\documentclass[12pt]{amsart}

\usepackage[margin=1in]{geometry}

\usepackage[bookmarksnumbered, colorlinks, plainpages,linkcolor=blue,anchorcolor=blue,citecolor=blue,urlcolor=blue]{hyperref}

\usepackage{amsmath, amssymb, amsthm, mathrsfs, mathtools, enumitem, bbm, color}

\newtheorem{theorem}{Theorem}[section]
\newtheorem{corollary}[theorem]{Corollary}
\newtheorem{lemma}[theorem]{Lemma}
\newtheorem{proposition}[theorem]{Proposition}
\theoremstyle{definition}
\newtheorem{definition}[theorem]{Definition}
\newtheorem{remark}[theorem]{Remark}
\newtheorem{assumption}[theorem]{Assumption}

\numberwithin{equation}{section}

\newcommand{\C}{\mathbb{C}}  
\newcommand{\R}{\mathbb{R}} 
 
\newcommand{\Z}{\mathbb{Z}} 
\newcommand{\N}{\mathbb{N}} 

\newcommand{\Sc}{\mathcal{S}}
\newcommand{\cF}{\mathcal{F}}
\newcommand{\cL}{\mathcal{L}}

\newcommand{\cB}{\mathcal{B}}

\newcommand{\sB}{\mathsf{B}}
\newcommand{\sR}{\mathsf{R}}

\newcommand{\rH}{\mathrm{H}}
\newcommand{\bM}{\mathbf{M}}
\newcommand{\SL}{\Sc_{\sqrL}}

\newcommand{\Dom}{\mathsf{Dom}}

\newcommand{\rL}{\mathrm{L}}
\newcommand{\sqrL}{\ssqrt{\mathbf{L}}}
\newcommand{\id}{{\mathrm{Id}}}
\newcommand{\bL}{\mathbf{L}}

\newcommand{\eps}{\varepsilon}
\newcommand{\ee}{\mathrm{e}}
\newcommand{\ii}{\mathrm{i}}

\newcommand{\dd}{\, \mathrm{d}}
\newcommand{\supp}[1]{\mathrm{supp}(#1)}
\newcommand{\loc}{\mathrm{loc}}
\newcommand{\diag}{\mathrm{diag}}

\newcommand{\jb}{\langle\, \cdot \,\rangle}

\newcommand{\ssqrt}[1]{\sqrt{\smash[b]{#1}}}

\usepackage{color}

\definecolor{gr}{rgb}   {0.,   0.8,   0. } 
\definecolor{bl}{rgb}   {0.,   0.5,   1. } 
\definecolor{mg}{rgb}   {0.7,  0.,    0.7}

\newcommand{\Bk}{\color{black}}

\title[Global Strichartz Estimates ]{Global Strichartz Estimates for Wave Equations with time-dependent structured Lipschitz coefficients}

\author{Dorothee Frey}
\address{Karlsruhe Institute of Technology, Department of Mathematics, Englerstr. 2, 76131 Karlsruhe, Germany}
\email{dorothee.frey@kit.edu}

\author{Yonas Mesfun}
\address{Karlsruhe Institute of Technology, Department of Mathematics, Englerstr. 2, 76131 Karlsruhe, Germany}
\email{yonas.mesfun@kit.edu}

\date{June 24, 2026}
\thanks{Funded by the Deutsche Forschungsgemeinschaft (DFG, German Research Foundation) – Project-ID 258734477 – SFB 1173}
\keywords{Global Strichartz estimates, wave equations, low-regularity, Phillips functional calculus}

\begin{document}

		\begin{abstract}
		We establish global-in-time Strichartz estimates without loss of derivatives for wave equations with  time-dependent  Lipschitz coefficients, which satisfy an additional structural assumption.  Our approach is based on a parametrix construction through the Phillips functional calculus.  
		 We furthermore obtain the well-posedness of such wave equations with  Lipschitz coefficients in $H^1$. 
	\end{abstract}

		\maketitle

	\section{Introduction and Main Results}
		 In this article, we prove  global-in-time  Strichartz estimates for  variable-coefficient wave equations with  time-dependent  Lipschitz-regular coefficients  under a structural condition.
		 More precisely, we consider wave equations in space-time $\R\times \R^d=\{(t,x)\colon t\in \R, x\in \R^d\}$ with spatial dimension $d\geq 2$, which are of the form
		 \begin{equation}\label{eq:WaveEquationWithTimeDependentLipschitzMetrics}
		 	(D_t^2 -P(t,x, D_x))u(t,x)=F(t,x),\quad u(0,x)=g(x),\quad D_t u(0,x)=h(x).
		 \end{equation}
		 Here, $D_t\coloneqq -\ii\partial_t$, $D_x\coloneqq (D_{x_1},\dots, D_{x_d})\coloneqq -\ii\nabla_x$ and $P(t,x, D_x)$ is an elliptic second order differential operator of the form
		 \begin{equation*}
		 	P(t,x, D_x)\coloneqq \sum_{j=1}^d D_{x_j} c_{j}(t,x)D_{x_j}\quad \text{or}\quad P(t,x, D_x)\coloneqq \sum_{j=1}^d c_{j}(t,x)D_{x_j}^2.
		 \end{equation*}
		 The coefficients $c_1,\dots, c_d$ are merely assumed to be Lipschitz-regular and to satisfy some structural and an asymptotic flatness condition (see Assumption~\ref{ass:MainThm} below).
		 
		 Strichartz estimates control space-time $\rL_t^p\rL_x^q$-norms of the solution of a linear dispersive equation in terms of ($\rL^2$-based) Sobolev norms of its initial data. These estimates turn out to be particularly useful in establishing the well-posedness of nonlinear dispersive equations (see e.g. \cite{Sogge1995}, \cite{Tao2006}, \cite{BahouriDanchinChemin2011} for an account of nonlinear wave equations). For a Schwartz solution $u\in \Sc(\R\times \R^d)$ of the classical wave equation in spatial dimension $d\geq 2$ (which corresponds to $c_j=1$ for all $j\in \{1,\dots, d\}$, i.e., $P=-\Delta_x$ in \eqref{eq:WaveEquationWithTimeDependentLipschitzMetrics}), 
		\begin{equation}\label{eq:ClassicalWaveEquation}
		(D_t^2+\Delta_x) u =F \quad \text{in }\R\times \R^d, \quad u(0,\cdot)=g, \quad D_tu(0,\cdot )=h,
		\end{equation}
		they read
		\begin{equation}\label{eq:StrichartzEstimatesForClassicalWaveEquation}
			\||D_x|^{1-\alpha}u\|_{\rL_t^p(\R;\rL_x^q(\R^d))}\lesssim \|g\|_{\dot{\rH}^{1}(\R^d)}+\|h\|_{\rL^2(\R^d)}+\||D_x|^{\tilde{\alpha}}F\|_{\rL_t^{\tilde{p}'}(\R;\rL_x^{\tilde{q}'}(\R^d))},
		\end{equation}
		where $|D_x|^\beta$ is the Fourier multiplier operator associated with the symbol $|\xi|^\beta$ for $\beta\in \{1-\alpha, \tilde{\alpha}\}$ and $\|g\|_{\dot{\rH}^1(\R^d)}\coloneqq \|\nabla_x g\|_{\rL^2(\R^d)}$ denotes the homogeneous Sobolev norm. The integrability exponents $\tilde{p}'$ and $\tilde{q}'$ in \eqref{eq:StrichartzEstimatesForClassicalWaveEquation} are the Hölder conjugate exponents of $\tilde{p}$ and $\tilde{q}$, respectively, and the triples $(p,q,\alpha), (\tilde{p},\tilde{q},\tilde{\alpha})\in [0,\infty]^3$ satisfy the admissibility conditions
		\begin{equation}\label{eq:StrichartzTriple}
			2\leq p\leq \infty,\,2\leq q < \infty,\quad \frac{1}{p}+\frac{d}{q}=\frac{d}{2}-\alpha, \quad \frac{2}{p}+\frac{d-1}{q}\leq \frac{d-1}{2}.
		\end{equation}
		Triples satisfying \eqref{eq:StrichartzTriple} are called \textit{(wave-admissible) Strichartz triples} and they are referred to as \textit{strict} if the fourth condition in \eqref{eq:StrichartzTriple} holds with an equality. Time-translation invariance, scaling symmetry and the Knapp example show that the conditions in \eqref{eq:StrichartzTriple} are in fact necessary for \eqref{eq:StrichartzEstimatesForClassicalWaveEquation} to hold (see \cite{Tao2006}). We preclude $q=\infty$ in \eqref{eq:StrichartzTriple} because in general, \eqref{eq:StrichartzEstimatesForClassicalWaveEquation} fails to hold in this case (see \cite{FangWang2006}, \cite{Guo2018}). 
		Observe that strict Strichartz triples with $p=\infty$, i.e., $(p,q,\alpha)=(\tilde{p},\tilde{q},\tilde{\alpha})=(\infty,2,0)$ correspond to the well-known energy inequality (see e.g. \cite[p.13]{Sogge1995}). At the other extreme, strict Strichartz triples with $p=2$, i.e., $(2, \frac{2(d-1)}{d-3}, \frac{d+1}{2(d-1)})$ for $d\geq 4$, are called \textit{endpoint} Strichartz triples. Strichartz estimates for non-strict Strichartz triples follow from those for strict ones by Sobolev embedding.
	
		Strichartz estimates for the wave equation have a long history. In his seminal work, Strichartz \cite{Strichartz1977} proved \eqref{eq:StrichartzEstimatesForClassicalWaveEquation} in the case $p=q$ by means of Fourier restriction. Later, Ginibre--Velo \cite{GinibreVelo1995} and Lindblad--Sogge \cite{LindbladSogge1995} independently established \eqref{eq:StrichartzEstimatesForClassicalWaveEquation} for non-endpoint Strichartz triples. Finally, Keel--Tao \cite{KeelTao1998} developed an abstract approach to Strichartz inequalities, thereby settling the subtle endpoint case. 
	
		The primary goal of this article is to obtain  sharp global-in-time  Strichartz estimates for wave equations as in \eqref{eq:WaveEquationWithTimeDependentLipschitzMetrics}, where the coefficients $c_j$ are merely Lipschitz continuous. To this end, we impose the following assumptions on the coefficients $c_j$.
		\begin{assumption}\label{ass:MainThm} \leavevmode 
			We have 
			\begin{equation}\label{eq:StructuralAssumption}
			c_{j}(t,x)\coloneqq  b_j(t)a_j(x_j), \quad (t,x)\in \R\times \R^d,\, j\in \{1,\dots,d\},
			\end{equation}
			where $a_1, \dots, a_d$ and $b_1,\dots, b_d$ are Lipschitz functions from $\R$ to $\R$ with the following properties.
			\begin{enumerate}[label=$\bullet$]
				\item There exist constants $0<m_1\leq m_2 <\infty$ such that
				\begin{equation}\label{eq:Ellipticity_a}
				m_1 \leq a_j(x)\leq m_2 \quad \text{for all } x\in \R \text{ and } j\in \{1,\dots,d \}.
				\end{equation}
				Moreover, we assume that 
				\begin{equation}\label{eq:SizeCondition_a}
				m_3\coloneqq\max_{1\leq j\leq d}\big\|\tfrac{\dd}{\dd x}\log(a_j)\big\|_{\rL^1(\R)}<4.
				\end{equation}
				\item For some sufficiently small $\eps_0\in (0,\tfrac{1}{2})$, we have
				\begin{equation}\label{eq:Ellipticity_b}
				1-\eps_0 \leq b_j(t)\leq 1+\eps_0 \quad \text{for all } t\in \R \text{ and } j\in \{1,\dots,d \}.
				\end{equation}
				We set $m_4\coloneqq \max_{1\leq j\leq d}\|b_j'\|_{\rL^\infty(\R)}<\infty$. Moreover, we assume that there exists some sufficiently small $\eps_1=\eps_1(m_1,m_2,m_4)>0$ such that
				\begin{equation}\label{eq:SizeCondition_b}
				\max_{1\leq j\leq d}\|b_j'\|_{\rL^1(\R)}\leq \eps_1.
				\end{equation}
			\end{enumerate}
		\end{assumption}
		Thus, $P(t,x,D_x)$ is of the form
		\begin{equation}\label{eq:DefOfP(t)}
			P(t,x,D_x)=\sum_{j=1}^d b_j(t) D_{x_j}a_j(x_j)D_{x_j}\quad \text{or}\quad P(t,x,D_x)=\sum_{j=1}^d  b_j(t)a_j(x_j)D_{x_j}^2.
		\end{equation}
		Due to the low regularity of the coefficients, it is %(to the best of the authors' knowledge) 
		a priori not clear if \eqref{eq:WaveEquationWithTimeDependentLipschitzMetrics} admits a distributional solution even for very regular initial data (cf. for instance \cite[Theorem~10]{ColombiniFerruccioSpagnolo1979} in the case of Hölder continuous coefficients). Therefore, in a first step, we address the well-posedness of \eqref{eq:WaveEquationWithTimeDependentLipschitzMetrics} in $\rH^\alpha\coloneqq\rH^\alpha(\R^d)$, the latter denoting the $\rL^2$-based (inhomogeneous) Sobolev space of order $\alpha\in \R$. To this end, we view \eqref{eq:WaveEquationWithTimeDependentLipschitzMetrics} as a vector-valued Cauchy problem in $ \rH^\alpha$. It is convenient to write $P(t)$ instead of $P(t,x,D_x)$, and we think of $P\coloneqq (P(t))_{t\in \R}$ as a family of differential operators acting on functions of $x$. We use the following notion of a weak solution.
		\begin{definition}[Weak Solutions in $\rH^\alpha$]\label{def:WeakSolutions}Let $\alpha\in \R$ and suppose that $g\in \rH^\alpha, h\in \rH^{\alpha-1}$ and $F\in \rL^1(\R;\rH^{\alpha-1})$. Then, a function $u\in  C(\R; \rH^{\alpha})\cap C^1(\R; \rH^{\alpha-1})\cap W^{2,1}_\loc(\R;\rH^{\alpha-2})$ is called a \textit{weak solution} of \eqref{eq:WaveEquationWithTimeDependentLipschitzMetrics} (in $\rH^\alpha$) if 
		\begin{align*}
			\left\{
			\begin{aligned}
				D_t^2 u(t) &= P(t)u(t)+F(t) && \text{in } \rH^{\alpha-2} \quad \text{for a.e. $t \in \R$}, \\
				u(0)      &= g, \\
				D_t u(0)  &= h.
			\end{aligned}
			\right.
		\end{align*}	
		\end{definition}
		Then, we obtain the following well-posedness result.
		\begin{theorem}[Well-posedness in $\rH^\alpha$]\label{thm:wellposedness}
		Let $\alpha\in[-1,2]$ and suppose that $g\in \rH^{\alpha}$, $h\in \rH^{\alpha-1}$ and $F\in \rL^1(\R;\rH^{\alpha-1})$. Then, there exists a unique weak solution $u$ to \eqref{eq:WaveEquationWithTimeDependentLipschitzMetrics}. Moreover, for each bounded interval $J\subseteq \R$ with $0\in J$, the map 
		\begin{equation*}
				\Phi\colon \rH^\alpha \times \rH^{\alpha-1}\times \rL^1(J;\rH^{\alpha-1})\to C(\overline{J}; \rH^{\alpha}),\quad (g,h,F)\mapsto u
			\end{equation*}
		is Lipschitz continuous.
		\end{theorem}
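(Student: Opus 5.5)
We recast \eqref{eq:WaveEquationWithTimeDependentLipschitzMetrics} as a first-order non-autonomous Cauchy problem and apply the classical theory of hyperbolic evolution families (Kato's theory, or energy estimates plus duality). The natural phase space is $X_\alpha\coloneqq \rH^\alpha\times \rH^{\alpha-1}$, with unknown $U=(u,D_tu)$ and evolution equation
\begin{equation*}
D_t U(t)=\mathcal{A}(t)U(t)+(0,F(t)),\qquad \mathcal{A}(t)=\begin{pmatrix}0 & \id\\ P(t) & 0\end{pmatrix}.
\end{equation*}
The structure \eqref{eq:StructuralAssumption} gives $P(t)=\sum_j b_j(t)P_j$ with time-independent one-dimensional operators $P_j=D_{x_j}a_j(x_j)D_{x_j}$ (or $a_jD_{x_j}^2$). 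So the time dependence enters only through the Lipschitz scalars $b_j$.

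The core of the argument is the energy estimate at $\alpha=1$. We define
\begin{equation*}
E(t)=\|D_tu(t)\|_{\rL^2}^2+\sum_j b_j(t)\langle a_j D_{x_j}u(t),D_{x_j}u(t)\rangle ,
\end{equation*}
adding $\|u(t)\|_{\rL^2}^2$ to control the inhomogeneous norm. For the divergence form, \eqref{eq:Ellipticity_a} and \eqref{eq:Ellipticity_b} make $E(t)$ equivalent to $\|u\|_{\rH^1}^2+\|D_tu\|_{\rL^2}^2$. Differentiating, the only non-conservative terms are $b_j'(t)\langle a_jD_{x_j}u,D_{x_j}u\rangle$ and the forcing. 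Gronwall then gives
\begin{equation*}
E(t)^{1/2}\lesssim \exp\bigl(C\|b'\|_{\rL^1}+C|t|\bigr)\Bigl(E(0)^{1/2}+\int_0^t\|F(s)\|_{\rL^2}\dd s\Bigr).
\end{equation*}
The $C|t|$ term comes only from the lower-order $\rL^2$ part and is harmless on bounded $J$.

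For the non-divergence form we use the weighted inner product $\langle f,g\rangle_{a_j^{-1}}$, which is equivalent to $\rL^2$. The point is that $a_jD_{x_j}^2$ is symmetric with respect to it, and because each $a_j$ depends only on $x_j$ the weights for different $j$ do not interact badly. If they do, we multiply $u$ by the product weight $\prod_k a_k(x_k)^{-1}$.

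We then pass to the other regularity levels.
\begin{itemize}
\item For $\alpha=2$, we differentiate the equation in $t$, or equivalently estimate $P(t)u$. Since $\partial_t P(t)=\sum_j b_j'P_j$ is again controlled by $P(t)$, the same Gronwall argument applies to $(P(t)u,P(t)D_tu)$. We use the identification $\Dom(P(t))=\rH^2$, which holds because $a_j$ is Lipschitz and one-dimensional.
\item For $\alpha=0$ and $\alpha=-1$, we argue by duality: the adjoint (backward) problem has the same structure.
\item Intermediate $\alpha$ follow by complex interpolation of the solution map.
\end{itemize}

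Existence comes from Kato's theorem for hyperbolic evolution families. Stability of $\{\mathcal{A}(t)\}$ on $X_1$ follows from the energy estimate, and $\mathcal{A}(\cdot)\in$ Lipschitz$(\R;\mathcal{L}(X_2,X_1))$ because the $b_j$ are Lipschitz. This gives an evolution family $U(t,s)$ on $X_\alpha$ for $\alpha\in\{1,2\}$. For general data we use Duhamel's formula with $U(t,s)$ and approximate by smooth data to obtain the regularity $C(\R;\rH^\alpha)\cap C^1(\R;\rH^{\alpha-1})\cap W^{2,1}_\loc(\R;\rH^{\alpha-2})$. Uniqueness follows by applying the energy estimate at level $\alpha-1$ (or duality) to the difference of two weak solutions. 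Linearity together with the bound on $J$ gives the Lipschitz continuity of $\Phi$.

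The main obstacle is the justification at the lowest regularity levels. To make sense of the duality at $\alpha=-1$ and to rigorously differentiate $E(t)$ for a merely weak solution, we need the $\rH^2$ theory. That in turn requires $\Dom(P(t))=\rH^2$ with uniform constants, and for the divergence form this rests on the one-dimensional Lipschitz structure of $a_j$. We would handle this by regularizing, either mollifying $b_j$ in time or replacing $u$ by $(1+\eps P(0))^{-1}u$, and passing to the limit. The commutator $[P(t),(1+\eps P(0))^{-1}]$ is controlled using the fact that the $P_j$ commute with each other.
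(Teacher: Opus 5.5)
Your route is genuinely different from the paper's. The paper never uses abstract evolution-family theory. It builds explicit parametrices $C(t,s)$, $S(t,s)$ in the Phillips calculus of $\sqrL$ and proves the one-derivative gain $\|(D_t^2-P(t))C(t,s)f\|_{\rH^{\alpha-1}_L}\lesssim\|B'(t)\|\,\|f\|_{\rH^\alpha_L}$ (Theorem~\ref{thm:ParametrixPropertyOfSineAndCosineFuntions}). Existence then follows for every $\alpha\in\R$ in $\rH^\alpha_L$ by solving the Volterra equation of Lemma~\ref{lemma:Volterra} with a Neumann series. Uniqueness and Lipschitz dependence come from the classical $\rH^1$ energy inequality, which is your level-$1$ estimate (Lemma~\ref{lemma:ClassicalEnergyInequalities}), transported to all levels in Proposition~\ref{prop:EnergyEstimate}; Proposition~\ref{prop:Kato} then passes to $\rH^\alpha$.

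Kato's theory gives existence in one stroke. It needs no Volterra inversion (hence no smallness of $\eps_1$, cf.\ Remark~\ref{rem:RelaxAssForWellposedness}) and no Littlewood--Paley machinery for $\sqrL$. The paper's construction buys the representation \eqref{eq:RepresentationOfSolution} with $G$ globally controlled in $\rL^1(\R;\rH^{\alpha-1}_L)$. That is exactly what the proof of Theorem~\ref{mainthm:Strichartz} uses, and an abstract $U(t,s)$ does not provide it.

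At level $\alpha=1$ your argument is sound, with one correction for the non-divergence form. Put the single weight $\prod_k a_k(x_k)^{-1}$ into the inner product (the paper's \eqref{eq:EquivalentScalarProduct}); it symmetrizes every $a_jD_{x_j}^2$ because $\prod_{k\neq j}a_k^{-1}$ does not depend on $x_j$. Do not multiply $u$ by the weight, since that creates first-order terms.

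The gap is in how you leave level $1$.
\begin{itemize}
\item For $\alpha=2$, differentiating in $t$ fails. It needs $\partial_tF$, which is unavailable for $F\in\rL^1(\R;\rH^1)$, and inserting $P(t)u$ into the level-$1$ estimate produces $\sum_j b_j''L_ju$. Only an energy of the form $\|P(t)u\|^2+\langle P(t)D_tu,D_tu\rangle$ works, and the bound $\|\partial_tP(t)u\|\lesssim\max_j|b_j'(t)|\,\|P(t)u\|$ behind it comes from the joint functional calculus of the commuting $L_j$.
\item The duality step is false as stated for the non-divergence form. The $\rL^2$-adjoint of $a_jD_{x_j}^2$ is $D_{x_j}^2a_j$, which is not the same structure. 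Under the weighted pairing the operator is symmetric, but then the dual of $\rH^2$ is not $\rH^{-2}$, because a Lipschitz weight with a kink is not a multiplier on $\rH^2$. So duality from level $2$ does not cover data $h\in\rH^{-2}$.
\item More basically, at $\alpha=-1$ the operator $P(t)$ does not map $\rH^{-1}$ into the classical $\rH^{-3}$. By duality, $P(t)$ or its adjoint would have to map $\rH^{3}$ into $\rH^{1}$, which fails as soon as some $a_j'$ jumps. So at the bottom of the range the equation can only be read in the adapted space $\rH^{\alpha-2}_L$, which is where Theorem~\ref{thm:WellposednessInAdaptedSpaces} lives.
\item Interpolation bounds the solution map but gives no energy identity at non-integer levels. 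So the ``energy estimate at level $\alpha-1$'' you invoke for uniqueness is not available for general $\alpha$.
\end{itemize}

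The missing idea, which you use only in the final regularization, is that $L=\sum_jL_j$ is time-independent and commutes exactly with every $P(t)=\sum_jb_j(t)L_j$, since the $L_j$ act in different variables. Hence $\langle D_L\rangle^{s}$ maps weak solutions to weak solutions, with data and forcing replaced by $\langle D_L\rangle^{s}g$, $\langle D_L\rangle^{s}h$, $\langle D_L\rangle^{s}F$. Conjugation by $\langle D_L\rangle^{\alpha-1}$ therefore turns the level-$1$ theory into the level-$\alpha$ theory in $\rH^\alpha_L$ for every $\alpha\in\R$. This gives three things at once. It produces the evolution family at each level. It verifies Kato's admissibility hypothesis for $X_2$, which your proposal leaves implicit, with $Q=\langle D_L\rangle\oplus\langle D_L\rangle$ and $Q\mathcal{A}(t)Q^{-1}=\mathcal{A}(t)$. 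And it yields uniqueness after regularizing with $(1+\eps P(0))^{-1}$, whose commutator with $P(t)$ is identically zero. This is precisely the mechanism of Proposition~\ref{prop:EnergyEstimate}, and it replaces your duality and interpolation steps altogether.
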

		We note that for Theorem~\ref{thm:wellposedness} we do not need the smallness of $\eps_0$ as stated in \eqref{eq:Ellipticity_b}. In fact, it is enough that the coefficients $b_j$ are just bounded from above and below by positive constants, see Remark~\ref{rem:RelaxAssForWellposedness} below.
		
		For the weak solution from Theorem~\ref{thm:wellposedness}, we are able to derive a representation formula, see Theorem~\ref{thm:WellposednessInAdaptedSpaces} below. This representation then allows us to prove global-in-time Strichartz estimates, which is our main result:
		\begin{theorem}[Global-In-Time Strichartz Estimates for Weak Solutions in $\rH^1$]\label{mainthm:Strichartz} Let $(p,q,\alpha)$ be a wave-admissible Strichartz triple and $\alpha\in [0,2]$. Suppose that $g\in \rH^{1}$, $h\in \rL^{2}$ and $F\in \rL^1(\R;\rL^{2})$. Then, the weak solution to the wave equation \eqref{eq:WaveEquationWithTimeDependentLipschitzMetrics} satisfies the global-in-time Strichartz estimate
		\begin{equation}\label{eq:HomogeneousStrichartzEstimatesForTimeDependentLipschitzmetrics}
			\||D_x|^{1-\alpha}u\|_{\rL_t^p(\R;\rL_x^q(\R^d))}\lesssim \|g\|_{\rH^1}+\|h\|_{\rL^2}+\|F\|_{\rL^1(\R;\rL^2)}.
		\end{equation}
		\end{theorem}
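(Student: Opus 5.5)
The approach is to reduce the estimate to the constant-coefficient case by exploiting the product structure \eqref{eq:StructuralAssumption}. Each one-dimensional operator $L_j\coloneqq D_{x_j}a_j(x_j)D_{x_j}$ (or $a_j(x_j)D_{x_j}^2$) will be conjugated to a constant-coefficient operator through a Liouville-type change of variables.

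\textbf{Step 1: conjugating the spatial part.} Set $y_j=\int_0^{x_j}a_j(s)^{-1/2}\dd s$. After multiplying by a suitable power of $a_j$, the operator $L_j$ becomes $D_{y_j}^2$ plus a first-order perturbation with coefficient $\tfrac{\dd}{\dd x}\log a_j$. Since this coefficient is only in $\rL^1$ and not small, it cannot be treated perturbatively. Instead, I would use that $L_j$ is self-adjoint, so that $|D_x|$-type norms are equivalent to the corresponding fractional powers of $\sum_j L_j$. The key ingredient is a one-dimensional spectral result: the condition $m_3<4$ in \eqref{eq:SizeCondition_a} should ensure that the generalized eigenfunctions of $L_j$ (Jost solutions) are uniformly bounded and comparable to plane waves. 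A Volterra/Gronwall argument gives a bound like $\ee^{m_3/4}$, and the threshold $4$ appears to be exactly what makes the associated distorted Fourier transform $\cF_j$ bounded on the relevant spaces. The multi-dimensional distorted Fourier transform is then the tensor product $\cF=\bigotimes_j\cF_j$, which diagonalizes all of the $L_j$ simultaneously, independently of $t$.

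\textbf{Step 2: handling the time dependence.} In the distorted frequency variables $\eta$, the equation becomes a family of ODEs
\begin{equation*}
\partial_t^2\hat u(t,\eta)+\sum_j b_j(t)\eta_j^2\hat u(t,\eta)=\hat F(t,\eta).
\end{equation*}
I would write a WKB parametrix with phase $\phi(t,\eta)=\int_0^t(\sum_j b_j(s)\eta_j^2)^{1/2}\dd s$ and amplitude $(\sum_j b_j\eta_j^2)^{-1/4}$. The error is controlled by $\|b_j'\|_{\rL^1}\le\eps_1$, which allows a global Neumann/Duhamel iteration that is uniform in $t$. This is presumably where the Phillips functional calculus enters: it expresses $\ee^{\ii\phi(t,\cdot)}$ applied to the commuting tuple $(L_1,\dots,L_d)$ as a superposition of one-dimensional propagators. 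The representation formula, Theorem~\ref{thm:WellposednessInAdaptedSpaces}, should provide exactly this structure for the weak solution of Theorem~\ref{thm:wellposedness}.

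\textbf{Step 3: the dispersive estimate.} One needs
\begin{equation*}
\|\ee^{\ii\phi(t,\sqrt{L})}\ee^{-\ii\phi(s,\sqrt{L})}\chi(\sqrt{L}/\lambda)\|_{\rL^1\to\rL^\infty}\lesssim\lambda^{d}(1+\lambda|t-s|)^{-(d-1)/2}.
\end{equation*}
The kernel is $\int \ee^{\ii(\phi(t,\eta)-\phi(s,\eta))}\chi\,\prod_j e_j(x_j,\eta_j)\overline{e_j(y_j,\eta_j)}\dd\eta$. I would split each generalized eigenfunction into plane-wave components $\ee^{\pm\ii\eta_j y_j}$ with bounded, $\eta$-regular amplitudes. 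Stationary phase then applies, because for $\eps_0$ small the Hessian of $\phi(t)-\phi(s)$ is a small perturbation of that of $(t-s)|\eta|$, with $d-1$ nondegenerate directions. Having this estimate together with the energy estimate (the $\rL^2$ unitarity of $\cF$ and the uniform bounds of the parametrix), the Keel--Tao abstract theorem yields the homogeneous and the $\rL^1\rL^2$-inhomogeneous estimates, the latter through Minkowski's inequality and Duhamel's formula. Finally, the norm equivalence between $|D_x|^{1-\alpha}$ and $\bL^{(1-\alpha)/2}$ on $\rL^q$ for $\alpha\in[0,2]$ would be handled by Littlewood--Paley theory adapted to $\bL$, or by working directly with the frequency-localized pieces.

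The main obstacle is Step 3. The low regularity of $a_j$ makes the $\eta$-derivatives of the eigenfunction amplitudes delicate, since stationary phase requires several of them. I would first try to prove symbol-type bounds on the Jost amplitudes directly from the Volterra equation using only $\tfrac{\dd}{\dd x}\log a_j\in\rL^1$. If that fails, I would replace pointwise stationary phase by the Phillips calculus: write the half-wave multiplier as an integral of one-dimensional wave groups $\ee^{\ii s\sqrt{L_j}}$, exploit finite speed of propagation together with the explicit d'Alembert-type structure in one dimension, and reduce to the Euclidean kernel bounds.
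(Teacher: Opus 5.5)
Your skeleton agrees with the paper's. It uses a parametrix in $t$ with phase $\varphi_{t,s}(\xi)=\int_s^t(B(\tau)\xi|\xi)^{1/2}\dd\tau$, a Volterra/Neumann iteration closed by $\|B'\|_{\rL^1}\le\eps_1$ (the representation of Theorem~\ref{thm:WellposednessInAdaptedSpaces}), Keel--Tao, and stationary phase using the curvature of $\{\tilde\varphi=1\}$ for small $\eps_0$. Your Minkowski argument for the $\rL^1_t\rL^2_x$ Duhamel term is also fine. The gap is Step~3, the heart of the theorem. Your main route needs the generalized eigenfunctions of $L_j$ to split into plane waves $\ee^{\pm\ii\eta_jy_j}$ whose amplitudes have several $\eta$-derivatives bounded uniformly in $x,y$. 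These bounds must also hold uniformly over the dyadic pieces, i.e.\ for all rescaled coefficients $a_j(\cdot/\lambda)$. This fails under $\tfrac{\dd}{\dd x}\log a_j\in\rL^1$. If $a_j$ varies in two regions a distance $\ell$ apart, multiple reflections put factors like $\ee^{2\ii\eta_j\ell}$ into the amplitudes. Their $\eta$-derivatives are of size $\ell$, or $\lambda\ell$ at frequency $\lambda$, and nothing controls this. You also misplace the role of $m_3<4$. Uniform bounds on Jost solutions hold for every finite $m_3$ by Gronwall, so they cannot explain the threshold. The threshold is what makes the series of multiply reflected waves absolutely summable; heuristically, the total reflection is at most $m_3/4$.

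Your fallback points the right way, but finite speed of propagation and d'Alembert structure do not give the needed bound, and the key estimate is never named. The paper argues as follows. Since $\varphi_{\tau,\sigma}$ depends only on $\xi_1^2,\dots,\xi_d^2$ and $\psi$ is radial, $m\coloneqq\ee^{\ii\varphi_{\tau,\sigma}}\tilde\psi^2$ (with $\tilde\psi=|\cdot|^{-\alpha}\psi$) is even in each variable. By Remark~\ref{rem:PhillipsCalculusForSymmetricFunctions}, $m(\sqrL)f=\int_{\R^d}(\cF^{-1}m)(y)\,\mathrm{Cos}(y\sqrL)f\dd y$ with $\mathrm{Cos}(y\sqrL)=\prod_j\mathrm{Cos}(y_j\ssqrt{L_j})$. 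Now apply H\"older in $y$ together with the $\rL^\infty_x\rL^1_y$ bound $\sup_x\int_{\R^d}|\mathrm{Cos}(y\sqrL)f(x)|\dd y\lesssim\|f\|_1$ of Proposition~\ref{prop:L^inftyL^1-Estimate}. That bound is the one-dimensional Beli--Ignat--Zuazua estimate, and it is exactly where $m_3<4$ enters. Together they give $\|m(\sqrL)\|_{\rL^1\to\rL^\infty}\lesssim\|\cF^{-1}m\|_\infty$. This removes eigenfunctions and every $\eta$-regularity issue, and reduces the dispersive estimate to the Euclidean bound of Theorem~\ref{thm:OscillatoryIntegralEstimate}. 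The constants depend only on $m_1,m_2,m_3$, so the estimate survives the rescaling $a_j\mapsto a_j(\cdot/\lambda)$.

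Two smaller points. First, with amplitude $(\sum_jb_j\eta_j^2)^{-1/4}$ the transport terms cancel, but the WKB residual is $\partial_t^2$ of the amplitude. That involves $b_j''$ and is not controlled by $\|b_j'\|_{\rL^1}$ for Lipschitz $b_j$. The paper uses amplitude $1$, so the residual $\tfrac{1}{\ii}\partial_t^2\varphi_{t,s}$ is $O(\|B'(t)\|\,|\xi|)$, and the lost derivative is recovered because $S(t,s)$ gains one. Second, passing from $L^{\frac{1-\alpha}{2}}$ to $|D_x|^{1-\alpha}$ in $\rL^q$ follows neither from self-adjointness nor from $\bL$-adapted Littlewood--Paley theory. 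It is the $\rL^q$ Kato square root estimate of Proposition~\ref{prop:Kato}~(b), which is why the theorem requires $\alpha\in[0,2]$.
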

 In 	\cite[Theorem~2]{FreySchippa2023}, Schippa and the first author	  obtained Theorem~\ref{mainthm:Strichartz} in the special case where $P$ as in \eqref{eq:DefOfP(t)} is time-independent, i.e., $b_j=1$ for all $j\in \{1\dots,d\}$.  Theorem~\ref{mainthm:Strichartz} extends this result to the case of time-dependent coefficients of multiplicative form  \eqref{eq:DefOfP(t)}. 
		% that we can allow for 'multiplicative' time-dependence. 
	
		One striking feature of Theorem~\ref{mainthm:Strichartz} is the mere Lipschitz regularity of the coefficients. Indeed, Strichartz estimates for wave equations with low-regularity coefficients have been intensively studied in the literature, leading to the insight that $C^2$- or at least $C^{1,1}$-regularity of the coefficients is the minimal requirement for Strichartz estimates to be valid in general. Indeed, for more general wave equations
		\begin{equation}\label{eq:GeneralWaveEquation}
		(D_t^2 -\tilde{P}(t,x, D_x))u(t,x)=F(t,x),\quad u(0,x)=g(x),\quad D_t u(0,x)=h(x),%\quad (t,x)\in \R\times \R^d,
		\end{equation}
		in $\R\times \R^d$ with elliptic differential operator $\tilde{P}(t,x,D_x)=\sum_{j,k=1}^d D_{x_j} c_{jk}(t,x)D_{x_k}$, Smith \cite{SmithParametrix1998} first obtained local(-in-time)\footnote{By this, we mean that \eqref{eq:StrichartzEstimatesForClassicalWaveEquation} holds with $\||D_x|^{1-\alpha}u\|_{\rL_t^p(\R;\rL_x^q(\R^d))}$ replaced by $\||D_x|^{1-\alpha}u\|_{\rL_t^p(I;\rL_x^q(\R^d))}$ or $\|\langle D_x\rangle^{1-\alpha}u\|_{\rL_t^p(I;\rL_x^q(\R^d))}$ for some finite interval $I\subseteq \R$ containing $0$.} Strichartz estimates in spatial dimensions $d=2,3$ in the homogeneous case ($F=0$), provided that $c_{jk}\in C^{1,1}$. Later, Tataru \cite{Tataru2001} established local Strichartz estimates under a $C^2$-assumption in all dimensions $d\geq 2$, and Metcalfe--Tataru \cite{MetcalfeTataru2012} proved that these estimates are global-in-time under an additional asymptotic flatness condition. For $C^r$-coefficients with $r\in [0,2)$, Strichartz estimates hold true with loss of derivatives \cite{Tataru2001}, in which case the regularity parameter $\alpha$ of the Strichartz triple $(p,q,\alpha)$ in \eqref{eq:StrichartzTriple} has to be replaced by $\alpha+\frac{\sigma_r}{p}$ with $\sigma_r\coloneqq \frac{2-r}{2+r}$. Moreover, the counterexamples constructed in \cite{SmithTataru2002} show that this loss is generally unavoidable. In light of these results, Theorem~\ref{mainthm:Strichartz} shows that for the wave equations under consideration in this article, Strichartz estimates in fact do hold without loss of derivatives. The reason for this essentially lies in the crucial structural assumption \eqref{eq:StructuralAssumption}, see also Subsection~\ref{section:Methods} below for a further discussion.
	
	\subsection{Methods}\label{section:Methods}
	In our approach, we make use of  the abstract result by  Keel--Tao \cite{KeelTao1998},  which states  that Strichartz estimates follow from uniform $\rL^2$-bounds and dispersive estimates for the solution operators to \eqref{eq:GeneralWaveEquation}. However, one of the the main difficulties  in our situation  is to find 	 	a good enough approximation of the solution operator, i.e., a {parametrix}. 
	 In case of the classical wave equation, the solution operator to \eqref{eq:ClassicalWaveEquation} can be represented in terms of Fourier multiplier operators of the form
	\begin{equation}\label{eq:SolutionOperatorsForClassicalWaveEquation}
		\big[\tilde{T}(t)f\big](x)\coloneqq \big[\ee^{\ii t|D_x|}\psi(D_x)f\big](x)=\frac{1}{(2\pi)^d}\int_{\R^d} \ee^{\ii (x\xi+t|\xi|)}\psi(\xi) \widehat{f}(\xi)\dd \xi
	\end{equation}
	with $0\neq \psi\in C_c^\infty(\R^d)$ supported away from the origin. While in this case the uniform $\rL^2$-bounds are an immediate consequence of Plancherel's Theorem, the dispersive estimates are considerably more delicate. In order to derive them, we may rewrite \eqref{eq:SolutionOperatorsForClassicalWaveEquation} as a convolution according to
	\begin{equation}\label{eq:SolutionOperatorsForClassicalWaveEquation2}
		\big[\tilde{T}(t)f\big](x)=\int_{\R^d} \tilde{K}_t(y) f(x-y)\dd y=\int_{\R^d} \tilde{K}_t(y) (\ee^{-\ii y\cdot D_x}f)(x)\dd y,
	\end{equation}
	where $\tilde{K}_t(y)\coloneqq \cF^{-1}(\ee^{\ii t |\cdot|}\psi)(y)$
	 and $(\ee^{\ii y\cdot D_x})_{y\in \R^d}$ denotes the translation group acting on functions of $x$.
	The crucial dispersive estimate for the operator $\tilde{T}(t)$,
	\begin{equation}
		\|\tilde{T}(t)f\|_{\rL^\infty(\R^d)}\lesssim (1+|t|)^{-\frac{d-1}{2}}\|f\|_{\rL^1(\R^d)},
	\end{equation}
	then follows from the bound $|\tilde{K}_t(y)|\lesssim (1+|t|)^{-\frac{d-1}{2}}$, which in turn follows from the theory of oscillatory integrals (see e.g. \cite[Chapter~8]{Stein1993}, \cite[Section~7.7]{Hoermander1990}).
	
	For variable-coefficient wave equations, 
	one has to rely on more sophisticated tools from microlocal analysis, namely {Fourier integral operators} (FIOs) in the case of smooth coefficients (see \cite{Kapitanski1989_1}, \cite{MockenhauptSeegerSogge1993}), or, in the case of coefficients with limited regularity, on phase space methods such as the {FBI transform} (see \cite{Tataru2001}, \cite{Tataru2002}). Those latter methods  are not readily available  if the regularity of the coefficients of $P$ is lower than $C^2$.
	
	The assumption that allows us to fall below the '$C^2$-barrier' is the  structural assumption \eqref{eq:StructuralAssumption}, which goes back to \cite{FrePortal2020} on fixed-time $\rL^p$-estimates for rough wave equations   and which opens the door to a construction of a parametrix via tools from functional calculus. Indeed, \eqref{eq:StructuralAssumption} means that $P(t)=\sum_{j=1}^d b_j(t)L_j$, where $L_1,\dots, L_d$ are commuting differential operators given by
	\begin{equation*}
		L_j\coloneqq D_{x_j}a_j(x_j)D_{x_j}\quad\text{or}\quad L_j\coloneqq a_j(x_j)D^2_{x_j}.
	\end{equation*}
	It turns out that $\ii\sqrL=\ii(\sqrt{L_1},\dots,\sqrt{L_d})$ is the generator of a $d$-parameter $C_0$-group $(\ee^{\ii y\cdot \sqrL})_{y\in \R^d}$ on $\rL^p\coloneqq \rL^p(\R^d)$, $p\in (1,\infty)$, and, in analogy to \eqref{eq:SolutionOperatorsForClassicalWaveEquation2}, we are able to define a parametrix for \eqref{eq:WaveEquationWithTimeDependentLipschitzMetrics} in terms of operators of the form 
	 \begin{equation}
	 	T(t)f\coloneqq (\ee^{\ii \varphi_t}\psi)(\sqrL)f\coloneqq \int_{\R^d} K_t(y)\ee^{-\ii y\cdot \sqrL}f \dd y
	 \end{equation}
	 with a suitable one-homogeneous phase function $\varphi_t\colon \R^d\to \R$ and the kernel $K_t$ given by $K_t(y)=\cF^{-1}(\ee^{\ii \varphi_t}\psi)(y)\coloneqq$$ \frac{1}{(2\pi)^d}\int_{\R^d} \ee^{\ii(y\cdot \xi +\varphi_t(\xi))}\psi(\xi) \dd \xi$. 
 Let us emphasize that this representation cannot be achieved through a simple bi-Lipschitz change of variables, see \cite[Remark 4.5]{FrePortal2020}. 
	 The representation then allows to reduce the crucial dispersive estimate for $T(t)f$ to a corresponding oscillatory integral estimate for $K_t$, namely
	 \begin{equation}\label{eq:DispersiveEstimate}
	 	|K_t(y)|\lesssim (1+|t|)^{-\frac{d-1}{2}}.
	 \end{equation}
	 We deduce \eqref{eq:DispersiveEstimate} from decay estimates for the Fourier transform of a surface-carried measure. This is where \eqref{eq:Ellipticity_b} enters the picture. Indeed, the smallness of $\eps_0$ as in \eqref{eq:Ellipticity_b} guarantees that the hypersurface $\mathbb{S}\coloneqq \{\xi\in \R^d\colon \varphi(\xi)=1\}$ is a small perturbation of the unit sphere, and as such inherits its nonvanishing Gaussian curvature. An application of stationary phase then yields \eqref{eq:DispersiveEstimate}.
	 
	\subsection{Outline} We briefly describe the structure of this article. In Section~\ref{section:Preliminaries}, we recall the operator-theoretic framework, which was introduced in \cite{FrePortal2020}. In particular, we shortly review the Phillips functional calculus for $\ssqrt{\bL}$ and recall basic facts and tools of this calculus. In Section~\ref{section:ParametrixConstruction}, we use these tools in order to construct parametrices, i.e., approximate solution operators for \eqref{eq:WaveEquationWithTimeDependentLipschitzMetrics}. Following  e.g.   \cite{SmithParametrix1998} (see also \cite[Section~4]{Kapitanski1989_2}), we then use these parametrices in Section~\ref{section:ExistenceOfWeakSolutions} to obtain a weak solution to \eqref{eq:WaveEquationWithTimeDependentLipschitzMetrics} by an iterative procedure. Uniqueness is shown by energy methods in Section~\ref{section:UniquenessOfWeakSolutions}. Finally, we prove in Section~\ref{section:GlobalStrichartzEstimates} our main result, Theorem~\ref{mainthm:Strichartz}. Our proof is based on the Keel--Tao argument \cite{KeelTao1998} and closely follows the proof of \cite[Theorem~2]{FreySchippa2023}. Here, the crucial ingredient is the dispersive estimate \eqref{eq:DispersiveEstimate}, the proof of which we have postponed to Section~\ref{section:OscillatoryIntegralEstimate}.

	\subsection{Notation}

 Since the underlying first order operators are Dirac operators as in \cite{FrePortal2020} (cf. \cite{FreyMcPortal2018} and the references therein), we define	the second order elliptic operator $P(t)$ from \eqref{eq:DefOfP(t)} as the matrix-valued operator
	\begin{equation*}
	P(t)=\begin{pmatrix}
		\sum_{j=1}^d b_j(t) D_{x_j}a_j(x_j)D_{x_j} & 0\\
		 0 & \sum_{j=1}^d  b_j(t)a_j(x_j)D_{x_j}^2
	\end{pmatrix}
	\end{equation*}	
	and implicitly work with $\C^2$-valued functions.  This allows to treat the divergence-form and standard-form versions of $P(t)$ simultaneously. 
	To ease noation, we suppress the target space and write $\rL^p\coloneqq \rL^p(\R^d;\C^2)$ for $p\in [1,\infty]$, $\rH^\alpha\coloneqq \rH^\alpha(\R^d;\C^2)$, etc. We denote the Schwartz space by $\Sc(\R^d)$ and the Fourier transform on $\Sc(\R^d)$ by $\cF$, where we use the convention 
	\begin{align*}
	\cF f(\xi)\coloneqq \int_{\R^d} \ee^{-\ii y\cdot \xi} f(y) \dd y\quad \text{ and }\quad \cF^{-1}g(y)=\frac{1}{(2\pi)^d} \int_{\R^d} \ee^{\ii y\cdot \xi} g(\xi) \dd \xi 
	\end{align*}
	for $f,g\in \Sc(\R^d)$. The extension of $\cF$ onto $\Sc'(\R^d)$, the space of tempered distributions, is still denoted by $\cF$. We set $\cF \rL^1\coloneqq \cF(\rL^1(\R^d))$ and $\cF\bM\coloneqq \cF(\bM(\R^d))$, where $\bM(\R^d)$ denotes the space of complex-valued Borel measures on $\R^d$ with finite variation norm; we put $\|\varphi\|_{\cF\bM}\coloneqq \|\cF^{-1}\varphi\|_{\bM(\R^d)}$ for $\varphi\in \cF\bM$. For $\lambda>0$ and $\xi\in \R^d$, we set $\langle \lambda \rangle \coloneqq (1+\lambda^2)^{1/2}$ and $\langle \xi \rangle \coloneqq \langle |\xi| \rangle $, respectively. By a slight abuse of notation, we denote both the dual operator (in Banach spaces) and the adjoint operator (in Hilbert spaces) of $T$ by $T^\ast$. We reserve the letter $\lambda>0$ to denote either the size $\xi \in \R^d$ in frequency space or a scaling parameter in physical space. Finally, $A\lesssim_M B$ means $A\leq C B$ for some constant $C>0$ depending on a set $M$ of parameters, and $A\simeq_M B$ denotes the relations $A\lesssim_M B$ and $B\lesssim_M A$.

	\section{Preliminaries}\label{section:Preliminaries}
	In this section, we recall some properties of the Phillips functional calculus used in this article. For more information on this calculus, we refer the reader to e.g. \cite[Section~3.3]{Haase2006}, \cite[Section~10.7]{HytonenVanNeervenVeraarWeis2017}.

	\subsection{Functional Calculus} For each $j\in \{1,\dots, d\}$ and $p\in (1,\infty)$, we define the operator $L_{j,p}$ in $\rL^p=\rL^p(\R^d;\C^2)$ by
	\begin{align*}\label{eq:DiracOperators}
	L_{j,p}\coloneqq{}& \begin{pmatrix}
		D_{x_j}a_j(x_j)D_{x_j} & 0\\
		0& a_{j}(x_j)D_{x_j}^2 &
	\end{pmatrix},\\[0.2cm]
	\Dom(L_{j,p})\coloneqq{}&\{f\in \rL^p\mid D_{x_j}^k u\in \rL^p \text{ for }k\in \{0,1,2\}\}.
	\end{align*}
	The operators $L_{j,p}$ are sectorial operators in $\rL^p$ by \cite[Proposition~1.1]{McIntoshNahmod2000} and \cite[Section 7.3, Theorem~3.2]{Pazy2011} (the latter theorem is only formulated for bounded domains $\Omega$ but can be modified for $\Omega=\R^d$, see \cite[Proposition~2.2.2]{Mesfun2026}). By the holomorphic functional calculus for sectorial operators, the square roots $\ssqrt{L_{j,p}}$ are well-defined and we write $\ssqrt{\mathbf{L}_p}\coloneqq (\ssqrt{L_{1,p}},\dots, \ssqrt{L_{d,p}})$. For the proof of the following proposition, we refer to \cite[Proposition~2]{FreySchippa2023} (see also \cite[Theorem~2.2.8 and Remark~2.2.9~(3)]{Mesfun2026}).
	\begin{proposition}[Generation of $d$-parameter $C_0$-Group] Let $p\in (1,\infty)$. Then, the operator $\ii\ssqrt{\bL_p}$ generates a bounded $d$-parameter $C_0$-group $\big(\ee^{\ii y\cdot \ssqrt{\bL_p}}\big)_{y\in \R^d}$ on $\rL^p$. Moreover, $\ee^{\ii y\cdot \ssqrt{\bL_p}}f=\ee^{\ii y\cdot \ssqrt{\bL_2}}f$ for all $f\in \rL^p\cap \rL^2$.
	\end{proposition}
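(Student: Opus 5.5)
The plan is to reduce the statement to one-dimensional operators acting in a single variable, and then assemble the $d$-parameter group as a product of commuting one-parameter groups. For each fixed $j$, consider the operator $\ii\ssqrt{L_{j,p}}$. It acts only in the variable $x_j$, with the remaining variables as parameters. So by Fubini it suffices to show that, on $\rL^p(\R;\C^2)$, the operator $\ii\ssqrt{\ell_j}$ generates a bounded $C_0$-group, where $\ell_j=D a_j D$ or $a_j D^2$. The bound must be uniform in the frozen variables, which is automatic since $a_j$ depends only on $x_j$.

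For the one-dimensional problem I would use a Liouville-type change of variables. Set $s=\int_0^{x}a_j(r)^{-1/2}\dd r$, which is bi-Lipschitz by \eqref{eq:Ellipticity_a}. Conjugating by this change of variables and by the multiplication operator $a_j^{1/4}$ (or a suitable power) turns $\ell_j$ into $D_s^2$ plus a first-order perturbation whose coefficient is built from $\tfrac{\dd}{\dd x}\log a_j\in\rL^1$. One then has two options.
\begin{enumerate}[label=(\roman*)]
\item Treat $\ee^{\ii y\ssqrt{\ell_j}}$ via d'Alembert-type transmission/reflection formulas. The $\rL^1$-size of $(\log a_j)'$ controls the reflected waves by a Neumann or Volterra series, giving uniform boundedness on $\rL^p$.
\item Invoke a bounded $H^\infty$-calculus, or more precisely the bounded imaginary powers and square-function transference results for $\ssqrt{\ell_j}$ from \cite{McIntoshNahmod2000}, together with the transference principle. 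This yields that $\ii\ssqrt{\ell_j}$ generates a bounded group on $\rL^p$ for $1<p<\infty$.
\end{enumerate}
The condition $m_3<4$ is what I expect makes the series in (i) converge, or the relevant first-order Dirac operator bisectorial with a bounded calculus in (ii).

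Next, the resolvents of $L_{j,p}$ and $L_{k,p}$ commute, since they act on different variables. Hence the groups $\ee^{\ii y_j\ssqrt{L_{j,p}}}$ commute, and I would define
\[
\ee^{\ii y\cdot\ssqrt{\bL_p}}\coloneqq \prod_{j=1}^d \ee^{\ii y_j\ssqrt{L_{j,p}}}.
\]
Boundedness follows from the product of the uniform bounds, and strong continuity from that of each factor. The generator identification follows since $\ii\ssqrt{L_{j,p}}$ is the partial generator in direction $y_j$.

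Consistency across $p$ comes from the fact that the resolvents of $L_{j,p}$ and $L_{j,2}$ agree on $\rL^p\cap\rL^2$. Consequently the holomorphic calculus (square roots) agrees there, and so do the groups, via the Euler or Laplace formula: the resolvent of the generator is a Laplace transform of the group. The main obstacle is the uniform $\rL^p$ boundedness of the one-dimensional group $\ee^{\ii y\ssqrt{\ell_j}}$ for all $y\in\R$. I would first try option (i), the explicit reflection-series representation after the Liouville transform, and estimate the series using $\|(\log a_j)'\|_{\rL^1}<4$.
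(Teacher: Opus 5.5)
There is a genuine gap. Your reduction to one variable, the product of commuting one-parameter groups, and the consistency argument via resolvents are fine. The paper gives no proof of this proposition and defers to \cite{FreySchippa2023}, \cite{Mesfun2026}.

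The gap is in the one-dimensional step. A d'Alembert or transmission--reflection (Volterra) expansion produces the \emph{wave} propagator, not the \emph{half-wave} group. Note that $L_j=\Pi_j^2$ with $\Pi_j=\begin{pmatrix}0&D_{x_j}\\ a_j(x_j)D_{x_j}&0\end{pmatrix}$; this Dirac structure is the reason for the $\C^2$-valued setting. A reflection series controls the group $\ee^{\ii y\Pi_j}$, and hence $\mathrm{Cos}(y\ssqrt{L_j})=\cos(y\Pi_j)$, which is all that Proposition~\ref{prop:L^inftyL^1-Estimate} uses. The half-wave group, however, is
\[
\ee^{\ii y\ssqrt{L_j}}=\chi_+(\Pi_j)\,\ee^{\ii y\Pi_j}+\chi_-(\Pi_j)\,\ee^{-\ii y\Pi_j}.
\]
So you additionally need the spectral projections $\chi_\pm(\Pi_j)=\tfrac12(\id\pm\mathrm{sgn}(\Pi_j))$ to be bounded on $\rL^p$. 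This is a Hilbert-transform-type estimate, and no reflection series supplies it. Already for $a_j\equiv1$, the operator $\ee^{\ii y|D|}=\cos(yD)+\ii\,\mathrm{sgn}(D)\sin(yD)$ is unbounded on $\rL^1(\R)$ and $\rL^\infty(\R)$ for $y\neq0$. An argument that only sums reflections would work equally for $p\in\{1,\infty\}$, so it cannot be complete. This is exactly why the statement is restricted to $1<p<\infty$.

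Your option (ii) does not fill this gap. A bounded sectorial $H^\infty$-calculus, or bounded imaginary powers, of $\ssqrt{\ell_j}$ never implies that $\ii\ssqrt{\ell_j}$ generates a bounded group. For example, $\sqrt{-\Delta}$ on $\rL^p(\R^n)$ with $n\ge2$, $p\ne2$ has both properties, yet $\ee^{\ii t\sqrt{-\Delta}}$ is unbounded. Transference goes from a bounded group to a functional calculus, not conversely.

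Your ingredients do suffice once reorganised around $\Pi_j$:
\begin{enumerate}
\item Pass to the Liouville variable $s=\int_0^{x_j}a_j^{-1/2}$ and conjugate with $\mathrm{diag}(a_j^{1/4},a_j^{-1/4})$. This turns $\Pi_j$ into $\begin{pmatrix}0&D_s\\ D_s&0\end{pmatrix}+\frac{(\log a_j)_s}{4\ii}\begin{pmatrix}0&1\\ -1&0\end{pmatrix}$. The coupling term has $\rL^1(\dd s)$-norm at most $m_3/4<1$.
\item A zig-zag change of variables along characteristics bounds the $n$-th Dyson term by $O((m_3/4)^n)$ on $\rL^1$ and $\rL^\infty$, uniformly in $y$. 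Hence $\sup_y\|\ee^{\ii y\Pi_j}\|_{\cL(\rL^p)}<\infty$. This is where $m_3<4$ belongs, not in bisectoriality.
\item The projections $\chi_\pm(\Pi_j)$ require a genuinely different input. One option is to transfer the Hilbert transform to the bounded group $\ee^{\ii y\Pi_j}$ on the UMD space $\rL^p$ (Coifman--Weiss, Berkson--Gillespie--Muhly). Another is a bisectorial $H^\infty$-calculus for $\Pi_j$, which is the role of the Dirac-operator framework of \cite{FrePortal2020}, \cite{FreyMcPortal2018}.
\end{enumerate}
With both pieces in hand, the displayed formula gives the bounded group generated by $\ii\ssqrt{L_{j,p}}$, and your product and consistency steps finish the proof.
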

	We set $M_p\coloneqq \sup_{y\in \R^d} \|\ee^{\ii y\cdot \ssqrt{\bL_p}}\|_{\cL(\rL^p)}.$ Since $\ee^{\ii y\cdot \ssqrt{\mathbf{L}_p}}f=\ee^{\ii y\cdot \ssqrt{\mathbf{L}_2}}f$ for $f\in \rL^p\cap \rL^2$, there is no danger of ambiguity and we will just write $\ee^{\ii y\cdot \ssqrt{\mathbf{L}}}$ instead of $\ee^{\ii y\cdot \ssqrt{\mathbf{L}_p}}$. The bounded $d$-parameter $C_0$-group  $(\ee^{\ii y\cdot \ssqrt{\bL}})_{y\in \R^d}$ gives rise to the so-called \textit{Phillips functional calculus} for $\sqrL$.
	\begin{definition}[Phillips Functional Calculus for $\sqrL$] Let $p\in (1,\infty)$. For $\varphi\in \cF \bM$, we define $\varphi(\sqrL)\in \cL(\rL^p)$ by
		\begin{equation}\label{eq:DefinitionPhillipsCalculus}
			\varphi(\sqrL)\colon \rL^p\to \rL^p, \quad \varphi(\sqrL)f\coloneqq \int_{\R^d} \ee^{-\ii y\cdot \ssqrt{\mathbf{L}}}f \dd\mu_\varphi (y),
		\end{equation}
	where $\mu_\varphi\coloneqq\mathcal{F}^{-1}\varphi\in \bM(\R^d)$.
	\end{definition}
	We will mostly work with $\varphi\in \cF\rL^1\subseteq \cF\bM$, for which \eqref{eq:DefinitionPhillipsCalculus} is just
	\begin{equation}\label{eq:DefinitionPhillipsCalculus2}
	 \varphi(\sqrL)f= \int_{\R^d} (\mathcal{F}^{-1}\varphi)(y)\ee^{-\ii y\cdot \ssqrt{\mathbf{L}}}f \dd y\quad (f\in \rL^p).
	\end{equation}
	
	\begin{proposition}[Properties of the Phillips Functional Calculus]\label{prop:PropertiesPhillipsCalculus}Let $p\in (1,\infty)$. The following statements hold true.
		\begin{enumerate}[label=(\alph*)]
			\item The map $\Phi\colon 	\cF\bM \to \cL(\rL^p),\: \varphi \mapsto \varphi(\ssqrt{\bL})$, defines a unit-preserving, bounded homomorphism of Banach algebras. Moreover, $(\ee^{\ii te_j \cdot })(\ssqrt{\bL})=\ee^{\ii t\ssqrt{L_j}}$ for $t\in \R$ and $j\in \{1,\dots,d\}$. 
			\item Let $f\in \rL^p$ and $\varphi\in C_c^\infty(\R^d)$.Then, for all $\alpha\in \N_0^d$, we have $\varphi(\ssqrt{\bL})f\in \Dom(\ssqrt{\bL}^\alpha)$ and
				\begin{equation}\label{eq:PhillipsCalculusDerivativeAndMultiplication}
					\ssqrt{\bL}^\alpha \varphi(\ssqrt{\bL})f=\varphi_\alpha(\ssqrt{\bL})f,
				\end{equation}
				where $\varphi_\alpha(\xi)\coloneqq \xi^\alpha \varphi(\xi)$, $\xi \in \R^d$.
				Moreover, $\ssqrt{\bL}^\alpha \varphi(\ssqrt{\bL})f=\varphi(\ssqrt{\bL})\ssqrt{\bL}^\alpha f$ if $f\in \Dom(\ssqrt{\bL}^\alpha)$.
				\item Let $\varphi\in \cF \rL^1$. Then,
				\begin{equation*}
					\|\varphi(\ssqrt{\bL})\|_{\cL(\rL^p)}\lesssim_{M_p} \|\varphi(D_x)\|_{\cL(\rL^p)},
				\end{equation*}
				where $\varphi(D_x)\coloneqq \mathcal{F}^{-1}M_\varphi \mathcal{F}$ denotes the Fourier multiplier operator associated with the symbol $\varphi$. In particular, we have
				\begin{align*}%\label{eq:Transference_l2}
					\|\varphi(\ssqrt{\bL})\|_{\cL(\rL^2)}&\lesssim_{M_2} \|\varphi\|_{\rL^\infty},\\[0.2cm] \label{eq:Transference_lp}
					\|\varphi(\sqrL)\|_{\cL(\rL^p)}&\lesssim_{M_p,p,d} \sup_{|\alpha|\leq [d/2]+1}\big\|\, |\cdot |^{|\alpha|}  \partial_{\xi}^{\alpha}\varphi(\cdot )\big\|_{\rL^\infty}, \quad p\in(1,\infty).
				\end{align*}
			\end{enumerate}
		\end{proposition}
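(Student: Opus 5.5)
Since the Proposition collects standard facts about the Phillips calculus attached to the bounded $d$-parameter $C_0$-group $(\ee^{\ii y\cdot\sqrL})_{y\in\R^d}$, I would prove each part directly from the definition \eqref{eq:DefinitionPhillipsCalculus}, using only the group law, strong continuity and the bound $M_p$.

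For (a), boundedness is immediate: $\|\varphi(\sqrL)f\|_{\rL^p}\le M_p\|\mu_\varphi\|_{\bM}\|f\|_{\rL^p}=M_p\|\varphi\|_{\cF\bM}\|f\|_{\rL^p}$, and linearity is clear. For the homomorphism property I use $\cF^{-1}(\varphi\chi)=\mu_\varphi*\mu_\chi$ (the convolution of measures). Then, by Fubini (Bochner integrals against finite measures) and the group law,
\begin{equation*}
\varphi(\sqrL)\chi(\sqrL)f=\iint \ee^{-\ii (y+z)\cdot\sqrL}f\dd\mu_\varphi(y)\dd\mu_\chi(z)=\int \ee^{-\ii w\cdot\sqrL}f\dd(\mu_\varphi*\mu_\chi)(w).
\end{equation*}
The unit $1$ corresponds to $\delta_0$, giving $\id$. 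The function $\xi\mapsto\ee^{\ii t\xi_j}$ has $\cF^{-1}$ equal to $\delta_{-te_j}$, which yields $\ee^{\ii t e_j\cdot\sqrL}=\ee^{\ii t\sqrt{L_j}}$ by definition of the $d$-parameter group generated by $\ii\sqrL$.

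For (b), let $\varphi\in C_c^\infty$, so $k=\cF^{-1}\varphi\in\Sc$. It suffices, by iteration and the homomorphism property, to treat $\alpha=e_j$. I compute the difference quotient
\begin{equation*}
\tfrac1{\ii s}\big(\ee^{\ii s\sqrt{L_j}}-\id\big)\varphi(\sqrL)f=\int \tfrac{k(y+se_j)-k(y)}{\ii s}\,\ee^{-\ii y\cdot\sqrL}f\dd y,
\end{equation*}
obtained by shifting the variable. Since $k\in\Sc$, the quotient converges in $\rL^1$ to $-\ii\partial_jk=\cF^{-1}(\xi_j\varphi)$. Hence $\varphi(\sqrL)f\in\Dom(\sqrt{L_j})$ with $\sqrt{L_j}\varphi(\sqrL)f=\varphi_{e_j}(\sqrL)f$. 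Commutation with $\sqrL^\alpha$ on $\Dom(\sqrL^\alpha)$ follows because each group operator commutes with the generators and closed operators pass through Bochner integrals.

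For (c), this is a transference argument in the spirit of Coifman--Weiss, and I expect it to be the main step. Fix $f\in\rL^p$ and $g\in\rL^{p'}$. The idea is to write, for large $R$ and $N$ (with $N\gg R$),
\begin{equation*}
\langle\varphi(\sqrL)f,g\rangle=\frac{1}{|B_N|}\int_{B_N}\Big\langle\int k(y)\ee^{-\ii (y-x)\cdot\sqrL}\,\ee^{-\ii x\cdot\sqrL}f\dd y,\ g\Big\rangle\dd x .
\end{equation*}
Here I first truncate $k$ to $|y|\le R$, at an $\rL^1$ error tending to $0$. I then set $F(x)=\mathbbm 1_{B_{N+R}}(x)\,\ee^{-\ii x\cdot\sqrL}f$ and $G(x)=\mathbbm 1_{B_N}(x)\,(\ee^{\ii x\cdot\sqrL})^*g$, so that the expression becomes $|B_N|^{-1}\int (k*F)(x)\cdot G(x)\dd x$, which is a scalar convolution applied to vector-valued ($\rL^p(\R^d)$-valued) functions. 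Since $\varphi(D_x)$ is a scalar multiplier, it extends to $\rL^p(\R^d_x;\rL^p(\R^d))$ with the same norm by Fubini. The estimate is therefore bounded by
\begin{equation*}
\|\varphi(D_x)\|\,M_pM_{p'}\,\frac{|B_{N+R}|^{1/p}|B_N|^{1/p'}}{|B_N|}\,\|f\|\,\|g\|,
\end{equation*}
and the volume factor tends to $1$ as $N\to\infty$. Bounded groups on the dual are available because $(\ee^{\ii y\cdot\sqrL})^*$ is bounded by $M_p$. The two particular bounds then follow from Plancherel and from the Mikhlin multiplier theorem, applied to $\varphi\in\cF\rL^1$.
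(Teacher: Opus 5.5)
Your proof is correct and follows the standard route: convolution of measures for (a), difference quotients of the Schwartz kernel $k=\cF^{-1}\varphi$ against the generators for (b), and Coifman--Weiss transference over $\R^d$ for (c). The paper gives no argument of its own here and simply defers to \cite[Proposition~2.1.10]{Mesfun2026}, so there is nothing further to compare.

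There is one sign slip in (c). With your $F$ and $G$, for $x\in B_N$ one gets $\langle (k*F)(x),G(x)\rangle=\big\langle\int k(y)\ee^{\ii y\cdot\sqrL}f\dd y,g\big\rangle$, so you have transferred $\varphi(-\,\cdot)$ rather than $\varphi$. Either take $F(x)=\mathbbm{1}_{B_{N+R}}(x)\ee^{\ii x\cdot\sqrL}f$ and $G(x)=\mathbbm{1}_{B_N}(x)(\ee^{-\ii x\cdot\sqrL})^\ast g$, or note that $\varphi(-D_x)$ and $\varphi(D_x)$ have the same $\rL^p$-norm (conjugate by $x\mapsto -x$).

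Two bookkeeping points in (c) as well. The kernel you actually convolve is the truncation $k_R$, and its convolution norm is at most $\|\varphi(D_x)\|+\|k-k_R\|_{\rL^1}$. Also, since $\|(\ee^{\ii x\cdot\sqrL})^\ast\|=\|\ee^{\ii x\cdot\sqrL}\|\le M_p$, the constant is $M_p^2$ rather than $M_pM_{p'}$.
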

		\begin{proof} See \cite[Proposition~2.1.10]{Mesfun2026}.	
		\end{proof}
	\begin{remark}\label{rem:PhillipsCalculusForSymmetricFunctions} If $\varphi\in \cF \rL^1$ is even in every component, i.e., if
		\begin{equation*}
			\varphi(\xi)=\varphi^{(\mathrm{e})}(\xi)\coloneqq \frac{1}{2^d}\sum_{(\delta_j)_{j=1}^d\in \{-1,1\}^d} \varphi(\delta_1\xi_1,\dots, \delta_d \xi_d)\quad (\xi \in \R^d),
		\end{equation*}
		then the same is true for $\rho\coloneqq \cF^{-1}\varphi$. In this case, we have by Fubini's theorem
		\begin{align*}
			\varphi(\sqrL)f={}& \int_{\R^{d-1}}\bigg(\int_\R  \rho^{(\ee)}(y)\frac{1}{2}\big(\ee^{-\ii y_1 \ssqrt{L_1}}+\ee^{-\ii y_1 \ssqrt{L_1}}\big)  \dd y_1\bigg)\ee^{-\ii y'\cdot \ssqrt{\mathbf{L'}}}f\dd y'\\
			={}&\int_{\R^{d-1}}\bigg(\int_\R  \rho^{(\ee)}(y)\frac{1}{2}\big(\ee^{\ii y_1 \ssqrt{L_1}}+\ee^{-\ii y_1 \ssqrt{L_1}}\big)  \dd y_1\bigg)\ee^{-\ii y'\cdot \ssqrt{\mathbf{L'}}}f\dd y'\\
			\eqqcolon {}&  \int_{\R^{d-1}}\bigg(\int_\R  \rho^{(\ee)}(y)\mathrm{Cos}(y_1\ssqrt{L_1})\big)  \dd y_1\bigg)\ee^{-\ii y'\cdot \ssqrt{\mathbf{L'}}}f\dd y'\\
			={}&\cdots= \int_{\R^d}\rho^{(\ee)}(y)\mathrm{Cos}(y\sqrL)f \dd y=\int_{\R^d}\rho(y)\mathrm{Cos}(y\sqrL)f
		\end{align*}
		with the $d$-parameter cosine function $\mathrm{Cos}(y\ssqrt{\bL})\coloneqq \prod_{j=1}^d  \mathrm{Cos}(y_j\ssqrt{L_j})$.
	\end{remark}
	Thus for such functions $\varphi$, we are free to replace $\ee^{-\ii y\cdot \sqrL}$ by $\mathrm{Cos}(y \sqrL)$ in the integral which defines $\varphi(\sqrL)$. This observation turns out to be useful when combined with the following proposition. 
	\begin{proposition}[$\rL^\infty_x\rL^1_y$-Estimate]\label{prop:L^inftyL^1-Estimate} Let $p\in (1,\infty)$. 
		\begin{enumerate}[label=(\alph*)]
			\item We have \[\|\mathrm{Cos}(y\sqrL)f(x)\|_{\rL^\infty_x(\R;\rL^1_y(\R^d))}\lesssim \|f\|_1\quad (f\in \rL^1\cap \rL^p),\] 
			with an implicit constant depending on $m_1,m_2,m_3$ from Assumption~\ref{ass:MainThm}. 
			\item If $\varphi\in \cF(\rL^1\cap \rL^\infty)$ with $\varphi=\varphi^{(\mathrm{e})}$, then
			 \[\|\varphi(\sqrL)f\|_{\infty}\lesssim \|\cF^{-1}\varphi\|_\infty \|f\|_1\quad (f\in \rL^1\cap \rL^p).\] 
		\end{enumerate}
	\end{proposition}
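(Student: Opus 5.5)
Part (b) is a direct consequence of part (a), so I will prove (a) first and then deduce (b).

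\emph{Reduction of (a) to one dimension.} The operators $\mathrm{Cos}(y_j\ssqrt{L_j})$ act only in the variable $x_j$. Hence $\mathrm{Cos}(y\sqrL)=\prod_{j=1}^d \mathrm{Cos}(y_j\ssqrt{L_j})$ is a tensor product of one-dimensional cosine families. Suppose we have the one-dimensional bound
\[
\sup_{x_j}\int_\R |[\mathrm{Cos}(y_j\ssqrt{L_j})g](x_j)|\dd y_j\lesssim \|g\|_{\rL^1(\R)}.
\]
Equivalently, the kernel $k_{y_j}(x_j,z_j)$ of $\mathrm{Cos}(y_j\ssqrt{L_j})$, which is a measure in $z_j$, satisfies $\sup_{x_j,z_j}\int|k_{y_j}(x_j,\cdot)|(z_j)\dd y_j\lesssim 1$. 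Then the kernel of the product is $\prod_j k_{y_j}(x_j,z_j)$. Applying Fubini and the one-dimensional bound coordinate by coordinate gives
\[
\int_{\R^d}|\mathrm{Cos}(y\sqrL)f(x)|\dd y\le \int_{\R^d}\prod_j\Big(\int|k_{y_j}(x_j,z_j)|\dd y_j\Big)|f(z)|\dd z\lesssim\|f\|_1 .
\]
(I would work with the measure-valued kernel and justify the interchange first for nice $f$, then extend by density.)

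\emph{The one-dimensional estimate.} Fix $j$ and write $a=a_j$. For $L=D_x a D_x$ (and similarly for $aD_x^2$), $u(y,x)=\mathrm{Cos}(y\ssqrt L)g$ solves the one-dimensional wave equation $\partial_y^2u=\partial_x(a\partial_xu)$. I would pass to the travel-time coordinate $s=\int_0^x a^{-1/2}$; since $m_1\le a\le m_2$, this is bi-Lipschitz. Writing $u=a^{-1/4}v$ in impedance form turns the equation into a system for the right- and left-moving components $w_\pm=\partial_y u\pm a^{1/2}\partial_x u$ (or their zeroth-order analogues). This system is a transport along $s\pm y$, coupled through reflection coefficients $\tfrac14\frac{\dd}{\dd s}\log a$.

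The solution is obtained by a Duhamel/Neumann (Bremmer) series. The $n$-th term involves $n$ reflections, and its total mass in $y$, for fixed $x$ and $z$, is bounded by $(\tfrac14\|(\log a)'\|_{\rL^1})^n/n!$ times a constant, or at worst by the geometric factor $(m_3/4)^n$. The hypothesis $m_3<4$ is exactly what makes the series converge in this geometric form. The free term is $\tfrac12(\delta_{s-y}+\delta_{s+y})$, whose $y$-integral is $1$, and all other terms have uniformly bounded $y$-mass. This yields the kernel bound in total-variation norm, with constants depending on $m_1,m_2,m_3$.

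\emph{Proof of (b).} By Remark~\ref{rem:PhillipsCalculusForSymmetricFunctions}, $\varphi(\sqrL)f=\int\rho(y)\mathrm{Cos}(y\sqrL)f\dd y$ with $\rho=\cF^{-1}\varphi\in \rL^1\cap\rL^\infty$. Therefore
\[
|\varphi(\sqrL)f(x)|\le\|\rho\|_\infty\int|\mathrm{Cos}(y\sqrL)f(x)|\dd y,
\]
and (a) gives the claim.

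The main obstacle is the one-dimensional kernel estimate. The difficulty is setting up the reflection series rigorously for merely Lipschitz $a$ and controlling the mass of the multiply-reflected terms uniformly in $x$ by a geometric series in $m_3/4$. I would first try to prove it for smooth $a$, with constants depending only on $m_1,m_2,m_3$, and then pass to Lipschitz $a$ by approximation using strong convergence of the cosine families.
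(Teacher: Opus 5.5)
Your part (b) is exactly the paper's argument: Remark~\ref{rem:PhillipsCalculusForSymmetricFunctions}, then H\"older in $y$, then (a).

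For (a) the routes differ. The paper proves nothing itself. It refers to the one-dimensional dispersion estimate \cite[Theorem~1.1]{BeliIgnatZuaZua2016} (and to \cite{Mesfun2026}) and leaves the passage to $\R^d$ implicit. You instead tensorize explicitly and prove the 1D bound by a Bremmer reflection series. This is a legitimate, self-contained route, and it shows where the hypothesis $m_3<4$ comes from. The citation buys brevity and covers BV rather than Lipschitz coefficients.

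Three points to make your sketch tight:

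(i) Drop the $1/n!$. Consecutive reflection points of an $n$-fold reflected path alternate, $\sigma_1>\sigma_2<\sigma_3>\cdots$. They therefore fill a zigzag region, not a simplex. If the factorial were there, no smallness of $m_3$ would be needed at all. The geometric bound $(m_3/4)^n$ is the correct one, and it suffices. Counting the zigzag region exactly via Euler's numbers $E_n$, with $\sum_n E_n x^n/n!=\sec x+\tan x$, would even allow $m_3<2\pi$.

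(ii) Here is a clean zeroth-order formulation, so that no derivatives of $f\in\rL^1$ appear. In the travel-time variable $s$ the equation reads $\partial_y^2u=Z^{-1}\partial_s(Z\partial_s u)$, with $Z=a^{1/2}$ for $D_xaD_x$ and $Z=a^{-1/2}$ for $aD_x^2$. Introduce $v$ with $\partial_y v=Z\partial_s u$, $\partial_s v=Z\partial_y u$, $v(0)=0$, and put $w_\pm=Z^{1/2}u\pm Z^{-1/2}v$. Then $(\partial_y\mp\partial_s)w_\pm=\mp\rho\, w_\mp$ with $\rho=\tfrac12\partial_s\log Z$, so $|\rho|=\tfrac14|\partial_s\log a|$. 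The data are $w_\pm(0)=Z^{1/2}f\in\rL^1$. In the norm $\sup_s\|w(\cdot,s)\|_{\rL^1_y}$, the Duhamel term has operator norm at most $\|\rho\|_{\rL^1}\le m_3/4$; this is your geometric series. Since $\rho\in\rL^1\cap\rL^\infty$ for Lipschitz $a$, no smoothing of $a$ is needed. You only have to identify this solution with $\mathrm{Cos}(y\ssqrt{L_j})f$ by a uniqueness argument.

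(iii) Measure-valued kernels are unnecessary, and the expression $\sup_{z_j}$ of a measure is not well defined anyway. Apply the 1D bound in $x_1$ to $\mathrm{Cos}(y'\ssqrt{\mathbf{L}'})f$ for fixed $y'$. Then integrate in $y'$, swap the integrals by Tonelli, and induct on $d$.
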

	\begin{proof} For a proof of (a), we refer the reader to \cite[Theorem~1.1]{BeliIgnatZuaZua2016} (see also \cite[Theorem~2.2.8 and Remark~2.2.9~(3)]{Mesfun2026}). To prove (b), let $\varphi\in \cF(\rL^1\cap \rL^\infty)$ with $\varphi=\varphi^{(\mathrm{e})}$. Then, by Remark~\ref{rem:PhillipsCalculusForSymmetricFunctions}, Hölder's inequality and part~(a), we have for $f\in \rL^1\cap \rL^p$ and a.e. $x\in \R^d$
	\begin{align*}
		|\varphi(\sqrL)f(x)| \leq{}& \int_{\R^d} |(\cF^{-1}\varphi)(y)| |\mathrm{Cos}(y\sqrL)f(x)| \dd y\\
		\leq{}& \|\cF^{-1}\varphi\|_{\infty}	\|\mathrm{Cos}(y\sqrL)f(x)\|_{\rL^\infty_x(\R;\rL^1_y(\R^d))}\lesssim \|\cF^{-1}\varphi\|_{\infty} \|f\|_1.
	\end{align*}
	\end{proof}
	\subsection{An Adapted Scale of Sobolev spaces} Fourier multiplier operators can be viewed as the Phillips functional calculus for $D_x$. These operators can be efficiently studied in the scale of $\rL^2$-based Sobolev spaces $\rH^\alpha$, using Plancherel's Theorem and the characterization of these spaces within this calculus, i.e., $\rH^\alpha=\{u\in \Sc'\colon \langle D_x \rangle^\alpha u \in \rL^2 \}$ $(\alpha\in \R)$. As we replace the Fourier multiplier calculus (for $D_x$) by the Phillips functional calculus for $\sqrL$, it is more convenient to replace the classical Sobolev spaces $\rH^\alpha$ by a scale of spaces which is better adapted to $\sqrL$. To this end, we consider the operator $L$ in $\rL^2= \rL^2(\R^d;\C^2)$ defined by
	\begin{equation}\label{eq:OperatorL}
		L\coloneqq L_{1,2}+\cdots +L_{d,2}=\begin{pmatrix}
			\sum_{j=1}^d D_{x_j} a_j(x_j)D_{x_j} & 0\\
			0 &  \sum_{j=1}^d a_{j}(x_j)D_{x_j}^2
		\end{pmatrix}
	\end{equation}
	with domain $\rH^2=\rH^2(\R^d;\C^2)$. Note that $L$ is self-adjoint with respect to the equivalent scalar product
	\begin{equation}\label{eq:EquivalentScalarProduct}
		\langle u,v \rangle_{A}\coloneqq \langle A^{-1}u,v\rangle_{\rL^2}, \quad \text{with}\quad A=A(x)=\begin{pmatrix}
			1 & 0\\
			0 & a(x)
		\end{pmatrix}, \quad a(x)\coloneqq a_1(x_1)\cdots a_d(x_d).
	\end{equation}
	Moreover, $L$ is injective with spectrum contained in $[0,\infty)$. In particular, the fractional powers $L^\alpha$ are defined for each $\alpha\in \R$ (see e.g. \cite[Section~3.2]{Haase2006}). As in \cite[Appendix~E]{KunstmannWeis2004}, we may therefore use the following definition.
	\begin{definition}[The Spaces $\rH^\alpha_L$] Let $\alpha\in \R$. We define the \textit{$L$-adapted ($\rL^2$-based) Sobolev space of order $\alpha$} by 
	\begin{equation*}
		(\rH_L^\alpha, \|\cdot\|_{\rH^{\alpha}_L})\coloneqq 
		\begin{cases}
			(\Dom(L^{\frac{\alpha}{2}}), \|(\id+L)^{\frac{\alpha}{2}} \cdot\|_2), & \alpha\geq 0,\\
			(\rL^2,\|(\id+L)^{\frac{\alpha}{2}} \cdot\|_2)^{\sim}, & \alpha<0.
		\end{cases}
	\end{equation*}	
	Here, $(X,\|\cdot\|_X)^\sim$  denotes the completion of a normed space $(X,\|\cdot\|_X)$. We also set $\rH^\infty_L\coloneqq \bigcap_{\alpha\in \R} \rH^\alpha_L$ and $\rH^{-\infty}_L\coloneqq \bigcup_{\alpha\in \R} \rH^\alpha_L$. 
	\end{definition}
	Note that $\rH^\alpha_L\subseteq \rH^\beta_L$ for $-\infty\leq\beta\leq \alpha\leq\infty$. To ease notation, we will frequently write $\|\cdot\|_\alpha$ instead of $\|\cdot\|_{\rH^{\alpha}_L}$. The notation $\rH^\alpha_L$ is motivated by the observation that $\rH^\alpha_{-\Delta_x}=\rH^\alpha$ for all $\alpha\in \R$. Let $\beta\in\R$.
	It follows from \cite[ Proposition~15.23]{KunstmannWeis2004} that the fractional power $(\id+L)^\frac{\beta}{2}$ extends to an operator  $ (\id+\mathscr{L})^{\frac{\beta}{2}}$ on $\rH_L^{-\infty}$ such that $(\id+\mathscr{L})^{\frac{\beta}{2}}\colon \rH_L^\alpha\to \rH^{\alpha-\beta}_L$ is a bounded isomorphism for all $\alpha\in \R$. We write $\langle D_L\rangle^{\beta}\coloneqq (\id+\mathscr{L})^{\frac{\beta}{2}}$. We need the following lemma to extend the Phillips functional calculus to the spaces $\rH^\alpha_L$, at least for compactly supported smooth functions.
	\begin{lemma}\label{lemma:ExtensionOfPhillipsFunctionalCalculus} Let $\varphi\in C_c^\infty(\R^d)$ and $\psi\in C_c^\infty(\R^d)$ with $\supp{\psi}\subseteq \R^d\setminus\{0\}$. Let further $f\in \rL^2$, and $\alpha,\beta\in \R$. Then:
	\begin{enumerate}[label=(\alph*)]
			\item $(\id+L)^{\frac{\alpha}{2}}\varphi(\sqrL)f=(\jb^{\alpha}\varphi)(\sqrL)f$, and if $f\in \Dom(L^{\frac{\alpha}{2}})$, then $(\id+L)^{\frac{\alpha}{2}}\varphi(\sqrL)f=\varphi(\sqrL)(\id+L)^{\frac{\alpha}{2}}f$,
			\item  $L^{\frac{\beta}{2}}\psi(\sqrL)f=(| \cdot |^{\beta}\psi)(\sqrL)f$. Moreover, if $f\in \Dom(L^\frac{\beta}{2})$, then $L^{\frac{\beta}{2}}\psi(\sqrL)f=\psi(\sqrL)L^{\frac{\beta}{2}}f$.
	\end{enumerate}
	\end{lemma}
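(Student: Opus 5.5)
The plan is to identify the fractional powers of $L$ (and of $\id+L$) with the Phillips calculus applied to the radial symbols $|\cdot|^\beta$ and $\jb^\alpha$, after cutting off with $\varphi$ or $\psi$. The basic relation is $L=\sum_j L_{j,2}=\sum_j \ssqrt{L_j}^{\,2}$. For $\varphi\in C_c^\infty(\R^d)$, Proposition~\ref{prop:PropertiesPhillipsCalculus}(b) with $\alpha=2e_j$, summed over $j$, gives
\[
L\varphi(\sqrL)f=(|\cdot|^2\varphi)(\sqrL)f
\qquad\text{and}\qquad
(\id+L)\varphi(\sqrL)f=(\jb^2\varphi)(\sqrL)f .
\]
The same holds with $\varphi$ replaced by any $C_c^\infty$ function. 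Iterating gives the claim for integer powers $\alpha/2\in\N_0$.

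For general $\alpha$, I would use the resolvent and holomorphic functional calculus. Fix $\lambda\notin(-\infty,-1]$, or for part (b) $\lambda\notin(-\infty,0]$. Take $\chi\in C_c^\infty$ with $\chi=1$ on $\supp{\varphi}$, and set $\eta(\xi):=\chi(\xi)(\lambda+1+|\xi|^2)^{-1}\in C_c^\infty$. By the multiplicative property in Proposition~\ref{prop:PropertiesPhillipsCalculus}(a) and the identity above,
\[
(\lambda+\id+L)\,(\eta\varphi)(\sqrL)f=\big((\lambda+1+|\cdot|^2)\eta\varphi\big)(\sqrL)f=\varphi(\sqrL)f .
\]
Since $\lambda+\id+L$ is invertible, this gives
\[
(\lambda+\id+L)^{-1}\varphi(\sqrL)=\big((\lambda+1+|\cdot|^2)^{-1}\varphi\big)(\sqrL).
\]
So resolvents of $\id+L$ applied to $\varphi(\sqrL)f$ are given by Phillips calculus with the scalar resolvent symbol.

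For $\alpha<0$, write $(\id+L)^{\alpha/2}$ via the Balakrishnan/Cauchy integral of resolvents. Applied to $\varphi(\sqrL)f$, this becomes an integral of $((\lambda+1+|\cdot|^2)^{-1}\varphi)(\sqrL)f$. Since $\varphi$ has compact support, the symbols depend smoothly and compactly on $\xi$ with bounds in $\cF\rL^1$ that are integrable in $\lambda$. Boundedness of the calculus $\cF\rL^1\to\cL(\rL^2)$ then lets us interchange the integral with $\Phi$, yielding $(\jb^\alpha\varphi)(\sqrL)f$. For $\alpha\ge0$, write $\alpha/2=k+\theta$ with $k\in\N_0$ and $\theta<0$, and compose the integer and negative cases. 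Alternatively, use $(\id+L)^{\alpha/2}=(\id+L)^k(\id+L)^{\alpha/2-k}$ with the closedness of $(\id+L)^{k}$.

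Commutation with $(\id+L)^{\alpha/2}$ on $\Dom(L^{\alpha/2})$ follows because $\varphi(\sqrL)$ commutes with the group $\ee^{\ii y\cdot\sqrL}$, hence with each $\ssqrt{L_j}$ and with $L$. Therefore it commutes with the resolvents of $L$, and hence with fractional powers defined through them.

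Part (b) is the same argument with $L$ in place of $\id+L$. Here the support of $\psi$ avoiding $0$ is what makes the argument work: $|\xi|^{\beta}\psi(\xi)$ lies in $C_c^\infty$ for every real $\beta$. For $\beta<0$, I would use $L^{\beta/2}=(L^{-\beta/2})^{-1}$ on the range, which $L$ being injective permits. Concretely, $(|\cdot|^{\beta}\psi)(\sqrL)f$ lies in $\Dom(L^{-\beta/2})$, and $L^{-\beta/2}$ maps it to $\psi(\sqrL)f$ by the $\alpha\ge0$-type argument. The main obstacle I expect is justifying the interchange of the resolvent integral with the Phillips calculus near $\lambda\to0$ when $L$ is not invertible. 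The cutoff $\psi$ vanishing near $0$ gives uniform control of $(\lambda+|\xi|^2)^{-1}\psi$ in $\cF\rL^1$, which should resolve this.
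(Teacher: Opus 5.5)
Your proof is correct, but it takes a different route from the paper. The paper identifies the heat semigroup globally, $\ee^{-tL}=G_t(\sqrL)$ with $G_t(\xi)=\ee^{-t|\xi|^2}$, a symbol that lies in $\cF\rL^1$ without any cutoff. It then obtains fractional powers by subordination, e.g.\ $(\id+L)^{-\alpha/2}=\Gamma(\alpha/2)^{-1}\int_0^\infty t^{\alpha/2-1}\ee^{-t}\ee^{-tL}\dd t$ for $\alpha>0$, passing the $t$-integral through the bounded map $\cF\rL^1\to\cL(\rL^2)$. Part (b) is done ``similarly'', with the support condition on $\psi$ controlling the $t\to\infty$ end.

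You instead derive $(\lambda+\id+L)^{-1}\varphi(\sqrL)f=((\lambda+1+|\cdot|^2)^{-1}\varphi)(\sqrL)f$ purely algebraically, from Proposition~\ref{prop:PropertiesPhillipsCalculus}(b) and the invertibility of $\lambda+\id+L$. You then integrate resolvents via Balakrishnan/Cauchy formulas.

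What your route buys is self-containedness. It needs only Proposition~\ref{prop:PropertiesPhillipsCalculus}(b) and $\Dom(L)=\bigcap_j\Dom(L_{j,2})=\rH^2$. The paper's ``straightforward'' identity $\ee^{-tL}=G_t(\sqrL)$ needs a further argument, e.g.\ a Weierstrass-type formula for each $\ee^{-tL_{j,2}}$ plus the fact that the product of these commuting semigroups is generated by $-L$ on $\rH^2$.

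What the paper's route buys is a global operator identity. The commutation statements then follow at once from the homomorphism property, since $\varphi(\sqrL)$ commutes with every $G_t(\sqrL)$. You need the separate chain group $\to$ generators $\to L\to$ resolvents.

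One ordering point in (b): since $L$ is not invertible, do not get positive non-integer powers of $L$ by composing with negative ones, as in your $\alpha\ge0$ argument for (a). That would be circular with your inversion step $L^{\beta/2}=(L^{-\beta/2})^{-1}$ for $\beta<0$. Instead, use $L^{\gamma}g=\frac{\sin\pi\gamma}{\pi}\int_0^\infty\lambda^{\gamma-1}L(\lambda+L)^{-1}g\dd\lambda$ for $g\in\Dom(L)$ and $0<\gamma<1$. Its symbols $|\xi|^2(\lambda+|\xi|^2)^{-1}\psi$ have $\cF\rL^1$-norm $\lesssim\min\{1,\lambda^{-1}\}$ because $\supp{\psi}\subseteq\R^d\setminus\{0\}$. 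Then invert as you propose. Alternatively, apply the negative-power Balakrishnan integral directly, which is valid on $\mathrm{Ran}(L)\ni\psi(\sqrL)f$.
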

	\begin{proof} It is straightforward to verify that $\ee^{-tL}=G_t(\sqrL)$, where $G_t(\xi)\coloneqq \ee^{-t|\xi|^2}$. Then, one may use subordination formulas such as \cite[Proposition~3.3.5]{Haase2006} to prove (a). Part (b) can be proved similarly. 
	\end{proof}
	For $\varphi\in C_c^\infty(\R^d)$ and $f\in \rH^\alpha_L$, the Phillips functional calculus for $\sqrL$ extends to $\rH^\alpha_L$ by letting
	\begin{equation}\label{eq:ExtensionOfPhillipsFunctionalCalculus}
		\varphi(\sqrL)f\coloneqq (\jb^{-\alpha} \varphi)(\sqrL)g,\quad g\coloneqq \langle D_L\rangle^\alpha f\in \rL^2.
	\end{equation}
	By Lemma~\ref{lemma:ExtensionOfPhillipsFunctionalCalculus}~(a), this definition does not depend on $\alpha$ and is therefore reasonable. In view of  Lemma~\ref{lemma:ExtensionOfPhillipsFunctionalCalculus}~(b) and \eqref{eq:ExtensionOfPhillipsFunctionalCalculus}, we see that 
	\begin{equation}\label{eq:RuleForFractionalPowers}
		L^{\frac{\beta}{2}} \psi(\sqrL)f=(|\cdot|^{\beta}\psi)(\sqrL)f\quad (f\in \rH^\alpha_L)
	\end{equation}
	for all $\alpha,\beta\in \R$ and all smooth $\psi\in C_c^\infty(\R^d)$ supported away from the origin. It follows similarly that
	\begin{equation}\label{eq:RuleForD_L}
		\langle D_L\rangle^\beta \varphi(\sqrL)f=(\jb^{\beta}\varphi)(\sqrL)f\quad (f\in \rH^\alpha_L)
	\end{equation}
	for all $\alpha,\beta\in \R$ and $\varphi\in C_c^\infty(\R^d)$. We use the following result in order to switch from the $L$-adapted scale to the classical one.
 The $L$-adapted $\rL^q$-based homogeneous Sobolev space $\dot{W}^{\beta,q}_L(\R^d)$ is defined as in \cite[Definition~15.21]{KunstmannWeis2004} with $A$ replaced by the closure of the $\rL^q$-part of $\ssqrt{L}$ and $\alpha$ replaced by $\beta$ therein. 
	
	\begin{proposition}\label{prop:Kato}\begin{enumerate}[label=(\alph*)]
			\item For $\alpha\in [-2,2],$ we have $\rH^\alpha_L=\rH^\alpha$ and
			\begin{equation*}
				\|\langle D_L\rangle^\alpha u\|_2\simeq \|\langle D_x\rangle^\alpha u\|_2 \quad (u\in \rH^\alpha).
			\end{equation*}
			\item For all $\beta\in [-1,1]$ and $q\in (1,\infty)$, we have $\dot{W}^{\beta,q}(\R^d)=\dot{W}^{\beta,q}_L(\R^d)$ with equivalent norms $\|L^{\frac{\beta}{2}} u\|_q\simeq \||D_x|^\beta u \|_q$. 
		\end{enumerate}
	\end{proposition}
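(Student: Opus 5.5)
For part (a), the plan is to establish the two endpoint cases $\alpha=2$ and $\alpha=1$, get $\alpha=0$ for free, and then interpolate and dualize. The case $\alpha=0$ is trivial, since $\rH^0_L=\rL^2=\rH^0$ with identical norms. For $\alpha=2$ we have $\Dom(L)=\rH^2$ by definition of $L$. The two-sided estimate $\|(\id+L)u\|_2\simeq\|u\|_{\rH^2}$ then follows from two observations.
\begin{itemize}
\item The bound $\lesssim$ follows because $L$ is a second order operator with Lipschitz (hence $W^{1,\infty}$) coefficients, so $L\colon\rH^2\to\rL^2$ is bounded.
\item The bound $\gtrsim$ follows from the closed graph theorem: $L$ is closed on $\rH^2$ and $\id+L$ is invertible, since $L$ is self-adjoint and nonnegative with respect to $\langle\cdot,\cdot\rangle_A$.
\end{itemize}
Alternatively, for $\gtrsim$ one can argue componentwise in each variable $x_j$ via one-dimensional elliptic regularity, using $m_1\le a_j\le m_2$ and $a_j'\in\rL^\infty$.

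For $\alpha=1$, I would use the Kato square root property for self-adjoint operators. Since $L$ is self-adjoint and nonnegative for $\langle\cdot,\cdot\rangle_A$, which is equivalent to the $\rL^2$ product, we have $\|L^{1/2}u\|_{A}^2=\langle Lu,u\rangle_A$. For the divergence-form component this quantity equals $\sum_j\int a_j|D_{x_j}u|^2\simeq\|\nabla u\|_2^2$ by \eqref{eq:Ellipticity_a}. For the non-divergence component the weight $a^{-1}$ turns $a_jD_{x_j}^2$ into a divergence-form operator in $x_j$, because $a^{-1}a_j$ does not depend on $x_j$. So the same computation applies and again gives $\simeq\|\nabla u\|_2^2$. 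Hence $\Dom(L^{1/2})=\rH^1$ with $\|(\id+L)^{1/2}u\|_2\simeq\|u\|_{\rH^1}$.

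Intermediate $\alpha\in(0,2)$ then follow by complex interpolation. On one side, $\rH^\alpha_L=[\rL^2,\Dom(L)]_{\alpha/2}$ because $L$ is self-adjoint (equivalently, has a bounded $\rH^\infty$-calculus). On the other side, $\rH^\alpha=[\rL^2,\rH^2]_{\alpha/2}$. Negative $\alpha\in[-2,0)$ follow by duality. Pairing with respect to $\langle\cdot,\cdot\rangle_A$, self-adjointness gives $(\rH^{\alpha}_L)^\ast\cong\rH^{-\alpha}_L$. Since $A^{\pm1}$ is bounded on $\rL^2$ and multiplication by $A$ preserves $\rH^{|\alpha|}$ for $|\alpha|\le1$, we get $(\rH^\alpha)^\ast\cong\rH^{-\alpha}$ compatibly. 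For $|\alpha|\in(1,2]$ this step fails, since $a$ is only Lipschitz and multiplication by $A$ need not preserve $\rH^{|\alpha|}$. In that range I would instead dualize the $\rH^2$ statement for the adjoint operator $L^\ast=A^{-1}LA$ directly; its first component is unchanged, and its second component is a divergence-type operator whose $\rH^2$ estimate holds by the same one-dimensional argument. This duality step is where I expect the main care to be needed.

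For part (b), the statement for $\beta=0$ is trivial, and negative $\beta$ follow by duality (with respect to $A$) from positive ones. The case $\beta=1$ is an $\rL^q$ Riesz transform estimate, $\|\sqrt{L}u\|_q\simeq\|\nabla u\|_q$, and this is the main obstacle. My approach is to reduce it to one-dimensional operators through the product structure. Each $L_j$ acts only in $x_j$, and for the one-dimensional operator $D\,a_j D$ (respectively $a_jD^2$) the Riesz transform bounds $\|\sqrt{L_j}u\|_q\simeq\|D_{x_j}u\|_q$ are known. In one dimension the Kato problem is solved and $\rL^q$ Riesz bounds hold for all $q\in(1,\infty)$, by McIntosh--Nahmod or via the group property of $\ee^{\ii t\sqrt{L_j}}$.

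To combine the coordinates, I would write $\sqrt{L}=\sum_j m_j(\sqrL)\sqrt{L_j}$ with $m_j(\xi)=\xi_j/|\xi|$. The multipliers $m_j$ are homogeneous of degree $0$ and smooth away from the origin, so the transference bound in Proposition~\ref{prop:PropertiesPhillipsCalculus}(c), applied after a Littlewood--Paley cutoff and summed, gives $\rL^q$-boundedness. Combined with the one-dimensional bounds this yields $\|\sqrt{L}u\|_q\lesssim\|\nabla u\|_q$. The reverse inequality comes from $\sqrt{L_j}=(\xi_j/|\xi|)(\sqrL)\sqrt L$ by the same argument. Fractional $\beta\in(0,1)$ then follow by complex interpolation of the homogeneous spaces, using that $L$ has a bounded $\rH^\infty$-calculus on $\rL^q$, which follows from the Phillips calculus and transference.
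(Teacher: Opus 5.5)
\textbf{Part (a).} For $\alpha\in[0,2]$ your argument is the paper's: $\alpha=2$ via boundedness of $L\colon\rH^2\to\rL^2$ plus the open mapping/closed graph theorem, then complex interpolation. Your separate quadratic-form computation at $\alpha=1$ is correct but not needed.

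\textbf{Gap in the repair for $\alpha\in[-2,-1)$.}
\begin{itemize}
\item In the second block, $L^\ast=A^{-1}LA$ acts as $v\mapsto\sum_j D_{x_j}^2(a_j v)$. This is a double-divergence operator. It is $A^{-1}L=\sum_j D_{x_j}(a^{-1}a_j)D_{x_j}$ that has divergence form; the adjoint carries the extra right factor $A$.
\item For $v\in\rH^2$ this produces the term $a_j''v$, and $a_j''$ is only a distribution (a Dirac mass at a kink). So $L^\ast$ does not map $\rH^2$ into $\rL^2$, and $\Dom(L^\ast)=A^{-1}\rH^2\neq\rH^2$.
\item Hence there is no $\rH^2$-estimate for $L^\ast$ to dualize. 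Duality only yields $\|\langle D_L\rangle^{-2}f\|_2\simeq\|A^{-1}f\|_{\rH^{-2}}$, which is exactly the expression you wanted to avoid.
\end{itemize}

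\textbf{No repair is possible at $\alpha=-2$.} The identification itself fails there in the non-divergence block. Take $a_1(s)=2+\tfrac1{10}\max(0,1-|s|)$ and $a_j\equiv1$ for $j\geq2$; these are admissible under Assumption~\ref{ass:MainThm}. Let $u_\eps(x)=\eta(x_1/\eps)\chi(x')$ with $\eta(0)=1$, and let $f_\eps=(\id+L)u_\eps$ in the second block.
\begin{itemize}
\item On one side, $\|\langle D_L\rangle^{-2}f_\eps\|_2=\|u_\eps\|_2\lesssim\eps^{1/2}$.
\item On the other, pair with a fixed $\phi=\zeta(x_1)\chi(x')$, where $\zeta=1$ near $0$. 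Since $a_1''=-\tfrac15\delta_0$ near $0$, one gets $\langle f_\eps,\phi\rangle=\tfrac15\|\chi\|_2^2+O(\eps)$, so $\|f_\eps\|_{\rH^{-2}}\gtrsim1$.
\end{itemize}
So your suspicion was correct, and it applies equally to the paper's one-line ``by duality''. That step is justified for the divergence block on all of $[-2,0)$. For the non-divergence block it is justified on $[-1,0)$, precisely by your multiplier argument, but it fails at $\alpha=-2$.

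\textbf{Part (b), $\beta=1$.} Here your route genuinely differs from the paper's, which simply quotes the $d$-dimensional Kato/Riesz-transform theorem of Auscher--McIntosh--Tchamitchian. You instead exploit the product structure. You use one-dimensional Riesz bounds for each $\sqrt{L_j}$ and glue them with the degree-zero multipliers $\xi_j/|\xi|$ through the transference bound of Proposition~\ref{prop:PropertiesPhillipsCalculus}~(c), after Littlewood--Paley cutoffs. This is sound and more self-contained given the Phillips calculus, at the price of being tied to the product structure.

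\textbf{Part (b), negative $\beta$.} Your duality step has the same defect as in (a), and now already at $|\beta|=1$.
\begin{itemize}
\item Duality with respect to $\langle\cdot,\cdot\rangle_A$ gives $\|L^{\beta/2}u\|_q\simeq\|A^{-1}u\|_{\dot W^{\beta,q}}$.
\item To remove $A^{-1}$ you would need multiplication by $a^{\pm1}$ to be bounded on $\dot W^{1,q'}$. On homogeneous spaces it is not: $\nabla(ah)=a\nabla h+h\nabla a$, and $\|h\nabla a\|_{q'}$ is not controlled by $\|\nabla h\|_{q'}$. Test with $h=\eta(x/R)$ and let $R\to\infty$.
\item At $q=2$ the $A$-duality is an exact equivalence in the non-divergence block: $\|L^{-1/2}u\|_2\simeq\|a^{-1}u\|_{\dot{\rH}^{-1}}$. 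So the asserted bound for $\beta=-1$ genuinely fails there.
\end{itemize}
Only the divergence block, where $A=1$, obtains negative $\beta$ by duality. The paper's proof addresses only $\beta=1$, so it does not settle this range either.
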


	\begin{proof} Consider first the case $\alpha=2$. Then, $\rH_L^2=\Dom(L)=\rH^2$ by definition. The estimate $\|u\|_{\rH_L^2}\lesssim \|u\|_{\rH^2}$ is trivial (by the Lipschitz continuity of the $a_j$), and the reverse one then follows from the open mapping theorem. Thus, $\rH^2_L=\rH^2$ with equivalent norms. For $\alpha=0$, there is nothing to prove. For intermediate values $\alpha\in (0,2)$, one argues by interpolation, as both $(\rH^\beta_L)_{\beta\in \R}$ and $(\rH^\beta)_{\beta\in \R}$ form complex interpolation scales \cite[Theorem~4.17]{Lunardi2009}. The case $\alpha\in [-2,0)$ follows by duality.  The second assertion  for $q\in(1,\infty)$ and $\beta=1$ follows from the Kato square root problem,  see \cite[Corollary~5.19]{AuscherMcInstoshTchamitchian1998} for the case of Lipschitz coefficients.  	
	\end{proof}
	
	\subsection{Littlewood--Paley-type Estimates}
	In order to prove Theorem~\ref{thm:wellposedness}, we rely on results of Littlewood--Paley type for the Phillips functional calculus for $\sqrL$. Proofs can be found in \cite[Section~2]{Mesfun2026}. In order to state the results, we fix some standard (homogeneous) Littlewood--Paley partition of unity, i.e., some radially symmetric $\psi\in C_c^\infty(\R^d)$ supported in the annulus $\{\xi \in \R^d\colon \frac{1}{2}<|\xi|<2\}$ such that we have
	\begin{equation}\label{eq:LittlewoodPaleyPartitionOfUnity}
		\sum_{\lambda \in 2^\Z} \psi_\lambda(\xi)=1\quad \text{for all} \quad \xi \neq 0,\quad \text{where }  \psi_\lambda(\xi)\coloneqq \psi\bigg(\frac{\xi}{\lambda}\bigg).
	\end{equation}

	\begin{proposition}[Calderón Reproducing Formula in $\rH^\alpha_L$]\label{prop:CalderonReproducingFormulaOnInhomogeneousSobolevSpaces} 
	Let $\alpha\in \R$ and $f\in \rH^\alpha_L$. Then,
		\begin{equation*}
			\sum_{\lambda\in 2^\Z} \psi_\lambda(\sqrL)f =f.
		\end{equation*}
	In particular, 
		\begin{equation}\label{eq:DenseSpace}
			\Sc_{\sqrL}\coloneqq  \mathrm{span}\{\psi(\sqrL)f\mid \psi,f\in C_c^\infty(\R^d), \supp{\psi}\in \R^d\setminus\{0\} \}
		\end{equation}
	belongs to $\bigcap_{\beta\in \R}\Dom(L^\beta)\cap \bigcap_{p\in (1,\infty)}\rL^p$ and is dense in $\rH^\alpha_L$. 
	\end{proposition}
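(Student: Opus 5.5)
The plan is to reduce the identity to the spectral theorem for the self-adjoint operator $L$, working with the scalar product $\langle\cdot,\cdot\rangle_A$ from \eqref{eq:EquivalentScalarProduct}, and then to transfer it to $\rH^\alpha_L$ via $\langle D_L\rangle^\alpha$.

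\textbf{Step 1: the case $\alpha=0$.} Let $f\in\rL^2$ and fix a finite range $2^{-N}\le\lambda\le 2^N$. Define
\begin{equation*}
\Psi_N\coloneqq\sum_{|\log_2\lambda|\le N}\psi_\lambda .
\end{equation*}
Then $\Psi_N\in C_c^\infty(\R^d)$, $0\le \Psi_N\le 1$ (assuming, as we may, that $\psi\ge 0$), and $\Psi_N\to \mathbbm{1}_{\R^d\setminus\{0\}}$ pointwise. Since $\psi$ is radial, $\psi_\lambda(\xi)=\eta(|\xi|^2/\lambda^2)$ for some smooth $\eta$. I would show that the Phillips calculus of such radial functions agrees with the self-adjoint functional calculus of $L$, namely $\psi_\lambda(\sqrL)=\eta(L/\lambda^2)$. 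To do so, I would use $\ee^{-tL}=G_t(\sqrL)$ from the proof of Lemma~\ref{lemma:ExtensionOfPhillipsFunctionalCalculus}. Since $\mathrm{span}\{\ee^{-t|\xi|^2}\}$ is dense in $C_0$ of the radial variable (Stone--Weierstrass), and since both calculi are bounded homomorphisms agreeing on this family, they agree for Schwartz functions of $|\xi|^2$. Here the bound for the Phillips side is controlled in $\cF\rL^1$; to pass to the limit I would instead use a Laplace-transform representation directly, e.g. write $\eta(s)=\int \ee^{-ts}\,\dd\nu(t)$ approximately. This identification is the main obstacle, and the first thing I would try is exactly the Stone--Weierstrass/Laplace argument combined with the $\rL^2$ bound of Proposition~\ref{prop:PropertiesPhillipsCalculus}(c), which depends only on $\|\cdot\|_\infty$.

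Once the identification holds, $\Psi_N(\sqrL)=\Psi_N^{\mathrm{rad}}(L)$ is a uniformly bounded family converging strongly to $E_L((0,\infty))$ by dominated convergence in the spectral theorem. This projection equals $\id$ because $L$ is injective. Hence $\sum_\lambda\psi_\lambda(\sqrL)f=f$ in $\rL^2$.

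\textbf{Step 2: general $\alpha$.} For $f\in\rH^\alpha_L$, set $g=\langle D_L\rangle^\alpha f\in\rL^2$. By \eqref{eq:ExtensionOfPhillipsFunctionalCalculus} and \eqref{eq:RuleForD_L}, we have $\langle D_L\rangle^\alpha\psi_\lambda(\sqrL)f=\psi_\lambda(\sqrL)g$. Since $\langle D_L\rangle^{\alpha}\colon\rH^\alpha_L\to\rL^2$ is an isomorphism, convergence of the partial sums to $g$ in $\rL^2$ gives convergence to $f$ in $\rH^\alpha_L$.

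\textbf{Step 3: the consequences for $\Sc_{\sqrL}$.} For membership, take $\psi(\sqrL)f$ with $f\in C_c^\infty$. It lies in every $\rL^p$, $p\in(1,\infty)$, by Proposition~\ref{prop:PropertiesPhillipsCalculus}(c). By Lemma~\ref{lemma:ExtensionOfPhillipsFunctionalCalculus}(b) it lies in $\Dom(L^\beta)$ for every $\beta\in\R$, since $|\cdot|^{2\beta}\psi\in C_c^\infty$ because $\psi$ vanishes near $0$. For density, given $f\in\rH^\alpha_L$ and $\eps>0$:
\begin{itemize}
\item choose $N$ with $\|f-\Psi_N(\sqrL)f\|_\alpha<\eps$;
\item choose $h\in C_c^\infty$ with $\|\langle D_L\rangle^\alpha f-h\|_2$ small, which is possible since $C_c^\infty$ is dense in $\rL^2$.
\end{itemize}
Then $\Psi_N(\sqrL)f=(\jb^{-\alpha}\Psi_N)(\sqrL)\langle D_L\rangle^\alpha f$, and the function $\jb^{-\alpha}\Psi_N\in C_c^\infty$ is supported away from $0$. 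Consequently
\begin{equation*}
\big\|\Psi_N(\sqrL)f-(\jb^{-\alpha}\Psi_N)(\sqrL)h\big\|_\alpha\lesssim\|\jb^{-\alpha}\Psi_N\cdot\jb^{\alpha}\|_\infty\,\|\langle D_L\rangle^\alpha f-h\|_2 ,
\end{equation*}
which is small. Here the $\rL^2$ bound of Proposition~\ref{prop:PropertiesPhillipsCalculus}(c) is used, together with \eqref{eq:RuleForD_L} once more. The approximant $(\jb^{-\alpha}\Psi_N)(\sqrL)h$ lies in $\Sc_{\sqrL}$, which proves density.
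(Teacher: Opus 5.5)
Your proposal is correct. The paper does not prove this proposition in the text; it defers to \cite[Section~2]{Mesfun2026}, so there is no in-paper argument to match line by line. Your route uses only tools the paper states:
\begin{itemize}
\item identify $\psi_\lambda(\sqrL)$ with $\eta(L/\lambda^2)$ by radial symmetry;
\item apply the spectral theorem for $L$, which is self-adjoint for $\langle\cdot,\cdot\rangle_A$, together with $E_L(\{0\})=0$;
\item transfer to $\rH^\alpha_L$ via $\langle D_L\rangle^\alpha$;
\item get density from the approximants $(\jb^{-\alpha}\Psi_N)(\sqrL)h$.
\end{itemize}
The identification it rests on is the one the paper alludes to in the remark following Proposition~\ref{prop:L^2BoundednessOfParametrix}. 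Because it relies on Hilbert space structure and the radial symmetry of $\psi$, it is tailored to the $\rL^2$-based scale. It would not by itself give the $\rL^p$ Littlewood--Paley statements used later, but none is needed here.

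Three points to tidy up.

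First, drop the hedging in Step~1. The worry that the Phillips side is only controlled in $\cF\rL^1$ is unfounded on $\rL^2$: Proposition~\ref{prop:PropertiesPhillipsCalculus}~(c) gives $\|\varphi(\sqrL)\|_{\cL(\rL^2)}\lesssim\|\varphi\|_\infty$ for all $\varphi\in\cF\rL^1$. So choose finite sums $\eta_n(s)=\sum_k c_k\ee^{-t_k s}$ with $\eta_n\to\eta$ uniformly on $[0,\infty)$, by Stone--Weierstrass in $C_0([0,\infty))$. Then $\eta_n(|\cdot|^2)-\eta(|\cdot|^2)\in\cF\rL^1$ tends to $0$ uniformly on $\R^d$, and both calculi pass to the limit. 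The Laplace alternative should not be pursued: an exact representation $\eta(s)=\int\ee^{-ts}\dd\nu(t)$ with a finite measure $\nu$ is impossible for compactly supported $\eta\neq0$, since the right-hand side is real-analytic in $s>0$.

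Second, you cannot assume $\psi\geq0$, because $\psi$ is fixed in \eqref{eq:LittlewoodPaleyPartitionOfUnity}. You do not need it, though. At most two of the $\psi_\lambda$ are nonzero at any point, so every finite partial sum is bounded by $2\|\psi\|_\infty$. That is all dominated convergence in the spectral theorem requires, and it even gives unconditional convergence of the series.

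Third, for $\Sc_{\sqrL}\subseteq\bigcap_{p}\rL^p$, say explicitly that the $\rL^p$- and $\rL^2$-valued integrals defining $\psi(\sqrL)f$ coincide for $f\in C_c^\infty$. This follows from the consistency $\ee^{\ii y\cdot\ssqrt{\bL_p}}f=\ee^{\ii y\cdot\ssqrt{\bL_2}}f$ on $\rL^p\cap\rL^2$. Without it, you only know that some $\rL^p$ element exists, not that the element of $\rH^\alpha_L$ lies in $\rL^p$.
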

	
	\begin{proposition}[$\rL^2$-Boundedness of Almost Orthogonal Operators]\label{prop:L^2BoundednessOfAlmostOrthogonalOperators} 
			Let $\alpha,\gamma\in \R$. Assume that $h\coloneqq (h_\lambda)_{\lambda\in 2^\Z}$ is a sequence of functions on $\R^d$ such that $h_\lambda$ is smooth on an open neighborhood of $K_\lambda\coloneqq \supp{\psi_\lambda}$ for all $\lambda\in 2^\Z$ and $\|h\|_\infty\coloneqq \sup_{\lambda\in 2^\Z} \|h_\lambda\|_{\rL^\infty(K_\lambda)}<\infty$. Then,
			\begin{equation*}
				T\colon \rH^\alpha_L\to \rH_L^{\alpha-\gamma}, \quad Tf\coloneqq \sum_{\lambda\in 2^\Z} \langle\lambda\rangle ^\gamma (h_\lambda\psi_\lambda)(\sqrL)f
			\end{equation*}
			is a well-defined linear bounded operator with $\|T\|\lesssim \|h\|_\infty$. Moreover, for $f\in \rH^\alpha_L$, there holds
			\begin{equation*}
				\|Tf\|_{\rH^{\alpha-\gamma}_L}^2\lesssim \sum_{\lambda\in 2^\Z} \|(h_\lambda \psi_\lambda)(\sqrL)g\|_2^2\lesssim \sum_{\lambda\in 2^\Z} \|\psi_\lambda(\sqrL)g\|_2^2\simeq \|f\|^2_{\rH^\alpha_L}, 
			\end{equation*}
			where $g\coloneqq \langle D_L \rangle^\alpha f\in \rL^2$. 
	\end{proposition}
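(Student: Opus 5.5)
We reduce to $\rL^2$ by conjugating with $\langle D_L\rangle^{\pm\alpha}$ and then use Cotlar--Stein type almost orthogonality, that is, the finite overlap of the Littlewood--Paley pieces $\psi_\lambda$.

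\textbf{Reduction to $\alpha=0$.} Set $g\coloneqq\langle D_L\rangle^\alpha f\in\rL^2$. By \eqref{eq:ExtensionOfPhillipsFunctionalCalculus} and \eqref{eq:RuleForD_L},
\[
\langle D_L\rangle^{\alpha-\gamma}\langle\lambda\rangle^\gamma(h_\lambda\psi_\lambda)(\sqrL)f=(m_\lambda h_\lambda\psi_\lambda)(\sqrL)g, \qquad m_\lambda(\xi)\coloneqq\langle\lambda\rangle^\gamma\langle\xi\rangle^{-\gamma}.
\]
On $K_\lambda$ we have $|\xi|\simeq\lambda$, so $m_\lambda\simeq 1$ there, with bounded derivatives after rescaling. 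We may therefore absorb $m_\lambda$ into $h_\lambda$. To use the calculus we also replace $h_\lambda$ by a $C_c^\infty$ function agreeing with it near $K_\lambda$: multiply it by a cutoff $\tilde\psi_\lambda$ that equals $1$ on $K_\lambda$ and is supported in the slightly larger annulus. It thus suffices to show
\[
\Big\|\sum_\lambda (h_\lambda\psi_\lambda)(\sqrL)g\Big\|_2^2\lesssim\sum_\lambda\|(h_\lambda\psi_\lambda)(\sqrL)g\|_2^2\lesssim\|h\|_\infty^2\sum_\lambda\|\psi_\lambda(\sqrL)g\|_2^2\simeq\|h\|_\infty^2\|g\|_2^2 .
\]
Since $\|g\|_2=\|f\|_{\rH^\alpha_L}$, this is the claim.

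\textbf{Key steps.} We work with the equivalent inner product $\langle\cdot,\cdot\rangle_A$ from \eqref{eq:EquivalentScalarProduct}, for which each $\sqrt{L_j}$ is self-adjoint. Then the Phillips calculus for real-valued even symbols yields self-adjoint operators, and in general $\varphi(\sqrL)^*=\bar\varphi(\sqrL)$. Combined with the homomorphism property of Proposition~\ref{prop:PropertiesPhillipsCalculus}(a), this gives $(\psi_\lambda\bar\psi_\mu)(\sqrL)=0$ whenever $\lambda/\mu\notin\{1/2,1,2\}$, since the supports are disjoint. The first inequality above then follows by expanding the square: only the terms with $|\log_2(\lambda/\mu)|\le1$ survive, and Cauchy--Schwarz bounds them.

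For the second inequality we choose $\tilde\psi_\lambda$ with $\tilde\psi_\lambda\psi_\lambda=\psi_\lambda$ and write
\[
(h_\lambda\psi_\lambda)(\sqrL)g=(h_\lambda\tilde\psi_\lambda)(\sqrL)\,\psi_\lambda(\sqrL)g .
\]
The factor $(h_\lambda\tilde\psi_\lambda)(\sqrL)$ has norm $\lesssim\|h_\lambda\|_{\rL^\infty(\supp\tilde\psi_\lambda)}$ by the $\rL^2$ transference bound in Proposition~\ref{prop:PropertiesPhillipsCalculus}(c). We should enlarge the neighborhood in the hypothesis accordingly, or, more simply, factor as $(h_\lambda\psi_\lambda^{1/2})(\sqrL)\,\psi_\lambda^{1/2}(\sqrL)$ with a smooth square-root partition, so that only $\rL^\infty(K_\lambda)$ enters.

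The last equivalence $\sum_\lambda\|\psi_\lambda(\sqrL)g\|_2^2\simeq\|g\|_2^2$ comes from the Calderón reproducing formula, Proposition~\ref{prop:CalderonReproducingFormulaOnInhomogeneousSobolevSpaces}, together with the same orthogonality argument. The lower bound is obtained by pairing $g=\sum_\lambda\psi_\lambda(\sqrL)g$ with $g$ and applying Cauchy--Schwarz.

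\textbf{Main obstacle.} The delicate point is orthogonality in the non-Hilbertian setting. The group $\ee^{\ii y\cdot\sqrL}$ is unitary only for $\langle\cdot,\cdot\rangle_A$, not for the standard $\rL^2$ inner product, so all adjoint computations must be carried out in the $A$-inner product; since it is equivalent to the standard one, the resulting estimates transfer. Well-definedness of the series in $\rH^{\alpha-\gamma}_L$ then follows from the uniform bound on finite partial sums and the convergence of $\sum_\lambda\|\psi_\lambda(\sqrL)g\|_2^2$.
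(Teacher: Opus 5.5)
The paper gives no proof of this proposition; it defers to \cite[Section~2]{Mesfun2026}, so there is nothing to compare against line by line. Your overall approach is the natural one and is correct:
\begin{itemize}
\item conjugate by $\langle D_L\rangle^{\alpha-\gamma}$ to reduce to $\rL^2$ with $g=\langle D_L\rangle^\alpha f$;
\item work in $\langle\cdot,\cdot\rangle_A$, where $\varphi(\sqrL)^*=\bar\varphi(\sqrL)$;
\item use that $(h_\lambda\psi_\lambda)(\sqrL)g$ and $(h_\mu\psi_\mu)(\sqrL)g$ are $A$-orthogonal unless $\lambda/\mu\in\{\frac12,1,2\}$.
\end{itemize}
However, one step does not go through as written: the bound $\sum_\lambda\|\psi_\lambda(\sqrL)g\|_2^2\lesssim\|g\|_2^2$. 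This is exactly the direction you need, both for $\|T\|\lesssim\|h\|_\infty$ and for the Cauchy criterion that makes the series converge.

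Both mechanisms you name give the opposite inequality. Expanding $\|\sum_\lambda\psi_\lambda(\sqrL)g\|_A^2$ after the Calder\'on formula bounds $\|g\|_2^2$ by the square sum. So does pairing $g=\sum_\lambda\psi_\lambda(\sqrL)g$ with $g$ and applying Cauchy--Schwarz. The missing direction is a one-liner with the tools you already cite. For finite $F\subseteq 2^{\Z}$,
\[
\sum_{\lambda\in F}\|\psi_\lambda(\sqrL)g\|_A^2=\Big\langle\Big(\sum_{\lambda\in F}|\psi_\lambda|^2\Big)(\sqrL)g,g\Big\rangle_A\lesssim\Big\|\sum_{\lambda\in F}|\psi_\lambda|^2\Big\|_{\rL^\infty}\|g\|_A^2\le 2\|\psi\|_\infty^2\|g\|_A^2 .
\]
This follows from Proposition~\ref{prop:PropertiesPhillipsCalculus}~(a),(c) together with your adjoint formula. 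The last step holds because $\psi_\lambda(\xi)\neq0$ forces $\lambda\in(|\xi|/2,2|\xi|)$, so at most two $\psi_\lambda$ are nonzero at any point. Alternatively, run your expand-the-square argument on the synthesis map $(w_\lambda)_\lambda\mapsto\sum_\lambda\bar\psi_\lambda(\sqrL)w_\lambda$ and dualize. The reverse inequality, which is what your two arguments actually give, then completes the equivalence; for instance, take your first inequality with $h_\lambda\equiv1$ and apply the Calder\'on formula.

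Two smaller points.
\begin{itemize}
\item You do not need to enlarge the hypothesis for the cutoff $\tilde\psi_\lambda$. Since $h_\lambda$ is continuous near the compact set $K_\lambda$, you can support $\tilde\psi_\lambda$ inside the open set where $h_\lambda$ is smooth and $|h_\lambda|<2\|h\|_\infty$; the case $\|h\|_\infty=0$ is trivial. The square-root variant is best avoided, because $\psi^{1/2}$ need not be smooth and it changes the middle quantity.
\item Because you absorbed $m_\lambda$ into $h_\lambda$, you literally prove the middle inequality with $m_\lambda h_\lambda$ in place of $h_\lambda$. To get the stated version, write $(m_\lambda h_\lambda\psi_\lambda)(\sqrL)g=(m_\lambda\chi_\lambda)(\sqrL)\,(h_\lambda\psi_\lambda)(\sqrL)g$. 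Here $\chi_\lambda$ is a cutoff equal to $1$ on $K_\lambda$ and supported in $\{\lambda/4<|\xi|<4\lambda\}$, so $\|m_\lambda\chi_\lambda\|_\infty\lesssim_\gamma1$.
\end{itemize}

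A slicker route exists. The $\sqrt{L_j}$ are strongly commuting and self-adjoint for $\langle\cdot,\cdot\rangle_A$, so the Phillips calculus coincides with their joint Borel calculus. Then $\|\varphi(\sqrL)g\|_A^2=\int|\varphi|^2\,\mathrm{d}\mu_g$, and all three inequalities reduce to pointwise inequalities between symbols, with no cutoffs needed.
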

	 
	\begin{lemma}[Differentiation under the Sum]\label{lemma:DifferentiationUnderTheSum} Let $\alpha\in \R$, $f\in \rH^{\alpha}_L$ and $n\in \N_0$. Suppose that $\{h_{t,\lambda} \mid \lambda\in 2^\Z, t\in \R\}$ is a set of smooth functions on $\R^d\setminus\{0\}$ which satisfy the following assumptions:
	\begin{enumerate}[label=(\roman*)]
		\item For all $\lambda\in 2^\Z$ and each $\xi \in K_\lambda\coloneqq \supp{\psi_\lambda}$, the map $t\mapsto h_{t,\lambda}(\xi)$ belongs to $C^n(\R;\C)$ and
		\begin{equation*}
			|\partial_t^k h_{t,\lambda}(\xi)|\leq C(t)\langle \xi \rangle^{k} \quad (0\leq k \leq n)
		\end{equation*}
		with some locally bounded
		%\footnote{By this, we mean that $\sup_{t\in I}C(t)<\infty$ for any bounded Interval $I\subseteq \R$.} 
		function $t\mapsto C(t)$.
		\item For each fixed $t_0\in \R, \lambda\in 2^\Z$, we have 
		\begin{equation*}
			\big\|\partial_t^n h_{t,\lambda}-\partial_t^n h_{t_0,\lambda}\big\|_{L_\xi^\infty(K_\lambda)}\rightarrow 0 \quad (t\to t_0).
		\end{equation*}
	\end{enumerate}
	Then, for all $k\in \{0,\dots,n\}$, the map \begin{equation*}
		u\colon t\mapsto T(t)f\coloneqq \sum_{\lambda\in 2^\Z}  (h_{t,\lambda}\psi_\lambda)(\sqrL)f
	\end{equation*}
	belongs to $C^k(\R;\rH_L^{\alpha-k})$ and it holds
	\begin{equation*}
		u^{(k)}(t)=\sum_{\lambda\in 2^\Z}  (\partial_t^k h_{t,\lambda}\psi_\lambda)(\sqrL)f,\quad \|u^{(k)}(t)\|_{\alpha-k}\lesssim C(t)\|f\|_{\alpha}\quad (t\in \R).
	\end{equation*}
	\end{lemma}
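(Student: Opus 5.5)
The plan is to reduce everything to $f\in\rL^2$ by writing $g\coloneqq\langle D_L\rangle^\alpha f$. Using \eqref{eq:ExtensionOfPhillipsFunctionalCalculus}, each term becomes
\[
(h_{t,\lambda}\psi_\lambda)(\sqrL)f=(\jb^{-\alpha}h_{t,\lambda}\psi_\lambda)(\sqrL)g .
\]
The argument is an induction on $k$, using two tools: Proposition~\ref{prop:L^2BoundednessOfAlmostOrthogonalOperators} for convergence and norm bounds, and the dominated-convergence-type structure of the Phillips calculus for differentiation.

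First I will establish the bound and well-definedness for each $k$. Apply Proposition~\ref{prop:L^2BoundednessOfAlmostOrthogonalOperators} with $\gamma=k$ and
\[
\tilde h_\lambda\coloneqq\langle\lambda\rangle^{-k}\partial_t^k h_{t,\lambda}.
\]
On $K_\lambda$ we have $\langle\xi\rangle\simeq\langle\lambda\rangle$, so assumption (i) gives $\|\tilde h\|_\infty\lesssim C(t)$. The proposition then shows that $\sum_\lambda(\partial_t^kh_{t,\lambda}\psi_\lambda)(\sqrL)f$ converges in $\rH^{\alpha-k}_L$ with norm $\lesssim C(t)\|f\|_\alpha$. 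The same argument applied to the differences $\partial_t^kh_{t,\lambda}-\partial_t^kh_{t_0,\lambda}$ gives
\[
\|u^{(k)}(t)-u^{(k)}(t_0)\|_{\alpha-k}^2\lesssim\sum_\lambda\big\|\big((\partial_t^kh_{t,\lambda}-\partial_t^kh_{t_0,\lambda})\psi_\lambda\big)(\sqrL)g_\lambda\big\|_2^2 .
\]
Each summand tends to $0$: for $k=n$ this follows from (ii) and the $\rL^2$ transference bound in Proposition~\ref{prop:PropertiesPhillipsCalculus}(c). For $k<n$ it follows from the mean value theorem together with (i) at order $k+1$, with a bound locally uniform in $t$. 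The summands are also dominated by $\sup_{|t-t_0|\le1}C(t)^2\|\psi_\lambda(\sqrL)g\|_2^2$, which is summable, so dominated convergence for series gives continuity. Since each summand is a compactly supported smooth symbol, the $\rL^\infty$ transference bound is legitimate after inserting a fattened cutoff $\tilde\psi_\lambda$ with $\tilde\psi_\lambda\psi_\lambda=\psi_\lambda$. By the homomorphism property, this is what reduces the sum to $\|\psi_\lambda(\sqrL)g\|_2^2$ as in the proposition.

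For differentiability, I will show that the difference quotient of $u^{(k-1)}$ converges to $u^{(k)}$ in $\rH^{\alpha-k}_L$. Termwise, the symbol $\lambda$-piece is
\[
\Big(\tfrac{\partial_t^{k-1}h_{t+s,\lambda}-\partial_t^{k-1}h_{t,\lambda}}{s}-\partial_t^kh_{t,\lambda}\Big)\psi_\lambda
=\Big(\frac1s\int_0^s(\partial_t^kh_{t+r,\lambda}-\partial_t^kh_{t,\lambda})\dd r\Big)\psi_\lambda .
\]
Its sup over $K_\lambda$, weighted by $\langle\lambda\rangle^{-k}$, is bounded by $2\sup_{|r|\le1}C(t+r)$ and tends to $0$ for each fixed $\lambda$. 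Here I use continuity of $\partial_t^kh$ in sup norm on $K_\lambda$: from (ii) when $k=n$, and from the mean value theorem with (i) when $k<n$. The almost-orthogonality estimate of Proposition~\ref{prop:L^2BoundednessOfAlmostOrthogonalOperators} plus dominated convergence over $\lambda$ then yields the limit in $\rH^{\alpha-k}_L\supseteq\rH^{\alpha-k+1}_L$.

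The main obstacle I expect is justifying that a sup-norm smallness of symbols on $K_\lambda$ controls the operator norm. The calculus is only bounded with respect to $\cF\rL^1$ or multiplier norms, not with respect to $\rL^\infty$ on a compact set. The $\rL^2$ estimate of Proposition~\ref{prop:PropertiesPhillipsCalculus}(c) requires $\varphi\in\cF\rL^1$, which holds for smooth compactly supported symbols; smoothness of $h_{t,\lambda}$ near $K_\lambda$ ensures this for the difference quotients. I would handle this by defining everything through Proposition~\ref{prop:L^2BoundednessOfAlmostOrthogonalOperators}, whose hypotheses are exactly sup bounds on $K_\lambda$, and by treating the integral-in-$r$ symbol via Fubini inside the Phillips integral.
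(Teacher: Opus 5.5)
Your proposal is correct and follows the paper's own argument, written out in detail. The paper's one-line proof is Proposition~\ref{prop:L^2BoundednessOfAlmostOrthogonalOperators} (with $\gamma=k$ and normalized symbols $\langle\lambda\rangle^{-k}\partial_t^k h_{t,\lambda}$) plus dominated convergence in $\ell^1(2^\Z)$, with termwise convergence coming from (ii) for $k=n$ and from the mean value theorem with (i) for $k<n$, exactly as you do.

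One justification should be dropped: the fattened cutoff for the termwise domination. The quantity $\|m\tilde\psi_\lambda\|_\infty$ involves $m$ on $\supp{\tilde\psi_\lambda}\supsetneq K_\lambda$, where (i)--(ii) give no control. Instead, take $\|(m\psi_\lambda)(\sqrL)g\|_2\lesssim\|m\|_{\rL^\infty(K_\lambda)}\|\psi_\lambda(\sqrL)g\|_2$ directly from Proposition~\ref{prop:L^2BoundednessOfAlmostOrthogonalOperators}, as you ultimately propose.
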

	\begin{proof} This is a straightforward consequence of Proposition~\ref{prop:L^2BoundednessOfAlmostOrthogonalOperators} and dominated convergence in $\ell^1(2^\Z)$.
	\end{proof}
	
	\begin{remark}\label{rem:DifferentiationUnderTheSum} We also have a straightforward modification of Lemma~\ref{lemma:DifferentiationUnderTheSum} in the case when $t\mapsto h_{t,\lambda}$ is only Lipschitz continuous (i.e., only differentiable a.e.). More precisely, let $n\in \N_0, \alpha\in \R$, $f\in \rH^{\alpha}_L$ and suppose that $\{h_{t,\lambda} \mid \lambda\in 2^\Z, t\in \R\}$ is a set of smooth functions on $\R^d\setminus\{0\}$ which satisfy: For all $\lambda\in 2^\Z$ and each $\xi \in K_\lambda\coloneqq \supp{\psi_\lambda}$, the map $t\mapsto h_{t,\lambda}(\xi)$ is Lipschitz continuous and
			\begin{equation*}
				|\partial_t h_{t,\lambda}(\xi)|\leq C(t)\langle \lambda\rangle^{n}\quad \text{for a.e. } t\in \R,
			\end{equation*}
			where $t\mapsto C(t)$ is some locally bounded function.
			
	Then, the map
		\begin{equation*}
			u\colon t\mapsto T(t)f\coloneqq \sum_{\lambda\in 2^\Z}  (h_{t,\lambda}\psi_\lambda)(\sqrL)f
		\end{equation*}
		belongs to $W^{1,\infty}(\R;\rH_L^{\alpha-n})$ and it holds
		\begin{equation*}
			u'(t)=\sum_{\lambda\in 2^\Z}  (\partial_t h_{t,\lambda}\psi_\lambda)(\sqrL)f,\quad \|u'(t)\|_{\alpha-n}\lesssim C(t)\|f\|_{\alpha}\quad \text{for a.e. $t\in \R$}.
		\end{equation*}
	\end{remark}
	\begin{proposition}[Squarefunction Characterization of the $\rL^p$-norm]\label{prop:SquareFunctionCharacterizationOfL^pNorm} Let $1<p<\infty$. Then,
		\begin{equation*}
			\left\|\bigg(\sum_{\lambda\in 2^{\Z}} \big|\psi_\lambda(\sqrL)f(x)\big|^2 \bigg)^{1/2}\right\|_p\simeq \|f\|_p \quad \text{for all } f\in \rL^p.
		\end{equation*}	
	\end{proposition}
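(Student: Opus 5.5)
The plan is to reduce the square function characterization for $\sqrL$ to vector-valued (Hilbert-space valued) transference via the Phillips calculus, and then recover the lower bound by duality using the Calderón reproducing formula (Proposition~\ref{prop:CalderonReproducingFormulaOnInhomogeneousSobolevSpaces}).

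\emph{Upper bound.} First restrict to finitely many $\lambda$, say $|\log_2\lambda|\le N$, and prove bounds uniform in $N$; monotone convergence then removes the restriction. For a fixed choice of signs $\epsilon=(\epsilon_\lambda)\in\{-1,1\}^{2^\Z}$, set $m_\epsilon\coloneqq \sum_{|\log_2\lambda|\le N}\epsilon_\lambda\psi_\lambda$. This is a smooth compactly supported function away from the origin, so it lies in $\cF\rL^1$. It also satisfies Mikhlin bounds $\sup_{|\alpha|\le[d/2]+1}\||\cdot|^{|\alpha|}\partial^\alpha m_\epsilon\|_\infty\lesssim 1$ uniformly in $\epsilon$ and $N$, because the dyadic supports have bounded overlap. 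Proposition~\ref{prop:PropertiesPhillipsCalculus}~(c) then gives $\|m_\epsilon(\sqrL)f\|_p\lesssim\|f\|_p$ uniformly. Averaging over random signs and applying Khintchine's inequality pointwise in $x$ yields
\[
\Big\|\Big(\sum_{|\log_2\lambda|\le N}|\psi_\lambda(\sqrL)f|^2\Big)^{1/2}\Big\|_p\simeq \Big(\mathbb{E}\,\|m_\epsilon(\sqrL)f\|_p^p\Big)^{1/p}\lesssim\|f\|_p,
\]
where linearity of the calculus (Proposition~\ref{prop:PropertiesPhillipsCalculus}~(a)) gives $m_\epsilon(\sqrL)f=\sum\epsilon_\lambda\psi_\lambda(\sqrL)f$.

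\emph{Lower bound.} Choose $\tilde\psi\in C_c^\infty$ supported in a slightly larger annulus with $\tilde\psi\psi=\psi$, so that $\psi_\lambda=\tilde\psi_\lambda\psi_\lambda$. Since $\psi$ is real, the dual of $\psi_\lambda(\sqrL)$ on $\rL^{p'}$ is again given by the calculus, namely by $\psi_\lambda$ of the dual group generator. The dual group $(\ee^{\ii y\cdot\sqrL})^*$ is a bounded group on $\rL^{p'}$, generated by the adjoint operators $L_j^*$ (with respect to the pairing with weight $A$ from \eqref{eq:EquivalentScalarProduct}, which swaps divergence and nondivergence form). The upper bound therefore also holds for it, by the same argument.

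For $f\in\rL^p\cap\rL^2$ and $g\in\rL^{p'}\cap\rL^2$, Proposition~\ref{prop:CalderonReproducingFormulaOnInhomogeneousSobolevSpaces} in $\rL^2=\rH^0_L$ together with the multiplicativity of the calculus gives
\[
\langle f,g\rangle=\sum_\lambda\langle\psi_\lambda(\sqrL)f,\tilde\psi_\lambda(\sqrL)^*g\rangle .
\]
Applying Cauchy--Schwarz in $\lambda$ and Hölder in $x$, and using the upper bound for $\tilde\psi$ in the dual setting, we get $|\langle f,g\rangle|\lesssim\|Sf\|_p\|g\|_{p'}$, where $Sf$ denotes the square function. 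Taking the supremum over $g$ gives $\|f\|_p\lesssim\|Sf\|_p$ on the dense subspace. Density together with the upper bound then extends both inequalities to all $f\in\rL^p$.

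\emph{Main obstacle.} The delicate point is the duality step: identifying the adjoint calculus and checking that it again comes from a bounded $d$-parameter group of the same type, so that Proposition~\ref{prop:PropertiesPhillipsCalculus}~(c) applies to it. If that identification turns out to be awkward, the fallback is to prove the lower bound directly. Reproducing $f=\sum_\lambda\tilde\psi_\lambda(\sqrL)\psi_\lambda(\sqrL)f$, we would bound $\sum_\lambda\tilde\psi_\lambda(\sqrL)h_\lambda$ in $\rL^p$ by $\|(\sum|h_\lambda|^2)^{1/2}\|_p$. This can be done via an $\ell^2$-valued transference: the group $(\ee^{\ii y\cdot\sqrL})$ extends to $\rL^p(\ell^2)$ with the same bound, by Marcinkiewicz--Zygmund, since it consists of positive-free bounded operators on $\rL^p$. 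One then combines this with the $\ell^2$-valued Mikhlin theorem for the Fourier multiplier $(h_\lambda)\mapsto\sum\tilde\psi_\lambda(D)h_\lambda$.
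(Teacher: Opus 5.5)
Your argument is correct. It is the standard proof of this estimate: random signs plus the transferred Mikhlin bound of Proposition~\ref{prop:PropertiesPhillipsCalculus}~(c) for the upper estimate, then duality against the $\rL^2$ Calder\'on reproducing formula for the lower one. The paper gives no proof of its own and simply cites \cite[Proposition~5]{FreySchippa2023}.

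The step you flag as the main obstacle is not one. For the sign sums $\tilde m_\epsilon=\sum_\lambda\epsilon_\lambda\tilde\psi_\lambda$ you have $\|\tilde m_\epsilon(\sqrL)^*\|_{\cL(\rL^{p'})}=\|\tilde m_\epsilon(\sqrL)\|_{\cL(\rL^{p})}\lesssim 1$, so Khintchine runs on the dual side without identifying any adjoint group, and the fallback is unnecessary. Your description of the adjoint is also slightly off: each $L_j$ is self-adjoint for $\langle\cdot,\cdot\rangle_A$, so nothing is swapped, and $\tilde\psi_\lambda(\sqrL)^*=\tilde\psi_\lambda(\sqrL)$ for real $\tilde\psi$.
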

	\begin{proof} See \cite[Proposition~5]{FreySchippa2023}.
	\end{proof}
	
	\begin{lemma}[Extension of the Operator $L_j$ to $\rH^\alpha_L$]\label{lemma:ExtensionOfL_jToHomogeneousSobolevSpaces} Let $\alpha\in \R$, $n\in \N_0$ and $j\in \{1,\dots,d\}$. Then, the operator $(L_j)^{n}\colon \SL\to \SL$ extends uniquely to a bounded operator from $\rH^{\alpha}_L$ to $\rH^{\alpha-n}_L$. In particular, for each $t\in \R$, we have $P(t)\colon \rH_L^\alpha\to \rH^{\alpha-2}_L$.
	\end{lemma}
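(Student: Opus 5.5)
The plan is to reduce everything to the Phillips calculus on the dense class $\SL$ from Proposition~\ref{prop:CalderonReproducingFormulaOnInhomogeneousSobolevSpaces}, and then extend by density. Take $f=\psi(\sqrL)g\in\SL$ with $\supp{\psi}$ compact and away from $0$. By Proposition~\ref{prop:PropertiesPhillipsCalculus}~(b) with a multi-index concentrated in the $j$-th slot, $(L_j)^n f=(\sqrt{L_j})^{2n}\psi(\sqrL)g=(\xi_j^{2n}\psi)(\sqrL)g$. In particular $(L_j)^n$ maps $\SL$ into $\SL$, because $\xi_j^{2n}\psi$ is again in $C_c^\infty$ and supported away from the origin. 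Uniqueness of the extension is automatic once boundedness is shown, since $\SL$ is dense in $\rH^\alpha_L$.

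For the bound I insert the Calderón reproducing formula, Proposition~\ref{prop:CalderonReproducingFormulaOnInhomogeneousSobolevSpaces}. For $f\in\SL$ this gives
\[
(L_j)^nf=\sum_{\lambda\in2^\Z}(\xi_j^{2n}\psi_\lambda)(\sqrL)f=\sum_{\lambda\in 2^\Z}\langle\lambda\rangle^{\gamma}(h_\lambda\psi_\lambda)(\sqrL)f,\qquad h_\lambda(\xi)\coloneqq \langle\lambda\rangle^{-\gamma}\xi_j^{2n},
\]
where $\gamma$ is the order of the operator, i.e.\ the degree of homogeneity of its symbol. Here $h_\lambda$ is smooth, and Proposition~\ref{prop:L^2BoundednessOfAlmostOrthogonalOperators} then yields
\[
\|(L_j)^nf\|_{\rH^{\alpha-\gamma}_L}\lesssim \sup_{\lambda}\|h_\lambda\|_{\rL^\infty(K_\lambda)}\,\|f\|_{\rH^\alpha_L}.
\]
Everything therefore hinges on the uniform symbol estimate $\sup_\lambda\sup_{\xi\in K_\lambda}|\xi_j|^{2n}\langle\lambda\rangle^{-\gamma}<\infty$. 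On $K_\lambda$ we have $|\xi_j|\le|\xi|\le2\lambda$, and for $\lambda\le1$ the estimate is trivial.

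This symbol estimate is the main obstacle, and it is where the index bookkeeping of the statement must be checked. For large $\lambda$ the estimate holds exactly when $\gamma$ equals the degree of the symbol $\xi_j^{2n}$, that is, the number of $x_j$-derivatives in $(L_j)^n$. The index $\alpha-n$ in the statement must be read with this convention, with $n$ counting derivatives. This is the reading under which the concluding ``in particular'' is consistent: $P(t)=\sum_j b_j(t)L_j$ consists of second-order operators and maps $\rH^\alpha_L\to\rH^{\alpha-2}_L$. I would state this convention explicitly, carry out the estimate above with $\gamma$ equal to that order, and note that for a literal second-order power the symbol $\xi_j^{2n}$ is not dominated by $\langle\lambda\rangle^{n}$ on $K_\lambda$. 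So no argument can give a smaller loss, and the whole content of the lemma lies in matching $\gamma$ to the homogeneity of the symbol.

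Finally, the bound for $P(t)$ follows by linearity: the $b_j(t)$ are bounded scalars, and the $n=1$ case gives each $L_j\colon\rH^\alpha_L\to\rH^{\alpha-2}_L$. This also shows that the extension is compatible for different $\alpha$, since on $\SL$ all the extensions agree.
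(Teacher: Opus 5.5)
Your proof is correct and follows the paper's argument. Uniqueness comes from the density of $\SL$. On $\SL$ the operator is identified with a Littlewood--Paley sum $\sum_{\lambda\in 2^\Z}\langle\lambda\rangle^{\gamma}(h_\lambda\psi_\lambda)(\sqrL)f$ via Proposition~\ref{prop:PropertiesPhillipsCalculus}~(b) and Proposition~\ref{prop:CalderonReproducingFormulaOnInhomogeneousSobolevSpaces}. Boundedness then follows from Proposition~\ref{prop:L^2BoundednessOfAlmostOrthogonalOperators}, using $|\xi_j|\le 2\lambda$ on $K_\lambda$.

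You also handled the index misprint correctly. The paper's proof takes $h_\lambda=(\xi_j/\langle\lambda\rangle)^n$ and $\gamma=n$, so it actually proves $(L_j)^{n/2}=(\sqrt{L_j})^n\colon\rH^\alpha_L\to\rH^{\alpha-n}_L$ and then applies it with $n=2$ to get the bound for $P(t)$. This is your statement $(L_j)^n\colon\rH^\alpha_L\to\rH^{\alpha-2n}_L$ with $n=1$; the paper's form is slightly more general, since it also covers odd powers of $\sqrt{L_j}$.
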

	\begin{proof} Uniqueness is clear, since $\SL$ is dense in $\rH^\alpha_L$ by Proposition~\ref{prop:CalderonReproducingFormulaOnInhomogeneousSobolevSpaces}. Now, just note that the operator
		\begin{equation*}
			T\colon \rH_L^\alpha\to \rH^{\alpha-n}_L,\quad Tf\coloneqq \sum_{\lambda\in 2^\Z} \langle \lambda\rangle^n  \big(\big(\tfrac{\xi_j}{\langle\lambda\rangle }\big)^n\psi_\lambda\big)(\sqrL) f
		\end{equation*}
		is bounded by Proposition~\ref{prop:L^2BoundednessOfAlmostOrthogonalOperators} and that by Proposition~\ref{prop:PropertiesPhillipsCalculus}~(b) and Proposition~\ref{prop:CalderonReproducingFormulaOnInhomogeneousSobolevSpaces}, we have $Tf=(L_j)^{\frac{n}{2}} f$ for $f\in \SL$. The second assertion follows from the identity $P(t)=\sum_{j=1}^d b_j(t)L_j$ and the first assertion with $n=2$.
	\end{proof}

	\section{Parametrix Construction}\label{section:ParametrixConstruction}
 We  collect the time-dependent coefficients $b_j$ in a matrix
	\begin{equation}\label{eq:DefinitionOfB(t)}
		B(t):=\diag(b_1(t),\dots, b_d(t))\quad (t\in \R).
	\end{equation}
	In this section, for fixed $s\in \R$ and $u_s\in \rH_L^\alpha$, we want to construct a parametrix for the equation
	\begin{equation}\label{eq:ParametrixConstructionI}
		(D_t^2-P(t))u(t)=0\quad (t\in \R),\quad u(s)=u_s.
	\end{equation}
	Recall that $P(t)=\sum_{j=1}^d b_j(t)L_j$. In view of Proposition~\ref{prop:PropertiesPhillipsCalculus}~(b), $P(t)$ is associated with the symbol $p(t,\xi)\coloneqq (B(t)\xi|\xi)=\sum_{j=1}^d b_j(t)\xi_j^2$,  $(t,\xi)\in \R\times \R^d$, and thus the ODE in \eqref{eq:ParametrixConstructionI} corresponds in the frequency space to the equation
	\begin{equation}\label{eq:ParametrixConstructionII}
		(D_t^2-p(t,\xi))v(t,\xi)=0\quad (t\in \R).
	\end{equation}
	For fixed $\xi\in \R^d$, \eqref{eq:ParametrixConstructionII} is a second-order linear ODE in $t$. In order to find an approximate solution, we make, as in Lax's parametrix construction (see \cite{Lax1957}, \cite[Chapter~VIII.64]{Eskin2011}), the ansatz $v(t,\xi)=\ee^{\ii \varphi_{t,s}(\xi)}$ with an appropriate one-homogeneous phase function $\varphi_{t,s}\colon \R^d\to \R$, whose partial derivatives $\partial_t^k \varphi_{t,s}$ $(k\in \{0,1,2\})$ are also one-homogeneous with respect to $\xi$. Formally, we then have
	\begin{equation*}
		\ee^{-\ii \varphi_{t,s}(\xi)}(D_t^2-p(t,\xi))\ee^{\ii \varphi_{t,s}(\xi)} =(\partial_t \varphi_{t,s}(\xi))^2-p(t,\xi))+\tfrac{1}{\ii}\partial^2_t \varphi_{t,s}(\xi).
	\end{equation*}
	Viewing the right-hand side as a function of the frequency variable $\xi$, we observe that $(\partial_t\varphi_{t,s}(\xi))^2, p(t,\xi)\simeq |\xi|^2$ and $|\partial^2_t \varphi_{t,s}(\xi)|\simeq |\xi|$ by the homogeneity of $\varphi_{t,s}$. Therefore, we might think of $\frac{1}{\ii}\partial^2_t \varphi_{t,s}(\xi)$ as a term of \textit{lower} order (at least in the relevant high-frequency regime $|\xi|\gg1$). Neglecting this term, we arrive at the approximation
	\begin{equation*}
		\ee^{-\ii \varphi_{t,s}(\xi)}(D_t^2-p(t,\xi))\ee^{\ii \varphi_{t,s}(\xi)} \approx(\partial_t \varphi_{t,s}(\xi))^2-p(t,\xi))\overset{!}=0,
	\end{equation*}
	which yields $\partial_t\varphi_{t,s}(\xi) =\pm p(t,\xi)^{\frac{1}{2}}=\pm (B(t)\xi|\xi)^{\frac{1}{2}}$. \Bk Choosing the initial condition $v(s,\xi)=1$ in \eqref{eq:ParametrixConstructionII} gives $\varphi_{s,s}(\xi)=0$, so that an integration with respect to $t$ naturally leads to the following definition.
	\begin{definition}[Definition of Phase Function]\label{def:PhaseFunction} Let $s,t\in \R$. We  define the phase function 
	\begin{equation}\label{eq:PhaseFunction}
		\varphi_{t,s}\colon \R^d\to \R,\quad \varphi_{t,s}(\xi)\coloneqq \int_s^t (B(\tau)\xi|\xi)^{\frac{1}{2}} \dd \tau.
	\end{equation} 	
	\end{definition} 
	For later reference, we state some useful properties of $\varphi_{t,s}$. % which are readily checked by a straightforward computation.
	\begin{lemma}[Regularity of the Phase Function]\label{lemma:DerivativesOfPhaseFunction} The following assertions hold true.
		\begin{enumerate}[label=(\alph*)]
			\item For each $s,t\in \R$, the phase function $\varphi_{t,s}\colon \R^d\to \R$ is smooth away from the origin and positively homogeneous of degree one. Moreover, we have the bounds
			\begin{equation}
				|\partial_\xi^\alpha 	\varphi_{t,s}(\xi)|\lesssim_{\alpha} |\xi|^{1-|\alpha|} |t-s|.\label{eq:XiDerivativesOfVarphi}\\
			\end{equation}
			\item For fixed $s\in \R$ and $\xi\in \R^d\setminus \{0\}$, the map $t\mapsto \varphi_{t,s}$ belongs to $C^{1,1}(\R)$ with
			\begin{align}
				&|\partial_t \varphi_{t,s}(\xi)| \simeq |\xi|,\quad |\partial_t^2 \varphi_{t,s}(\xi)|\lesssim \|B'(t)\|\,|\xi|.\label{eq:BoundTimeDetivativeVarphi}
			\end{align}
			Here, $\partial_t^2 \varphi_{t,s}(\xi)$ exists for almost every $t\in \R$. Moreover, $\varphi_{t,s}=-\varphi_{s,t}$ and thus $\partial_s^k \varphi_{t,s}=-\partial_\tau^k \varphi_{\tau,t}\mid_{\tau=s}$ for all $k\in \{0,1,2\}$.
		\end{enumerate}
	\end{lemma}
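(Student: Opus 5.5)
The plan is to reduce everything to properties of the single integrand $\xi\mapsto (B(\tau)\xi|\xi)^{1/2}$. By \eqref{eq:Ellipticity_b} and $\eps_0<\tfrac12$, we have
\begin{equation*}
\tfrac12|\xi|^2\le (1-\eps_0)|\xi|^2\le (B(\tau)\xi|\xi)\le (1+\eps_0)|\xi|^2\le \tfrac32|\xi|^2
\end{equation*}
for all $\tau\in\R$ and $\xi\in\R^d$. Hence $q_\tau(\xi)\coloneqq(B(\tau)\xi|\xi)^{1/2}$ is smooth on $\R^d\setminus\{0\}$, being the square root of a polynomial bounded away from zero there. It is also positively homogeneous of degree one and satisfies $q_\tau(\xi)\simeq|\xi|$ uniformly in $\tau$.

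For part (a), I first show the derivative bounds for $q_\tau$ uniformly in $\tau$. Since $\partial_\xi^\alpha q_\tau$ is homogeneous of degree $1-|\alpha|$, it suffices to bound it on the unit sphere. There $\partial_\xi^\alpha q_\tau$ is a finite sum of terms of the form (monomials in $b_j(\tau)$ and $\xi$) times $(B(\tau)\xi|\xi)^{1/2-k}$. By the two-sided bound above and $|b_j|\le 2$, each term is bounded by a constant depending only on $\alpha$. Differentiation under the integral sign in \eqref{eq:PhaseFunction} is justified on compact subsets of $\R^d\setminus\{0\}$, because $\tau\mapsto\partial^\alpha_\xi q_\tau(\xi)$ is continuous ($b_j$ is Lipschitz) and locally uniformly bounded in $\xi$. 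This gives smoothness of $\varphi_{t,s}$ away from the origin and
\begin{equation*}
|\partial_\xi^\alpha\varphi_{t,s}(\xi)|\le\Big|\int_s^t|\partial_\xi^\alpha q_\tau(\xi)|\dd\tau\Big|\lesssim_\alpha|\xi|^{1-|\alpha|}|t-s|.
\end{equation*}
Homogeneity of $\varphi_{t,s}$ is inherited directly from that of $q_\tau$.

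For part (b), fix $\xi\neq0$. The map $\tau\mapsto q_\tau(\xi)$ is continuous, so the fundamental theorem of calculus gives $\partial_t\varphi_{t,s}(\xi)=q_t(\xi)\simeq|\xi|$. Moreover, $\tau\mapsto(B(\tau)\xi|\xi)$ is Lipschitz, and it takes values in $[\tfrac12|\xi|^2,\infty)$, where the square root is smooth with bounded derivative. Hence $t\mapsto q_t(\xi)$ is Lipschitz, so $\varphi_{\cdot,s}(\xi)\in C^{1,1}$, and by Rademacher's theorem it is differentiable a.e. with
\begin{equation*}
\partial_t^2\varphi_{t,s}(\xi)=\frac{(B'(t)\xi|\xi)}{2(B(t)\xi|\xi)^{1/2}}.
\end{equation*}
This gives $|\partial_t^2\varphi_{t,s}(\xi)|\le\|B'(t)\||\xi|^2/(2\cdot 2^{-1/2}|\xi|)\lesssim\|B'(t)\||\xi|$.

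The identity $\varphi_{t,s}=-\varphi_{s,t}$ is just the orientation reversal of the integral. Differentiating $\varphi_{t,s}=-\varphi_{s,t}$ in $s$ (for fixed $t$) then gives $\partial_s^k\varphi_{t,s}=-\partial_\tau^k\varphi_{\tau,t}|_{\tau=s}$ for $k=0,1,2$, with $k=2$ holding for a.e. $s$.

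No step is a genuine obstacle. The only care needed is the uniform (in $\tau$) control of $\xi$-derivatives near the sphere, which is where the ellipticity of $B$ is used, and the a.e. interpretation of the second time derivative.
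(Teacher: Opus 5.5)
Your proposal is correct and follows essentially the same route as the paper. For (a), you reduce to the integrand $(B(\tau)\xi|\xi)^{1/2}$, obtain uniform-in-$\tau$ bounds on the unit sphere from $\tfrac12\mathrm{Id}\le B(\tau)\le\tfrac32\mathrm{Id}$ together with homogeneity, and integrate in $\tau$; for (b), you use the fundamental theorem of calculus and the a.e.\ chain rule for the Lipschitz map $t\mapsto (B(t)\xi|\xi)^{1/2}$. Your explicit justification of differentiation under the integral sign is a detail the paper leaves implicit.
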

	\begin{proof} For $\tau\in \R$, consider the function $f_{\tau}(\xi)\coloneqq (B(\tau)\xi|\xi)^{\frac{1}{2}},$ $\xi\in \R^d$. Clearly, $f_\tau$ is positively homogeneous of order one and smooth away from the origin. Thus, for every $\alpha\in \N_0$,
	\[|\partial_\xi^\alpha f_\tau(\xi)|\leq C_{\alpha,\tau} |\xi|^{1-|\alpha|}\quad \text{with}\quad C_{\alpha,\tau}\coloneqq \sup_{|\xi|=1}|\partial_\xi^\alpha f_\tau(\xi)|.\]	
	By \eqref{eq:Ellipticity_b}, we have $\frac{1}{2}\id \leq B(\tau)\leq \frac{3}{2}\id$, which yields $C_\alpha\coloneqq \sup_{\tau\in \R}C_{\alpha,\tau}<\infty$. Therefore, if $s,t\in \R$ with $s\leq t$, then
	\[|\partial_\xi^\alpha \varphi_{t,s}(\xi)|\leq \int_s^t |\partial_\xi^\alpha f_\tau(\xi)|\dd\tau \leq C_\alpha |\xi|^{1-|\alpha|}\int_s^t \dd \tau=C_\alpha |\xi|^{1-|\alpha|}|t-s|. \]
	The same estimate holds if $s>t$, since $\varphi_{t,s}=-\varphi_{s,t}$. This proves (a). To prove (b), fix $s\in \R$ and $\xi\in \R^d\setminus\{0\}$, and observe that $t\mapsto \varphi_{t,s}(\xi)$ is differentiable with
	\[\partial_t \varphi_{t,s}(\xi)=f_t(\xi)\simeq |\xi|.\]
	Moreover, since $t\mapsto f_t(\xi)$ is Lipschitz continuous, we have for a.e. $t\in \R$
	\[|\partial_t^2 \varphi_{t,s}(\xi)|=|\partial_t f_t(\xi)|=\frac{|(B'(t)\xi|\xi)|}{2f_t(\xi)}\leq \frac{\|B'(t)\| |\xi|^2}{2f_t(\xi)}\simeq \|B'(t)\| |\xi|. \]
	\end{proof}
	
	Now, our natural candidate for a parametrix of \eqref{eq:ParametrixConstructionI} would be $T^\pm(t,s)u_s=(\ee^{\pm \ii  \varphi_{t,s}})(\sqrL)u_s$. However, this expression is not well-defined within the Phillips functional calculus for $\sqrL$ as $\ee^{\pm \ii \varphi_{t,s}}\notin \cF\bM$. To circumvent this subtlety, we define $T^{\pm}(t,s)$  as a limit of Littlewood--Paley sums as in Proposition~\ref{prop:L^2BoundednessOfAlmostOrthogonalOperators}. To this end, we fix a standard Littlewood--Paley partition of unity $(\psi)_{\lambda\in 2^\Z}$ as defined in \eqref{eq:LittlewoodPaleyPartitionOfUnity} for the rest of this article.
	\begin{proposition}[Boundedness of Parametrices I]\label{prop:L^2BoundednessOfParametrix} Let $\alpha\in \R$. Then, the linear operators 
	\begin{equation*}
		T^\pm(t,s)\colon \rH^\alpha_{L}\to \rH^\alpha_{L},\quad  T^\pm(t,s)f=\sum_{\lambda\in 2^{\Z}}(\ee^{\pm \ii \varphi_{t,s} }\psi_\lambda)(\sqrL)f
	\end{equation*}
	are bounded with operator norms uniformly bounded in $s,t\in \R$. 
	\end{proposition}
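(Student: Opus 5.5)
This is a direct application of Proposition~\ref{prop:L^2BoundednessOfAlmostOrthogonalOperators} with $\gamma=0$ and $h_\lambda\coloneqq \ee^{\pm\ii\varphi_{t,s}}$ for every $\lambda\in 2^\Z$, where $s,t\in\R$ are fixed.

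First we check the hypotheses. By Lemma~\ref{lemma:DerivativesOfPhaseFunction}~(a), $\varphi_{t,s}$ is smooth on $\R^d\setminus\{0\}$. This set is an open neighborhood of each $K_\lambda=\supp{\psi_\lambda}\subseteq\{\lambda/2\le|\xi|\le 2\lambda\}$. Hence each $h_\lambda=\ee^{\pm\ii\varphi_{t,s}}$ is smooth near $K_\lambda$. Since $\varphi_{t,s}$ is real-valued, $|h_\lambda|\equiv 1$, so
\[
\|h\|_\infty=\sup_{\lambda\in 2^\Z}\|\ee^{\pm\ii\varphi_{t,s}}\|_{\rL^\infty(K_\lambda)}=1 .
\]

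Proposition~\ref{prop:L^2BoundednessOfAlmostOrthogonalOperators} then shows that the series defining $T^\pm(t,s)f$ is well defined in $\rH^\alpha_L$ for every $f\in\rH^\alpha_L$. It also gives
\[
\|T^\pm(t,s)f\|_{\rH^\alpha_L}\lesssim \|h\|_\infty\|f\|_{\rH^\alpha_L}=\|f\|_{\rH^\alpha_L}.
\]
The implicit constant comes only from that proposition. It depends on $\alpha$, $d$, $\psi$ and $M_2$, but not on the particular functions $h_\lambda$ beyond $\|h\|_\infty$. It is therefore independent of $s$ and $t$, which gives the claimed uniform bound.

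There is no genuine obstacle here; all the work sits in the almost-orthogonality argument behind Proposition~\ref{prop:L^2BoundednessOfAlmostOrthogonalOperators}. The one point to state explicitly is that the constant there is controlled solely by $\|h\|_\infty$ and the fixed partition $(\psi_\lambda)$. This holds because the proof rests on the $\rL^2$ transference bound $\|\varphi(\sqrL)\|_{\cL(\rL^2)}\lesssim_{M_2}\|\varphi\|_{\rL^\infty}$ from Proposition~\ref{prop:PropertiesPhillipsCalculus}~(c), applied to each piece $h_\lambda\psi_\lambda$. Using that bound requires $h_\lambda\psi_\lambda\in\cF\rL^1$, which holds because $h_\lambda\psi_\lambda$ is smooth with compact support.
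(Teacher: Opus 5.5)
Your proposal is correct and matches the paper's proof: fix $s,t$, set $h_\lambda=\ee^{\pm\ii\varphi_{t,s}}$ so that $|h_\lambda|=1$, and apply Proposition~\ref{prop:L^2BoundednessOfAlmostOrthogonalOperators} with $\gamma=0$. The statement of that proposition already gives $\|T\|\lesssim\|h\|_\infty=1$, so the norm bound is uniform in $s,t$, and your speculation about how its constant arises is not needed.
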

	\begin{proof} Fix $s,t\in \R$ and put $h_{\lambda}\coloneqq \ee^{\pm \ii\varphi_{t,s}}$ for $\lambda\in 2^\Z$. Since $|h_\lambda|=1$ for all $\lambda\in 2^\Z$, the assertion immediately follows from Proposition~\ref{prop:L^2BoundednessOfAlmostOrthogonalOperators}.
	\end{proof}
	\begin{remark}  One  may wonder why we use the Phillips functional calculus for $\sqrL$ in place of the Borel functional calculus for the self-adjoint operator $L$. There are at least two reasons. First, while $\rho(|\sqrL|)=\rho(L)$ for $\rho\in C_c^\infty([0,\infty))$, the phase function $\varphi_{t,s}$ is generally not radially symmetric, so that $T^{\pm}(t,s)$ cannot be meaningfully expressed within the Borel functional calculus for $L$. Second, the integral representation \eqref{eq:DefinitionPhillipsCalculus2} is concrete enough to deduce dispersive estimates for $T^{\pm}(t,s)$ which are foundational to prove Strichartz estimates.
	\end{remark}

Clearly, the operator $\Box_P\coloneqq  D_t^2 -P(t)$ is a differential operator of order two and therefore loses two derivatives in the $\rH^{\alpha}_L$-scale, i.e., $\Box_P\colon \rH^{\alpha}\to \rH^{\alpha-2}$ boundedly. In the following, we show that $T^\pm(t,s)$ are approximate solution operators to the equation $\Box_Pu=0$, in the sense that $\Box_P T^{\pm}(t,s)$ only loses one derivative in the $\rH^{\alpha}_L$-scale, i.e., $\Box_P T^{\pm}(t,s)\colon \rH_{L}^\alpha\to \rH_{L}^{\alpha-1}$ boundedly, see Theorem~\ref{thm:ParametrixProperty} below. 
% It is in this sense that the operators $T^{\pm}(t,s)$ can be thought of parametrices.
 The derivative gain allows us to construct a weak solution to \eqref{eq:WaveEquationWithTimeDependentLipschitzMetrics} by an iterative procedure. We need the following lemmas as a preparation.
\begin{lemma}[Strong Differentiability of Parametrices I]\label{lemma:TimeDerivativeOfParametrices} Let $s_0,t_0\in \R$ and assume $f\in \rH^\alpha_L$ for some $\alpha\in \R$. Then, the maps
	\begin{equation*}
		t\mapsto T^\pm(t,s_0)f\quad \text{and}\quad  s\mapsto T^\pm(t_0,s)f
	\end{equation*}
	belong to $ C_b(\R; \rH_L^{\alpha})\cap C_b^1(\R; \rH_L^{\alpha-1})\cap W^{2,\infty}(\R; \rH_L^{\alpha-2})$, and for all $k\in \{0,1,2\}$, we have
	\begin{align*}
		&D_t^k T^{\pm}(t,s_0)f={}\sum_{\lambda\in 2^\Z} \big(D_t^k\ee^{\pm \ii  \varphi_{t,s_0}}\psi_\lambda\big)(\sqrL)f,\\
		&D_s^k T^{\pm}(t_0,s)f={}\sum_{\lambda\in 2^\Z} \big(D_s^k\ee^{\pm \ii  \varphi_{t_0,s}}\psi_\lambda\big)(\sqrL)f,\quad \text{ and }\\
		&\|D_t^k T^{\pm}(t,s_0)f\|_{\alpha-k}+\|D_s^k T^{\pm}(t_0,s)f\|_{\alpha-k}\lesssim \|f\|_\alpha.
	\end{align*}
\end{lemma}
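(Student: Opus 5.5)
The plan is to reduce everything to Lemma~\ref{lemma:DifferentiationUnderTheSum} (for $k\in\{0,1\}$) and Remark~\ref{rem:DifferentiationUnderTheSum} (for the second derivative), applied with $h_{t,\lambda}\coloneqq \ee^{\pm\ii\varphi_{t,s_0}}$ (independent of $\lambda$), respectively with $s$ as the time variable and $h_{s,\lambda}\coloneqq \ee^{\pm\ii\varphi_{t_0,s}}$. The whole task then becomes checking the symbol bounds on $K_\lambda=\supp{\psi_\lambda}$ with constants $C(t)$ that are in fact uniformly bounded in $t$. Uniform boundedness is what upgrades $C$ and $C^1$ to $C_b$ and $C_b^1$, and $W^{2,\infty}_{\loc}$ to $W^{2,\infty}$.

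\emph{First and second derivatives of the symbol.} By Lemma~\ref{lemma:DerivativesOfPhaseFunction}~(b),
\[
\partial_t \ee^{\pm\ii\varphi_{t,s_0}}=\pm\ii\,\partial_t\varphi_{t,s_0}\,\ee^{\pm\ii\varphi_{t,s_0}},
\]
which is continuous in $t$ and bounded by $|\partial_t\varphi_{t,s_0}(\xi)|\simeq|\xi|\lesssim\langle\xi\rangle$. Hypothesis (i) of Lemma~\ref{lemma:DifferentiationUnderTheSum} therefore holds with $n=1$ and constant $C$. For hypothesis (ii) I need uniform convergence on the compact set $K_\lambda$. Since $\partial_t\varphi_{t,s_0}(\xi)=(B(t)\xi|\xi)^{1/2}$, it suffices that $B$ is continuous and $\varphi_{t,s_0}\to\varphi_{t_0,s_0}$ uniformly on compacts; both follow from \eqref{eq:XiDerivativesOfVarphi}-type estimates, namely $|\varphi_{t,s_0}-\varphi_{t_0,s_0}|\lesssim|\xi||t-t_0|$. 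This yields $C_b(\R;\rH^\alpha_L)\cap C_b^1(\R;\rH^{\alpha-1}_L)$ together with the stated formulas and bounds for $k=0,1$.

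For the second derivative I apply Remark~\ref{rem:DifferentiationUnderTheSum} to $f$ with $n=2$ and symbol $\tilde h_{t,\lambda}\coloneqq\partial_t\ee^{\pm\ii\varphi_{t,s_0}}$. This symbol is Lipschitz in $t$ because $t\mapsto f_t(\xi)$ is Lipschitz. For a.e. $t$,
\[
\partial_t\tilde h_{t,\lambda}=\big(\pm\ii\,\partial_t^2\varphi_{t,s_0}-(\partial_t\varphi_{t,s_0})^2\big)\ee^{\pm\ii\varphi_{t,s_0}},
\]
and on $K_\lambda$ this is $\lesssim m_4|\xi|+|\xi|^2\lesssim\langle\lambda\rangle^2$. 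Here I use $\|B'\|_{\rL^\infty}\le m_4$ from \eqref{eq:BoundTimeDetivativeVarphi}. To conclude that the derivative of $t\mapsto D_tT^\pm(t,s_0)f$ in $\rH^{\alpha-2}_L$ is the claimed sum, I also need that $D_tT^\pm(t,s_0)f$ is given by the sum with symbol $\tilde h_{t,\lambda}$. That was established in the previous step.

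\emph{The $s$-variable.} Here I use $\varphi_{t_0,s}=-\varphi_{s,t_0}$ from Lemma~\ref{lemma:DerivativesOfPhaseFunction}~(b). This gives $\partial_s\varphi_{t_0,s}(\xi)=-(B(s)\xi|\xi)^{1/2}$ and $|\partial_s^2\varphi_{t_0,s}|\lesssim\|B'(s)\||\xi|$, so the same bounds hold verbatim and the argument is identical.

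\emph{Main obstacle.} The delicate point is the second derivative, since $\varphi$ is only $C^{1,1}$ in $t$. Remark~\ref{rem:DifferentiationUnderTheSum} gives the Lipschitz/a.e. version, but one must make sure its proof (dominated convergence in the Littlewood--Paley sum applied to difference quotients) identifies the a.e. derivative with the sum over $\partial_t\tilde h_{t,\lambda}$. Concretely, I would write $\tilde h_{t,\lambda}-\tilde h_{t_0,\lambda}=\int_{t_0}^t\partial_\tau\tilde h_{\tau,\lambda}\dd\tau$ pointwise in $\xi$. I would then use Proposition~\ref{prop:L^2BoundednessOfAlmostOrthogonalOperators} to pass to a Bochner integral identity in $\rH^{\alpha-2}_L$, and apply Lebesgue differentiation to get the a.e. derivative with the bound $\lesssim(m_4+1)\|f\|_\alpha$.
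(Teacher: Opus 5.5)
Your proposal is correct and is essentially the paper's proof. Both apply Lemma~\ref{lemma:DifferentiationUnderTheSum} with $n=1$ to $h_{t,\lambda}=\ee^{\pm\ii\varphi_{t,s_0}}$, and Remark~\ref{rem:DifferentiationUnderTheSum} with $n=2$ to its time derivative, using the same $t$-independent symbol bounds from Lemma~\ref{lemma:DerivativesOfPhaseFunction}. The only differences are cosmetic: the paper handles $s\mapsto T^\pm(t_0,s)f$ through the identity $T^\pm(t,s)=T^\mp(s,t)$ instead of re-deriving the bounds for $\partial_s\varphi_{t_0,s}$, and it leaves to that Remark the identification of the a.e.\ derivative, which you spell out via a Bochner-integral and Lebesgue-differentiation argument.
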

\begin{proof} Let $s_0,t_0,\alpha\in \R$ and $f\in \rH^\alpha_L$. Since $T^{\pm}(t,s)=T^{\mp}(s,t)$ for $s,t\in \R$, it suffices to consider the maps $u^\pm \colon t\mapsto T^{\pm}(t,s_0)f$. But in this case, the corresponding statement is just a straightforward consequence of Lemma~\ref{lemma:DifferentiationUnderTheSum}, Remark~\ref{rem:DifferentiationUnderTheSum}, and Lemma~\ref{lemma:DerivativesOfPhaseFunction}. Indeed, put $h_{t,\lambda}\coloneqq \ee^{\pm i\varphi_{t,s_0}}$ for all $t\in \R,\lambda\in 2^\Z$. Then, $h_{t,\lambda}$ is smooth away from the origin and by Lemma~\ref{lemma:DerivativesOfPhaseFunction}, we have the bounds
	\begin{align}\label{eq:BoundsForH}
		\begin{split}
			&|h_{t,\lambda}(\xi)|=1,\\ 
			&|D_t h_{t,\lambda}(\xi)|=|D_t\varphi_{t,s_0}(\xi)|\lesssim|\xi|\lesssim \langle \xi \rangle,\\
			&|D_t h_{t,\lambda}(\xi)-D_t h_{t_0,\lambda}(\xi)|\lesssim \|B(t)-B(t_0)\| |\xi|+|t-t_0| |\xi|^2,\\
			&|D_t^2 h_{t,\lambda}(\xi)|=\big| \big(D_t\varphi_{t,s_0}(\xi)\big)^2+\tfrac{1}{\ii}D_t^2 \varphi_{t,s_0}(\xi)\big|\lesssim |\xi|^2+\|B'(t)\||\xi|\lesssim_{m_4} \langle \xi\rangle^2.
		\end{split}
	\end{align}	
	Lemma \ref{lemma:DifferentiationUnderTheSum} and the first three bounds in \eqref{eq:BoundsForH} yield $u^\pm \in C_b^{k}(\R; \rH_L^{\alpha-k})$ for $k\in \{0,1\}$, while Remark~\ref{rem:DifferentiationUnderTheSum} and the last bound in \eqref{eq:BoundsForH} give $D_t u^\pm\in W^{1,\infty}(\R; \rH_L^{\alpha-2})$. The claim follows.
	\end{proof}
	\begin{lemma}[Derivative Gain on Dyadic Frequencies]\label{lemma:TimeDerivativeFrequencyLocalized} Let $s\in \R$, $\lambda\in 2^\Z$ and $f\in \rH^\alpha_L$  for some $\alpha\in \R$. Then, for a.e. $t\in \R$,
	\begin{equation*}
		\big(D_t^2-P(t)\big)\big(\ee^{\ii\varphi_{t,s}}\psi_\lambda\big)(\sqrL)f=\lambda \big(\ee^{\ii\varphi_{t,s}}\cdot r_{t,\lambda}\cdot \psi_\lambda\big)(\sqrL)f,
	\end{equation*}
	where $r_{t,\lambda}$ is smooth away from the origin with $\|r_{t,\lambda}\|_{\rL^\infty(K_\lambda)}\lesssim \|B'(t)\|$ and $K_\lambda\coloneqq \supp{\psi_\lambda}$.	
	\end{lemma}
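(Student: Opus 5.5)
We compute both terms separately on the single dyadic piece and let the leading-order parts cancel. First, since $\ee^{\ii\varphi_{t,s}}\psi_\lambda$ is smooth and compactly supported, $(\ee^{\ii\varphi_{t,s}}\psi_\lambda)(\sqrL)f$ is a one-term instance of Lemma~\ref{lemma:DifferentiationUnderTheSum} and Remark~\ref{rem:DifferentiationUnderTheSum}, taking $h_{t,\mu}=\ee^{\ii\varphi_{t,s}}$ for $\mu=\lambda$ and $0$ otherwise. Alternatively, one may directly differentiate the Bochner integral $\int(\cF^{-1}(\ee^{\ii\varphi_{t,s}}\psi_\lambda))(y)\ee^{-\ii y\cdot\sqrL}g\dd y$ after passing to $g=\langle D_L\rangle^\alpha f\in\rL^2$ via \eqref{eq:ExtensionOfPhillipsFunctionalCalculus}. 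This gives, for a.e.\ $t$,
\begin{equation*}
D_t^2(\ee^{\ii\varphi_{t,s}}\psi_\lambda)(\sqrL)f=\big((\partial_t\varphi_{t,s})^2\ee^{\ii\varphi_{t,s}}\psi_\lambda-\ii\,\partial_t^2\varphi_{t,s}\,\ee^{\ii\varphi_{t,s}}\psi_\lambda\big)(\sqrL)f,
\end{equation*}
using $D_t^2=-\partial_t^2$ and $\partial_t^2\ee^{\ii\varphi}=(\ii\partial_t^2\varphi-(\partial_t\varphi)^2)\ee^{\ii\varphi}$. The a.e.\ differentiability comes from Lemma~\ref{lemma:DerivativesOfPhaseFunction}(b).

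Second, we compute $P(t)$ on the piece. Since $P(t)=\sum_j b_j(t)L_j$ and $L_j=(\ssqrt{L_j})^2$, Proposition~\ref{prop:PropertiesPhillipsCalculus}(b) with multi-index $2e_j$ gives $L_j\varphi(\sqrL)f=(\xi_j^2\varphi)(\sqrL)f$ for $\varphi\in C_c^\infty$. This is first checked for $f\in\rL^2$ and then extended to $\rH^\alpha_L$ by \eqref{eq:ExtensionOfPhillipsFunctionalCalculus} and Lemma~\ref{lemma:ExtensionOfL_jToHomogeneousSobolevSpaces}, using density of $\SL$ and that both sides are continuous in $f$. Hence $P(t)(\ee^{\ii\varphi_{t,s}}\psi_\lambda)(\sqrL)f=(p(t,\cdot)\ee^{\ii\varphi_{t,s}}\psi_\lambda)(\sqrL)f$ with $p(t,\xi)=(B(t)\xi|\xi)$.

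By Definition~\ref{def:PhaseFunction}, $(\partial_t\varphi_{t,s})^2=(B(t)\xi|\xi)=p(t,\xi)$ exactly, so the eikonal terms cancel. Linearity of the calculus (Proposition~\ref{prop:PropertiesPhillipsCalculus}(a)) then leaves
\begin{equation*}
(D_t^2-P(t))(\ee^{\ii\varphi_{t,s}}\psi_\lambda)(\sqrL)f=\big(-\ii\,\partial_t^2\varphi_{t,s}\,\ee^{\ii\varphi_{t,s}}\psi_\lambda\big)(\sqrL)f.
\end{equation*}
We therefore set $r_{t,\lambda}(\xi):=-\ii\lambda^{-1}\partial_t^2\varphi_{t,s}(\xi)=-\ii\lambda^{-1}\frac{(B'(t)\xi|\xi)}{2(B(t)\xi|\xi)^{1/2}}$. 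For a fixed $t$ where $B'(t)$ exists, this is smooth away from the origin, being homogeneous of degree one. On $K_\lambda$ we have $|\xi|\le 2\lambda$, so Lemma~\ref{lemma:DerivativesOfPhaseFunction}(b) gives $\|r_{t,\lambda}\|_{\rL^\infty(K_\lambda)}\lesssim\lambda^{-1}\|B'(t)\|\lambda=\|B'(t)\|$.

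The main obstacle is the first step: justifying that the second time derivative of the operator-valued map exists a.e.\ and commutes with the calculus when $t\mapsto\varphi_{t,s}$ is only $C^{1,1}$. I would handle this through Remark~\ref{rem:DifferentiationUnderTheSum} applied to $\partial_t$ of the $C^1$ map from Lemma~\ref{lemma:DifferentiationUnderTheSum}. This requires the bound $|\partial_t(\partial_t\varphi\,\ee^{\ii\varphi})|\lesssim\langle\lambda\rangle^2$ on $K_\lambda$, which follows from \eqref{eq:BoundsForH}, together with the observation that the a.e.\ derivative formula holds for each fixed $\xi$ and passes inside the integral by dominated convergence (the Lipschitz constants are uniform on the compact $K_\lambda$).
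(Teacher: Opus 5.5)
Your proposal is correct and is essentially the paper's own proof. You differentiate under the calculus, use $(\partial_t\varphi_{t,s})^2=(B(t)\xi|\xi)$ together with Proposition~\ref{prop:PropertiesPhillipsCalculus}(b) to cancel the $P(t)$ term, and set $r_{t,\lambda}=-\ii\lambda^{-1}\partial_t^2\varphi_{t,s}$, which is exactly the paper's $D_t(B(t)\tfrac{\xi}{\lambda}|\tfrac{\xi}{\lambda})^{1/2}$. Your justification of the a.e.\ second derivative via Remark~\ref{rem:DifferentiationUnderTheSum} just spells out what the paper compresses into ``dominated convergence''.
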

	\begin{proof} By Lemma \ref{lemma:DerivativesOfPhaseFunction} and dominated convergence, we have for a.e. $t\in \R$	\begin{equation}\label{eq:TimeDerivativeDominatedConvergence}
			D_t^2(\ee^{\ii\varphi_{t,s}}\psi_\lambda)(\sqrL)f=(D_t^2\ee^{\ii\varphi_{t,s}}\psi_\lambda)(\sqrL)f.
		\end{equation}
		Recalling that $\varphi_{t,s}(\xi)=\int_s^t (B(\tau)\xi|\xi)^{\frac{1}{2}} \dd \tau$, we compute for a.e. $t\in \R$
		\begin{equation*}
			D_t^2 \ee^{\ii\varphi_{t,s}(\xi)}=D_t ((B(t)\xi|\xi)^{\frac{1}{2}}\cdot \ee^{\ii \varphi_{t,s}(\xi)})=\big((B(t)\xi|\xi)+D_t (B(t)\xi|\xi)^{\frac{1}{2}}\big)\ee^{\ii\varphi_{t,s}(\xi)}.
		\end{equation*}
		Plugging this into \eqref{eq:TimeDerivativeDominatedConvergence}, we get
		\begin{align*}
			&{}D_t^2(\ee^{\ii\varphi_{t,s}}\psi_\lambda)(\sqrL)f\\
			={}&((B(t)\xi|\xi)\ee^{\ii\varphi_{t,s}}\psi_\lambda)(\sqrL)f+\lambda(\ee^{\ii\varphi_{t,s}}D_t\big(B(t)\tfrac{\xi}{\lambda}|\tfrac{\xi}{\lambda}\big)^{\frac{1}{2}}\psi_\lambda)(\sqrL)f.
		\end{align*}
		By Proposition~\ref{prop:PropertiesPhillipsCalculus}~(b), the first term on the right-hand side is equal to
		\begin{equation*}
			\sum_{j=1}^d b_j(t)(\xi_j^2\ee^{\ii\varphi_{t,s}}\psi_\lambda)(\sqrL)f=\sum_{j=1}^d b_j(t)L_j(\ee^{\ii\varphi_{t,s}}\psi_\lambda)(\sqrL)f=P(t)(\ee^{\ii\varphi_{t,s}}\psi_\lambda)(\sqrL)f.
		\end{equation*} 
		We therefore conclude
		\begin{equation*}
			\big(D_t^2-P(t)\big)\big(\ee^{\ii\varphi_{t,s}}\psi_\lambda\big)(\sqrL)f=\lambda \big(\ee^{\ii\varphi_{t,s}}D_t(B(t)\tfrac{\xi}{\lambda}|\tfrac{\xi}{\lambda})^{\frac{1}{2}}\psi_\lambda \big)(\sqrL)f.
		\end{equation*}
		But by \eqref{eq:BoundTimeDetivativeVarphi}, we have
		\begin{equation*}
			|(D_t(B(t)\tfrac{\xi}{\lambda}|\tfrac{\xi}{\lambda})^{\frac{1}{2}}|=|D_t^2\varphi_{t,s}(\tfrac{\xi}{\lambda})|\lesssim \|B'(t)\|\, |\tfrac{\xi}{\lambda}|\simeq \|B'(t)\|\quad (\xi \in K_\lambda),
		\end{equation*}
		so the assertion follows setting $r_{t,\lambda}(\xi)\coloneqq D_t(B(t)\tfrac{\xi}{\lambda}|\tfrac{\xi}{\lambda})^{\frac{1}{2}}$.
	\end{proof}
	\begin{theorem}[Derivative Gain I]\label{thm:ParametrixProperty} Let $\alpha\in \R$. Then, the operator
	\begin{equation*}
		\big(D_t^2-P(t)\big)T^{\pm}(t,s)\colon \rH_L^{\alpha}\to  \rH_L^{\alpha-1}
	\end{equation*}
	is bounded for a.e. $s,t\in \R$ with an operator norm uniform in $s$ and $t$. Moreover, we have the  estimates
	\begin{align*}
		(i)\quad&\|\big(D_t^2-P(t)\big)T^{\pm}(t,s)f\|_{\rH^{\alpha-1}_L}\lesssim \|B'(t)\|\cdot  \|f\|_{\rH_L^\alpha},\\
		(ii)\quad&\|\big(D_t^2-P(t)\big)T^{\pm}(t,s)f\|_{\rL_t^1(\R;\rH^{\alpha-1}_L)}\lesssim \|B'\|_{\rL^1} \|f\|_{\rH_L^\alpha}. 
	\end{align*}
	\end{theorem}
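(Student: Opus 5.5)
The plan is to reduce everything to the dyadic identity of Lemma~\ref{lemma:TimeDerivativeFrequencyLocalized} and then sum the pieces with the almost-orthogonality estimate of Proposition~\ref{prop:L^2BoundednessOfAlmostOrthogonalOperators}. We may assume the sign is $+$: for $T^-(t,s)$, note that $\ee^{-\ii\varphi_{t,s}}$ satisfies the same computation with $\varphi_{t,s}$ replaced by $-\varphi_{t,s}$, and $(\partial_t(-\varphi_{t,s}))^2=(B(t)\xi|\xi)$. The remainder then only changes sign.

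\textbf{Step 1: commute the operator with the sum.} Fix $f\in\rH^\alpha_L$ and $s$. By Lemma~\ref{lemma:TimeDerivativeOfParametrices}, for a.e.\ $t$,
\[
D_t^2T^+(t,s)f=\sum_{\lambda\in 2^\Z}\big(D_t^2\ee^{\ii\varphi_{t,s}}\psi_\lambda\big)(\sqrL)f,
\]
with convergence in $\rH^{\alpha-2}_L$. By Lemma~\ref{lemma:ExtensionOfL_jToHomogeneousSobolevSpaces}, $P(t)\colon\rH^\alpha_L\to\rH^{\alpha-2}_L$ is bounded. Since $T^+(t,s)f$ is the $\rH^\alpha_L$-limit of its partial Littlewood--Paley sums, we may pass $P(t)$ inside the sum. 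The dyadic identities in the proof of Lemma~\ref{lemma:TimeDerivativeFrequencyLocalized} are stated with $D_t^2$ applied termwise, so they match the termwise expression above. Subtracting then gives, for a.e.\ $t$,
\[
\big(D_t^2-P(t)\big)T^{+}(t,s)f=\sum_{\lambda\in2^\Z}\lambda\big(\ee^{\ii\varphi_{t,s}}r_{t,\lambda}\psi_\lambda\big)(\sqrL)f .
\]

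\textbf{Step 2: apply almost orthogonality with $\gamma=1$.} Rewrite $\lambda=\langle\lambda\rangle\cdot\frac{\lambda}{\langle\lambda\rangle}$ and set
\[
h_\lambda\coloneqq \frac{\lambda}{\langle\lambda\rangle}\,\ee^{\ii\varphi_{t,s}}\,r_{t,\lambda}.
\]
These functions are smooth near $K_\lambda$ and satisfy $\sup_\lambda\|h_\lambda\|_{\rL^\infty(K_\lambda)}\lesssim\|B'(t)\|$, because $\lambda\le\langle\lambda\rangle$, $|\ee^{\ii\varphi_{t,s}}|=1$, and Lemma~\ref{lemma:TimeDerivativeFrequencyLocalized} bounds $r_{t,\lambda}$. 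Proposition~\ref{prop:L^2BoundednessOfAlmostOrthogonalOperators} with $\gamma=1$ then yields (i):
\[
\big\|\big(D_t^2-P(t)\big)T^{+}(t,s)f\big\|_{\alpha-1}\lesssim\|B'(t)\|\,\|f\|_\alpha .
\]
The operator norm is uniform in $s,t$ because $\|B'(t)\|\le m_4$ for a.e.\ $t$.

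\textbf{Step 3: integrate in time.} Integrating (i) over $t\in\R$ gives (ii), since $\int_\R\|B'(t)\|\dd t\lesssim_d\max_j\|b_j'\|_{\rL^1}$. Measurability of the left-hand side in $t$ follows from Lemma~\ref{lemma:TimeDerivativeOfParametrices}, which places $t\mapsto T^+(t,s)f$ in $W^{2,\infty}(\R;\rH^{\alpha-2}_L)$.

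\textbf{Main obstacle.} The delicate step is Step 1: justifying that the a.e.-in-$t$ second derivative of the full sum equals the sum of the dyadic second derivatives. This is needed on a common full-measure set of $t$ for all $\lambda$ simultaneously. A countable union of null sets is null, so a common set exists, and the summed identity then holds in $\rH^{\alpha-2}_L$. Since Step 2 shows the right-hand side lies in $\rH^{\alpha-1}_L$, the identity also holds there.
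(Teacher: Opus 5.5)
Your proposal is correct and follows the paper's proof essentially verbatim: both write $(D_t^2-P(t))T^{\pm}(t,s)f$ as the Littlewood--Paley sum of the dyadic remainders from Lemma~\ref{lemma:TimeDerivativeFrequencyLocalized}, bound it in $\rH^{\alpha-1}_L$ via Proposition~\ref{prop:L^2BoundednessOfAlmostOrthogonalOperators}, and integrate in $t$ to get (ii). Your rewriting $\lambda=\langle\lambda\rangle\cdot\frac{\lambda}{\langle\lambda\rangle}$ to match the $\langle\lambda\rangle^{\gamma}$ weight in that proposition, and your remarks on the common null set of times and on the sign change for $T^-$, are small details the paper leaves implicit.
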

	\begin{proof} It is enough to prove the estimates (i) and (ii). Let $s,t,\alpha\in \R$ and $f\in \rH^\alpha_L$. To ease notation, we just give the proof for $T\coloneqq T^+$ (the proof for $T^{-}$ is analogous). By Lemma~\ref{lemma:TimeDerivativeOfParametrices}, we have $D_t^2T(t,s)f\in \rH^{\alpha-2}_L$ almost everywhere. On the other hand, Proposition~\ref{prop:L^2BoundednessOfParametrix} and Lemma~\ref{lemma:ExtensionOfL_jToHomogeneousSobolevSpaces} yield $P(t)T(t,s)f\in  \rH^{\alpha-2}_L$. Thus,  $(D_t^2-P(t))T(t,s)f$ is a well-defined element in $\rH^{\alpha-2}_L$ (for a.e. $t$). Now, Lemma~\ref{lemma:TimeDerivativeOfParametrices} and Lemma~\ref{lemma:TimeDerivativeFrequencyLocalized} imply
	\begin{align*}
		\big(D_t^2-P(t)\big)T(t,s)f={}&\sum_{\lambda\in 2^\Z}\big(D_t^2-P(t)\big)\big(\ee^{\ii\varphi_{t,s}}\psi_\lambda\big)(\sqrL)f\\
		={}&\sum_{\lambda\in 2^\Z} \lambda \big(\ee^{\ii\varphi_{t,s}} r_{t,\lambda}\cdot \psi_\lambda\big)(\sqrL)f\\
		={}&\sum_{\lambda\in 2^\Z} \lambda (h_{t,s,\lambda}\psi_\lambda)(\sqrL)f
	\end{align*}
	with
	\begin{equation*}
		h_{t,s,\lambda}\colon \R^d\setminus\{0\}\to \C,\quad h_{t,s,\lambda}(\xi)=\ee^{\ii\varphi_{t,s}(\xi)}r_{t,\lambda}(\xi)
	\end{equation*}
	satisfying
	\begin{equation*}
		\|h_{t,s,\lambda}\|_{\rL^\infty(K_\lambda)}=\|r_{t,\lambda}\|_{\rL^\infty(K_\lambda)} \lesssim \|B'(t)\| 
	\end{equation*}
	uniformly in $\lambda\in 2^\Z$ and $s\in\R$. It therefore follows from Proposition~\ref{prop:L^2BoundednessOfAlmostOrthogonalOperators} that $\big(D_t^2-P(t)\big)T(t,s)f$ in fact belongs to $\rH^{\alpha-1}_L$ (for a.e. $t$) and that 
	\begin{equation*}
		\|\big(D_t^2-P(t)\big)T(t,s)f\|_{\rH^{\alpha-1}_L}\lesssim \|B'(t)\|\,  \|f\|_{\rH_L^\alpha}.
	\end{equation*}
	This proves (i). Estimate (ii) follows by integrating (i) with respect to $t\in \R$.
	\end{proof}

	Based on the operators $T^{\pm}(t,s)$, we define the following two families of operators $C(t,s)$ and $S(t,s)$ to adjust for the initial values $T(t,t)$ and $D_t T(t,s)|_{t=s}$. They play a similar role as $\mathrm{Cos}((t-s)\sqrt{L})$ and $\ii (t-s)\mathrm{Sinc}((t-s)\ssqrt{L})$ do in the time-independent case.
	\begin{definition}[Parametrices II]\label{def:Parametrices2} Let $s, t,\alpha\in \R$. For $f\in \rH_L^\alpha$, we define
	\begin{align}
		C(t,s)f\coloneqq{}& \hphantom{i}\sum_{\lambda\in 2^\Z} \big((\cos\circ\varphi_{t,s}) \cdot \psi_\lambda\big)(\sqrL)f,\notag\\
		S(t,s)f\coloneqq{}& \ii\sum_{\lambda\in 2^\Z} \bigg(\frac{\sin\circ\varphi_{t,s}}{\partial_s \varphi_{t,s}} \cdot \psi_\lambda\bigg)(\sqrL)f.\label{eq:DefintionSinc}
	\end{align}
	\end{definition}
	The properties established for $T^\pm(t,s)$ extend to $C(t,s)$ in a natural way. The situation for $S(t,s)$ is slightly different. In view of the factor $|\partial_s\varphi_{t,s}|^{-1}\simeq |\xi|^{-1}$ in \eqref{eq:DefintionSinc}, the mapping properties of $S(t,s)$ are even improved by one order in the $\rH^\alpha_L$-scale (see Lemma~\ref{lemma:RelationBetweenParametrices}~(b)~(iv) below). This will be crucial in Section~\ref{section:ExistenceOfWeakSolutions}. Another difference is that the very same factor reduces the strong differentiability of $s\mapsto S(t,s)$ by one order, which fortunately turns out to be irrelevant for our purposes. 
	\begin{lemma}[Relation between Parametrices]\label{lemma:RelationBetweenParametrices} Let $s,t,\alpha\in \R$.
		\begin{enumerate}[label=(\alph*)]
			\item We have
			\begin{equation}\label{eq:RelationBetweenParametricesI}
			C(t,s)f=\frac{1}{2}\big(T^{+}(t,s)+T^{-}(t,s)\big)f\quad (f\in \rH^\alpha_L).
			\end{equation}
			\item Define for $f\in \rH^\alpha_L$
			\begin{equation*}
			\tilde{T}(t,s)f\coloneqq\sum_{\lambda\in 2^{\Z}}(\tilde{h}_{t,s} \psi_\lambda\big)(\sqrL)f, \quad \tilde{h}_{t,s}(\xi)\coloneqq \langle \xi \rangle \frac{\sin(\varphi_{t,s}(\xi))}{\partial_s \varphi_{t,s}(\xi)}.
			\end{equation*}
			Then:
			\begin{enumerate}[label=(\roman*)]
				\item $\tilde{T}(t,s)\in \cL(\rH^\alpha_L)$  with $\|\tilde{T}(t,s)\|\lesssim \langle t-s \rangle$.
				\item The map $\tau\mapsto \tilde{T}(\tau,s)f$ belongs to $\bigcap_{k=0}^1C^k(\R;\rH^{\alpha-k}_L)\cap W_\loc^{2,\infty}(\R;\rH_L^{\alpha-2})$.
				\item The map $\tau\mapsto \tilde{T}(t,\tau)f$ belongs to $C(\R;\rH^{\alpha}_L)\cap W_\loc^{1,\infty}(\R; \rH^{\alpha-1}_L)$.
				\item We have the identity \begin{equation}\label{eq:RelationBetweenParametricesII}
					S(t,s)f=\ii\langle D_L\rangle^{-1}\,\tilde{T}(t,s)f\quad (f\in \rH_L^\alpha).	 
				\end{equation}
				\item We have
				\begin{equation*}
				\big(D_t^2-P(t)\big)(\tilde{h}_{t,s}\psi_\lambda)(\ssqrt{\mathbf{L}})f= \lambda  (\tilde{r}_{t,s,\lambda}\psi_\lambda)(\sqrL)f \quad (\lambda\in 2^\Z),
				\end{equation*}
				where $\tilde{r}_{t,s,\lambda}\colon \R^d\setminus\{0\}\to \C$ is smooth and satisfies the estimate\\ $\|\tilde{r}_{t,s,\lambda}\|_{\rL_\xi^\infty(K_\lambda)}\lesssim \|B'(t)\|$, $K_\lambda\coloneqq \supp{\psi_\lambda}$. 
			\end{enumerate}
		\end{enumerate}	
	\end{lemma}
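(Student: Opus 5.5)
The plan is to reduce every item to the calculus statements of Section~\ref{section:Preliminaries}: Propositions~\ref{prop:PropertiesPhillipsCalculus} and~\ref{prop:L^2BoundednessOfAlmostOrthogonalOperators}, Lemma~\ref{lemma:DifferentiationUnderTheSum} with Remark~\ref{rem:DifferentiationUnderTheSum}, and \eqref{eq:RuleForD_L}. For this we need pointwise bounds on the symbols on $K_\lambda$. Write $f_\tau(\xi)\coloneqq(B(\tau)\xi|\xi)^{1/2}\simeq|\xi|$. By Lemma~\ref{lemma:DerivativesOfPhaseFunction}(b), $\partial_s\varphi_{t,s}=-f_s$, which depends only on $s$. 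Hence
\[
\tilde h_{t,s}(\xi)=-g_s(\xi)\sin(\varphi_{t,s}(\xi)),\qquad g_s(\xi)\coloneqq\frac{\langle\xi\rangle}{f_s(\xi)}.
\]
For (a) we simply use $\cos=\tfrac12(\ee^{\ii\cdot}+\ee^{-\ii\cdot})$ and linearity of the calculus, summand by summand. The sums converge in $\rH^\alpha_L$ by Proposition~\ref{prop:L^2BoundednessOfParametrix}.

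For (b)(i), use $|\sin\varphi_{t,s}(\xi)|\le\min\{1,|\varphi_{t,s}(\xi)|\}\lesssim\min\{1,|t-s||\xi|\}$ by \eqref{eq:XiDerivativesOfVarphi}. This gives $|\tilde h_{t,s}|\lesssim 1$ for $|\xi|\ge1$ and $|\tilde h_{t,s}|\lesssim|t-s|$ for $|\xi|\le1$, so $\|\tilde h_{t,s}\|_\infty\lesssim\langle t-s\rangle$, and Proposition~\ref{prop:L^2BoundednessOfAlmostOrthogonalOperators} with $\gamma=0$ applies.

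For (ii) we apply Lemma~\ref{lemma:DifferentiationUnderTheSum} (with $n=1$) and Remark~\ref{rem:DifferentiationUnderTheSum} to $h_{\tau,\lambda}=\tilde h_{\tau,s}$. We have $\partial_\tau\tilde h_{\tau,s}=-g_s\cos(\varphi_{\tau,s})f_\tau$, which is $\lesssim\langle\xi\rangle$, and it is continuous in $\tau$ uniformly on $K_\lambda$ as in \eqref{eq:BoundsForH}. Almost everywhere,
\[
\partial_\tau^2\tilde h_{\tau,s}=g_s\bigl(\sin(\varphi_{\tau,s})f_\tau^2-\cos(\varphi_{\tau,s})\,\partial_\tau f_\tau\bigr),
\]
which is $\lesssim_{m_4}\langle\xi\rangle^2$ since $|\partial_\tau f_\tau|\lesssim\|B'\||\xi|$.

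For (iii) the $s$-derivative hits both factors:
\[
\partial_s\tilde h_{t,s}=-g_s\cos(\varphi_{t,s})\,\partial_s\varphi_{t,s}-(\partial_s g_s)\sin(\varphi_{t,s}),\qquad |\partial_s g_s|\lesssim m_4\frac{\langle\xi\rangle}{|\xi|}.
\]
Using again $|\sin\varphi|\lesssim\min\{1,|t-s||\xi|\}$, this gives $|\partial_s\tilde h_{t,s}|\lesssim\langle t-s\rangle\langle\xi\rangle$, a locally bounded constant times $\langle\xi\rangle$. Remark~\ref{rem:DifferentiationUnderTheSum} then gives $W^{1,\infty}_{\loc}(\R;\rH^{\alpha-1}_L)$. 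Continuity in $\rH^\alpha_L$ follows from Lemma~\ref{lemma:DifferentiationUnderTheSum} with $n=0$, because $s\mapsto\tilde h_{t,s}$ is uniformly continuous on each $K_\lambda$ and bounded by $\langle t-s\rangle$. Only one derivative is available here, since $g_s$ is merely Lipschitz in $s$.

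For (iv), $\langle D_L\rangle^{-1}$ is bounded on $\rH^{\alpha}_L$ into $\rH^{\alpha+1}_L$, so it commutes with the convergent sum. By \eqref{eq:RuleForD_L} each summand becomes $(\langle\cdot\rangle^{-1}\tilde h_{t,s}\psi_\lambda)(\sqrL)f$, and $\langle\xi\rangle^{-1}\tilde h_{t,s}=\sin\varphi_{t,s}/\partial_s\varphi_{t,s}$. This is exactly the summand in \eqref{eq:DefintionSinc}.

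For (v) I would proceed as in Lemma~\ref{lemma:TimeDerivativeFrequencyLocalized}. First justify $D_t^2(\tilde h_{t,s}\psi_\lambda)(\sqrL)f=(D_t^2\tilde h_{t,s}\psi_\lambda)(\sqrL)f$ by dominated convergence. Then use Proposition~\ref{prop:PropertiesPhillipsCalculus}(b) to write $P(t)(\tilde h_{t,s}\psi_\lambda)(\sqrL)f=(f_t^2\tilde h_{t,s}\psi_\lambda)(\sqrL)f$. Since $g_s$ is independent of $t$, the computation above gives the exact cancellation
\[
(D_t^2-f_t^2)\tilde h_{t,s}=g_s\cos(\varphi_{t,s})\,\partial_tf_t.
\]
So one sets $\tilde r_{t,s,\lambda}\coloneqq\lambda^{-1}g_s\cos(\varphi_{t,s})\,\partial_tf_t$, which is smooth away from $0$ and satisfies $|\tilde r_{t,s,\lambda}|\lesssim\|B'(t)\|\,\langle\xi\rangle/\lambda$ on $K_\lambda$.

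The main obstacle is precisely this last bound. For $\lambda\ge1$ it gives $\|\tilde r_{t,s,\lambda}\|_{\rL^\infty(K_\lambda)}\lesssim\|B'(t)\|$ as stated. For $\lambda<1$, however, the factor $\langle\xi\rangle/|\xi|$ coming from $g_s$ gives only $\|B'(t)\|\lambda^{-1}$. I would therefore check whether the intended normalisation is $\langle\lambda\rangle$ in place of $\lambda$ in front of $\tilde r_{t,s,\lambda}$. In that form the bound $\|\tilde r_{t,s,\lambda}\|_{\rL^\infty(K_\lambda)}\lesssim\|B'(t)\|$ holds for all $\lambda\in 2^\Z$, and it is all that Proposition~\ref{prop:L^2BoundednessOfAlmostOrthogonalOperators} with $\gamma=1$ needs for the derivative gain later on.
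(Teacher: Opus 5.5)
Your proposal is correct and follows the paper's route. You derive the same pointwise bounds on $\tilde{h}_{t,s}$ and its $t$- and $s$-derivatives; your factorisation $\tilde{h}_{t,s}(\xi)=-\langle\xi\rangle(B(s)\xi|\xi)^{-1/2}\sin(\varphi_{t,s}(\xi))$ just makes these bounds transparent. You then feed them into Proposition~\ref{prop:L^2BoundednessOfAlmostOrthogonalOperators}, Lemma~\ref{lemma:DifferentiationUnderTheSum} and Remark~\ref{rem:DifferentiationUnderTheSum}, use \eqref{eq:RuleForD_L} for (iv), and redo the computation of Lemma~\ref{lemma:TimeDerivativeFrequencyLocalized} for (v).

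Your caveat on (v) is also justified. The paper's one-line reference to Lemma~\ref{lemma:TimeDerivativeFrequencyLocalized} overlooks that $\langle\xi\rangle/\partial_s\varphi_{t,s}$ is not homogeneous. With the prefactor $\lambda$, the remainder on $K_\lambda$ therefore grows like $\|B'(t)\|/\lambda$ as $\lambda\to 0$ whenever $B'(t)\neq 0$. Your corrected form, with $\langle\lambda\rangle$ in place of $\lambda$ and $\|\tilde{r}_{t,s,\lambda}\|_{\rL^\infty(K_\lambda)}\lesssim\|B'(t)\|$, holds for all $\lambda\in 2^\Z$. It is exactly what Proposition~\ref{prop:L^2BoundednessOfAlmostOrthogonalOperators} with $\gamma=1$ needs in Theorem~\ref{thm:ParametrixPropertyOfSineAndCosineFuntions}.
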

	\begin{proof} Assertion (a) is obvious. To prove (b),  let us first show  the estimates
	\begin{equation}\label{eq:EstimatesforTildeT}
		\begin{aligned}
			|\tilde{h}_{t,s}(\xi)|&\lesssim \langle t-s\rangle, &\quad
			&|\partial_t \tilde{h}_{t,s}(\xi)|\lesssim \langle \xi \rangle, \\
			|\partial_s \tilde{h}_{t,s}(\xi)|&\lesssim_{m_4} \langle t-s\rangle \langle \xi\rangle ,&\quad & |\partial_t^2 \tilde{h}_{t,s}(\xi)|\lesssim \langle t-s\rangle \langle \xi\rangle^2
		\end{aligned}
	\end{equation}
	and
	\begin{align}\label{eq:BoundsContinuityofHTilde}
	|D_t \tilde{h}_{t,s}(\xi)-D_t \tilde{h}_{t_0,s}(\xi)|\lesssim \langle \xi\rangle^2\big( |t-t_0|+\|B(t)-B(t_0)\|\big)
	\end{align}
	for $s,t,s_0,t_0\in \R$, $\xi\in \R^d\setminus\{0\}$. Indeed, \eqref{eq:BoundTimeDetivativeVarphi} implies
	\begin{equation*}
		|\tilde{h}_{t,s}(\xi)|\leq \frac{\langle \xi\rangle }{|\partial_s \varphi_{t,s}(\xi)|}\simeq \frac{\langle \xi\rangle }{|\xi|}\lesssim 1\quad \text{for }|\xi|\geq 1,
	\end{equation*} 
	while for $|\xi|\leq 1$, \eqref{eq:XiDerivativesOfVarphi}, \eqref{eq:BoundTimeDetivativeVarphi} give
	\begin{equation*}
		|\tilde{h}_{t,s}(\xi)|\leq 2 \frac{|\sin(\varphi_{t,s}(\xi))|}{|\partial_s \varphi_{t,s}(\xi)|}\lesssim \frac{|\varphi_{t,s}(\xi)|}{|\xi|}\lesssim |t-s|.
	\end{equation*}
	This shows $|\tilde{h}_{t,s}(\xi)|\lesssim \langle t-s\rangle$. Using this estimate and \eqref{eq:BoundTimeDetivativeVarphi}, we deduce %\textcolor{red}{ASSUMPTION}
	\begin{align*}
		|\partial_t \tilde{h}_{t,s}(\xi)|&={} \frac{|\cos(\varphi_{t,s}(\xi))|\; |\partial_t \varphi_{t,s}(\xi)|}{|\partial_s \varphi_{t,s}(\xi)|} \langle \xi \rangle \lesssim \langle \xi \rangle,\\[0.1cm]
		|\partial_s \tilde{h}_{t,s}(\xi)|&={}\bigg|\cos(\varphi_{t,s}(\xi)) \langle \xi \rangle -\frac{\partial^2_s \varphi_{t,s}(\xi)}{\partial_s \varphi_{t,s}(\xi)} \tilde{h}_{t,s}(\xi)\bigg|\\
		&\lesssim_{}{} \langle \xi\rangle +\|B'(s)\|\,|\tilde{h}_{t,s}(\xi)|\\
		&\lesssim_{m_4} \langle  \xi \rangle+\langle  t-s\rangle \lesssim \langle t-s\rangle \langle \xi \rangle
	\end{align*}
	and finally
	\begin{align*}
		|\partial_t^2 \tilde{h}_{t,s}(\xi)|&={} \bigg|\tilde{h}_{t,s}(\xi)|\partial_t \varphi_{t,s}(\xi)|^2 -\frac{\partial_t^2\varphi_{t,s}(\xi) }{\partial_s\varphi_{t,s}(\xi)} \cos(\varphi_{t,s}(\xi))\langle \xi \rangle \bigg|     \\
		&\lesssim \langle t-s\rangle |\xi|^2 +\|B'(t)\| \langle\xi \rangle   \lesssim_{m_4} \langle t-s\rangle \langle \xi\rangle^2.
	\end{align*} 
	The bounds \eqref{eq:BoundsContinuityofHTilde} are shown similarly. Using \eqref{eq:EstimatesforTildeT} and \eqref{eq:BoundsContinuityofHTilde}, one may argue as in the proofs of Proposition~\ref{prop:L^2BoundednessOfParametrix} and Lemma~\ref{lemma:TimeDerivativeOfParametrices} to show assertions (i), (ii), and (iii). Furthermore, the boundedness of $\langle D_L\rangle^{-1}\colon \rH_L^{\alpha}\to \rH_L^{\alpha+1}$ and \eqref{eq:RuleForD_L} shows for each $f\in \rH^\alpha_L$ 
	\begin{align*}
		\ii\langle D_L\rangle^{-1}\tilde{T}(t,s)f={}&\ii \sum_{\lambda\in 2^\Z} \langle D_L\rangle^{-1}(\tilde{h}_{t,s}\psi_\lambda)(\sqrL)f\\
		={}&\ii \sum_{\lambda\in 2^\Z}  (\langle \xi \rangle^{-1} \tilde{h}_{t,s}\psi_\lambda)(\sqrL)f=S(t,s)f,
	\end{align*}
	proving (iv). Finally, (v) is exactly shown as Lemma~\ref{lemma:TimeDerivativeFrequencyLocalized}.
	\end{proof}
	The following two theorems are the main results of this section.
	\begin{theorem}[Boundedness and Strong Differentiability of Parametrices~II]\label{thm:GeneralizedCosineAndSineFuntions} Let $\alpha\in \R$. 
	\begin{enumerate}[label=(\alph*)]
		\item Let $s,t\in \R$. Then, the linear operators
		\begin{align}
			& C(t,s)\colon \rH_L^\alpha\to \rH_L^\alpha,\label{eq:L^2BoundednessOfCosineFunction}\\
			& S(t,s)\colon \rH_L^\alpha\to \rH_L^{\alpha+1}\label{eq:L^2BoundednessOfSineFunction}
		\end{align}
		are bounded with $\|C(t,s)\|\lesssim 1$ and $\|S(t,s)\|\lesssim \langle t-s\rangle $.
		\item  Let $f\in \rH^\alpha_L$ and $s_0,t_0\in \R$. Then:
		\begin{enumerate}[label=(\roman*)]
			\item The map $u_{s_0}\colon t\mapsto C(t,s_0)f$ belongs to $ \bigcap_{k=0}^1 C_b^k(\R; \rH_L^{\alpha-k})\cap W^{2,\infty}(\R; \rH_L^{\alpha-2}) $ with \[\|D_t^k u_{s_0}(t)\|_{\alpha-k}\lesssim \|f\|_\alpha,\quad k\in \{0,1,2\}.\] 
			\item The map $v_{s_0}\colon t\mapsto S(t,s_0)f$ belongs to $ \bigcap_{k=0}^1 C^k(\R; \rH_L^{\alpha+1-k}) \cap W_{\loc}^{2,\infty}(\R; \rH_L^{\alpha-1})$ with \[\|D_t^k v_{s_0}(t)\|_{\alpha+1-k}\lesssim \langle t-s_0\rangle^{|k-1|}  \|f\|_\alpha,\quad  k\in \{0,1,2\}.\]
			The map $w_{t_0}\colon s\mapsto S(t_0,s)f$ belongs to $ \dot{W}_{\loc}^{1,\infty}(\R; \rH_L^{\alpha})$.
		\end{enumerate}
	Moreover,
		\begin{alignat}{2}
			C(t_0,t_0)={}& \mathrm{Id}, \qquad&D_t C(t,s)|_{t=s_0}={}&0, \label{eq:InitialConditionsCosineFunction}\\
			S(t_0,t_0)={}& 0,  & D_t S(t,s_0)|_{t=s_0}={}&\mathrm{Id},\label{eq:InitialConditionsSineFunction}
		\end{alignat}
		where the time derivative is to be understood in the strong sense.
	\end{enumerate}
	\end{theorem}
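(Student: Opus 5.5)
The proof reduces almost everything to results already established for $T^\pm(t,s)$ and $\tilde T(t,s)$, via Lemma~\ref{lemma:RelationBetweenParametrices}.

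\emph{Part (a).} By \eqref{eq:RelationBetweenParametricesI}, $C(t,s)=\frac12(T^+(t,s)+T^-(t,s))$. Proposition~\ref{prop:L^2BoundednessOfParametrix} then gives $\|C(t,s)\|_{\cL(\rH^\alpha_L)}\lesssim 1$ uniformly in $s,t$. For the sine operator we use \eqref{eq:RelationBetweenParametricesII}, $S(t,s)=\ii\langle D_L\rangle^{-1}\tilde T(t,s)$. Lemma~\ref{lemma:RelationBetweenParametrices}~(b)(i) gives $\|\tilde T(t,s)\|_{\cL(\rH^\alpha_L)}\lesssim\langle t-s\rangle$, and $\langle D_L\rangle^{-1}\colon\rH^\alpha_L\to\rH^{\alpha+1}_L$ is an isomorphism. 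Together these give $\|S(t,s)\|_{\rH^\alpha_L\to\rH^{\alpha+1}_L}\lesssim\langle t-s\rangle$.

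\emph{Part (b)(i).} This follows from Lemma~\ref{lemma:TimeDerivativeOfParametrices} applied to $T^\pm(\cdot,s_0)f$ and averaged. The derivative formulas come for free: $D_t^ku_{s_0}(t)=\sum_\lambda(D_t^k\cos(\varphi_{t,s_0})\psi_\lambda)(\sqrL)f$.

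\emph{Part (b)(ii).} Since $\langle D_L\rangle^{-1}$ is a bounded isomorphism $\rH^{\beta}_L\to\rH^{\beta+1}_L$ for every $\beta$, it commutes with strong derivatives. Hence Lemma~\ref{lemma:RelationBetweenParametrices}~(b)(ii) for $\tau\mapsto\tilde T(\tau,s_0)f$ transfers directly: $v_{s_0}\in C(\R;\rH^{\alpha+1}_L)\cap C^1(\R;\rH^{\alpha}_L)\cap W^{2,\infty}_\loc(\R;\rH^{\alpha-1}_L)$. Likewise (b)(iii) of that lemma gives the statement for $w_{t_0}$.

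For the quantitative bounds we go back to the symbol estimates \eqref{eq:EstimatesforTildeT} and divide by $\langle\xi\rangle$. The symbol of $S$ is $\langle\xi\rangle^{-1}\tilde h_{t,s}$. Its $t$-derivative is $\cos(\varphi_{t,s})\,\partial_t\varphi_{t,s}/\partial_s\varphi_{t,s}$, which is bounded by a constant independent of $t-s$, since $\partial_s\varphi_{t,s}=-\partial_t\varphi_{t,s}$ at $t=s$ and both are $\simeq|\xi|$. Its second $t$-derivative is $\lesssim\langle t-s\rangle\langle\xi\rangle$. Feeding these into Proposition~\ref{prop:L^2BoundednessOfAlmostOrthogonalOperators} (with $\gamma=0$ and $\gamma=1$ respectively, written as $\langle\lambda\rangle^\gamma$ times a uniformly bounded symbol on $K_\lambda$), together with Lemma~\ref{lemma:DifferentiationUnderTheSum} and Remark~\ref{rem:DifferentiationUnderTheSum}, gives $\|D_t^kv_{s_0}(t)\|_{\alpha+1-k}\lesssim\langle t-s_0\rangle^{|k-1|}\|f\|_\alpha$.

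\emph{Initial conditions.} We have $\varphi_{t_0,t_0}=0$, so $\cos\circ\varphi_{t_0,t_0}=1$ and $\sin\circ\varphi_{t_0,t_0}=0$. The Calderón reproducing formula (Proposition~\ref{prop:CalderonReproducingFormulaOnInhomogeneousSobolevSpaces}) then gives $C(t_0,t_0)=\id$ and $S(t_0,t_0)=0$.

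For the derivatives, $\partial_t\cos(\varphi_{t,s_0})|_{t=s_0}=-\sin(0)\,\partial_t\varphi=0$, so $D_tC(t,s_0)|_{t=s_0}=0$. For the sine,
\[
D_t\Big(\ii\tfrac{\sin\varphi_{t,s_0}}{\partial_s\varphi_{t,s_0}}\Big)\Big|_{t=s_0}=\frac{\partial_t\varphi_{t,s_0}}{\partial_s\varphi_{t,s_0}}\Big|_{t=s_0}.
\]
Here $\partial_t\varphi_{t,s}=(B(t)\xi|\xi)^{1/2}$ and $\partial_s\varphi_{t,s}=-(B(s)\xi|\xi)^{1/2}$, so at $t=s_0$ this ratio is $-1$.

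This sign is the step I expect to be the main obstacle. It forces a careful check of the convention behind the factor $\ii$ and the choice of $\partial_s$ (rather than $\partial_t$) in the denominator; the authors' choice is designed to make the quotient equal to the identity in the appropriate sense, so I would recompute with $D_t=-\ii\partial_t$ to fix the sign. Once the symbol equals $1$ at $t=s_0$, the reproducing formula yields $D_tS(t,s_0)|_{t=s_0}=\id$, with the strong derivative justified by the differentiation-under-the-sum lemma.
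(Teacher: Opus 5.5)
Your reduction is the same as the paper's. Parts (a) and (b)(i) go through $C=\frac12(T^++T^-)$ with Proposition~\ref{prop:L^2BoundednessOfParametrix} and Lemma~\ref{lemma:TimeDerivativeOfParametrices}. Everything about $S$ goes through $S=\ii\langle D_L\rangle^{-1}\tilde T$ and Lemma~\ref{lemma:RelationBetweenParametrices}~(b)(i)--(iii). Your symbol bounds for $D_t^kv_{s_0}$ are fine, as are the identities $C(t_0,t_0)=\mathrm{Id}$, $D_tC(t,s_0)|_{t=s_0}=0$ and $S(t_0,t_0)=0$. One small correction: the $t$-derivative of the symbol is bounded because $(B(t)\xi|\xi)^{1/2}\simeq|\xi|\simeq(B(s)\xi|\xi)^{1/2}$ for all $t,s$. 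The relation $\partial_s\varphi=-\partial_t\varphi$ at $t=s$ plays no role there.

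The gap is the last identity, and your proposed remedy cannot close it. Your computation already uses $D_t=-\ii\partial_t$. Since $\partial_s\varphi_{t,s}(\xi)=-(B(s)\xi|\xi)^{1/2}$ does not depend on $t$, the symbol of $D_tS(t,s_0)$ is $\cos(\varphi_{t,s_0})\,\partial_t\varphi_{t,s_0}/\partial_s\varphi_{t,s_0}$, which equals $-1$ at $t=s_0$. Recomputing will not change this. With Definition~\ref{def:Parametrices2} taken literally one gets $D_tS(t,s)|_{t=s_0}=-\mathrm{Id}$, so your step ``once the symbol equals $1$'' assumes something false.

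What is missing is to recognise a sign slip in the definition of $S$ and correct it. Two consistency checks point to it:
\begin{itemize}
\item For $B\equiv\id$, the literal $S(t,s)$ is $-\ii(t-s)\mathrm{Sinc}((t-s)\ssqrt{L})$, not the $+\ii(t-s)\mathrm{Sinc}((t-s)\ssqrt{L})$ it is meant to imitate.
\item The splitting $\frac12T^+m_{s,\lambda}-\frac12T^-m_{s,\lambda}$ with $m_{s,\lambda}=|\xi|(B(s)\xi|\xi)^{-1/2}\chi_\lambda$ in the proof of Corollary~\ref{cor:StrichartzEstimatesParatetricesII} corresponds to the denominator $-\partial_s\varphi_{t,s}=(B(s)\xi|\xi)^{1/2}$.
\end{itemize}
Use that denominator, or equivalently the prefactor $-\ii$. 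No other assertion changes, since only absolute values of symbols enter the bounds. The symbol of $D_tS(t,s_0)$ becomes $\cos(\varphi_{t,s_0})(B(t)\xi|\xi)^{1/2}(B(s_0)\xi|\xi)^{-1/2}$, which is $1$ at $t=s_0$. Lemma~\ref{lemma:DifferentiationUnderTheSum} with $k=1$ and the Calder\'on reproducing formula then give $D_tS(t,s_0)|_{t=s_0}=\mathrm{Id}$. The paper's proof does not verify this identity explicitly either. With the corrected sign, the later uses of it (the $\tfrac{1}{\ii}G$ term in Lemma~\ref{lemma:Volterra} and $D_tu(0)=h$ in Theorem~\ref{thm:WellposednessInAdaptedSpaces}) go through as written.
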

	\begin{proof} In view of the identity \eqref{eq:RelationBetweenParametricesI}, the boundedness of $C(t,s)$ and its strong differentiability as stated in (b) immediately follow from  Proposition~\ref{prop:L^2BoundednessOfParametrix} and  Lemma~\ref{lemma:TimeDerivativeOfParametrices}, respectively. Combining the strong differentiability of $C(t,s)$ with Proposition~\ref{prop:CalderonReproducingFormulaOnInhomogeneousSobolevSpaces} then also yields \eqref{eq:InitialConditionsCosineFunction}. To prove the corresponding statements for $S(t,s)$, we use that $S(t,s)=\ii \langle D_L\rangle^{-1}\tilde{T}(t,s)$ by Lemma~\ref{lemma:RelationBetweenParametrices}~(b)~(iv). Thus, the assertions for $S(t,s)$ follow from the properties of $\tilde{T}(t,s)$ stated in Lemma~\ref{lemma:RelationBetweenParametrices}~(b)~(i)-(iii), and the fact that $\langle D_L\rangle^{-1}$ gains one derivative in the $\rH^\alpha_L$-scale, i.e., $\langle D_L\rangle^{-1}\colon \rH_L^\alpha\to \rH_L^{\alpha+1}$. 
	\end{proof}
	\begin{theorem}[Derivative Gain II]\label{thm:ParametrixPropertyOfSineAndCosineFuntions} Let $s,t,\alpha\in \R$. Then,
	\begin{align*}
		& \big(D_t^2-P(t)\big)C(t,s)\colon \rH_L^{\alpha}\to \rH_L^{\alpha-1},\\
		& \big(D_t^2-P(t)\big)S(t,s)\colon \rH_L^\alpha\to \rH_L^{\alpha}
	\end{align*}
	are bounded. More precisely,  uniformly in $s\in \R$, we have the estimates   %  the following estimates hold true, uniformly in $s\in \R$.
	\begin{align*}
		&\|\big(D_t^2-P(t)\big)C(t,s)f\|_{\rH^{\alpha-1}_L}\lesssim \|B'(t)\|  \|f\|_{\rH_L^\alpha},\\
		&\|\big(D_t^2-P(t)\big)S(t,s)f\|_{\rH^{\alpha}_L}\lesssim \|B'(t)\| \|f\|_{\rH_L^\alpha},
	\end{align*}
	and
	\begin{align*}
		&\|\big(D_t^2-P(t)\big)C(t,s)f\|_{\rL^1_t(\R;\rH^{\alpha-1}_L)}\lesssim \|B'\|_{\rL^1} \|f\|_{\rH_L^\alpha},\\
		&\|\big(D_t^2-P(t)\big)S(t,s)f\|_{\rL^1_t(\R;\rH^{\alpha}_L)}\lesssim \|B'\|_{\rL^1} \|f\|_{\rH_L^\alpha}.
	\end{align*}
	\end{theorem}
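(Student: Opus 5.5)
The cosine part reduces directly to Theorem~\ref{thm:ParametrixProperty}. By Lemma~\ref{lemma:RelationBetweenParametrices}~(a) we have $C(t,s)=\tfrac12(T^+(t,s)+T^-(t,s))$. By Lemma~\ref{lemma:TimeDerivativeOfParametrices}, each $T^\pm(t,s)f$ is a.e.\ twice differentiable in $t$ with values in $\rH^{\alpha-2}_L$. Since $P(t)$ is linear, $(D_t^2-P(t))C(t,s)f$ is the average of $(D_t^2-P(t))T^\pm(t,s)f$. Estimates (i) and (ii) of Theorem~\ref{thm:ParametrixProperty} then give the $\rH^{\alpha-1}_L$ bound with constant $\|B'(t)\|$ and, after integrating in $t$, the $\rL^1_t$ bound with $\|B'\|_{\rL^1}$. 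Both are uniform in $s$.

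For the sine part, use Lemma~\ref{lemma:RelationBetweenParametrices}~(b)~(iv), which gives $S(t,s)=\ii\langle D_L\rangle^{-1}\tilde T(t,s)$. Proceed in three steps.

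\emph{Step 1: commute the operator through the sum.} Apply Lemma~\ref{lemma:RelationBetweenParametrices}~(b)~(ii), together with the representation of $t$-derivatives of dyadic sums from Lemma~\ref{lemma:DifferentiationUnderTheSum} and Remark~\ref{rem:DifferentiationUnderTheSum}, using the symbol bounds \eqref{eq:EstimatesforTildeT} and \eqref{eq:BoundsContinuityofHTilde}. This gives, for a.e.\ $t$,
\[D_t^2\tilde T(t,s)f=\sum_{\lambda}(D_t^2\tilde h_{t,s}\psi_\lambda)(\sqrL)f .\]
By Lemma~\ref{lemma:ExtensionOfL_jToHomogeneousSobolevSpaces}, $P(t)$ is bounded $\rH^\alpha_L\to\rH^{\alpha-2}_L$, so it also passes inside the convergent sum. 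On each piece, Proposition~\ref{prop:PropertiesPhillipsCalculus}~(b) gives $P(t)(\tilde h_{t,s}\psi_\lambda)(\sqrL)f=(p(t,\cdot)\tilde h_{t,s}\psi_\lambda)(\sqrL)f$. Lemma~\ref{lemma:RelationBetweenParametrices}~(b)~(v) then yields
\[(D_t^2-P(t))\tilde T(t,s)f=\sum_\lambda \lambda(\tilde r_{t,s,\lambda}\psi_\lambda)(\sqrL)f\]
with $\|\tilde r_{t,s,\lambda}\|_{\rL^\infty(K_\lambda)}\lesssim\|B'(t)\|$.

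\emph{Step 2: estimate.} Apply Proposition~\ref{prop:L^2BoundednessOfAlmostOrthogonalOperators} with $\gamma=1$. Since $\lambda\le\langle\lambda\rangle$, replace $\tilde r_{t,s,\lambda}$ by $(\lambda/\langle\lambda\rangle)\tilde r_{t,s,\lambda}$. This gives
\[\|(D_t^2-P(t))\tilde T(t,s)f\|_{\alpha-1}\lesssim\|B'(t)\|\,\|f\|_\alpha .\]

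\emph{Step 3: recover one derivative.} The operator $\langle D_L\rangle^{-1}$ is a function of $L$ alone. It commutes with $D_t$ (it is bounded and $t$-independent) and with each $L_j$ on the dense set $\SL$, by \eqref{eq:RuleForD_L} and Proposition~\ref{prop:PropertiesPhillipsCalculus}~(b); by density it commutes with $P(t)$ as a map $\rH^\alpha_L\to\rH^{\alpha-1}_L$. Hence
\[(D_t^2-P(t))S(t,s)f=\ii\langle D_L\rangle^{-1}(D_t^2-P(t))\tilde T(t,s)f .\]
Since $\langle D_L\rangle^{-1}\colon\rH^{\alpha-1}_L\to\rH^\alpha_L$ is bounded, this gives the $\rH^\alpha_L$ bound. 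Integrating in $t$ gives the $\rL^1_t$ version.

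The main obstacle is uniformity in $s$ for the sine estimate. The symbol $\tilde h_{t,s}$ itself is only bounded by $\langle t-s\rangle$, so the argument must rely only on the remainder bound in Lemma~\ref{lemma:RelationBetweenParametrices}~(b)~(v) and never on $\|\tilde T(t,s)\|$. Its proof requires checking that the remainder is $s$-uniform. Writing $\tilde h=\langle\xi\rangle\sin(\varphi_{t,s})/\partial_s\varphi_{t,s}$, where $\partial_s\varphi_{t,s}(\xi)=-(B(s)\xi|\xi)^{1/2}$ is independent of $t$, one computes
\[(D_t^2-p)\tilde h=\frac{\langle\xi\rangle}{\partial_s\varphi_{t,s}}\Bigl[\sin\varphi_{t,s}\bigl((\partial_t\varphi_{t,s})^2-p\bigr)-\cos\varphi_{t,s}\,\partial_t^2\varphi_{t,s}\Bigr].\]
The first bracket term vanishes by the eikonal equation $(\partial_t\varphi_{t,s})^2=p$. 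The second is bounded by $\|B'(t)\||\xi|\cdot\langle\xi\rangle/|\xi|$ with no growth in $t-s$, so $s$-uniformity holds; the bound $\langle\xi\rangle\lesssim\lambda$ on $K_\lambda$ is valid only for $\lambda\ge1$. For $\lambda<1$, where $\langle\xi\rangle\simeq1$, the factor $\lambda^{-1}$ is compensated by putting $\langle\lambda\rangle$ in place of $\lambda$ in Step 2, which is all Proposition~\ref{prop:L^2BoundednessOfAlmostOrthogonalOperators} needs.
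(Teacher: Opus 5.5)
Your proposal is correct and follows the paper's proof. The cosine bounds come from $C(t,s)=\tfrac12\bigl(T^{+}(t,s)+T^{-}(t,s)\bigr)$ and Theorem~\ref{thm:ParametrixProperty}; the sine bounds come from rerunning that theorem for $\tilde T(t,s)$ with Lemma~\ref{lemma:RelationBetweenParametrices}~(b)~(v) and then gaining one derivative through $S(t,s)=\ii\langle D_L\rangle^{-1}\tilde T(t,s)$. You are right that for $\tilde T$ the remainder has size $\langle\lambda\rangle\|B'(t)\|$ rather than $\lambda\|B'(t)\|$ when $\lambda<1$, so the factor $\lambda$ in Lemma~\ref{lemma:RelationBetweenParametrices}~(b)~(v) should be $\langle\lambda\rangle$. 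This does no harm, because Proposition~\ref{prop:L^2BoundednessOfAlmostOrthogonalOperators} is stated with the weight $\langle\lambda\rangle^\gamma$.
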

	\begin{proof} Once again, the properties for $C(t,s)$ follow from Theorem \ref{thm:ParametrixProperty} and the identity \eqref{eq:RelationBetweenParametricesI}. To prove the assertions for $S(t,s)$, we once again use $S(t,s)=\ii\langle D_L\rangle^{-1}\tilde{T}(t,s)$ as stated in Lemma~\ref{lemma:RelationBetweenParametrices}~(b)~(iv). Using (b)~(v) of the very same lemma in place of Lemma~\ref{lemma:TimeDerivativeFrequencyLocalized}, we find that Theorem~\ref{thm:ParametrixProperty} holds true with $\tilde{T}(t,s)$ in place of $T^\pm(t,s)$. Thus, the assertions for $S(t,s)$ follow again from the fact that $\langle D_L\rangle^{-1}\colon \rH_L^\alpha \to \rH_L^{\alpha+1}$. 
	\end{proof}

	\section{Existence of Weak Solutions in $\rH^\alpha_L$}\label{section:ExistenceOfWeakSolutions}
	In this section, we aim to establish the existence of weak solution to \eqref{eq:WaveEquationWithTimeDependentLipschitzMetrics} in $\rH^\alpha_L$. We prove existence
	following the approach in \cite{SmithParametrix1998}, which was subsequently also used by Hassell--Rozendaal in their $\rL^p$-theory for rough wave equations \cite{HassellRozendaal2022}. To motivate the idea, we first recall the easier time-independent case $B(t)=\id$ $(t\in \R)$. Then $P(t)=L$ and since $\ii\ssqrt{L}$ generates a $C_0$-group $(\ee^{\ii t\ssqrt{L}})_{t\in \R}$ on $\rL^2$ (by the Borel functional calculus for self-adjoint operators), we have the representation (cf. \cite[Corollary~3.14.8]{ArendtBattyHieberNeubrander2011} in the case $F=0$)
	\begin{equation}\label{eq:RepresentationFormulaInTheAutonomousCase}
	u(t)=\mathrm{Cos}(t\ssqrt{L})g+\ii t\,\mathrm{Sinc}(t\ssqrt{L})h+\int_0^t 
	\ii (t-s)\mathrm{Sinc}((t-s)\ssqrt{L})F(s) \dd s,
	\end{equation}
	where 
	\begin{equation*}
	\mathrm{Cos}(t\ssqrt{L})\coloneqq \frac{1}{2} \big(\ee^{\ii t\ssqrt{L}}+\ee^{-\ii t\ssqrt{L}}\big),\quad t\, \mathrm{Sinc}(t\ssqrt{L})\coloneqq \int_0^t \mathrm{Cos}(s\ssqrt{L})\dd s.
	\end{equation*}
	Now, the idea in the time-dependent case is to replace the \textit{exact} solution operators $\mathrm{Cos}((t-s)\sqrt{L})$ and $\ii (t-s)\, \mathrm{Sinc}((t-s)\sqrt{L})$ in \eqref{eq:RepresentationFormulaInTheAutonomousCase} by the \textit{parametrices} $C(t,s)$ and $S(t,s)$ constructed in Section~\ref{section:ParametrixConstruction}.  
	So we make the ansatz
	\begin{equation*}
	u(t)=C(t,0)g+S(t,0)h+\int_0^t S(t,s)G(s)\dd s
	\end{equation*} 
	with a suitable function $G\in \rL^1(\R;\rH_L^{\alpha-1})$. This function will turn out to be the solution to a Volterra equation. Recall that we know from Theorem~\ref{thm:ParametrixPropertyOfSineAndCosineFuntions} that
	\begin{align*}
	& R_1(t,s)\coloneqq (D_t^2-P(t))C(t,s)\colon \rH^\alpha_L\to \rH^{\alpha-1}_L,\\
	& R_2(t,s)\coloneqq (D_t^2-P(t))S(t,s)\colon \rH^{\alpha-1}_L\to \rH^{\alpha-1}_L
	\end{align*}
	boundedly.
	\begin{lemma}[Volterra Operator]\label{lemma:Volterra} Let $\alpha\in \R$ and $G\in \rL^1(\R; \rH_L^{\alpha-1})$. Put
	\begin{equation*}
		V(t)\coloneqq \int_0^t S(t,s)G(s)\dd s\quad (t\in \R).
	\end{equation*}	
	Then, $V\in C(\R;\rH^{\alpha}_L)\cap C^1(\R;\rH^{\alpha-1}_L)\cap W^{2,1}_\loc(\R;\rH^{\alpha-2}_L)$. Moreover, 
	\begin{equation*}
		(D_t^2-P(t))V(t)=\tfrac{1}{\ii}[(\mathbf{Id}+\ii \mathbf{R})G](t)
	\end{equation*}
	for a.e. $t\in \R$,
	where
	\begin{equation*}
		\mathbf{R}\colon \rL^1(\R;\rH_L^{\alpha-1})\to \rL^1(\R;\rH_L^{\alpha-1}),\quad (\mathbf{R}G)(t)=\int_0^t R_2(t,s)G(s)\dd s
	\end{equation*}
	has operator norm $\|\mathbf{R}\|\leq C \|B'\|_1$ with a constant $C$ depending only on $m_1,m_2$ and $m_4$.
	\end{lemma}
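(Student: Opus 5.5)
The plan is to differentiate $V$ under the integral by a Duhamel/Leibniz-rule argument, using the initial conditions \eqref{eq:InitialConditionsSineFunction} and the mapping properties of $S(t,s)$ from Theorem~\ref{thm:GeneralizedCosineAndSineFuntions}. Continuity $V\in C(\R;\rH^\alpha_L)$ comes first. Since $S(t,s)\colon \rH^{\alpha-1}_L\to\rH^{\alpha}_L$ with norm $\lesssim\langle t-s\rangle$, the integrand is dominated on bounded intervals by $\langle t-s\rangle\|G(s)\|_{\alpha-1}\in \rL^1$. Moreover, $t\mapsto S(t,s)G(s)$ is continuous in $\rH^\alpha_L$ by Theorem~\ref{thm:GeneralizedCosineAndSineFuntions}(b)(ii), so dominated convergence gives continuity.

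For the first derivative, I would first treat $G$ in a dense class, for instance $G\in C_c(\R;\SL)$ or step functions with values in $\rH^{\alpha-1}_L$, and then pass to the limit. Formally,
\[
D_tV(t)=\tfrac1\ii S(t,t)G(t)+\int_0^t D_tS(t,s)G(s)\dd s=\int_0^t D_tS(t,s)G(s)\dd s,
\]
because $S(t,t)=0$. Here $\|D_tS(t,s)f\|_{\alpha-1}\lesssim\|f\|_{\alpha-1}$ uniformly, by the case $k=1$ of (b)(ii) with $\langle t-s\rangle^0$. This gives $V\in C^1(\R;\rH^{\alpha-1}_L)$ with a bound linear in $\|G\|_{\rL^1}$, so the identity extends to all $G\in\rL^1(\R;\rH^{\alpha-1}_L)$ by density.

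Differentiating again produces the boundary term $\tfrac1\ii D_tS(t,s)|_{s=t}G(t)=\tfrac1\ii G(t)$, using $D_tS(t,s)|_{t=s}=\mathrm{Id}$. It also produces $\int_0^t D_t^2S(t,s)G(s)\dd s$, which lies in $\rH^{\alpha-2}_L$ with a bound $\lesssim \langle t\rangle\int|G|$, since $\|D_t^2S(t,s)f\|_{\alpha-1}\lesssim\langle t-s\rangle\|f\|_{\alpha-1}$. Because the second derivative exists only almost everywhere in $t$, I would not differentiate pointwise. Instead I would write $D_tV(t)-D_tV(t_0)$ as the integral of the candidate derivative and verify this via Fubini in $\rH^{\alpha-2}_L$. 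That yields $V\in W^{2,1}_{\loc}$ and
\[
D_t^2V(t)=\tfrac1\ii G(t)+\int_0^t D_t^2S(t,s)G(s)\dd s .
\]
Subtracting $P(t)V(t)=\int_0^tP(t)S(t,s)G(s)\dd s$, which is legitimate because $P(t)$ is bounded $\rH^\alpha_L\to\rH^{\alpha-2}_L$ by Lemma~\ref{lemma:ExtensionOfL_jToHomogeneousSobolevSpaces}, gives $\tfrac1\ii G(t)+(\mathbf R G)(t)=\tfrac1\ii[(\mathbf{Id}+\ii\mathbf R)G](t)$.

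The bound on $\mathbf R$ follows from Theorem~\ref{thm:ParametrixPropertyOfSineAndCosineFuntions} applied with $\alpha-1$:
\[
\|\mathbf RG\|_{\rL^1(\rH^{\alpha-1}_L)}\le\int_\R\int_\R\|R_2(t,s)G(s)\|_{\alpha-1}\dd s\dd t\lesssim\int_\R\|B'(t)\|\dd t\int_\R\|G(s)\|_{\alpha-1}\dd s ,
\]
using Fubini/Tonelli. The constant is uniform in $s$ and depends only on $m_1,m_2,m_4$. Measurability of $(t,s)\mapsto R_2(t,s)G(s)$ follows from strong continuity properties.

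I expect the main obstacle to be the rigorous justification of the second differentiation. The issue is that $t\mapsto D_tS(t,s)$ is only Lipschitz (in $W^{2,\infty}_\loc$) with an $s$-dependent null set, and $G$ is merely $\rL^1$. I would handle this through the integrated form: show that $D_tS(t,s)f-D_tS(t_0,s)f=\int_{t_0}^tD_\tau^2S(\tau,s)f\dd\tau$ in $\rH^{\alpha-2}_L$ for every $s$, which is the absolute continuity statement of Remark~\ref{rem:DifferentiationUnderTheSum}. Inserting this and swapping integrals over the triangle $\{0\le s\le\tau\le t\}$ produces the boundary term from the $s$-limits.
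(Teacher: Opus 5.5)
Your proof is correct. Its skeleton is the paper's: dominated convergence for continuity, $D_tV=\int_0^tD_tS(t,s)G(s)\dd s$ because $S(t,t)=0$, the term $\tfrac1\ii G$ from $D_tS(t,s)|_{t=s}=\mathrm{Id}$, subtraction of $P(t)V(t)$, and Tonelli together with Theorem~\ref{thm:ParametrixPropertyOfSineAndCosineFuntions} at level $\alpha-1$ for the bound on $\mathbf{R}$. The difference lies in how you justify the two differentiations.

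The paper works with general $G\in\rL^1(\R;\rH^{\alpha-1}_L)$ and difference quotients throughout. For the first derivative, it kills the boundary term $\frac{1}{\ii h}\int_t^{t+h}S(t,s)G(s)\dd s$ with the bound $\|S(t,s)G(s)\|_{\alpha-1}\lesssim|t-s|\,\|G(s)\|_{\alpha-1}$, so no density step is needed. For the second derivative, it shows that the difference quotients of $D_tV$ converge in $\rL^1(J;\rH^{\alpha-2}_L)$. This costs a Lebesgue-point argument for $G$ (the Cazenave--Haraux step). It also needs the limit of difference quotients of the merely Lipschitz map $t\mapsto D_tS(t,s)G(s)$, which holds in $\rL^1$ by dominated convergence but not uniformly in $t$ when $B'$ is discontinuous.

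Your integrated identity $D_tS(t,s)G(s)=G(s)+\ii\int_s^tD_\tau^2S(\tau,s)G(s)\dd\tau$, combined with Fubini on the triangle, gives directly
\[
D_tV(t)=\int_0^t\Bigl[G(\tau)+\ii\int_0^\tau D_\tau^2S(\tau,s)G(s)\dd s\Bigr]\dd\tau .
\]
This yields $D_tV\in W^{1,1}_{\loc}(\R;\rH^{\alpha-2}_L)$ and the formula for $D_t^2V$ at once. It uses only the absolute continuity of $\tau\mapsto D_\tau S(\tau,s)f$ in $\rH^{\alpha-2}_L$, so it is cleaner than the paper's route. The same device with $S(s,s)=0$ would also give the first derivative without your detour through a dense class.

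A few small fixes:
\begin{itemize}
\item The bound you quote should read $\|D_t^2S(t,s)f\|_{\alpha-2}\lesssim\langle t-s\rangle\|f\|_{\alpha-1}$, not with $\|\cdot\|_{\alpha-1}$ on the left.
\item Your integrated identity is missing the factor $\ii$, since $\partial_\tau=\ii D_\tau$.
\item Joint measurability of $(\tau,s)\mapsto D_\tau^2S(\tau,s)G(s)$, needed for Fubini and for $\mathbf{R}$, comes from its being an a.e.\ limit of difference quotients. It does not come from strong continuity in $\tau$, which fails because $B'$ is only bounded.
\item The claim that $C$ depends only on $m_1,m_2,m_4$ must be traced, as in the paper, to $m_4$ and to $M_2\simeq_{m_1,m_2}1$. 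The latter holds by self-adjointness of the $L_j$ with respect to $\langle\cdot,\cdot\rangle_A$.
\end{itemize}
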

	\begin{proof} Let $\alpha\in \R$ and $G\in \rL^1(\R;\rH_L^{\alpha-1})$. We write
	\begin{equation*}
		V(t)= \int_0^t v(t,s) \dd s \quad (t\in \R),
	\end{equation*}	
	where we have set $v(t,s)\coloneqq S(t,s)G(s)$ for $t\in \R$ and a.e. $s\in \R$. Then,
	by Theorem~\ref{thm:GeneralizedCosineAndSineFuntions}, we have for a.e. $s\in \R$ 
	\begin{align}\label{eq:PreparationVolterraOperator}
		&v(\cdot,s)\in  C(\R;\rH^{\alpha}_L)\cap C^1(\R;\rH^{\alpha-1}_L)\cap  W_\loc^{2,\infty}(\R; \rH^{\alpha-k}_L) \quad\text{with } \notag\\
		&\|D_t^k v(t,s)\|_{\alpha-k}\lesssim \langle t-s\rangle^{|k-1|}  \|G(s)\|_{\alpha-1}, \quad k\in \{0,1,2\}.
	\end{align} 
	\textbf{First Step}: $\mathbf{V\in C(\R;\rH^{\alpha}_L)}$\\[0.2cm]
	Fix $t_0\in \R$ and let $t\in \R$. We have to show $V(t)\to V(t_0)$ in $\rH^{\alpha}_L$ as $t\to t_0$. We consider the limit $t\downarrow t_0$ (the limit $t\uparrow t_0$ can be treated similarly). Observe that for $t\geq t_0$,
	\begin{equation}\label{eq:ContinuityOfV}
		\|V(t)-V(t_0)\|_{\alpha}\leq{}\int_{t_0}^{t}\|v(t,s)\|_{\alpha}\dd s+\int_0^{t_0}\|v(t,s)-v(t_0,s)\|_{\alpha}\dd s.
	\end{equation}
	Now for the first integral on the right-hand side of \eqref{eq:ContinuityOfV}, we have
	\begin{equation*}
		\int_{t_0}^{t}\|v(t,s)\|_{\alpha}\dd s  \lesssim\langle t-t_0\rangle \int_{t_0}^{t}\|G(s)\|_{\alpha-1}\dd s \rightarrow 0\quad (t\downarrow t_0), 
	\end{equation*}
	where we used \eqref{eq:PreparationVolterraOperator} with $k=0$ and then dominated convergence. To estimate the second integral on the right-hand side of \eqref{eq:ContinuityOfV}, note that once again by \eqref{eq:PreparationVolterraOperator} (with $k=0$), the integrand converges to zero as $t\downarrow t_0$ for a.e. $s$ and is controlled by $\langle t_0-s\rangle \|G(s)\|_{\alpha-1}\lesssim \langle t_0\rangle \|G(s)\|_{\alpha-1}$ if $|t-t_0|\leq 1$. Thus, we conclude once again that
	\begin{equation*}
		\int_0^{t_0}\|v(t,s)-v(t_0,s)\|_{\alpha}\dd s\rightarrow 0 \quad (t\downarrow t_0)
	\end{equation*}
	by dominated convergence. This shows that $V\in C(\R;\rH^\alpha_L)$.
	\\[0.2cm]
	\textbf{Second Step}: $\mathbf{V\in C^1(\R;H^{\alpha-1}_L)}$\\[0.2cm]
	Let $t\in \R$. Then, for $h\neq 0$,
	\begin{align*}
		\frac{V(t+h)-V(t)}{\ii h}={}&\int_0^{t+h}\frac{v(t+h,s)-v(t,s)}{\ii h}\dd s+\frac{1}{\ii h}\int_t^{t+h} v(t,s)\dd s\\
		\eqqcolon{}& \Delta_{1,h}+\Delta_{2,h}.
	\end{align*}
	Now, it follows again from \eqref{eq:PreparationVolterraOperator} with $k=1$ and dominated convergence that
	\begin{equation*}
		\Delta_{1,h}\rightarrow \int_0^t D_t v(t,s)\dd s \quad (h\to 0)\quad \text{in }\rH^{\alpha-1}_L.
	\end{equation*}
	Turning to $\Delta_{2,h}$, we have to be a bit more careful since $G$ need not be continuous and thus we cannot use the fundamental theorem. But using $S(t,t)=0$ and Theorem~\ref{thm:GeneralizedCosineAndSineFuntions}~(b), we have 
	\begin{align*}
		\|\Delta_{2,h}\|_{\alpha-1}={}&\bigg\|\int_{\R} \mathbbm{1}_{(t,t+h)}(s) \bigg(\frac{s-t}{h}\bigg) \frac{S(t,s)-S(t,t)}{s-t} G(s) \dd s\bigg\|_{\alpha-1}\\
		\leq{}& \int_{\R} \mathbbm{1}_{(t,t+h)}(s) \int_0^1 \|\partial_s S(t,t+(s-t)\tau)G(s) \|_{\alpha-1} \dd \tau \dd s\\
		\lesssim{}& \int_{\R} \mathbbm{1}_{(t,t+h)}(s) \|G(s)\|_{\alpha-1} \dd s
		\rightarrow{} 0 \quad (h\downarrow 0).
	\end{align*}
	One argues similarly for $h\uparrow 0 $. This proves that $V\in C^1(\R;\rH^{\alpha-1}_L)$ with $D_t V(t)=\int_0^t D_t v(t,s)\dd s$.
	\\[0.2cm]
	\textbf{Third Step}: $\mathbf{D_t V\in W_\loc^{1,1}(\R;H_L^{\alpha-2})}$\\[0.2cm]
	Let $J\coloneqq (-T,T)$, $T>0$. We have to show that $D_t V\in W^{1,1}(J;\rH_L^{\alpha-2})$. Exactly as in the second step, one writes for $h\neq 0$,
	\begin{align*}
		&\frac{D_t V(t+h)-D_t V(t)}{\ii h}\\={}&\int_0^{t+h}\frac{D_t v(t+h,s)-D_t v(t,s)}{\ii h}\dd s+\frac{1}{\ii h}\int_t^{t+h} D_t v(t,s)\dd s\\
		\eqqcolon{}& \Delta_{1,h}(t)+\Delta_{2,h}(t).
	\end{align*}
	Using \eqref{eq:PreparationVolterraOperator} with $k=2$ and dominated convergence, we deduce $\Delta_{1,h}\rightarrow \int_0^t  D_t^2 v(t,s)\dd s$ in $\rL^\infty(\overline{J};\rH^{\alpha-2}_L)\hookrightarrow \rL^1(J;\rH^{\alpha-2}_L)$ as $h\rightarrow 0$. Turning to $\Delta_{2,h}(t)$, we further split for $h>0$
	\begin{align*}
		&\|\Delta_{2,h}(t)-\tfrac{1}{i}G(t)\|_{\alpha-2}\\
		\leq{}&\frac{1}{h}\int_t^{t+h} \|D_t S(t,s)G(s)-G(s)\|_{\alpha-2}\dd s+\frac{1}{h}\int_t^{t+h} \|G(s)-G(t)\|_{\alpha-2}\dd s\\
		\eqqcolon{}& \Delta^{1}_{2,h}(t)+\Delta^{2}_{2,h}(t),
	\end{align*}
	so that
	\begin{equation*}
		\|\Delta_{2,h}(t)-\tfrac{1}{\ii }G(t)\|_{L^1_t(J; \rH_L^{\alpha-2}(\R^d))}\leq \int_J \Delta^1_{2,h}(t)\dd t +\int_J \Delta^2_{2,h}(t)\dd t.
	\end{equation*}
	Arguing just as in the proof of \cite[Proposition 1.4.29]{CazenaveHaraux1998}), we find $\int_J \Delta_{2,h}^2(t)\dd t\rightarrow 0$ as $h\downarrow 0$. On the other hand, using \eqref{eq:InitialConditionsSineFunction} and \eqref{eq:PreparationVolterraOperator}, we estimate
	\begin{align*}
		\|D_t S(t,s)G(s)-G(s)\|_{\alpha-2}={}&\|D_t S(t,s)G(s)-D_tS(s,s)G(s)\|_{\alpha-2}\\
		\leq{}& \int_0^1 \|D_t^2S(s+(t-s)\tau,s)G(s)\|_{\alpha-2}\dd \tau \,|t-s|\\
		\lesssim{}& \|G(s)\|_{\alpha-1}|t-s|,
	\end{align*}
	which implies
	\begin{align*}
		\int_J \Delta^1_{2,h}(t)\dd t \leq{}& \int_J \frac{1}{h}\int_{t}^{t+h}  \|G(s)\|_{\alpha-1}|t-s|\dd s\dd t\\
		\leq{}& \int_{-T}^{T+h} \bigg(\frac{1}{h}\int_{s-h}^{s}  |t-s|\dd t\bigg) \|G(s)\|_{\alpha-1}\dd s\leq  \|G\|_{\rL^1(\R;\rH^{\alpha-1}_L)} h \to 0
	\end{align*}
	for $h\downarrow 0$. One argues similarly for the limit $h\uparrow 0$. Thus, we have shown that
		\begin{equation*}
			\frac{D_t V(t+h)-D_t V(t)}{\ii h}\rightarrow  \int_0^tD_t^2v(t,s) \dd s+\tfrac{1}{i}G(t)\quad (h\to 0)
		\end{equation*}
		in $\rL^1(J;\rH_L^{\alpha-2})$. It now follows (see e.g. \cite{CazenaveHaraux1998}) that $D_tV\in W^{1,1}(J;\rH_L^{\alpha-2})$ with
		\begin{equation}\label{eq:WeakDerivative}
			D_t^2 V(t)=\int_0^t D_t^2 v(t,s) \dd s+\tfrac{1}{i}G(t) \quad \text{in }\rH^{\alpha-2}_L
		\end{equation} 
		for a.e. $t\in \R$.
		\\[0.2cm]
		\textbf{Fourth Step}: \textbf{Conclusion}\\[0.2cm]
		By Theorem~\ref{thm:GeneralizedCosineAndSineFuntions} and Lemma~\ref{lemma:ExtensionOfL_jToHomogeneousSobolevSpaces}, we have 
		\begin{equation*}
			P(t)V(t)=\int_0^t P(t)v(t,s) \dd s \quad \text{in }\rH^{\alpha-1}_L.
		\end{equation*}
		Recalling that $(D_t^2-P(t))v(t,s)=R_2(t,s)G(s)$, it follows that
		\begin{align*}
			(D_t^2-P(t))V(t)={}&\tfrac{1}{i}G(t)+\int_0^t R_2(t,s)G(s)\dd s\\
			={}&\tfrac{1}{i}[(\mathbf{Id}+i\mathbf{R})G](t).
		\end{align*}
		Now by Theorem \ref{thm:ParametrixPropertyOfSineAndCosineFuntions}, we have for a.e. $t\in \R$
		\begin{align*}
			&\|(\mathbf{R}G)(t)\|_{\rH^{\alpha-1}_L}\\\leq{}& \int_0^t \|R_2(t,s)G(s)\|_{\rH^{\alpha-1}_L}\dd s\\
			\lesssim{}& \int_0^t \|B'(t)\| \|G(s)\|_{\rH^{\alpha-1}_L} \dd s \leq\|B'(t)\|\cdot \|G\|_{\rL^1(\R;\rH_L^{\alpha-1})},
		\end{align*}
		which yields
		\begin{equation*}
			\|\mathbf{R}G\|_{\rL^1(\R;\rH^{\alpha-1}_L)}\lesssim \|B'\|_{\rL^1}\cdot \|G\|_{\rL^1(\R;\rH_L^{\alpha-1})}
		\end{equation*}
		as desired. Finally, tracing back all constants arising in the preceeding proofs, one checks that the implicit constant only depends on $m_4$ and on the $\rL^2$-bounds for the Phillips functional calculus for $\sqrL$, the latter depending in turn on $M_2= \sup_{y\in \R^d}\|\ee^{-\ii y\cdot \sqrL}\|_{\cL(\rL^2)}$. But $M\simeq_{m_1,m_2} 1$ by the self-adjointness of $L_1,\dots, L_d$ with respect to \eqref{eq:EquivalentScalarProduct}.  
	\end{proof}
	We are now ready to prove the existence of a weak solution to \eqref{eq:WaveEquationWithTimeDependentLipschitzMetrics} in the $\rH^\alpha_L$-scale\footnote{Here, a weak solution in $\rH^\alpha_L$ is just defined as in Definition~\ref{def:WeakSolutions} with $\rH^\alpha_L$ in place of $\rH^\alpha$.}. 
	
	\begin{theorem}[Well-posedness in $\rH_L^\alpha$]\label{thm:WellposednessInAdaptedSpaces} 
	Let $\eps_1$ be the constant from \eqref{eq:SizeCondition_b} and suppose that $\eps_1\in (0,\frac{1}{C})$, 
	where $C$ is the constant from Lemma~\ref{lemma:Volterra}. Let $\alpha\in\R $ and suppose that $g\in \rH_L^{\alpha}$, $h\in \rH_L^{\alpha-1}$, and $F\in \rL^1(\R;\rH_L^{\alpha-1})$. Then, there exists a unique weak solution $u$ to \eqref{eq:WaveEquationWithTimeDependentLipschitzMetrics} in $\rH^\alpha_L$. Moreover, there exists $G\in \rL^1(\R;\rH_L^{\alpha-1})$ such that
		\begin{equation}\label{eq:RepresentationOfSolution}
			u(t)=C(t,0)g+S(t,0)h+\int_0^t S(t,s)G(s)\dd s\quad (t\in \R),
		\end{equation}
		and we have the estimate
		\begin{equation*}
			\|G\|_{\rL^1(\R;\rH^{\alpha-1}_L)}\lesssim \|g\|_{\rH_L^{\alpha}}+\|h\|_{\rH^{\alpha-1}_L}+\|F\|_{\rL^1(\R;\rH_L^{\alpha-1})}.
		\end{equation*}
	Furthermore, for each bounded interval $J\subseteq \R$ with $0\in J$, the map
	\begin{equation}\label{eq:ContinuousDependence}
		\Phi\colon \rH_L^\alpha \times \rH^{\alpha-1}_L\times \rL^1(J;\rH_L^{\alpha-1})\to C(\overline{J}; \rH_L^{\alpha}),\quad (g,h,F)\mapsto u
	\end{equation}
	is Lipschitz continuous.
	\end{theorem}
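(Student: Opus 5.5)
Existence comes from the ansatz \eqref{eq:RepresentationOfSolution}, with $G$ obtained by inverting the Volterra operator from Lemma~\ref{lemma:Volterra}. Uniqueness comes from an energy argument, and Lipschitz dependence follows from linearity together with the a priori bounds.

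\textbf{Existence.} For $G\in \rL^1(\R;\rH^{\alpha-1}_L)$ still to be chosen, set $u(t)\coloneqq C(t,0)g+S(t,0)h+V(t)$ with $V$ as in Lemma~\ref{lemma:Volterra}.

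1. By Theorem~\ref{thm:GeneralizedCosineAndSineFuntions}, $t\mapsto C(t,0)g$ lies in $C(\R;\rH^\alpha_L)\cap C^1(\R;\rH^{\alpha-1}_L)\cap W^{2,\infty}(\R;\rH^{\alpha-2}_L)$. Since $S(t,0)$ maps $\rH^{\alpha-1}_L$ into $\rH^{\alpha}_L$, the map $t\mapsto S(t,0)h$ lies in $C(\R;\rH^\alpha_L)\cap C^1(\R;\rH^{\alpha-1}_L)\cap W^{2,\infty}_{\loc}(\R;\rH^{\alpha-2}_L)$. Lemma~\ref{lemma:Volterra} gives the same regularity for $V$.

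2. The initial conditions follow from \eqref{eq:InitialConditionsCosineFunction} and \eqref{eq:InitialConditionsSineFunction}, together with $V(0)=0$ and $D_tV(0)=\int_0^0\cdots=0$. The latter uses the formula $D_tV(t)=\int_0^t D_tv(t,s)\dd s$ from the second step of that proof. Hence $u(0)=g$ and $D_tu(0)=h$.

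3. Applying $D_t^2-P(t)$ and using Theorem~\ref{thm:ParametrixPropertyOfSineAndCosineFuntions} and Lemma~\ref{lemma:Volterra}, we get for a.e. $t$
\[
(D_t^2-P(t))u(t)=R_1(t,0)g+R_2(t,0)h+\tfrac{1}{\ii}[(\mathbf{Id}+\ii\mathbf{R})G](t).
\]
Setting this equal to $F(t)$ leads to the equation
\[
(\mathbf{Id}+\ii\mathbf{R})G=\ii\big(F-R_1(\cdot,0)g-R_2(\cdot,0)h\big)\eqqcolon \ii H .
\]

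4. The $\rL^1_t$-bounds of Theorem~\ref{thm:ParametrixPropertyOfSineAndCosineFuntions} (applied to $h\in\rH^{\alpha-1}_L$ for $S$) give
\[
\|H\|_{\rL^1(\R;\rH^{\alpha-1}_L)}\lesssim \|F\|_{\rL^1(\R;\rH^{\alpha-1}_L)}+\|B'\|_1\big(\|g\|_\alpha+\|h\|_{\alpha-1}\big).
\]

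5. Since $\|\mathbf{R}\|\le C\|B'\|_1\le C\eps_1<1$, the Neumann series inverts $\mathbf{Id}+\ii\mathbf{R}$ on $\rL^1(\R;\rH^{\alpha-1}_L)$. This produces $G$ with the stated bound.

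\textbf{Lipschitz dependence.} The map $(g,h,F)\mapsto u$ is linear. So it suffices to show $\sup_{t\in\overline J}\|u(t)\|_\alpha\lesssim_J \|g\|_\alpha+\|h\|_{\alpha-1}+\|F\|_{\rL^1(J;\rH^{\alpha-1}_L)}$. This follows from $\|C(t,0)\|\lesssim1$, from $\|S(t,s)\|_{\rH^{\alpha-1}_L\to\rH^\alpha_L}\lesssim\langle t-s\rangle\lesssim_J1$, and from the bound on $G$. To restrict to $F$ on $J$, extend $F$ by zero outside $J$.

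\textbf{Uniqueness (main obstacle).} By linearity it suffices to show that a weak solution $w$ with zero data and $F=0$ vanishes. The weak solution only lives in $\rH^\alpha_L$ with $\alpha$ arbitrary, so a direct energy identity is unavailable.

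1. First try to reduce to the energy level by frequency truncation. Apply $\psi_\lambda(\sqrL)$ (or $\langle D_L\rangle^{-N}$) to $w$. Because $P(t)=\sum_j b_j(t)L_j$ commutes with the Phillips calculus by Proposition~\ref{prop:PropertiesPhillipsCalculus}~(b), the function $w_\lambda\coloneqq \psi_\lambda(\sqrL)w$ solves the same equation. It lies in $W^{2,1}_\loc(\R;\rH^\beta_L)$ for every $\beta$.

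2. For such smooth solutions, use the time-dependent energy
\[
E(t)=\|D_tw_\lambda\|^2_{A}+\sum_j b_j(t)\langle L_jw_\lambda,w_\lambda\rangle_{A},
\]
with the scalar product \eqref{eq:EquivalentScalarProduct}, for which each $L_j$ is self-adjoint and nonnegative. This gives $E'(t)=\sum_j b_j'(t)\langle L_jw_\lambda,w_\lambda\rangle_A\le m_4(1-\eps_0)^{-1}E(t)$.

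3. Gronwall and $E(0)=0$ then give $w_\lambda\equiv0$ for every $\lambda$. Proposition~\ref{prop:CalderonReproducingFormulaOnInhomogeneousSobolevSpaces} yields $w=0$.

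The delicate points are justifying that $\psi_\lambda(\sqrL)$ commutes with $D_t^2$ and $P(t)$ on $\rH^{\alpha-2}_L$, and differentiating $E$ for a $W^{2,1}_\loc$ function. Both should follow from the extension \eqref{eq:ExtensionOfPhillipsFunctionalCalculus} and boundedness of $\psi_\lambda(\sqrL)$ on each $\rH^\beta_L$.
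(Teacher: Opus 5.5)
Your proposal is correct. The existence part is the paper's argument: the same ansatz, the same identity $(D_t^2-P(t))u=R_1(t,0)g+R_2(t,0)h+\tfrac{1}{\ii}[(\mathbf{Id}+\ii\mathbf{R})G](t)$, and the same Neumann-series inversion under $C\eps_1<1$. Uniqueness and Lipschitz dependence, however, you obtain by a genuinely different route.

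\textbf{How the paper does it.} It derives both from the a priori estimate of Proposition~\ref{prop:EnergyEstimate}. It regularizes $u$ to $u_n=S_n\Pi_n u$ and conjugates by $\langle D_L\rangle^{\alpha-1}$ to reach the level $\rH^1_L=\rH^1$ (Proposition~\ref{prop:Kato}). It then applies the classical energy inequality of Lemma~\ref{lemma:ClassicalEnergyInequalities}, imported from Sogge, and controls the commutator between $b_j(t)$ and the time mollifier by $2^{-n}$.

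\textbf{Your uniqueness argument.} You work with a single dyadic piece $w_\lambda=\psi_\lambda(\sqrL)w$, which lies in $W^{2,1}_{\loc}(\R;\rH^\beta_L)$ for every $\beta$ and solves the homogeneous problem. You then run the energy identity by hand in $\langle\cdot,\cdot\rangle_A$, where each $L_j$ separately is symmetric and nonnegative. For the nondivergence component this holds because $a^{-1}a_j=\prod_{k\neq j}a_k(x_k)^{-1}$ does not depend on $x_j$. This is shorter and self-contained: it needs no external energy inequality, no identification with classical Sobolev spaces, and no time regularization or commutator estimate. The justifications you flag all go through:
\begin{itemize}
\item $\psi_\lambda(\sqrL)$ is bounded from $\rH^{\alpha-2}_L$ into every $\rH^\beta_L$;
\item it commutes with weak time derivatives and with the extended $L_j$;
\item $E$ is absolutely continuous;
\item $E\equiv 0$ forces $D_tw_\lambda\equiv0$, hence $w_\lambda\equiv0$.
\end{itemize}
Use the two-sided bound $|E'|\le m_4(1-\eps_0)^{-1}E$ so that Gronwall applies for $t<0$ as well.

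\textbf{Your Lipschitz argument.} You read the bound off the representation formula, using $\|C(t,0)\|\lesssim1$, $\|S(t,s)\|_{\rH^{\alpha-1}_L\to\rH^\alpha_L}\lesssim\langle t-s\rangle$ and the bound on $G$. This is valid, and extending $F$ by zero is harmless by causality of $\mathbf{R}$ or by your local uniqueness. It does, however, tie the estimate to the parametrix and to the global bound on $G$. The paper's energy estimate instead holds for every function in the solution class on a bounded interval, with source term, independently of the construction and of the smallness of $\eps_1$. That is what Remark~\ref{rem:RelaxAssForWellposedness} relies on. Your localized energy could be upgraded to such an estimate by keeping the source term in $E'$, but as written it yields only uniqueness.
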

	\begin{proof} Let $\alpha\in \R$ and suppose that $g\in \rH_L^{\alpha}$, $h\in \rH_L^{\alpha-1}$, and $F\in \rL^1(\R;\rH_L^{\alpha-1})$. Uniqueness of weak solutions and the continuous dependence of the initial data and the driving force as in \eqref{eq:ContinuousDependence} is proved in Corollary~\ref{cor:UniquenessOfWeakSolutions} below. To prove existence, define for a given function $G\in \rL^1(\R;\rH_L^{\alpha-1})$ the function $u$ by the right-hand side of \eqref{eq:RepresentationOfSolution}. Then, by Theorem~\ref{thm:GeneralizedCosineAndSineFuntions} and Lemma~\ref{lemma:Volterra}, $u\in C(\R; \rH_L^{\alpha})\cap C^1(\R; \rH_L^{\alpha-1})\cap W^{2,1}_\loc(\R;\rH_L^{\alpha-2})$ with $u(0)=g$ and $D_tu(0)=h$. Now Theorem~\ref{thm:ParametrixPropertyOfSineAndCosineFuntions} and Lemma~\ref{lemma:Volterra} imply that $u$ is a weak solution to \eqref{eq:WaveEquationWithTimeDependentLipschitzMetrics} in  $\rH^\alpha_L$  if and only if for a.e. $t\in \R$
		\begin{equation*}
			F(t)=((D_t^2-P(t))u(t) = R_1(t,0)g+R_2(t,0)h+\tfrac{1}{\ii}\big[(\mathbf{Id}+\ii\mathbf{R})G\big](t),
		\end{equation*}
		which is equivalent to
		\begin{align*}
			\tfrac{1}{\ii}\big(\mathbf{Id}+\ii\mathbf{R}\big)G(t)=F(t)-R_1(t,0)g-R_2(t,0)h.
		\end{align*}
		Since $\|\mathbf{R}\|_{\cL(\rL^1(\R;\rH^{\alpha-1}_L))}\leq C \|B'\|_{\rL^1}\leq C \eps_1<1$ by Lemma \ref{lemma:Volterra} and the assumption, the operator $\mathbf{Id}+\ii\mathbf{R}$ is invertible. On the other hand, $R_1(t,0)g$ and $R_2(t,0)h$ belong to $\rL_t^1(\R;\rH^{\alpha-1}_L)$ with 
		\begin{align*}
			&\|R_1(t,0)g\|_{\rL_t^1(\R;\rH^{\alpha-1}_L)}\lesssim \|B'\|_{\rL^1} \|g\|_{\rH_L^\alpha},\\&  \|R_2(t,0)h\|_{\rL_t^1(\R;\rH^{\alpha-1}_L)}\lesssim \|B'\|_{\rL^1} \|h\|_{\rH_L^{\alpha-1}}
		\end{align*}
		by Theorem \ref{thm:ParametrixPropertyOfSineAndCosineFuntions}. Therefore, if we choose
		\begin{equation*}
			G=\ii\big(\mathbf{Id}+\ii\mathbf{R}\big)^{-1} \big(F-R_1(t,0)g-R_2(t,0)h\big)\in \rL_t^1(\R;\rH^{\alpha-1}_L),
		\end{equation*}
		it follows that $u$ given by \eqref{eq:RepresentationOfSolution} indeed defines a weak solution to \eqref{eq:WaveEquationWithTimeDependentLipschitzMetrics}. Moreover, we have the estimate
		\begin{align*}
			&\|G\|_{\rL_t^1(\R;\rH^{\alpha-1}_L)}\\\lesssim{}& \|R_1(t,0)g\|_{\rL_t^1(\R;\rH^{\alpha-1}_L)}+\|R_2(t,0)h\|_{\rL_t^1(\R;\rH^{\alpha-1}_L)}+\|F\|_{\rL_t^1(\R;\rH_L^{\alpha-1})}\\
			\lesssim{}& \|g\|_{\rH_L^{\alpha}}+\|h\|_{\rH^{\alpha-1}_L}+\|F\|_{\rL^1(\R;\rH_L^{\alpha-1})}.
		\end{align*}
		as desired. The proof is complete.
	\end{proof}
	By Proposition~\ref{prop:Kato}, we can identify the spaces $\rH^\alpha$ and $\rH_L^\alpha$ for $\alpha\in [-2,2]$, so Theorem~\ref{thm:WellposednessInAdaptedSpaces} immediately gives the proof of Theorem~\ref{thm:wellposedness}.
	
	\begin{corollary}[Existence and Uniqueness of Weak Solutions in $\rH^\alpha$]\label{cor:WellposednessInH} Let $\eps_1>0$ be as in Theorem~\ref{thm:WellposednessInAdaptedSpaces} and suppose additionally $-1\leq \alpha \leq 2$. Assume that $g\in \rH^{\alpha}$, $h\in \rH^{\alpha-1}$, and $F\in \rL^1(\R;\rH^{\alpha-1})$. Then, the function $u$ as in Theorem~\ref{thm:WellposednessInAdaptedSpaces} defines the unique weak solution to \eqref{eq:WaveEquationWithTimeDependentLipschitzMetrics} in $\rH^\alpha$.
	\end{corollary}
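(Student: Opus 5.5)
The corollary follows from Theorem~\ref{thm:WellposednessInAdaptedSpaces} once the data and the solution are transported between the classical scale $\rH^\beta$ and the adapted scale $\rH^\beta_L$. The only tool needed for this is Proposition~\ref{prop:Kato}~(a): for $\beta\in[-2,2]$ we have $\rH^\beta_L=\rH^\beta$ with equivalent norms.

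The restriction $-1\leq\alpha\leq 2$ is exactly what puts the three relevant orders in that range: $\alpha\in[-2,2]$, $\alpha-1\in[-2,1]$ and $\alpha-2\in[-3,0]$. The first two are covered by Proposition~\ref{prop:Kato}~(a). The third is not when $\alpha<0$, and that case needs a separate argument.

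The plan is as follows. Given $g\in\rH^\alpha$, $h\in\rH^{\alpha-1}$ and $F\in\rL^1(\R;\rH^{\alpha-1})$, Proposition~\ref{prop:Kato}~(a) gives $g\in\rH^\alpha_L$, $h\in\rH^{\alpha-1}_L$, and also $F\in\rL^1(\R;\rH^{\alpha-1}_L)$, since the identity is an isomorphism $\rH^{\alpha-1}\to\rH^{\alpha-1}_L$ and therefore preserves Bochner measurability and integrability. Theorem~\ref{thm:WellposednessInAdaptedSpaces} then produces $u\in C(\R;\rH^\alpha_L)\cap C^1(\R;\rH^{\alpha-1}_L)\cap W^{2,1}_\loc(\R;\rH^{\alpha-2}_L)$ satisfying the equation in $\rH^{\alpha-2}_L$. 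The first two memberships transfer directly to $\rH^\alpha$ and $\rH^{\alpha-1}$.

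\textbf{The obstacle: the order $\alpha-2$.} If $\alpha\geq0$, then $\alpha-2\in[-2,0]$, the two spaces coincide, and both the equation and the $W^{2,1}_\loc$ regularity transfer at once.

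If $\alpha\in[-1,0)$, then $\alpha-2\in[-3,-2)$ and the two spaces need not coincide. I would avoid comparing $\rH^{\alpha-2}_L$ with $\rH^{\alpha-2}$ directly. Instead, I would note that both scales embed into the dual of the common dense test space $\rH^{2-\alpha}=\rH^{2-\alpha}_L$, which has order $2-\alpha\in(2,3]$.

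The hard point is that this equality at order $2-\alpha>2$ is not given by Proposition~\ref{prop:Kato}. I would try to prove it from $\rH^{2-\alpha}_L=\{u\in\rH^2: Lu\in\rH^{-\alpha}_L\}$ together with $\rH^{-\alpha}_L=\rH^{-\alpha}$ for $-\alpha\in(0,1]$. The difficulty is that $Lu\in\rH^{-\alpha}$ with $u\in\rH^2$ should give $u\in\rH^{2-\alpha}$, and this requires elliptic regularity for Lipschitz coefficients, which holds only up to order $3$. That order is exactly the limit $\alpha\geq-1$.

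A cleaner fallback avoids order $2-\alpha$ altogether. Here $u\in C^1(\R;\rH^{\alpha-1})$ and $P(t)u\in \rH^{\alpha-2}_L$. The equation then identifies $D_t^2u$, in the distributional sense pairing against $C_c^\infty$, with $P(t)u+F$. Here $P(t)u$ is understood as the classical distribution $\sum_j b_j D_j a_j D_j u$, which lies in $\rH^{\alpha-2}$ because multiplication by a Lipschitz $a_j$ is bounded on $\rH^{\alpha-1}$ when $\alpha-1\in[-2,-1)$; this is dual to boundedness on $\rH^{1-\alpha}$ with $1-\alpha\in(1,2]$, which fails beyond $1$. So instead I would use the sharper fact that $D_ja_jD_j$ maps $\rH^{\alpha}\to\rH^{\alpha-2}$ by duality of its action $\rH^{2-\alpha}\to\rH^{-\alpha}$. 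For the divergence-form component this holds because the operator is self-adjoint.

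\textbf{Uniqueness.} Any weak solution in $\rH^\alpha$ is, by the same identifications, a weak solution in $\rH^\alpha_L$, so uniqueness follows from Corollary~\ref{cor:UniquenessOfWeakSolutions} applied in the adapted scale.
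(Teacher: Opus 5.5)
Your first two paragraphs and your uniqueness paragraph are exactly the paper's proof. The paper's proof is a single sentence: Proposition~\ref{prop:Kato}~(a) identifies $\rH^\beta=\rH^\beta_L$ for $\beta\in[-2,2]$, so Theorem~\ref{thm:WellposednessInAdaptedSpaces}, together with Corollary~\ref{cor:UniquenessOfWeakSolutions} for uniqueness, applies. The restriction $\alpha\geq-1$ only encodes $\alpha-1\geq-2$.

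You are right that for $\alpha\in[-1,0)$ the order $\alpha-2<-2$ is not covered, and the paper does not discuss this. Both of your repairs, however, rest on false statements. Take $a_2=\dots=a_d=1$ and $a_1$ with a kink at $x_1=0$, which is admissible.
\begin{enumerate}[label=(\roman*)]
\item For $\phi\in C_c^\infty$ with $\partial_{x_1}\phi\neq0$ somewhere on $\{x_1=0\}$, the first component of $L\phi$ contains $(D_{x_1}a_1)D_{x_1}\phi$. This term jumps across $\{x_1=0\}$, so $L\phi\notin\rH^{1/2}$ and hence $C_c^\infty\not\subseteq\rH^{5/2}_L$.
\item Conversely, let $a_1v'=G$ with $G\in C_c^\infty(\R)$, $G(0)\neq0$ and $\int G/a_1=0$. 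Then $v\in H^2(\R)$ has compact support and $D_{x_1}(a_1D_{x_1}v)$ is smooth, but $v''$ jumps at $0$. Hence $u=v\otimes w$ lies in $\rH^3_L$ but not in $\rH^{5/2}$.
\end{enumerate}
So the identity $\rH^{2-\alpha}_L=\rH^{2-\alpha}$ you want to prove fails in both directions for $2-\alpha\in[\frac52,3]$. Lipschitz coefficients give no elliptic regularity up to order $3$, and the threshold $\alpha\geq-1$ has nothing to do with such regularity.

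The ``sharper fact'' in your fallback, $D_{x_j}a_jD_{x_j}\colon\rH^{2-\alpha}\to\rH^{-\alpha}$, fails for the same reason, namely the term $(D_{x_j}a_j)D_{x_j}\phi$. Self-adjointness in $\rL^2$ only dualizes a mapping property; it cannot produce one. The component $a_jD_{x_j}^2$ is not treated at all.

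In fact, for $\alpha<0$ the classical expression $P(t)u$ need not even be a distribution. For $u=\delta_0(x_1)\otimes w(x')\in\rH^{-1}$, pairing $D_{x_1}a_1D_{x_1}u$ with a test function $\phi$ requires evaluating $(D_{x_1}a_1)D_{x_1}\phi$ on $\{x_1=0\}$, which is meaningless at the kink. So the missing step cannot be supplied by comparing the two scales; it is a matter of interpretation.

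For $\alpha\in[-1,0)$, the equation and the $W^{2,1}_{\loc}$-regularity at order $\alpha-2$ in Definition~\ref{def:WeakSolutions} can only be read in $\rH^{\alpha-2}_L$. Here $P(t)$ is the extension from Lemma~\ref{lemma:ExtensionOfL_jToHomogeneousSobolevSpaces}, which is how $P(t)$ acts on such spaces throughout the paper. This is the reading under which the paper's one-line proof is complete. With it, your first two paragraphs plus the uniqueness paragraph already form the whole proof, and everything concerning the order $2-\alpha$ should be deleted.
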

	
	\begin{remark}\label{rem:RelaxAssForWellposedness} Theorem~\ref{thm:WellposednessInAdaptedSpaces} and hence Corollary~\ref{cor:WellposednessInH} are also true under milder conditions on the coefficients $b_1,\dots , b_d$. First, observe that up to now, we did not use the smallness assumption of $\eps_0$ as in \eqref{eq:Ellipticity_b}. Thus, it would be enough if the coefficients $b_j$ are bounded from above and below by a positive constant, just as the coefficients $a_j$ are. Second, the smallness of $\eps_1>0$ was only needed in the proof of Theorem~\ref{thm:WellposednessInAdaptedSpaces} in order to invert the operator $\mathbf{Id}+\ii \mathbf{R}$ on $\rL^1(\R;\rH_L^{\alpha-1})$ (using a Neumann-series argument). However, \textit{without} assuming the smallness of $\eps_1$, we can invert this operator \textit{locally} in time, i.e., on the space $\rL^1(J;\rH_L^{\alpha-1})$, where $J\coloneqq (-T,T)$ for some fixed $T\in (0,\infty)$. Indeed, a straightforward induction on $k\in \N_0$ yields
		\begin{equation}
			\|\mathbf{R}^k G\|_{\rL^1(J;\rH_L^{\alpha-1})}\lesssim \frac{(C_J\,T)^k}{k!}\|G\|_{\rL^1(J;\rH_L^{\alpha-1})}\quad (k\in \N_0)
		\end{equation}
		with
		\begin{equation*}
			C_J\coloneqq \sup_{t,s\in J} \|R_2(t,s)\|_{\cL(\rH^{\alpha-1}_L)}\lesssim  \sup_{t,s\in J} \|B'(t)\| \leq m_4,
		\end{equation*}
		where we used Theorem~\ref{thm:ParametrixPropertyOfSineAndCosineFuntions}. Thus, the operator $A\coloneqq \sum_{k=0}^\infty (-1)^k\ii^k\mathbf{R}^k$ converges in $\cL(\rL^1(J;\rH^{\alpha-1}_L))$ (with operator norm bounded by a constant times $\ee^{m_4T}$) and defines the inverse of $\mathbf{Id}+\ii\mathbf{R}$. Following the proof of Theorem~\ref{thm:WellposednessInAdaptedSpaces}, we obtain a unique weak solution on $J$, i.e., a unique function $u_J\in  C(J; \rH_L^{\alpha})\cap C^1(J; \rH_L^{\alpha-1})\cap W^{2,1}(J;\rH_L^{\alpha-2})$ with $u(0)=g$, $D_tu(0)=h$ and $D_t^2u=P(t)u(t)+F(t)$ for a.e. $t\in J$. Exhausting $\R$ by bounded open intervals $J$ then gives the claim. Observe that this gives only $G\in \rL^1_\loc(\R;\rH_L^{\alpha-1})$ in the representation formula for $u$.
		 However, the smallness of $\eps_1$ ensures that $G\in \rL^1(\R;\rH^{\alpha-1}_L)$, which in turn means that the energy $\|u(t)\|_{\rH^1}$ stays globally bounded with respect to $t$. In general, this is nontrivial (see e.g. \cite{TataruGlobal}). However, it is a minimum requirement in order to prove global-in-time Strichartz estimates, since the energy estimates are a special case of the latter.
	\end{remark}

\section{Uniqueness of Weak Solutions}\label{section:UniquenessOfWeakSolutions}
	The classical energy inequalities imply the uniqueness of weak solutions to linear wave equations with $C^\infty_b(\R^d)$-coefficients in the scale of  standard Sobolev spaces $\rH^\alpha(\R^d)$, $\alpha\in \R$. 
 As the coefficients of our wave operator $\Box_P$ possess only limited regularity, these energy inequalities are applicable only in a restricted range of exponents $\alpha$. However, combining them with Proposition~\ref{prop:Kato} gives a first result.
	\begin{lemma}[Classical Energy Inequalities]\label{lemma:ClassicalEnergyInequalities} 
		Let $J\subseteq \R$ be a bounded open interval with $0\in J$. If
		\begin{equation*}
			u\in C(\overline{J};\rH^1_L)\cap C^1(\overline{J};\rL^2)\cap W^{2,1}(J;\rH^{-1}_L) 
		\end{equation*} 
		with $\Box_Pu= (D_t^2-P(t))u\in \rL^1(J;\rL^2)$, then
		\begin{equation}\label{eq:EnergyInequalityBabyVersion} 
			\|u\|_{C(\overline{J};\rH^1_L)}\lesssim_{|J|} \|u(0)\|_{\rH_L^1}+\|D_tu(0)\|_{\rL^2}+\|\Box_Pu \|_{\rL^1(J;\rL^2)}.
		\end{equation}
	\end{lemma}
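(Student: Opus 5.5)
The plan is the standard energy method, carried out in the $L$-adapted scale and justified by mollification in the frequency variable via the Phillips calculus. For $u$ as in the statement, the energy is
\[
E(t)\coloneqq \langle D_tu(t),D_tu(t)\rangle_A+\langle P(t)u(t),u(t)\rangle_A+\langle u(t),u(t)\rangle_A ,
\]
where $\langle\cdot,\cdot\rangle_A$ is the equivalent scalar product \eqref{eq:EquivalentScalarProduct}. With respect to this product each $L_j$ is self-adjoint and nonnegative. Since $b_j(t)\in[1-\eps_0,1+\eps_0]$, we have $\langle P(t)u,u\rangle_A\simeq\langle Lu,u\rangle_A$, and Proposition~\ref{prop:Kato} then gives $E(t)\simeq \|u(t)\|_{\rH^1_L}^2+\|D_tu(t)\|_2^2$ with constants depending only on $m_1,m_2$.

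\textbf{Step 1 (formal identity).} For smooth-in-frequency $u$ we differentiate:
\[
\tfrac{\dd}{\dd t}E(t)=2\,\mathrm{Re}\big\langle \partial_t^2u+P(t)u,\partial_tu\big\rangle_A+\sum_j b_j'(t)\langle L_ju,u\rangle_A+2\,\mathrm{Re}\langle u,\partial_tu\rangle_A .
\]
Since $\partial_t^2 u+P(t)u=-(D_t^2-P(t))u=-\Box_Pu$, this gives
\[
\big|\tfrac{\dd}{\dd t}E\big|\lesssim \|\Box_Pu\|_2\,E^{1/2}+(m_4+1)E .
\]
Writing this for $E^{1/2}$ and applying Gronwall on $J$ yields
\[
\sup_{t\in J}E(t)^{1/2}\lesssim e^{C(m_4+1)|J|}\bigl(E(0)^{1/2}+\|\Box_Pu\|_{\rL^1(J;\rL^2)}\bigr),
\]
which is \eqref{eq:EnergyInequalityBabyVersion}. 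The $L^1$ bound on $b_j'$ is not needed here, only $m_4$, which explains the dependence on $|J|$.

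\textbf{Step 2 (justification, the main obstacle).} At the given regularity the pairing $\langle \partial_t^2u,\partial_tu\rangle$ is only a pairing of $\rH^{-1}_L$ with $\rL^2$, so it is not defined, and the chain rule cannot be applied directly. I would regularize with the frequency cutoff $\chi_N\coloneqq\sum_{2^{-N}\le\lambda\le 2^N}\psi_\lambda$ and set $u_N\coloneqq\chi_N(\sqrL)u$. By Proposition~\ref{prop:PropertiesPhillipsCalculus}(b) and Lemma~\ref{lemma:ExtensionOfPhillipsFunctionalCalculus}, $\chi_N(\sqrL)$ maps $\rH^{-\infty}_L$ into $\rH^\infty_L$, is uniformly bounded on every $\rH^\alpha_L$, and commutes with each $L_j$. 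Because $b_j$ depends only on $t$, it also commutes with $P(t)$; this is exactly where the structural assumption removes the commutator problems one usually faces with Lipschitz coefficients.

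Consequently $u_N\in W^{2,1}(J;\rH^1_L)$ and $\Box_Pu_N=\chi_N(\sqrL)\Box_Pu$. Step 1 is then rigorous for $u_N$: $E_N$ is absolutely continuous, and the product rule holds in $W^{1,1}$. This gives the estimate for $u_N$ with $\|\chi_N(\sqrL)\Box_Pu\|_{\rL^1\rL^2}\lesssim\|\Box_Pu\|_{\rL^1\rL^2}$, and similarly for the initial data.

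\textbf{Step 3 (limit).} Letting $N\to\infty$, Proposition~\ref{prop:CalderonReproducingFormulaOnInhomogeneousSobolevSpaces} gives $u_N(t)\to u(t)$ in $\rH^1_L$ for each $t$. The left-hand side is then bounded by lower semicontinuity, or directly, since the right-hand side bound is uniform in $N$. This proves the lemma.
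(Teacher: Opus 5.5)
Your argument is correct, but it is organized differently from the paper's. The paper does not carry out the energy computation itself. It invokes the classical energy inequality \cite[Chapter~I, Theorem~3.1]{Sogge1995} in the standard scale $\rH^1$, remarking that Lipschitz coefficients suffice for one time derivative and one integration by parts in $x$. It then transfers the estimate to $\rH^1_L$ through $\rH^{\pm1}_L=\rH^{\pm1}$ from Proposition~\ref{prop:Kato}.

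You instead stay in the adapted scale. The weighted product $\langle\cdot,\cdot\rangle_A$ makes each $L_j$ self-adjoint and nonnegative. This is worth writing out: $a_j(x_j)/a(x)$ is independent of $x_j$, and you need $\langle L_ju,u\rangle_A\ge 0$ for each $j$ separately to bound $\sum_j b_j'(t)\langle L_ju,u\rangle_A$ by $2m_4\langle P(t)u,u\rangle_A$. Rigor at the stated regularity comes from the cutoffs $\chi_N(\sqrL)$, which commute exactly with $P(t)$ because the $b_j$ depend only on $t$.

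The paper's route is shorter, and the energy estimate itself needs only Lipschitz coefficients, not the product structure. However, it leaves the passage from smooth solutions to the class $C(\overline{J};\rH^1)\cap C^1(\overline{J};\rL^2)\cap W^{2,1}(J;\rH^{-1})$ to a ``careful inspection''. Your route is self-contained, never pairs $\rH^{-1}$ with $\rL^2$, and gives the explicit constant $\ee^{C(1+m_4)|J|}$. Applied to $\langle D_L\rangle^{\alpha-1}\chi_N(\sqrL)u$, it would also give Proposition~\ref{prop:EnergyEstimate} directly, without the time mollifier $S_n$ and the commutator term $Ru$ used there.

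Two minor points:
\begin{itemize}
\item Proposition~\ref{prop:Kato} is not needed for $E(t)\simeq\|u(t)\|_{\rH^1_L}^2+\|D_tu(t)\|_2^2$. By self-adjointness, $\langle u,u\rangle_A+\langle Lu,u\rangle_A=\langle(\id+L)^{1/2}u,(\id+L)^{1/2}u\rangle_A$ directly.
\item The Gronwall step for $E^{1/2}$ should be run with $(E+\eta)^{1/2}$ and $\eta\downarrow 0$, to avoid dividing by zero where $E$ vanishes.
\end{itemize}
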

	\begin{proof} We first observe that \eqref{eq:EnergyInequalityBabyVersion} holds with $\rH_L^1$ replaced by the classical Sobolev space $\rH^1$. Indeed, a careful inspection of the proof of \cite[Chapter~I, Theorem~3.1]{Sogge1995} shows that the latter is applicable with $s=0$ in the assumptions of that theorem (the coefficients $c_j(t,x)=b_j(t)a_j(x_j)$ of $P(t)$ are Lipschitz, so taking one time derivative and integrating by parts once with respect to\@ $x$ is justified to make the energy inequality work). But $\rH^{\pm 1}_L=\rH^{\pm 1}$ by Proposition~\ref{prop:Kato}, so the claim follows.
	\end{proof}
	Note that Lemma~\ref{lemma:ClassicalEnergyInequalities} in particular implies the uniqueness of weak solutions in $\rH^1_L$. In the following, we lift this result to general exponents $\alpha\in \R$. The reason why we can do this is that, while $\langle D_x\rangle^\alpha$ does not commute with $P(t)=\sum_{j=1}^d b_j(t)L_j$ for large $\alpha$ (by the limited regularity of the coefficients $a_j$), the fractional power $\langle D_L\rangle^\alpha$ in fact \textit{does} (on appropriate subsets of functions). In order to exploit this fact, we need the following approximation lemma. To state it, we introduce some notation first. Let $\alpha\in \R$, $p\in [1,\infty]$, $m\in \N_0$, and $J\subseteq \R$ be a bounded open interval. Put
	\begin{equation*}
		\mathcal{W}^{m,p}_{J,\alpha}\coloneqq \begin{cases}
			W^{m,p}(J;\rH^\alpha_L) & \text{for }p\in [1,\infty),\\
			C^m(\overline{J};\rH^\alpha_L)&\text{for }p=\infty.
		\end{cases}
	\end{equation*}
	Let $u\in \mathcal{W}^{m,p}_{J,\alpha}$ and $n\in \N_0$. Then, we set $\Z_n\coloneqq \{-n,\dots, n\}$ and define
	\begin{equation*}
		(\Pi_n u)(t)\coloneqq \sum_{\lambda\in 2^{\Z_n}} \psi_\lambda(\sqrL)u(t)\quad \text{for all } t\in \overline{J}
	\end{equation*}
	if $p=\infty$. If $p\in [1,\infty)$, we define $\Pi_n u$ by the same expression in an almost everywhere sense. If $I\Subset J$ is an open subinterval and $ 4^{-n}<  \mathrm{dist}(I, J^c)$, we define
	\begin{equation*}
		(S_n u)(t)\coloneqq (\varphi_n \ast u)(t)=\int_{\R}\varphi_n(t-s)u(s)\dd s\quad \text{for } t\in \overline{I},
	\end{equation*}
	where $\varphi_n(t)\coloneqq 4^n\varphi(4^nt)$ and $\varphi\in C_c^\infty(\R)$ satisfies $\supp{\varphi}\subseteq (-1,1)$ and $\int_\R \varphi(t)\dd t=1$.
	\begin{lemma}[Approximation Lemma]\label{lemma:Approximation} Let $I\Subset J$ be bounded open intervals in $\R$ and $m,\ell\in \N_0,p\in[1,\infty],\alpha,\gamma\in \R$.
		\begin{enumerate}[label=(\alph*)]
			\item For all $n\in\N_0$, we have $\Pi_n\in \cL(\mathcal{W}^{m,p}_{J,\alpha}, \mathcal{W}^{m,p}_{J,\alpha+\gamma})$ with $\|\Pi_n\|\lesssim_{\gamma} 2^{(n+1)\gamma_+}$. Moreover, $\Pi_n \to \id$ strongly on $\mathcal{W}^{m,p}_{J,\alpha}$ as $n\to \infty$.
			\item For all $n \geq n_0$, we have $S_n\in \cL(\mathcal{W}^{m,p}_{J,\alpha},\mathcal{W}^{m+\ell,p}_{I,\alpha})$ with $\|S_n \|\lesssim 4^{n\ell_+}$, where $n_0$ is such that $4^{-n_0}<\mathrm{dist}(I,J^c)$. Moreover, if $u\in \mathcal{W}^{m,p}_{J,\alpha}$ , then $(S_nu)^{(k)}=S_nu^{(k)}$ for $k\leq m$ and $S_n u\to u|_{I}$ in $\mathcal{W}^{m,p}_{I,\alpha}$ as $n\to \infty$.  
		\end{enumerate}
		\begin{proof}
			(a) Suppose first that $p\in [1,\infty)$. If $\gamma\leq 0$, then $\|\Pi_n\|\leq 1$ since $\rH^{\alpha}_L\hookrightarrow \rH^{\alpha+\gamma}_{L}$ in this case. Therefore, we may suppose $\gamma>0$. By  Lemma~\ref{lemma:ExtensionOfPhillipsFunctionalCalculus}~(a)   and Proposition~\ref{prop:PropertiesPhillipsCalculus}~(c), we then have 
			\begin{equation*}
				\|\Pi_n u(t)\|_{\rH^{\alpha+\gamma}_L}\lesssim \sum_{\lambda\in 2^{\Z_n}}\langle \lambda\rangle^{\gamma} \|u(t)\|_{\rH^\alpha_L} \lesssim_{\gamma} 2^{(n+1)\gamma}\|u(t)\|_{\rH^\alpha_L}
			\end{equation*}
			for a.e. $t\in J$. Raising this inequality to the $p$-th power and integrating it with respect to $t\in J$, we deduce the bound $\|\Pi_n\|\lesssim 2^{(n+1)\gamma}$. This proves the first claim. To prove the second claim, note that we have proved that $(\Pi_n)_n$ is in particular uniformly bounded on $\mathcal{W}_{J,\alpha}^{m,p}$. Let $v\in D\coloneqq C^\infty(\overline{J}; \rH^\alpha_L)$. Then, $\Pi_n v\to v$ in $\mathcal{W}_{J,\alpha}^{m,p}$ by  Proposition~\ref{prop:CalderonReproducingFormulaOnInhomogeneousSobolevSpaces}  and the uniform continuity of $v$. Now, since $D$ is dense in $\mathcal{W}_{J,\alpha}^{m,p}$ (see e.g. \cite[Corollary 1.4.37]{CazenaveHaraux1998}), the strong convergence of $(\Pi_n)_n$ to $\id$ on $\mathcal{W}^{m,p}_{J,\alpha}$ follows from a standard density argument using uniform boundedness. The case $p=\infty$ is proved similarly.
			
			(b) These assertions are proved exactly as in the case of scalar-valued functions (see e.g. \cite[Section 1.3]{Hoermander1990}).
		\end{proof}
	\end{lemma}
	
	\begin{proposition}[Energy Estimates in $\rH^{\alpha}_L$]\label{prop:EnergyEstimate} 
		Let $J\subseteq \R$ be a bounded open interval with $0\in J$. Suppose further that for some $\alpha\in\R$, the function $u$ belongs to $ C(\overline{J};\rH_L^\alpha)\cap C^1(\overline{J};\rH^{\alpha-1}_L)\cap W^{2,1}(J;\rH^{\alpha-2}_L)$. If $\Box_Pu\in \rL^1(J;\rH^{\alpha-1}_L)$, then
		\begin{equation*}
			\|u\|_{C(\overline{J}; \rH_L^\alpha)}\lesssim_{|J|}  \|u(0)\|_{\rH_L^\alpha}+\|D_tu(0)\|_{\rH_L^{\alpha-1}}+\|\Box_P u\|_{\rL^1(J;\rH_L^{\alpha-1})}.
		\end{equation*}
	\end{proposition}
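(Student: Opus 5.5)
The plan is to reduce to the case $\alpha=1$, which is Lemma~\ref{lemma:ClassicalEnergyInequalities}. The key point is that $\langle D_L\rangle^{\alpha-1}$ commutes with $P(t)$: we have $P(t)=\sum_j b_j(t)L_j$, each $L_j$ corresponds to the symbol $\xi_j^2$ in the Phillips calculus, and $\langle D_L\rangle^{\beta}$ corresponds to $\langle\xi\rangle^\beta$. The difficulty is that $u$ is too rough for this commutation to be applied directly. We therefore first regularize by setting $u_n\coloneqq S_n\Pi_n u$ on a subinterval $I\Subset J$ with $0\in I$.

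Step one concerns the approximants. By Lemma~\ref{lemma:Approximation}, $\Pi_n u$ is frequency localized to finitely many dyadic pieces, so it lies in every $\rH^\beta_L$ with the same time regularity as $u$. The mollifier $S_n$ then makes it smooth in $t$. Set $v_n\coloneqq \langle D_L\rangle^{\alpha-1}u_n$. Using \eqref{eq:RuleForD_L} and Proposition~\ref{prop:PropertiesPhillipsCalculus}~(b), each term of $\Pi_n$ satisfies
\[
L_j\langle D_L\rangle^{\alpha-1}\psi_\lambda(\sqrL)f=\langle D_L\rangle^{\alpha-1}L_j\psi_\lambda(\sqrL)f ,
\]
with both sides equal to $(\xi_j^2\langle\xi\rangle^{\alpha-1}\psi_\lambda)(\sqrL)f$. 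Time derivatives commute with $\langle D_L\rangle^{\alpha-1}$, $\Pi_n$ and $S_n$. The one operator that fails to commute is $S_n$ against $P(t)$, since $b_j(t)$ depends on time. Hence
\[
\Box_P v_n=\langle D_L\rangle^{\alpha-1}\bigl(S_n\Pi_n\Box_P u + [P(\cdot),S_n]\Pi_n u\bigr),
\]
where $\bigl([P,S_n]w\bigr)(t)=\sum_j\int\varphi_n(t-s)\bigl(b_j(t)-b_j(s)\bigr)L_jw(s)\dd s$.

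Step two applies Lemma~\ref{lemma:ClassicalEnergyInequalities} to $v_n$ on $I$ and uses $\|\langle D_L\rangle^{\alpha-1}w\|_{\rH^1_L}=\|w\|_{\rH^\alpha_L}$, which gives
\[
\|u_n\|_{C(\overline I;\rH^\alpha_L)}\lesssim \|u_n(0)\|_\alpha+\|D_tu_n(0)\|_{\alpha-1}+\|S_n\Pi_n\Box_Pu\|_{\rL^1(I;\rH^{\alpha-1}_L)}+\|[P,S_n]\Pi_nu\|_{\rL^1(I;\rH^{\alpha-1}_L)}.
\]

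The main obstacle is the commutator term. Since $|b_j(t)-b_j(s)|\le m_4|t-s|\lesssim 4^{-n}$ on the support of $\varphi_n$, it is bounded by $4^{-n}\|L_j\Pi_n u\|_{\rL^1(\rH^{\alpha-1}_L)}$. Lemma~\ref{lemma:ExtensionOfL_jToHomogeneousSobolevSpaces} and Lemma~\ref{lemma:Approximation}~(a) then bound this by $4^{-n}\cdot 2^{n+1}\|u\|_{C(\rH^\alpha_L)}$, which tends to $0$. This is exactly why the mollification scale $4^{-n}$ was chosen finer than the frequency cutoff $2^n$.

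Step three passes to the limit. By Lemma~\ref{lemma:Approximation}, $u_n\to u$ in $C(\overline I;\rH^\alpha_L)$. The data terms converge, since $u\in C^1(\overline J;\rH^{\alpha-1}_L)$. Also $S_n\Pi_n\Box_Pu\to\Box_Pu$ in $\rL^1(I;\rH^{\alpha-1}_L)$. This yields the estimate on every $I\Subset J$ containing $0$, with a constant depending only on $|J|$. Exhausting $J$ by such intervals and using the continuity of $u$ up to $\overline J$ gives the claim on $\overline J$.

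Some care is needed to justify the hypotheses of Lemma~\ref{lemma:ClassicalEnergyInequalities} for $v_n$. These hold because $v_n$ is smooth in $t$ with values in every $\rH^\beta_L$.
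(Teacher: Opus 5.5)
Your proposal is correct and follows essentially the same route as the paper: regularize via $u_n=S_n\Pi_n u$ on $I\Subset J$, apply Lemma~\ref{lemma:ClassicalEnergyInequalities} to $\langle D_L\rangle^{\alpha-1}u_n$, bound the commutator between $P(t)$ and the time mollifier by $4^{-n}\cdot 2^{n+1}\|u\|_{C(\overline{J};\rH^\alpha_L)}$, then pass to the limit and exhaust $J$. The only slip is a sign: with your definition of $[P,S_n]$ one gets $\Box_P u_n=S_n\Pi_n\Box_P u-[P,S_n]\Pi_n u$, which is immaterial for the norm estimate.
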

	\begin{proof} Let $J\subseteq \R, \alpha\in \R$ and $u$ be as in the statement of the proposition. Fix some open interval $I$ with $0\in \overline{I}\subseteq J$. Now, for $n\in \N$ with $4^n\geq (\mathrm{dist}(I,J^c))^{-1}$, we define $u_n\colon \overline{I}\to \rH_L^{\alpha}$ by
		\begin{equation*}
			u_n(t)\coloneqq (S_n \Pi_n u)(t)=\int_{\R}\varphi_n(t-s)\Pi_n u(s) \dd s \quad (t\in \overline{I}).
		\end{equation*}
		Observe that $u_n$ belongs to $C^\infty(\overline{I};\rH^\infty_L)$. Indeed, since $u\in C(\overline{J};\rH^\alpha_L)$, it follows that $\Pi_n u\in C(\overline{J};\rH^\beta_L)$ for all $\beta\in \R$ by Lemma~\ref{lemma:Approximation}~(a) and therefore $u_n\in C^k(\overline{I};\rH^\beta_L)$ by Lemma~\ref{lemma:Approximation}~(b) for all $\beta\in \R$ and $k\in \N_0$. This proves $u_n\in C^\infty(\overline{I};\rH^\infty_L)$. But then also $v_n\coloneqq \langle D_L\rangle^{\alpha-1}u_n\in C^\infty(\overline{I};\rH^\infty_L)$ and $\Box_P v_n\in C^{0,1}(\overline{I}; \rH^\infty_L)\subseteq \rL^1(I;\rL^2)$ (note that the loss of regularity in $t$ arises from the fact that the $b_j$ are only Lipschitz). Applying Lemma~\ref{lemma:ClassicalEnergyInequalities} to $v_n$, we obtain
		\begin{align}\label{eq:EnergyEstimateOnV_n}
			\begin{split}
				\|u_n\|_{C(\overline{I};\rH_L^\alpha)}&=\hphantom{_{|I|}}{}\|v_n\|_{C(\overline{I};\rH_\rL^1)}\\
				&\lesssim_{|I|} \|v_n(0)\|_{\rH^1_L}+\|D_t v_n(0)\|_{\rL^2}+\| \Box_P v_n\|_{\rL^1(I;\rL^2)}\\
				&=\hphantom{_K}{}\|u_n(0)\|_{\rH^\alpha_L}+\|D_t u_n(0)\|_{\rH^{\alpha-1}_L}+\|\Box_Pu_n\|_{\rL^1(I;\rH^{\alpha-1}_L)},
			\end{split}
		\end{align}
		where, for the last equality, we used that $\Box_P$ and $\langle D_L\rangle^{\alpha-1}$ commute when applied to $u_n$. Now, we want to pass to the limit $n\to \infty$ in \eqref{eq:EnergyEstimateOnV_n}. By Lemma~\ref{lemma:Approximation}, we infer $u_n\to u $ in $C(\overline{I}; \rH^\alpha_L)$ and $D_t u_n\to D_t u$ in $C^1(\overline{I};\rH^{\alpha-1}_L)$.
		We claim that
		\begin{equation}\label{eq:EnergyEstimateInhomogeneity}
			\|\Box_Pu_n\|_{\rL^1(I;\rH^{\alpha-1}_L)}\lesssim_{|I|} \|\Box_Pu \|_{\rL^1(J;\rH^{\alpha-1}_L)}+2^{-n}\|u\|_{C(\overline{J};\rH^\alpha_L)}.
		\end{equation}
		Taking \eqref{eq:EnergyEstimateInhomogeneity} for granted, we may let $n\to \infty$ in \eqref{eq:EnergyEstimateOnV_n} to obtain
		\begin{equation}\label{eq:EnergyEstimateSubinterval}
			\|u\|_{C(\overline{I}; \rH_L^\alpha)}\lesssim_{|I|}  \|u(0)\|_{\rH_L^\alpha}+\|D_tu(0)\|_{\rH_L^{\alpha-1}}+\|\Box_P u\|_{\rL^1(J;\rH_L^{\alpha-1})},
		\end{equation} 
		and we infer the claim from \eqref{eq:EnergyEstimateSubinterval} by simply exhausting $J$ by open, compactly contained subintervals $I\Subset J$. So it remains to prove \eqref{eq:EnergyEstimateInhomogeneity}. First, note that since $u\in W^{2,1}(J;\rH^{\alpha-2}_L)$ by assumption, Lemma~\ref{lemma:Approximation}~(b) implies
		\begin{equation}\label{eq:TimeDetivativeOFInhomogeneity}
			D_t^2u_n=D_t^2(\varphi_n\ast \Pi_n u)=(\varphi_n\ast \Pi_n D_t^2u)=S_n\Pi_n D_t^2u
		\end{equation}
		in $\rL^1(I;\rH^{\alpha-2}_L)$. On the other hand, since $P(t)=\sum_{j=1}^d b_j(t) L_j$, one may verify that 
		\begin{equation}\label{eq:PAppliedToInhomogeneity}
			(Pu_n)(t)=(S_n\Pi_n Pu)(t)- Ru(t)  \quad \text{for }t\in\overline{I}
		\end{equation}
		with $Ru=\sum_{j=1}^d R_ju$ and 
		\begin{equation*}
			R_j u(t)=\int_{\R}\varphi_n(t-s)(b_j(t)-b_j(s)) (L_j\Pi_n u)(s) \dd s.
		\end{equation*}
		Note that by Lemma~\ref{lemma:ExtensionOfL_jToHomogeneousSobolevSpaces} and Lemma~\ref{lemma:Approximation}~(a)
		\begin{align*}
			\begin{split}
				\|R_j u(t)\|_{\rH^{\alpha-1}_L}
				\leq{}& \|b_j'\|_\infty \|\Pi_n u\|_{C(\overline{J};\rH^{\alpha+1}_L)} \int_{\R}|\varphi_n(t-s)||t-s|\dd s\\
				\lesssim{}&m_4\cdot \big(2^{(n+1)}\|\Pi_n u\|_{C(\overline{J};\rH^{\alpha}_L)}\big) \cdot 4^{-n}\|\varphi\|_1\\
				\lesssim{}& 2^{-n}  \|u\|_{C(\overline{J};\rH^\alpha_L)}
			\end{split}
		\end{align*}
		for each $t\in \overline{I}$ and thus 
		\begin{equation}\label{eq:EstimateForErrorInhomogeneity}
			\|Ru\|_{\rL^1(I;\rH^{\alpha-1}_L)}\lesssim_{|I|} 2^{-n} \textbf{}\|u\|_{C(\overline{J},\rH^\alpha_L)} .	
		\end{equation}
		%	Since $P(t)=\sum_{j=1}^d b_j(t) L_j$, this gives
		%	\begin{equation}\label{eq:PAppliedToInhomogeneity}
			%		(Pu_n)(t)=(S_n\Pi_n Pu)(t)- Ru(t)  \quad \text{for }t\in\overline{I}
			%	\end{equation}
		%	with $Ru=\sum_{j=1}^d R_ju$.
		Combining \eqref{eq:TimeDetivativeOFInhomogeneity} and \eqref{eq:PAppliedToInhomogeneity}, we conclude
		\begin{equation*}
			\Box_P u_n=(D_t^2-P)u_n= S_n\Pi_n \Box_P u-Ru \quad \text{in } \rL^1(I;\rH^{\alpha-1}_L),
		\end{equation*}
		and applying Lemma~\ref{lemma:Approximation} and \eqref{eq:EstimateForErrorInhomogeneity} gives
		\begin{align*}
			\|\Box_P u_n\|_{\rL^1(I;\rH^{\alpha-1}_L)}&\leq{}\|S_n\Pi_n\Box_P u_n\|_{\rL^1(I;\rH^{\alpha-1}_L)}+\|Ru \|_{\rL^1(I;\rH^{\alpha-1}_L)}\\
			&\lesssim_{|I|}{}\|\Box_P u_n\|_{\rL^1(I;\rH^{\alpha-1}_L)}+ 2^{-n}\|u\|_{C(\overline{J},\rH^\alpha_L)}
		\end{align*}
		as desired. This completes the proof.
	\end{proof}
	\begin{corollary}[Uniqueness of Weak Solutions in $\rH^\alpha_L$]\label{cor:UniquenessOfWeakSolutions} Let $\alpha\in \R$ and suppose that $g\in \rH^\alpha_L, h\in \rH^{\alpha-1}_L$, and $F\in \rL^1(\R;\rH^{\alpha-1}_L)$. If $u$ and $v$ are weak solutions to \eqref{eq:WaveEquationWithTimeDependentLipschitzMetrics} in $\rH^\alpha_L$, then $u=v$.
	\end{corollary}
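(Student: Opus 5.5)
The plan is to apply the energy estimate of Proposition~\ref{prop:EnergyEstimate} to the difference $w\coloneqq u-v$ on bounded intervals. By linearity of the equation, $w$ is a weak solution in $\rH^\alpha_L$ of the homogeneous problem with vanishing data:
\[
D_t^2w(t)=P(t)w(t)\quad\text{in } \rH^{\alpha-2}_L \text{ for a.e. } t\in\R,\qquad w(0)=0,\qquad D_tw(0)=0 .
\]
Here we use that $P(t)$ is linear and that both $u$ and $v$ satisfy the same identity in $\rH^{\alpha-2}_L$ almost everywhere.

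Next, we fix an arbitrary bounded open interval $J\ni 0$. The restriction of $w$ to $\overline{J}$ lies in $C(\overline{J};\rH^\alpha_L)\cap C^1(\overline{J};\rH^{\alpha-1}_L)\cap W^{2,1}(J;\rH^{\alpha-2}_L)$. This follows since continuity on $\R$ gives continuity on the compact set $\overline J$, and $W^{2,1}_\loc$ gives $W^{2,1}(J)$. Moreover, $\Box_P w=0$ a.e. on $J$, so trivially $\Box_Pw\in\rL^1(J;\rH^{\alpha-1}_L)$.

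All hypotheses of Proposition~\ref{prop:EnergyEstimate} are therefore met, and we obtain
\[
\|w\|_{C(\overline J;\rH^\alpha_L)}\lesssim_{|J|}\|w(0)\|_{\rH^\alpha_L}+\|D_tw(0)\|_{\rH^{\alpha-1}_L}+\|\Box_Pw\|_{\rL^1(J;\rH^{\alpha-1}_L)}=0 .
\]
Hence $w=0$ on $\overline J$. Exhausting $\R$ by such intervals gives $u=v$.

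The same estimate, applied to the difference of the solutions for data $(g_1,h_1,F_1)$ and $(g_2,h_2,F_2)$, also yields the Lipschitz dependence claimed in \eqref{eq:ContinuousDependence}. In that case $\Box_P(u_1-u_2)=F_1-F_2$.

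There is no substantial obstacle here. The only point needing care is to check that $\Box_P w$ is meaningful as an element of $\rL^1(J;\rH^{\alpha-1}_L)$ and not merely of $\rL^1(J;\rH^{\alpha-2}_L)$. This holds because it vanishes identically a.e., or equals $F_1-F_2$ in the Lipschitz-dependence case.
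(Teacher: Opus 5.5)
Your proposal is correct and is essentially the paper's proof: the paper likewise applies Proposition~\ref{prop:EnergyEstimate} to $w_n=(u-v)|_{\overline{J_n}}$ with $J_n=(-n,n)$, obtains $w_n=0$, and lets $n\to\infty$. Your additional remark is also correct: applying the same estimate to $u_1-u_2$, where $\Box_P(u_1-u_2)=F_1-F_2$, gives the Lipschitz dependence \eqref{eq:ContinuousDependence}, which spells out what the paper only asserts when it cites this corollary in the proof of Theorem~\ref{thm:WellposednessInAdaptedSpaces}.
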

	\begin{proof} Define $J_n\coloneqq (-n,n)$ for $n\in \N$. Applying Proposition \ref{prop:EnergyEstimate} to $w_n\coloneqq (u-v)|_{\overline{J_n}}$ gives $w_n=0$. Letting $n\to \infty$ yields $w=0$ and thus $u=v$ as desired.
	\end{proof}

\section{Global Strichartz Estimates}\label{section:GlobalStrichartzEstimates}
	In this section, we prove Theorem~\ref{mainthm:Strichartz} following the approach in the proof of \cite[Theorem~2]{FreySchippa2023}. Recall the parametrices from Section~\ref{section:ParametrixConstruction},
	\begin{equation*}
	T^{\pm}(t,s)= \sum_{\lambda\in 2^\Z} T^\pm_\lambda(t,s) =\sum_{\lambda\in 2^\Z} (\ee^{\pm \ii \varphi_{t,s}} \psi_\lambda)(\sqrL).
	\end{equation*}
	We first establish a Strichartz inequality for $T_1^\pm (t,s)$ in Subsection~\ref{subsection:StrichartzEstimatesForUnitFrequencies} using the Keel--Tao framework \cite{KeelTao1998}, see Proposition~\ref{prop:StrichartzEstimatesForUnitFrequencies} below. In Subsection \ref{subsection:GlobalStrichartzEstimatesforT_lambda(t,s)}, we obtain global Strichartz estimates for the dyadically localized operators $T_\lambda^{\pm}(t,s)$ by a scaling argument. Finally, in Subsection~\ref{subsection:GlobalStrichartzEstimatesforT(t,s)}, we patch these estimates together to deduce global Strichartz estimates for $T^{\pm}(t,s)$ (see Theorem~\ref{thm:StrichartzEstimatesForParametricesI}~(a) below). This then immediately translates to Strichartz estimates for the parametrices $C(t,s)$ and $S(t,s)$, which, when combined with Theorem~\ref{thm:WellposednessInAdaptedSpaces} and the Christ--Kiselev lemma, then yields the proof of Theorem~\ref{mainthm:Strichartz}. 
	\subsection{Global Strichartz Estimates for Unit Frequencies}\label{subsection:StrichartzEstimatesForUnitFrequencies}
	In this subsection, we prove global Strichartz estimates for $T^{\pm}_1(t,s)$. More precisely, we show:
	\begin{proposition}[Global Strichartz Estimates for $T_1^{\pm}(t,s)$]\label{prop:StrichartzEstimatesForUnitFrequencies} Let $\eps_0\in (0,\frac{1}{2})$ from \eqref{eq:Ellipticity_b} be sufficiently small and suppose that $(p,q,\alpha)$ is a wave-admissible Strichartz triple and $s\in \R$. Then,
	\begin{equation}\label{eq:StrichartzEstimatesForUnitFrequencies}
		\|L^{-\frac{\alpha}{2}}(\ee^{\pm \ii\varphi_{t,s}}\psi)(\sqrL)f\|_{\rL^p_t(\R;\rL_x^q(\R^d))}\lesssim \|f\|_2 \quad (f\in \rL^2),
	\end{equation}
	with an implicit constant independent of $s$.
	\end{proposition}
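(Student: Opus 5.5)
We apply the abstract Keel--Tao theorem to the family $U(t)\coloneqq (\ee^{\pm\ii\varphi_{t,s}}\tilde\psi)(\sqrL)$, where $\tilde\psi\in C_c^\infty$ is an even (in every component), nonnegative bump equal to one on $\supp{\psi}$ and supported in $\{\tfrac14<|\xi|<4\}$. Since $\psi$ is radial, hence even in every component, and $\varphi_{t,s}$ depends on $\xi$ only through $\xi_j^2$, all symbols involved are even in every component, so Proposition~\ref{prop:L^inftyL^1-Estimate}~(b) is available. By the homomorphism property in Proposition~\ref{prop:PropertiesPhillipsCalculus}~(a),
\[
(\ee^{\pm\ii\varphi_{t,s}}\psi)(\sqrL)=U(t)\psi(\sqrL).
\]
The factor $L^{-\alpha/2}$ is absorbed by \eqref{eq:RuleForFractionalPowers}, which replaces $\psi$ by $|\cdot|^{-\alpha}\psi$, still a smooth even bump supported away from the origin. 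It therefore suffices to prove $\|U(t)g\|_{\rL^p_t\rL^q_x}\lesssim \|g\|_2$ for the sharp admissible exponents with $\sigma=\frac{d-1}{2}$. The non-strict triples then follow from the strict ones by Sobolev embedding, transferred to the $\sqrL$-calculus through frequency localization and Bernstein-type bounds obtained from Proposition~\ref{prop:PropertiesPhillipsCalculus}~(c). The endpoint case is included in the Keel--Tao theorem.

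\textbf{Energy estimate.} Proposition~\ref{prop:PropertiesPhillipsCalculus}~(c) gives $\|U(t)\|_{\cL(\rL^2)}\lesssim_{M_2}\|\ee^{\pm\ii\varphi_{t,s}}\tilde\psi\|_\infty\le 1$, uniformly in $t$ and $s$.

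\textbf{Dispersive estimate.} Write
\[
U(t)U(\tau)^\ast=(\ee^{\pm\ii(\varphi_{t,s}-\varphi_{\tau,s})}\tilde\psi^2)(\sqrL)^{\ast\ast},
\]
where self-adjointness of the calculus (up to the equivalent scalar product \eqref{eq:EquivalentScalarProduct}) holds because the symbols are real up to conjugation and $L_j$ is self-adjoint with respect to $\langle\cdot,\cdot\rangle_A$; the weight $A$ is bounded above and below, so it is harmless. Note that $\varphi_{t,s}-\varphi_{\tau,s}=\varphi_{t,\tau}$. Proposition~\ref{prop:L^inftyL^1-Estimate}~(b) then reduces the dispersive estimate to the oscillatory integral bound
\[
\sup_y|K_{t,\tau}(y)|\lesssim (1+|t-\tau|)^{-\frac{d-1}{2}},\qquad K_{t,\tau}(y)\coloneqq \cF^{-1}(\ee^{\ii\varphi_{t,\tau}}\tilde\psi^2)(y).
\]
The trivial bound is $\|\tilde\psi^2\|_1\lesssim 1$, so only $|t-\tau|\ge1$ is of interest.

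\textbf{Main obstacle: the oscillatory integral.} Set $T\coloneqq t-\tau$ and write
\[
\varphi_{t,\tau}(\xi)=T\,\Phi(\xi),\qquad \Phi(\xi)=\Big(\tfrac1T\int_\tau^t (B(r)\xi|\xi)^{1/2}\dd r\Big).
\]
Then $\Phi$ is an average of the norms $\xi\mapsto(B(r)\xi|\xi)^{1/2}$, each of which is one-homogeneous and, by \eqref{eq:Ellipticity_b}, $C^k$-close to $|\xi|$ on the annulus, with $C^k$ distance $O(\eps_0)$ and constants uniform in $r$. Hence $\Phi$ is $O(\eps_0)$-close to $|\xi|$ in $C^N$ on $\{\tfrac14\le|\xi|\le4\}$, for some fixed $N=N(d)$.

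In polar coordinates, write $\xi=\rho\omega$ and $\Phi(\xi)=\rho\,\Phi(\omega)$. It suffices to show that the level surface $\{\Phi=1\}$, which is a small $C^N$ perturbation of $\mathbb{S}^{d-1}$, has $d-1$ nonvanishing principal curvatures, with bounds uniform in $t,\tau,s$. This holds for $\eps_0$ small because the second fundamental form depends continuously on the $C^2$ norm.

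We then split into two regions.
\begin{itemize}
\item[(i)] Where $|y+T\nabla\Phi(\xi)|\gtrsim |T|$ on the support, repeated integration by parts in $\xi$ gives $O(|T|^{-N})$.
\item[(ii)] Otherwise, we integrate first in $\rho$ (harmless, the amplitude is smooth) and apply stationary phase / the decay of Fourier transforms of smooth densities on a hypersurface with nonvanishing Gaussian curvature (e.g.\ Stein, Ch.~VIII). This gives decay $|T|^{-\frac{d-1}{2}}$.
\end{itemize}
Alternatively, one can use the homogeneous-phase lemma: for $\Phi$ one-homogeneous with Hessian of rank $d-1$ on the support, $|\int\ee^{\ii(y\xi+T\Phi)}a\,\dd\xi|\lesssim |T|^{-\frac{d-1}{2}}$ with constants depending only on finitely many $C^N$ norms of $\Phi$ and $a$ and on a lower bound for the rank-$(d-1)$ curvature. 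Uniformity of these quantities in $t,\tau,s$ is exactly what the smallness of $\eps_0$ provides.

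With the energy and decay estimates in hand, the Keel--Tao theorem yields \eqref{eq:StrichartzEstimatesForUnitFrequencies} with a constant independent of $s$.
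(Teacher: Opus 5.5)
Your proposal is correct and follows essentially the same route as the paper. The paper also applies Keel--Tao (Theorem~\ref{thm:KeelTao}) with the uniform $\rL^2$ bound from Proposition~\ref{prop:PropertiesPhillipsCalculus}~(c) and the identity $U(t)U(\tau)^\ast=(\ee^{\ii\varphi_{t,\tau}}\tilde\psi^2)(\sqrL)$. It then uses the $\rL^1\to\rL^\infty$ reduction of Proposition~\ref{prop:L^inftyL^1-Estimate}~(b), whose evenness hypothesis you rightly check where the paper leaves it implicit. Finally it uses the uniform kernel decay of Theorem~\ref{thm:OscillatoryIntegralEstimate}, proved in Section~\ref{section:OscillatoryIntegralEstimate} by essentially the same perturbation-of-the-sphere curvature argument you sketch.

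The one superfluous step is your reduction to strict triples via Sobolev/Bernstein. Since you establish the truncated decay $(1+|t-\tau|)^{-\frac{d-1}{2}}$, Keel--Tao already covers all $\frac{d-1}{2}$-admissible pairs directly. Moreover, Proposition~\ref{prop:PropertiesPhillipsCalculus}~(c) alone gives only $\rL^q\to\rL^q$ bounds, not the Bernstein-type $\rL^{q_0}\to\rL^{q}$ bounds that step would need.
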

	Our proof is based on the celebrated result by Keel--Tao \cite{KeelTao1998}. For the convenience of the reader, we restate it here. Recall that, for given $\gamma>0$, an exponent pair $(p,q)\in [2,\infty]^2$ is called \textit{$\gamma$-admissible} if $(p,q,\sigma)\neq (2,\infty,1)$ and $\frac{1}{p}+\frac{\gamma}{q}\leq \frac{\gamma}{2}$. Note that if $(p,q,\alpha)$ is a wave-admissible Strichartz triple, then $(p,q)$ is $\frac{d-1}{2}$-admissible (cf. \eqref{eq:StrichartzTriple}).

	\begin{theorem}[\texorpdfstring{\cite[Theorem~1.2]{KeelTao1998}}{[KeelTao, Thm. 1.2]}]\label{thm:KeelTao} Let $(\Omega,\mu)$ be a measure space and $H$ be a Hilbert space. Suppose that $U(t)\colon H \to \rL^2(\Omega)$ is a linear operator for each $t\in \R$ satisfying the following properties:
	\begin{enumerate}[label=(\roman*)]
		\item \textit{Uniform Boundedness}: For all $t\in \R$ and $f\in H$
		\begin{equation*}
			\|U(t)f\|_{\rL_x^2(\Omega)}\lesssim \|f\|_H.
		\end{equation*}
		\item \textit{Truncated Decay}:  For some $\gamma>0$, we have
		\begin{equation*}
			\|U(s)U(t)^\ast g\|_{\rL_x^\infty(\Omega)} \lesssim (1+|t-s|)^{-\gamma}\|g\|_{\rL_x^1(\Omega)}\quad \text{for all } s,t\in \R, g\in S,
		\end{equation*}
		where $S\subseteq \rL^1(\Omega)\cap \rL^2(\Omega)$ denotes a dense subset in $\rL^1(\Omega)$.
	\end{enumerate}
	Then, the estimates
	\begin{equation*}
		\|U(t)f\|_{\rL^p_t(\R;\rL^q_x(\Omega))}\lesssim \|f\|_H\quad (f\in H)
	\end{equation*}
	and 
	\begin{equation*}
		\left\|\int_0^t U(t)U(s)^\ast F(s)\dd s \right\|_{\rL^p_t(\R;\rL^q_x(\Omega))}\lesssim \|f\|_H\quad (F\in \rL^{\tilde{p}'}_t(\R;\rL^{\tilde{q}'}_x(\Omega))),
	\end{equation*}
	hold true for any $\gamma$-admissible exponent pairs $(p,q), (\tilde{p},\tilde{q})$.
	\end{theorem}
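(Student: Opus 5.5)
The proof would follow Keel--Tao's $TT^\ast$ scheme, so that everything reduces to a bilinear estimate. First I observe that $(1+|t-s|)^{-\gamma}\le |t-s|^{-\gamma}$. So hypothesis (ii) implies the untruncated dispersive bound $\|U(s)U(t)^\ast g\|_{\rL^\infty}\lesssim |t-s|^{-\gamma}\|g\|_{\rL^1}$ on $S$. Since $S$ is dense in $\rL^1(\Omega)$ and $U(s)U(t)^\ast$ is bounded on $\rL^2$ by (i), this bound extends to all of $\rL^1\cap\rL^2$. Interpolating (Riesz--Thorin) with the energy bound $\|U(s)U(t)^\ast\|_{\rL^2\to\rL^2}\lesssim 1$ from (i) gives
\begin{equation*}
\|U(s)U(t)^\ast g\|_{\rL^{q}}\lesssim (1+|t-s|)^{-\gamma(1-\frac{2}{q})}\|g\|_{\rL^{q'}},\quad 2\le q\le\infty.
\end{equation*}

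By duality, the homogeneous estimate $\|U(t)f\|_{\rL^p_t\rL^q_x}\lesssim\|f\|_H$ is equivalent to $\|\int U(t)^\ast F(t)\dd t\|_H\lesssim\|F\|_{\rL^{p'}_t\rL^{q'}_x}$. This in turn follows from the bilinear form estimate
\begin{equation*}
|T(F,G)|\coloneqq \Big|\iint \langle U(s)^\ast F(s),U(t)^\ast G(t)\rangle \dd s\dd t\Big|\lesssim \|F\|_{\rL^{p'}\rL^{q'}}\|G\|_{\rL^{p'}\rL^{q'}}.
\end{equation*}
In the non-endpoint case $p>2$, I apply Hölder in $x$ and the interpolated decay to bound $|T(F,G)|$ by $\iint (1+|t-s|)^{-\gamma(1-2/q)}\|F(s)\|_{q'}\|G(t)\|_{q'}\dd s\dd t$. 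For sharp pairs ($\frac1p+\frac\gamma q=\frac\gamma2$), the one-dimensional Hardy--Littlewood--Sobolev inequality closes the estimate. For non-sharp pairs, Young's inequality closes it instead, since the kernel $(1+|\tau|)^{-\gamma(1-2/q)}$ has more integrability than the sharp $|\tau|^{-2/p}$; this is exactly where the truncation is used.

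The main obstacle is the endpoint $p=2$, $q=\frac{2\gamma}{\gamma-1}$ with $\gamma>1$, where HLS fails. I would follow Keel--Tao directly:
\begin{itemize}
\item decompose $T=\sum_{j\in\Z}T_j$, where $T_j$ restricts to $2^j\le|t-s|<2^{j+1}$;
\item prove the off-diagonal family of bounds $|T_j(F,G)|\lesssim 2^{-j\beta(a,b)}\|F\|_{\rL^2\rL^{a'}}\|G\|_{\rL^2\rL^{b'}}$ for $(1/a,1/b)$ in a neighbourhood of $(1/q,1/q)$, with $\beta(q,q)=0$;
\item obtain these from the energy bound (case $a=b=2$), from the dispersive bound after a localisation in time to intervals of length $2^j$ (case $a=b=\infty$), and from mixed interpolations;
\item sum in $j$ via the bilinear real interpolation lemma, which turns the family of geometric bounds into the endpoint estimate in Lorentz/Besov spaces that embed into $\rL^2\rL^{q'}$.
\end{itemize}

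For the inhomogeneous (retarded) estimate, the non-retarded version $\|\int U(t)U(s)^\ast F(s)\dd s\|_{\rL^p\rL^q}\lesssim\|F\|_{\rL^{\tilde p'}\rL^{\tilde q'}}$ follows by composing the homogeneous estimate for $(p,q)$ with the dual one for $(\tilde p,\tilde q)$. The restriction to $s<t$ is then handled by the Christ--Kiselev lemma whenever $\tilde p'<p$. The remaining cases, including the double endpoint $p=\tilde p=2$, require repeating the dyadic bilinear argument with the time cutoff built into each $T_j$, exactly as in Keel--Tao's Section 5. (The right-hand side of the second estimate in the statement should read $\|F\|_{\rL^{\tilde p'}\rL^{\tilde q'}}$.)
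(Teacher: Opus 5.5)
Your outline is the Keel--Tao argument itself, which is all the paper relies on here (it only cites \cite[Theorem~1.2]{KeelTao1998}), and you are right that the right-hand side of the retarded estimate must be $\|F\|_{\rL^{\tilde p'}_t\rL^{\tilde q'}_x}$. The genuine gap is your first step.

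Density of $S$ in $\rL^1$ together with $\rL^2$-boundedness of $U(s)U(t)^\ast$ does not let you pass the dispersive bound from $S$ to $\rL^1\cap\rL^2$. A sequence $g_n\to g$ in $\rL^1$ need not converge in $\rL^2$, so the $\rL^1\to\rL^\infty$ extension of $U(s)U(t)^\ast|_S$ need not coincide with $U(s)U(t)^\ast$ on $\rL^1\cap\rL^2$.

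This cannot be repaired, because with an arbitrary $\rL^1$-dense $S$ the statement is false. Take $\Omega=\R$, $H=\C$, $w_0(x)\coloneqq|x|^{-1/4}\mathbbm{1}_{[-1,1]}(x)\in\rL^2\setminus\rL^\infty$, $U(t)c\coloneqq c\,w_0$, and $S\coloneqq\{g\in\rL^1\cap\rL^2\colon\int g\overline{w_0}\dd x=0\}$.
\begin{itemize}
\item The functional $g\mapsto\int g\overline{w_0}\dd x$ is unbounded for the $\rL^1$-norm on $\rL^1\cap\rL^2$ (test $g=\mathbbm{1}_{[0,1/n]}$), so its kernel $S$ is dense in $\rL^1$.
\item Condition (i) holds, and $U(s)U(t)^\ast g=(\int g\overline{w_0}\dd x)\,w_0=0$ for $g\in S$, so (ii) holds for every $\gamma>0$.
\item Yet $t\mapsto\|U(t)1\|_{\rL^q_x}$ is a positive constant (or $+\infty$), so $\|U(\cdot)1\|_{\rL^p_t(\R;\rL^q_x)}=\infty$ for every $\gamma$-admissible pair with $p<\infty$.
\end{itemize}
What you need is the decay bound for all $g\in\rL^1\cap\rL^2$, as in Keel--Tao. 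Alternatively, require $S$ to be dense in $\rL^1\cap\rL^2$ for the norm $\|\cdot\|_1+\|\cdot\|_2$; then your extension works by passing to an a.e.\ convergent subsequence. This costs the paper nothing: Proposition~\ref{prop:L^inftyL^1-Estimate}~(b) gives the bound on all of $\rL^1\cap\rL^2$, which is what the proof of Proposition~\ref{prop:StrichartzEstimatesForUnitFrequencies} actually uses.

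A smaller point concerns your endpoint sketch. The estimates off the diagonal $a\neq b$ cannot come from interpolating the energy case $a=b=2$ with the dispersive case $a=b=\infty$. Interpolating two diagonal points only produces diagonal points, whereas the bilinear real-interpolation lemma needs a two-dimensional neighbourhood of $(1/q,1/q)$.

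In Keel--Tao the missing vertex is the case $b=2$, $2\le a<q$, together with its mirror. First restrict $F,G$ to time intervals of length $\sim 2^j$; this is where the time localisation enters, and the dispersive case needs none.
\begin{itemize}
\item The $G$-side is bounded by the energy estimate and Cauchy--Schwarz in time, giving $2^{j/2}$.
\item The $F$-side is bounded by the dual of the already proved non-endpoint estimate for the sharp pair $(\tilde p,a)$ with $\tilde p>2$, plus H\"older in time, giving $2^{j(1/\tilde p'-1/2)}$.
\end{itemize}
The product is exactly $2^{-j\beta(a,2)}$ with $\beta(a,b)=\gamma-1-\gamma/a-\gamma/b$.
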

	We want to apply Theorem~\ref{thm:KeelTao} with $U_\pm(t)\coloneqq L^{-\frac{\alpha}{2}}(\ee^{\pm \ii\varphi_{t,s}}\psi)(\sqrL)$ (for fixed $s\in \R$). Condition (i) in Theorem~\ref{thm:KeelTao} is straightforward to verify using the Phillips functional calculus. However, for condition (ii), we need an oscillatory integral estimate, whose proof we postpone to Section~\ref{section:OscillatoryIntegralEstimate}. In order to state it, we set for $\eps\in (0,\frac{1}{2})$
	\begin{equation*}
	\cB_\eps\coloneqq \{\tilde{B}\colon [0,1]\to \R^{d\times d} \mid \tilde{B}(t)\text{ is diagonal for all $t\in [0,1]$ and }\|\tilde{B}-\id\|_\infty\leq \eps\}
	\end{equation*}
	and associate with $\tilde{B}\in \cB_\eps$ the phase function
	\begin{equation}\label{eq:DefinitionOfGeneralPhaseFunction}
	\varphi_{\tilde{B}}\colon \R^d\to \R, \quad \varphi_{\tilde{B}}(\xi)=\int_0^1 (\tilde{B}(\tau)\xi|\xi)^{\frac{1}{2}}\dd \tau.
	\end{equation}
	To ease notation, we just write $\tilde{\varphi}\coloneqq \varphi_{\tilde{B}}$, if $\tilde{B}$ is clear from the context. 
	Observe that, if $\varphi_{t,s}$ denotes the phase function from Definition~\ref{def:PhaseFunction}, then a simple change of variables shows that 
	\begin{equation}\label{eq:ChangeOfVariablesPhaseFunction}
		\varphi_{t,s}=(t-s)\tilde{\varphi}_{t,s}, \,\text{where }\tilde{\varphi}_{t,s}\coloneqq \varphi_{\tilde{B}_{t,s}},\;\tilde{B}_{t,s}(\tau)\coloneqq B((t-s)\tau+s)\quad (\tau\in [0,1]),
	\end{equation} 
	and $\tilde{B}_{t,s}\in \cB_{\eps_0}$ by \eqref{eq:Ellipticity_b}.
	\begin{theorem}[Oscillatory Integral Estimate]\label{thm:OscillatoryIntegralEstimate} There exists $\eps_0\in (0,\frac{1}{2})$ such that the following holds. For any $\tilde{\psi}\in C_c^\infty(\R^d)$ supported away from the origin, there exists some constant $C=C(\tilde{\psi},d)>0$ such that we have the decay estimate
	\begin{equation*}
		|\cF^{-1}(\ee^{\ii t\tilde{\varphi}}\tilde{\psi})(y)|\leq C  (1+|t|)^{-\frac{d-1}{2}}
	\end{equation*}	 
	for all $t\in \R, y\in \R^d,$ and $\tilde{B}\in \cB_{\eps_0}$.
	\end{theorem}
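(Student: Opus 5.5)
The plan is to reduce the estimate to uniform stationary phase on a compact family of phases. Write
\[
I(t,y)\coloneqq(2\pi)^{d}\cF^{-1}(\ee^{\ii t\tilde{\varphi}}\tilde{\psi})(y)=\int_{\R^d}\ee^{\ii(y\cdot\xi+t\tilde{\varphi}(\xi))}\tilde{\psi}(\xi)\dd\xi .
\]
For $|t|\leq 1$ the bound is trivial, since $|I|\leq\|\tilde{\psi}\|_{\rL^1}$. For $|t|\geq 1$, set $x\coloneqq y/t$, so that the phase becomes $t\,\Phi_x(\xi)$ with $\Phi_x(\xi)=x\cdot\xi+\tilde{\varphi}(\xi)$.

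\textbf{Step 1: uniform symbol bounds.} As in Lemma~\ref{lemma:DerivativesOfPhaseFunction}, for $\tilde{B}\in\cB_{\eps}$ and $\xi$ in a fixed annulus $K\supseteq\supp{\tilde{\psi}}$,
\[
\big|\partial_\xi^\beta\big(\tilde{\varphi}-|\cdot|\big)(\xi)\big|\leq C_\beta\,\eps\quad\text{for all }|\beta|\leq N .
\]
This holds because $\xi\mapsto(\tilde{B}(\tau)\xi|\xi)^{1/2}$ depends smoothly on the diagonal entries in $[1-\eps,1+\eps]$ and equals $|\xi|$ when those entries are $1$. Averaging in $\tau$ keeps the estimate. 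Consequently, $\tilde{\varphi}$ is $C^N$-close to $|\xi|$ on $K$, uniformly in $\tilde{B}$ and with no regularity of $\tilde{B}$ in $\tau$ needed. In particular, the Hessian of $\tilde{\varphi}$ has rank exactly $d-1$ on $K$, with the $d-1$ nonzero eigenvalues bounded below, as for $|\xi|$: homogeneity forces the radial direction into the kernel, and the tangential block is a small perturbation of $|\xi|^{-1}\id$.

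\textbf{Step 2: nonstationary region.} Since $\nabla\tilde{\varphi}$ is $0$-homogeneous and close to $\xi/|\xi|$, we have $|\nabla\tilde{\varphi}|\in[1/2,2]$ on $K$. If $|x|\notin[1/4,4]$, then $|\nabla\Phi_x|\gtrsim 1+|x|$, and repeated integration by parts gives $|I|\lesssim_N|t|^{-N}$ with constants depending only on the uniform bounds of Step 1.

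\textbf{Step 3: stationary region.} For $|x|\in[1/4,4]$, reduce the dimension by one using homogeneity. Choose a finite partition of unity on $\supp{\tilde{\psi}}$ into narrow conic sectors. In each sector write $\xi=r\omega$ with $r\in[a,b]$ and $\omega$ in a small cap of the surface $\mathbb{S}_{\tilde{B}}=\{\tilde{\varphi}=1\}$; equivalently, use polar coordinates with $\omega\in\mathbb{S}^{d-1}$ and phase $r(x\cdot\omega+\tilde{\varphi}(\omega))$. Integrating in $r$ first and setting $h(\omega)\coloneqq x\cdot\omega+\tilde{\varphi}(\omega)$, the inner integral is
\[
\widehat{\chi}(-t\,h(\omega))\quad\text{for a smooth compactly supported amplitude }\chi .
\]
This is rapidly decaying in $t\,h(\omega)$.

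An alternative, which I would pursue, applies stationary phase in $d-1$ variables. Parametrize $\xi=r\theta(\eta)$ with $\eta\in\R^{d-1}$ locally. The phase restricted to $\{r=\text{const}\}$ has a Hessian in $\eta$ that is a small perturbation of the sphere's Hessian, hence nondegenerate with uniform constants. Apply the standard uniform stationary phase estimate (e.g.\ \cite[Theorem~7.7.5]{Hoermander1990} or \cite[Ch.~8]{Stein1993}) in $\eta$ for each fixed $r$. This gives
\[
|t|^{-\frac{d-1}{2}}
\]
with constants controlled by finitely many derivatives of the phase and the amplitude, which Step 1 bounds uniformly. Where the $\eta$-gradient does not vanish, integration by parts gives an even better bound. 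Finally, integrate over $r\in[a,b]$.

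\textbf{Main obstacle.} The delicate point is uniformity of the stationary phase constants in $(x,\tilde{B})$. In particular, the lower bound on the nondegenerate Hessian and the derivative bounds must hold over the whole parameter family. This is exactly where the smallness of $\eps_0$ enters: choose $\eps_0$ so small that the $C^N$-perturbation of Step 1 preserves the nonvanishing curvature of $\mathbb{S}^{d-1}$ quantitatively. I would use the stationary phase lemma in the form that depends only on $\|\Phi\|_{C^N}$ and a lower bound on $|\det\nabla^2_\eta\Phi|$.
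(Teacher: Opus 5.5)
Your proof is correct, and it follows a leaner route than the paper's. In round polar coordinates, homogeneity turns the phase into $t r\,h(\omega)$ with $h(\omega)=x\cdot\omega+\tilde\varphi(\omega)$ on $S^{d-1}$. So $r$ only rescales the large parameter, and all dependence on $\tilde B$ sits in the phase. By your Step~1, that phase is an $O(\eps_0)$ perturbation in $C^N(S^{d-1})$ of $x\cdot\omega+1$, whose critical points $\pm x/|x|$ are explicit and nondegenerate with tangential Hessian $\mp|x|\,\id$. Stationary phase in $d-1$ variables for fixed $r$ then gives $|tr|^{-\frac{d-1}{2}}$, and integrating over $r\in[a,b]$ finishes the argument without using any oscillation in $r$.

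The paper instead slices along level sets of $\tilde\varphi$ (coarea formula), which moves the perturbation into the hypersurface $\mathbb{S}=\{\tilde\varphi=1\}$ and reduces to the Fourier transform of a density on $\mathbb{S}$. That forces it to establish the global geometry of $\mathbb{S}$:
\begin{itemize}
\item a uniform lower bound on the principal curvatures, via a perturbative computation of the second fundamental form;
\item bijectivity of the Gauss map, via strict convexity;
\item the separation-of-normals lemma.
\end{itemize}
It then localizes near $\pm\omega_z$ and integrates by parts in $r$. In return it gets the sharper bound $|K_t(y)|\lesssim_N(1+|y|)^{-\frac{d-1}{2}}(1+\mathrm{dist}(y,t\Sigma))^{-N}$, i.e.\ concentration of the kernel near the cone $t\Sigma$, and deduces the $\rL^\infty$ bound from it. Your route yields only the $\rL^\infty$ bound, which is all the statement asks for. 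Your uniformity in $\tilde B$ comes directly from the $C^N$-closeness of $\tilde\varphi$ to $|\xi|$ rather than from a curvature computation.

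For a complete write-up, make explicit the quantitative dichotomy behind your stationary/nonstationary split. For the height function, $|\nabla_{S^{d-1}}(x\cdot\omega)|^2+(x\cdot\omega)^2=|x|^2\geq\tfrac{1}{16}$, and its Riemannian Hessian is $-(x\cdot\omega)\,\id$. After the $O(\eps_0)$ perturbation, every $\omega\in S^{d-1}$ therefore has either $|\nabla_{S^{d-1}}h(\omega)|\geq c$ or all tangential Hessian eigenvalues of modulus $\geq c$. This holds uniformly in $|x|\in[\tfrac14,4]$ and $\tilde B\in\cB_{\eps_0}$. A fixed fine partition of $S^{d-1}$ then lets you apply uniform nonstationary or uniform stationary phase on each piece, with at most one critical point per piece.

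Your first variant in Step~3 (integrating in $r$ first and bounding $\hat\chi(-t\,h(\omega))$ pointwise) discards the oscillation in $\omega$. On its own it would not give the exponent $\tfrac{d-1}{2}$, so you are right to abandon it.
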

	Equipped with Theorem~\ref{thm:KeelTao} and Theorem~\ref{thm:OscillatoryIntegralEstimate}, we are in the position to prove Proposition~\ref{prop:StrichartzEstimatesForUnitFrequencies}.
	\begin{proof}[Proof of Proposition~\ref{prop:StrichartzEstimatesForUnitFrequencies}]
	Let $(p,q,\alpha)$ be a wave-admissible Strichartz triple and fix $s\in \R$. Set $\tilde{\psi}\coloneqq |\cdot|^{-\alpha} \psi$. By Lemma \ref{lemma:ExtensionOfPhillipsFunctionalCalculus}~(b), we have for $f\in \rL^2$ that
	\begin{align*}
	L^{-\frac{\alpha}{2}}(\ee^{\pm  \ii\varphi_{t,s}}\psi)(\sqrL)f={}&\big(\ee^{\pm  \ii\varphi_{t,s}}\tilde{\psi}\big)(\sqrL)f\eqqcolon U_{\pm}(t)f.
	\end{align*}
	Thus, it remains to show
	\begin{equation}\label{eq:StrichartzEstimatesForUnitFrequencies2}
	\|U_\pm(t)f\|_{\rL^p_t(\R;\rL^q_x(\R^d))}\lesssim \|f\|_2\quad (f\in \rL^2),
	\end{equation}
	for which we invoke Theorem~\ref{thm:KeelTao}. To ease notation, we provide the proof for $U\coloneqq U_+$ (the proof for $U_-$ is analogous). We verify conditions (i) and (ii) from Theorem \ref{thm:KeelTao}. Property (i) is easy since 
	\begin{equation*}
	\|U(t)f\|_2\lesssim_{M_2} \|\ee^{\ii\varphi_{t,s}}\tilde{\psi}\|_\infty \|f\|_2\lesssim_{\psi,d,\alpha} \|f\|_2 \quad (f\in \rL^2,t \in \R)
	\end{equation*}
	by Proposition \ref{prop:PropertiesPhillipsCalculus}~(c). As $\sqrL$ and therefore also $\sqrL$ is self-adjoint w.r.t. \eqref{eq:EquivalentScalarProduct}, we have $(\ee^{\ii y\cdot \sqrL})^\ast=\ee^{-\ii y\cdot \sqrL}$ and therefore
	\begin{equation*}
	(U(\sigma))^\ast=\big(\overline{\ee^{\ii\varphi_{\sigma,s}}\tilde{\psi}}\big)(\sqrL)=\big(\ee^{-\ii\varphi_{\sigma,s}}\tilde{\psi}\big)(\sqrL).
	\end{equation*}
	Thus,
	\begin{equation}\label{eq:KeelTaoProperty2}
	U(\tau)(U(\sigma))^\ast =\big(\ee^{\ii(\varphi_{\tau,s}-\varphi_{\sigma,s})}\tilde{\psi}^2\big)(\sqrL)=\big(\ee^{\ii\varphi_{\tau,\sigma}}\tilde{\psi}^2\big)(\sqrL)
	\end{equation}
	for $\tau,\sigma\in \R$. Now, by Proposition~\ref{prop:L^inftyL^1-Estimate}~(b), we have for $g\in \rL^1\cap \rL^2$ that
	\begin{equation}\label{eq:Property2KeelTao}
	\|U(\tau)(U(\sigma))^{\ast}g\|_{\infty}=\big\|\big(\ee^{\ii\varphi_{\tau,\sigma} }\tilde{\psi}^2\big)(\sqrL)g\big\|_\infty\lesssim \|\cF^{-1}(\ee^{\ii\varphi_{\tau,\sigma} }\tilde{\psi}^2)\|_\infty \|g\|_1.
	\end{equation}
In view of \eqref{eq:ChangeOfVariablesPhaseFunction}, we have $\varphi_{\tau,\sigma}=(\tau-\sigma)\tilde{\varphi}_{\tau,\sigma}$ with $\tilde{\varphi}_{\tau,\sigma}=\varphi_{\tilde{B}_{\tau,\sigma}}$ and $\tilde{B}_{\tau,\sigma}\in \cB_{\eps_0}$.
 Hence, we obtain from Theorem~\ref{thm:OscillatoryIntegralEstimate}
	\begin{equation*}
	\|\cF^{-1}(\ee^{\ii\varphi_{\tau,\sigma} }\tilde{\psi}^2)\|_\infty= \|\cF^{-1}(\ee^{\ii(\tau-\sigma)\tilde{\varphi}_{\tau,\sigma} }\tilde{\psi}^2)\|_\infty\lesssim_{\tilde{\psi},d} (1+|\tau-\sigma|)^{-\frac{d-1}{2}}\|g\|_1 \quad (\tau,\sigma\in \R).
	\end{equation*}
	Using this in \eqref{eq:Property2KeelTao} shows condition (ii) in Theorem~\ref{thm:KeelTao}. Applying the latter theorem (separately in both components of $\rL^2=\rL^2(\R^d;\C^2)$), we obtain \eqref{eq:StrichartzEstimatesForUnitFrequencies2} as desired.
	\end{proof}
	\begin{remark}\label{rem:StrichartzEstimatesForUnitFrequencies} A careful inspection of the proof reveals that Propostion~\ref{prop:StrichartzEstimatesForUnitFrequencies} remains true (with same implicit constant) if one replaces the $\varphi$-defining matrix $B(t)$ by any matrix $B(\ell(t))$ with bijective transformation $\ell\colon \R\to \R$, as the argument only hinges on assumption \eqref{eq:Ellipticity_b}, which is invariant under this transformation.
	\end{remark}
	\subsection{Global Strichartz Estimates for $T_\lambda^{\pm}(t,s)$}\label{subsection:GlobalStrichartzEstimatesforT_lambda(t,s)}
	Let $p\in (1,\infty)$. For $\lambda>0$, we define the dilation operator $\delta_\lambda\colon \rL^p\to \rL^p$ by $(\delta_\lambda f)(x)\coloneqq f_\lambda(x)\coloneqq f(\frac{x}{\lambda})$ and the rescaled operator
	$\mathbf{L}_\lambda\coloneqq (L_{1,\lambda},\dots,L_{d,\lambda})$ by
	\begin{equation*}
	L_{j,\lambda}\coloneqq \begin{pmatrix}
		D_j a_{j,\lambda} D_j&\\
		0& a_{j,\lambda} D_j^2
	\end{pmatrix},\quad\quad a_{j,\lambda}(x_j)\coloneqq a_j\big(\tfrac{x_j}{\lambda}\big).
	\end{equation*}
	The following lemma shows that $T^\pm_\lambda(t,s)$ is obtained from $T^{\pm}_1(t,s)$ by rescaling.
	\begin{lemma}[Scaling]\label{lemma:RescalingDyadicFrequencies} Let $\lambda>0$. 
	\begin{enumerate}[label=(\alph*)]
		\item We have \begin{equation}\label{eq:ScalingSymmetryOfHalfWaveGroup}
			\delta_\lambda \ee^{\ii\frac{y}{\lambda} \sqrt{\mathbf{L}}} =\ee^{\ii y \sqrt{\mathbf{L}_\lambda}}\delta_\lambda.
		\end{equation}
		In particular,
		\begin{equation}\label{eq:LambdaIndependenceOfOperatorNormOfHalfWaveGroup}
			\sup_{\lambda>0, y\in \R^d} \|\ee^{\ii y \sqrt{\mathbf{L}_\lambda}}\|_{\cL(\rL^2)}\leq 1.
		\end{equation} 
		\item We have\begin{equation*}
			\big[\big(\ee^{ \pm \ii\varphi_{t,s}} \psi_\lambda)(\sqrL)f\big]_\lambda=\big(\ee^{\pm \ii \lambda\varphi_{t,s}}\psi\big)(\sqrt{\mathbf{L}_\lambda})f_\lambda \quad (f\in \rL^2).
		\end{equation*} 
	\end{enumerate}
	\end{lemma}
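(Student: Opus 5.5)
The plan is to prove (a) at the level of the one-parameter groups $\ee^{\ii y_j\sqrt{L_j}}$, and then obtain (b) from (a) by integrating against the kernel of the Phillips calculus.

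\emph{Part (a).} First I will check that $\delta_\lambda$ intertwines the generators. For $f$ in the domain, the chain rule gives $D_{x_j}(f_\lambda)=\lambda^{-1}(D_{x_j}f)_\lambda$, and clearly $a_{j,\lambda}\,(g)_\lambda=(a_jg)_\lambda$. Hence
\[
L_{j,\lambda}\delta_\lambda=\lambda^{-2}\delta_\lambda L_j,
\]
with $\delta_\lambda$ mapping $\Dom(L_j)$ onto $\Dom(L_{j,\lambda})$. The dilation $\delta_\lambda$ is an isomorphism of $\rL^2$, so $\delta_\lambda^{-1}L_{j,\lambda}\delta_\lambda=\lambda^{-2}L_j$ is again sectorial. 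Since the holomorphic functional calculus is compatible with similarity, we get $\sqrt{L_{j,\lambda}}\,\delta_\lambda=\lambda^{-1}\delta_\lambda\sqrt{L_j}$. The same similarity argument transfers the generated groups: $t\mapsto\delta_\lambda^{-1}\ee^{\ii t\sqrt{L_{j,\lambda}}}\delta_\lambda$ is a $C_0$-group with generator $\ii\lambda^{-1}\sqrt{L_j}$, so it equals $\ee^{\ii (t/\lambda)\sqrt{L_j}}$. Because the $L_j$ commute, taking the product over $j$ gives \eqref{eq:ScalingSymmetryOfHalfWaveGroup}.

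For \eqref{eq:LambdaIndependenceOfOperatorNormOfHalfWaveGroup}, the goal is a bound that does not depend on $\lambda$ or $y$. Each $L_{j,\lambda}$ is self-adjoint with respect to $\langle\cdot,\cdot\rangle_{A_\lambda}$, where $A_\lambda$ is built from $a_{j,\lambda}$; this holds because $a_{j,\lambda}$ satisfies the same bounds $m_1\le a_{j,\lambda}\le m_2$. Consequently $\ee^{\ii y\cdot\sqrt{\mathbf{L}_\lambda}}$ is unitary for that inner product. The norm in $\rL^2$ is therefore bounded by a constant depending only on $m_1,m_2$. This constant equals $1$ only if the norm is read as the $A_\lambda$-norm, or for the divergence-form component. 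So I expect the claim "$\le1$" is meant in that sense; in any case the essential point is uniformity in $\lambda$. Equivalently, one can conjugate by $\delta_\lambda$ and use that $\|\delta_\lambda\|\,\|\delta_\lambda^{-1}\|=1$ on $\rL^2$, which reduces the question to $M_2$ for the original $\mathbf{L}$.

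\emph{Part (b).} Write $\Phi=\ee^{\pm\ii\varphi_{t,s}}\psi_\lambda\in C_c^\infty\subseteq\cF\rL^1$. By \eqref{eq:DefinitionPhillipsCalculus2} and the continuity of $\delta_\lambda$ (which lets it pass under the Bochner integral),
\[
\delta_\lambda\Phi(\sqrL)f=\int (\cF^{-1}\Phi)(y)\,\delta_\lambda\ee^{-\ii y\cdot\sqrL}f\dd y .
\]
Applying (a) with $y$ replaced by $\lambda y$ gives $\delta_\lambda\ee^{-\ii y\cdot\sqrL}=\ee^{-\ii\lambda y\cdot\sqrt{\mathbf{L}_\lambda}}\delta_\lambda$. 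Then I substitute $z=\lambda y$, which yields the kernel $\lambda^{-d}(\cF^{-1}\Phi)(z/\lambda)=\cF^{-1}(\Phi(\lambda\,\cdot))(z)$. By the one-homogeneity of $\varphi_{t,s}$ (Lemma~\ref{lemma:DerivativesOfPhaseFunction}~(a)), $\Phi(\lambda\xi)=\ee^{\pm\ii\lambda\varphi_{t,s}(\xi)}\psi(\xi)$. The Phillips calculus for $\sqrt{\mathbf{L}_\lambda}$ is defined just as before, using the bounded group from (a), so the identity follows.

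The main obstacle I anticipate is the rigorous transfer of the square roots and groups under the similarity, together with the domain identification $\delta_\lambda\Dom(L_j)=\Dom(L_{j,\lambda})$. Both are standard once $\delta_\lambda$ is recognised as a bounded isomorphism.
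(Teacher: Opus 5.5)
Your proof is correct and follows the paper's route. The dilation identity $\bL_\lambda=\delta_\lambda(\lambda^{-2}\bL)\delta_{\lambda^{-1}}$ transfers the square roots and the groups. For (b) you push $\delta_\lambda$ through the Phillips integral and rescale the kernel using the one-homogeneity of $\varphi_{t,s}$. The paper instead detours through $\mathrm{Cos}(y\sqrL)$ via the evenness of the symbol; your direct use of $\ee^{-\ii y\cdot\sqrL}$ shows that step is not needed.

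Your caveat on the norm bound is also justified. The paper's step $\|\ee^{\ii\frac{y}{\lambda}\cdot\sqrL}\|_{\cL(\rL^2)}\leq 1$ really only yields $M_2$, and $M_2>1$ as soon as some $a_j$ is nonconstant: a contractive group would be unitary on unweighted $\rL^2$, which would force $a_j(x_j)D_{x_j}^2$ to be self-adjoint there. So what is actually proved, and all that is used later, is $\sup_{\lambda>0,\,y\in\R^d}\|\ee^{\ii y\cdot\sqrt{\bL_\lambda}}\|_{\cL(\rL^2)}\le M_2$.
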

	\begin{proof} Let $p\in (1,\infty)$ and $\lambda>0$.
	\begin{enumerate}[label=(\alph*)]
		\item A straightforward computation reveals the identity $\bL_\lambda =\delta_\lambda (\lambda^{-2} \bL) \delta_{\lambda^{-1}}$, which implies $\ssqrt{\bL_\lambda} =\delta_\lambda (\lambda^{-1} \sqrL) \delta_{\lambda^{-1}}$ by the holomorphic functional calculus for sectorial operators. By standard semigroup theory (see e.g. \cite[Sections~II.2.1 and II.2.2]{EngelNagel2000}), it follows that $\ssqrt{\bL_\lambda}$ generates a $d$-parameter $C_0$-group on $\rL^p$ given by
		\begin{equation*}
			\ee^{\ii y\ssqrt{\bL_\lambda}}=\delta_\lambda \ee^{\ii \frac{y}{\lambda}\ssqrt{\bL}}\delta_{\lambda^{-1}}\quad (y\in \R^d).
		\end{equation*}
		Now, \eqref{eq:LambdaIndependenceOfOperatorNormOfHalfWaveGroup} is a straightforward consequence, since
		\begin{equation*}
			\big\|\ee^{\ii y\cdot \sqrt{\mathbf{L}_\lambda}}\big\|=\big\|\delta_\lambda\big(\ee^{\ii\frac{y}{\lambda}\cdot \sqrt{\mathbf{L}}}\big)\delta_{\lambda^{-1}}\big\|
			\leq \big\|\delta_\lambda\big\|\big\|\ee^{\ii\frac{y}{\lambda}\cdot \sqrt{\mathbf{L}}}\big\| \big\|\delta_{\lambda^{-1}}\big\|\leq \lambda^{\frac{d}{2}}\cdot 1 \cdot \lambda^{-\frac{d}{2}}=1,
		\end{equation*} 
		where in the above display $\|\cdot\|$ denotes the norm in $\cL(\rL^2)$.
		\item Let $s,t\in$ and $f\in \rL^2$. By Remark \ref{rem:PhillipsCalculusForSymmetricFunctions}, we have
	\begin{equation}\label{eq:RepresentingDyadicPieceByRescaling}
		\big[(\ee^{\pm \ii\varphi_{t,s}}\psi_\lambda)(\sqrL)f\big]_\lambda={} \int_{\R^d} \cF^{-1}(\ee^{\ii \varphi_{t,s}}\psi_\lambda)(y)\big(\mathrm{Cos}(y\sqrL)f\big)_\lambda \dd y.
	\end{equation}	
	Observe that $\cF^{-1}(\ee^{\ii \varphi_{t,s}}\psi_\lambda)(y)=\lambda^d \cF^{-1}(\ee^{\ii \lambda\varphi_{t,s}}\psi)(\lambda y)$ by the change of variables $\xi\mapsto \lambda \xi$ and the positive homogeneity of $\varphi_{t,s}$. Moreover, $\big(\mathrm{Cos}(y\sqrL)f\big)_\lambda=\mathrm{Cos}(\lambda y\ssqrt{\bL_\lambda}))f_\lambda$ by \eqref{eq:ScalingSymmetryOfHalfWaveGroup}. Using these relations in \eqref{eq:RepresentingDyadicPieceByRescaling} gives
	\begin{align*}
		\big[(\ee^{\pm \ii\varphi_{t,s}}\psi_\lambda)(\sqrL)f\big]_\lambda={}& \int_{\R^d} \lambda^d \cF^{-1}(\ee^{\ii \lambda \varphi_{t,s}}\psi)(\lambda y)\mathrm{Cos}(\lambda y\ssqrt{\bL_\lambda}))f_\lambda \dd y\\
		={}& \int_{\R^d} \cF^{-1}(\ee^{\ii \lambda \varphi_{t,s}}\psi)( y)\mathrm{Cos}( y\ssqrt{\bL_\lambda}))f_\lambda \dd y=(\ee^{\pm \ii \lambda\varphi_{t,s}}\psi)(\ssqrt{\bL_\lambda})f_\lambda.
	\end{align*}
	\end{enumerate}
	\end{proof}
	\begin{lemma}[Global Strichartz Estimates for $T_\lambda^{\pm}(t,s)$]\label{lemma:StrichartzEstimatesForDyadicFrequencies} Suppose that $(p,q,\alpha)$ is a wave-admissible Strichartz triple and $s\in \R$, $\lambda>0$. Then,
	\begin{equation}\label{eq:StrichartzForDyadicPieces}
		\|L^{-\frac{\alpha}{2}}(\ee^{\pm \ii\varphi_{t,s}}\psi_\lambda)(\sqrL)f\|_{\rL^p_t(\R;\rL_x^q(\R^d))}\lesssim \|f\|_2 \quad (f\in \rL^2),
	\end{equation}
	with an implicit constant independent of $s$ and $\lambda$.
	\end{lemma}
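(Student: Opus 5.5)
We reduce \eqref{eq:StrichartzForDyadicPieces} to the unit-frequency estimate of Proposition~\ref{prop:StrichartzEstimatesForUnitFrequencies} by the scaling of Lemma~\ref{lemma:RescalingDyadicFrequencies}. Two things change under rescaling: the operator $\bL$ becomes $\bL_\lambda$, whose coefficients $a_{j,\lambda}$ obey the same bounds $m_1,m_2$ and the same $m_3$ (the $\rL^1$-norm of $\frac{\dd}{\dd x}\log a_j$ is dilation invariant), and the phase $\lambda\varphi_{t,s}$ appears. The phase is absorbed by a time rescaling. Since $\lambda\varphi_{t,s}(\xi)=\int_{\lambda s}^{\lambda t}(B(\tau/\lambda)\xi|\xi)^{1/2}\dd\tau$, we have $\lambda\varphi_{t,s}=\varphi^{(\lambda)}_{\lambda t,\lambda s}$, where $\varphi^{(\lambda)}$ is the phase built from $B_\lambda(\tau)\coloneqq B(\tau/\lambda)$. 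This is exactly the reparametrization allowed in Remark~\ref{rem:StrichartzEstimatesForUnitFrequencies}, and $B_\lambda$ still satisfies \eqref{eq:Ellipticity_b}.

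First, write $L^{-\alpha/2}(\ee^{\pm\ii\varphi_{t,s}}\psi_\lambda)(\sqrL)f=(\ee^{\pm\ii\varphi_{t,s}}|\cdot|^{-\alpha}\psi_\lambda)(\sqrL)f$ using Lemma~\ref{lemma:ExtensionOfPhillipsFunctionalCalculus}(b). Since $|\xi|^{-\alpha}\psi_\lambda(\xi)=\lambda^{-\alpha}(|\cdot|^{-\alpha}\psi)(\xi/\lambda)$, the proof of Lemma~\ref{lemma:RescalingDyadicFrequencies}(b), which uses only evenness and homogeneity, applies with $\psi$ replaced by $\tilde\psi=|\cdot|^{-\alpha}\psi$. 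This gives
\[
\big[L^{-\frac{\alpha}{2}}(\ee^{\pm\ii\varphi_{t,s}}\psi_\lambda)(\sqrL)f\big]_\lambda=\lambda^{-\alpha}\big(\ee^{\pm\ii\varphi^{(\lambda)}_{\lambda t,\lambda s}}\tilde\psi\big)(\ssqrt{\bL_\lambda})f_\lambda .
\]
Next, take $\rL^q_x$ norms, using $\|g\|_q=\lambda^{-d/q}\|g_\lambda\|_q$. Then take $\rL^p_t$ norms and substitute $t'=\lambda t$, which contributes a factor $\lambda^{-1/p}$. Finally apply the unit-frequency estimate for $(\bL_\lambda,B_\lambda)$ to get the bound $\lesssim\|f_\lambda\|_2=\lambda^{d/2}\|f\|_2$. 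The total power of $\lambda$ is $-\alpha-\frac dq-\frac1p+\frac d2=0$ by \eqref{eq:StrichartzTriple}, and \eqref{eq:StrichartzForDyadicPieces} follows.

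The main obstacle is uniformity of the unit-frequency constant in $\lambda$, since both $\bL$ and $B$ have been replaced. We would check this by re-reading the proof of Proposition~\ref{prop:StrichartzEstimatesForUnitFrequencies} with $\bL_\lambda$, $B_\lambda$ in place of $\bL$, $B$. Condition (i) of Theorem~\ref{thm:KeelTao} needs $M_2$ for $\bL_\lambda$, which is $\le 1$ by \eqref{eq:LambdaIndependenceOfOperatorNormOfHalfWaveGroup}. The dispersive step uses Proposition~\ref{prop:L^inftyL^1-Estimate}, whose constant depends only on $m_1,m_2,m_3$ and these are scale invariant. It also uses Theorem~\ref{thm:OscillatoryIntegralEstimate}, which is uniform over $\cB_{\eps_0}$, and the rescaled matrices $\tilde B$ built from $B_\lambda$ lie in $\cB_{\eps_0}$. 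Self-adjointness of $\ssqrt{\bL_\lambda}$ with respect to the rescaled weight is likewise inherited. Hence the constant is independent of $\lambda$ and of $s$, which completes the argument.
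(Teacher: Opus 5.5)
Your proposal is correct and follows the paper's proof essentially step by step: you rewrite the operator with $\tilde\psi=|\cdot|^{-\alpha}\psi$, rescale via Lemma~\ref{lemma:RescalingDyadicFrequencies}(b), absorb $\lambda\varphi_{t,s}$ as $\varphi^{(\lambda)}_{\lambda t,\lambda s}$ built from $B(\tau/\lambda)$, and apply Proposition~\ref{prop:StrichartzEstimatesForUnitFrequencies} together with Remark~\ref{rem:StrichartzEstimatesForUnitFrequencies} for $\bL_\lambda$, with the powers of $\lambda$ cancelling by admissibility. Your check that the unit-frequency constant is uniform in $\lambda$ (scale invariance of $m_1,m_2,m_3$, the bound on $M_2$ for $\bL_\lambda$, and $\tilde B\in\cB_{\eps_0}$) spells out what the paper only asserts in one line.
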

	\begin{proof} Let $(p,q,\alpha)$ be a wave-admissible Strichartz triple, $\lambda>0$, and $f\in \rL^2$. As in the proof of Proposition~\ref{prop:StrichartzEstimatesForUnitFrequencies}, set $\tilde{\psi}\coloneqq |\cdot|^{-\alpha}\psi$. Then, by Lemma~\ref{lemma:ExtensionOfPhillipsFunctionalCalculus}~(b) and Lemma~\ref{lemma:RescalingDyadicFrequencies}~(b), we have 
	\begin{align*}
		L^{-\frac{\alpha}{2}}(\ee^{\pm  \ii\varphi_{t,s}}\psi_\lambda)(\sqrL)f={}&\lambda^{-\alpha}(\ee^{\pm  \ii\varphi_{t,s}}\tilde{\psi}_\lambda)(\sqrL)f\\
		={}&
		\lambda^{-\alpha}\delta_{\lambda^{-1}}\big(\ee^{\pm  \ii\lambda \varphi_{t,s}}\tilde{\psi}\big)(\ssqrt{\mathbf{L}_\lambda})\delta_{\lambda}f\\
		={}&
		\lambda^{-\alpha}\delta_{\lambda^{-1}}\big(\ee^{\pm  \ii \varphi^\lambda_{\lambda t,\lambda s}}\tilde{\psi}\big)(\ssqrt{\mathbf{L}_\lambda})\delta_{\lambda}f,
	\end{align*}	
	where $\varphi^\lambda_{t,s}$ is just defined as $\varphi_{t,s}$, with $B(\tau)$ replaced by $B_\lambda(\tau)=B(\frac{\tau}{\lambda})$, see Definition~\ref{def:PhaseFunction}.
	Now, applying the change of variables $x\mapsto \frac{x}{\lambda}$, $t\mapsto \frac{t}{\lambda}$ first and then Remark~\ref{rem:StrichartzEstimatesForUnitFrequencies} as well as Proposition~\ref{prop:StrichartzEstimatesForUnitFrequencies} with $\ssqrt{\bL_\lambda}$ in place of $\sqrL$ (note that this is justified since Assumption \ref{ass:MainThm} is invariant under the scaling $a_j\mapsto a_{j,\lambda}$), we obtain
	\begin{align*}
		\|L^{-\frac{\alpha}{2}}(\ee^{\pm  \ii\varphi_{t,s}}\psi_\lambda)(\sqrL)f\|_{\rL^p_t(\R;\rL_x^q(\R^d))}={}&	\lambda^{-\alpha}\|\delta_{\lambda^{-1}}\big(\ee^{\pm  \ii \varphi^\lambda_{\lambda t,\lambda s}}\tilde{\psi}\big)(\ssqrt{\mathbf{L}_\lambda})\delta_{\lambda}f\|_{\rL^p_t(\R;\rL_x^q(\R^d))}\\
		={}&\lambda^{-\big(\alpha+\frac{1}{p}+\frac{d}{q}\big)}\|\big(\ee^{\pm  \ii \varphi^\lambda_{ t,\lambda s}}\tilde{\psi}\big)(\ssqrt{\mathbf{L}_\lambda})\delta_{\lambda}f\|_{\rL^p_t(\R;\rL_x^q(\R^d))}\\
		\lesssim{}& \lambda^{-\big(\alpha+\frac{1}{p}+\frac{d}{q}\big)}\|\delta_\lambda f\|_{2}\\
		={}&\lambda^{-\big(\alpha-\frac{d}{2}+\frac{1}{p}+\frac{d}{p}\big)} \|f\|_{2}=\|f\|_{2},
	\end{align*}
	where the last equality follows from the fact that $(p,q,\alpha)$ is a wave-admissible Strichartz triple. 
	\end{proof}
	\subsection{Proof of Theorem~\ref{mainthm:Strichartz}}\label{subsection:GlobalStrichartzEstimatesforT(t,s)}
	We recall the following lemma, which is due to Christ--Kiselev.
	\begin{lemma}[\texorpdfstring{\cite[Lemma~2.4]{Tao2006}}{[Tao, Lemma 2.4]}]\label{lemma:ChristKiselev} Let $X,Y$ be Banach spaces and let $K\in C(\R\times \R; \cL(X,Y))$. Suppose that $1\leq p<q\leq \infty$ is such that
	\begin{equation*}
		\bigg\|\int_\R K(t,s)F(s) \dd s\bigg\|_{\rL_t^q(\R;Y)}\lesssim \|F\|_{\rL_t^p(\R;X)}
	\end{equation*}
	for all $F\in \rL_t^p(\R;X)$. Then, one also has
	\begin{equation*}
		\bigg\|\int_{-\infty}^t K(t,s)F(s) \dd s\bigg\|_{\rL^q_t(\R;Y)}\lesssim \|F\|_{\rL_t^p(\R;X)}.
	\end{equation*}
	\end{lemma}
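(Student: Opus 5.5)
We follow the classical Christ--Kiselev argument: decompose the region $\{s<t\}$ into Whitney-type rectangles adapted to the distribution of $\|F(\cdot)\|_X^p$, and apply the assumed bound for the full integral operator to each rectangle. By homogeneity we may assume $\|F\|_{\rL^p_t(\R;X)}=1$. We then set
\begin{equation*}
G(t)\coloneqq \int_{-\infty}^t \|F(s)\|_X^p \dd s\in[0,1].
\end{equation*}
Since $\|F\|_X^p\in \rL^1(\R)$, the function $G$ is continuous and nondecreasing. For a dyadic interval $I\subseteq[0,1]$ we write $E_I\coloneqq G^{-1}(I)$, so that $\|F\mathbbm{1}_{E_I}\|_{\rL^p_t(\R;X)}^p=|I|$. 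This identity holds because $G$ pushes the measure $\|F(s)\|_X^p\dd s$ forward to Lebesgue measure on $[0,1]$.

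Up to a null set, the triangle $\{(x,y)\in[0,1]^2\colon y<x\}$ is the disjoint union of squares $J\times I$. Here, for each $n\geq 1$, $I$ and $J$ run over the $2^{n-1}$ pairs of sibling dyadic intervals of length $2^{-n}$, with $I$ the left child and $J$ the right child of a common parent. The diagonal and any non-Whitney boundary pieces have measure zero. Pulling this decomposition back through $G$ and using $s<t\Rightarrow G(s)\leq G(t)$, we obtain for a.e.\ $t$
\begin{equation*}
\int_{-\infty}^t K(t,s)F(s)\dd s=\sum_{n\geq1}\sum_{(I,J)\text{ at level } n}\mathbbm{1}_{E_J}(t)\int_\R K(t,s)\mathbbm{1}_{E_I}(s)F(s)\dd s.
\end{equation*}
We must also handle the set where $G(s)=G(t)$ with $s\neq t$. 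On such a set $F=0$ a.e.\ on $[s,t]$, so it contributes nothing. The continuity of $K$ guarantees measurability, and we first justify the identity for finite partial sums, taking the limit afterwards.

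For fixed $n$, the sets $E_J$ are pairwise disjoint in $t$. Using the hypothesis applied to $F\mathbbm{1}_{E_I}$, we get for $q<\infty$
\begin{equation*}
\Big\|\sum_{(I,J)}\mathbbm{1}_{E_J}\int_\R K(\cdot,s)\mathbbm{1}_{E_I}F\dd s\Big\|_{\rL^q_t(\R;Y)}\leq\Big(\sum_{(I,J)}C^q\|F\mathbbm{1}_{E_I}\|_{\rL^p}^q\Big)^{1/q}=C\,(2^{n-1})^{1/q}2^{-n/p}.
\end{equation*}
For $q=\infty$ the analogous bound is simply a supremum over the pairs. Since $p<q$, we have $\sum_n 2^{n(1/q-1/p)}<\infty$, and the triangle inequality over $n$ yields the claim with constant $\lesssim C(1-2^{1/q-1/p})^{-1}$.

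The main obstacle is making the decomposition identity rigorous. This requires controlling the null sets where $G$ is constant, and interchanging the sum over squares with the integral. We would handle the latter by first proving the estimate for $F$ in a dense class (for instance simple functions with compact support), where all sums are effectively finite a.e. We then extend to general $F$ by density, using that the bound obtained is uniform.
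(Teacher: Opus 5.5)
The paper gives no proof of this lemma and simply cites \cite[Lemma~2.4]{Tao2006}. Your argument is the standard Christ--Kiselev proof given there and is correct: normalize $\|F\|_{\rL^p}=1$, pull back the Whitney decomposition of the triangle into sibling dyadic squares via $G$, use disjointness of the sets $E_J$ at each fixed level, and sum the series $\sum_n 2^{n(1/q-1/p)}$, which converges since $p<q$. Two points need stating carefully in a write-up. First, level sets of $G$ can have positive, even infinite, Lebesgue measure in $t$ (e.g.\ $\{t\colon G(t)=1\}$ for compactly supported $F$), so discarding ``null sets in $[0,1]^2$'' is not enough. Instead use half-open dyadic intervals with the rightmost one closed at $1$; this makes the pulled-back identity exact for every $t$. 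Second, for fixed $t$ the sum over levels is genuinely infinite, even for simple $F$, so it is not ``effectively finite''. The interchange with the integral is instead justified by dominated convergence, since $\|K(t,\cdot)\|$ is bounded on the compact support of $F$.
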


	\begin{theorem}[Strichartz Estimates for Parametrices I]\label{thm:StrichartzEstimatesForParametricesI} Suppose that $(p,q,\alpha)$ is a wave-admissible Strichartz triple. 
	\begin{enumerate}[label=(\alph*)]
		\item The estimate
		\begin{equation*}
			\|L^{-\frac{\alpha}{2}}T^{\pm}(t,s)f\|_{\rL^p_t(\R;\rL_x^q(\R^d))}\lesssim \|f\|_2 \quad (f\in \SL)
		\end{equation*}
		holds uniformly in $s\in \R$.
		\item If $(\tilde{p}, \tilde{q}, \tilde{\alpha})$ is another wave-admissible Strichartz triple with $\tilde{p}'<p$, then 
		\begin{equation*}
			\left\|\int_0^t L^{-\tfrac{\alpha+\tilde{\alpha}}{2}}T^\pm(t,s) G(s)\dd s \right\|_{\rL^p_t(\R;(\rL^q_x(\R^d)))}\lesssim \|G\|_{\rL_t^{\tilde{p}'}(\R;\rL_x^{\tilde{q}'}(\R^d))}
		\end{equation*}
		for all measurable functions $G\colon \R\to \SL$ of compact support.
	\end{enumerate}		
	\end{theorem}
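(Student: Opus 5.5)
For part (a), I would patch the dyadic estimates of Lemma~\ref{lemma:StrichartzEstimatesForDyadicFrequencies} together by a Littlewood--Paley square function argument. Introduce a fattened cutoff $\tilde\psi\in C_c^\infty$ supported in an annulus with $\tilde\psi\psi=\psi$, and set $\tilde\psi_\lambda=\tilde\psi(\cdot/\lambda)$. For $f\in\SL$ the sum defining $T^\pm(t,s)f$ is effectively finite. By the homomorphism property (Proposition~\ref{prop:PropertiesPhillipsCalculus}~(a)) we can write $T^\pm_\lambda(t,s)f=T^\pm_\lambda(t,s)\tilde\psi_\lambda(\sqrL)f$. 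Since $q\in[2,\infty)$, Proposition~\ref{prop:SquareFunctionCharacterizationOfL^pNorm} applies; strictly it is stated with $\psi$, but a version with a second cutoff follows by the same proof or by almost orthogonality. Together with the fact that $\psi_\mu(\sqrL)T_\lambda^\pm=0$ unless $\mu\sim\lambda$, it gives
\[
\|L^{-\frac{\alpha}{2}}T^\pm(t,s)f\|_{\rL^q_x}\lesssim \Big\|\Big(\sum_{\lambda}|L^{-\frac{\alpha}{2}}T^\pm_\lambda(t,s)f|^2\Big)^{1/2}\Big\|_{\rL^q_x}.
\]
Since $p,q\geq 2$, Minkowski's inequality lets us interchange: $\|(\sum_\lambda|F_\lambda|^2)^{1/2}\|_{\rL^p_t\rL^q_x}\le(\sum_\lambda\|F_\lambda\|_{\rL^p_t\rL^q_x}^2)^{1/2}$.

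Applying Lemma~\ref{lemma:StrichartzEstimatesForDyadicFrequencies} to $\tilde\psi_\lambda(\sqrL)f$ then bounds each term by $\|\tilde\psi_\lambda(\sqrL)f\|_2$. Finally, $\sum_\lambda\|\tilde\psi_\lambda(\sqrL)f\|_2^2\lesssim\|f\|_2^2$ by Proposition~\ref{prop:L^2BoundednessOfAlmostOrthogonalOperators} (with $\alpha=\gamma=0$). The bounds are uniform in $s$ because the dyadic lemma is.

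For part (b), I would first prove the untruncated estimate for $\int_\R L^{-\frac{\alpha+\tilde\alpha}{2}}T^\pm(t,s)G(s)\dd s$ and then apply the Christ--Kiselev Lemma~\ref{lemma:ChristKiselev}. This needs $\tilde p'<p$, and it is applied separately to $s\in(0,t)$ and, by time reflection, to $s\in(t,0)$, or via cutoffs $\mathbbm{1}_{s>0}G$. For the untruncated estimate I use the group law $T^\pm(t,s)=T^\pm(t,0)T^\pm(0,s)$, which holds because $\varphi_{t,s}=\varphi_{t,0}+\varphi_{0,s}$ and the calculus is multiplicative on each dyadic block, with finite overlap handled by $\tilde\psi$. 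Writing $T^\pm(0,s)=T^\pm(s,0)^*$ with respect to the scalar product \eqref{eq:EquivalentScalarProduct}, we have
\[
\int_\R L^{-\frac{\alpha+\tilde\alpha}{2}}T^\pm(t,s)G(s)\dd s=L^{-\frac{\alpha}{2}}T^\pm(t,0)\int_\R \big(L^{-\frac{\tilde\alpha}{2}}T^\pm(s,0)\big)^*G(s)\dd s .
\]
Part (a) bounds this by the $\rL^2$ norm of the inner integral. By duality with part (a) for $(\tilde p,\tilde q,\tilde\alpha)$, that norm is $\lesssim\|G\|_{\rL^{\tilde p'}_t\rL^{\tilde q'}_x}$. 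The duality pairing is the $A$-weighted one, but $A$ and $A^{-1}$ are bounded, so this is harmless.

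Continuity of the kernel in $(t,s)$, needed for Lemma~\ref{lemma:ChristKiselev}, follows from Lemma~\ref{lemma:TimeDerivativeOfParametrices} on $\SL$, since $G$ takes values in $\SL$.

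I expect the main obstacle to be the square function step in (a), specifically justifying the Littlewood--Paley inequality with $L^{-\alpha/2}$ and the phase factor inserted. The functions $\ee^{\ii\varphi_{t,s}}\psi_\lambda$ are not cutoffs of the form covered by Proposition~\ref{prop:SquareFunctionCharacterizationOfL^pNorm}. I would handle this through the reproducing trick $T_\lambda=\sum_{\mu\sim\lambda}\psi_\mu(\sqrL)T_\lambda$, which reduces the matter to the square function bound for the standard cutoffs $\psi_\mu$ applied to the functions $F_\lambda$.
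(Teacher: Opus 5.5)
Your proposal is correct and follows essentially the same route as the paper. For (a) it combines a Littlewood--Paley square function bound, Minkowski's inequality (using $p,q\geq 2$), Lemma~\ref{lemma:StrichartzEstimatesForDyadicFrequencies} and almost orthogonality, and for (b) it uses a $TT^\ast$ factorization through $T^\pm(t,s)T^\pm(s,\tau)=T^\pm(t,\tau)$ and $T^\pm(t,s)^\ast=T^\pm(s,t)$, followed by Lemma~\ref{lemma:ChristKiselev}. The only difference is in (a): the paper applies the square function characterization with the standard cutoffs $\psi_\mu$ directly to $L^{-\frac{\alpha}{2}}T^\pm(t,s)f$ and commutes $\psi_\mu(\sqrL)$ past $T^\pm_\lambda(t,s)$, feeding $\psi_\mu(\sqrL)f$ into the dyadic lemma. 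This is exactly your fallback reproducing trick, and it avoids the fattened cutoff and the reverse square function inequality altogether.
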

	\begin{proof} Let $(p,q,\alpha)$ be a wave-admissible Strichartz triple.
	\begin{enumerate}[label=(\alph*)]
		\item Let $f\in \SL$ and $s\in \R$. Then, Proposition~\ref{prop:SquareFunctionCharacterizationOfL^pNorm} followed by an application of Minkowski's inequality (note that $p,q\geq 2$ in view of \eqref{eq:StrichartzTriple}) gives
	\begin{align}\label{eq:StrichartzLittlewoodPaley}
		\begin{split}
			\|L^{-\frac{\alpha}{2}}T^\pm(t,s)f\|^2_{\rL^p_t(\R;\rL^q_x(\R^d))}\simeq{}& \bigg\|\bigg(\sum_{\mu\in 2^\Z} \big| \psi_\mu (\sqrL)L^{-\frac{\alpha}{2}}T^\pm(t,s)f\big|^2\bigg)^{\frac{1}{2}}\bigg\|^2_{\rL^p_t(\R;\rL^q_x(\R^d))}\\
			\lesssim{}& \sum_{\mu\in 2^\Z} \big\| \psi_\mu (\sqrL)L^{-\frac{\alpha}{2}}T^\pm(t,s)f\big\|^2_{\rL^p_t(\R;\rL^q_x(\R^d))}.
		\end{split}
	\end{align}
	Note that by the Phillips functional calculus and the fact that the $\psi_\lambda$ are almost disjointly supported, we have
	\begin{align*}
		\psi_\mu (\sqrL)L^{-\frac{\alpha}{2}}T^\pm(t,s)f
		={}&\sum_{\lambda\in 2^{\Z}} (|\cdot|^{-\alpha}\ee^{\pm \ii\varphi_{t,s}}\psi_\lambda\psi_\mu) (\sqrL)f\\
		={}&\sum_{\lambda\in I_\mu} L^{-\alpha}(\ee^{\pm \ii\varphi_{t,s}}\psi_\lambda)(\sqrL) \psi_\mu(\sqrL)f,
	\end{align*}
	where $I_\mu\coloneqq \{\tfrac{\mu}{2}, \mu, 2\mu\}$. So the sum on the right-hand side of \eqref{eq:StrichartzLittlewoodPaley} can be estimated by
	\begin{align*}
		&{}3\sum_{\mu\in 2^\Z}\sum_{\lambda\in I_\mu} \big\| L^{-\frac{\alpha}{2}}(\ee^{\pm \ii\varphi_{t,s}}\psi_\lambda)(\sqrL)\psi_\mu (\sqrL)f\big\|^2_{\rL^p_t(\R;\rL^q_x(\R^d))}\\
		={}&3\sum_{\lambda\in 2^\Z}\sum_{\mu\in I_\lambda} \big\| L^{-\frac{\alpha}{2}}(\ee^{\pm \ii\varphi_{t,s}}\psi_\lambda)(\sqrL)\psi_\mu (\sqrL)f\big\|^2_{\rL^p_t(\R;\rL^q_x(\R^d))}\\
		\lesssim{}& 3\sum_{\lambda\in 2^\Z}\sum_{\mu\in I_\lambda} \big\|\psi_\mu (\sqrL)f\big\|^2_{2}= 9\sum_{\mu\in 2^\Z} \big\|\psi_\mu (\sqrL)f\big\|^2_{2}\simeq \|f\|_2^2,
	\end{align*}
	where we used Lemma~\ref{lemma:StrichartzEstimatesForDyadicFrequencies} and Proposition~\ref{prop:L^2BoundednessOfAlmostOrthogonalOperators} in the last line of the above display. This proves (a).\\[0.2cm]
	\item Let $(\tilde{p}, \tilde{q},\tilde{\alpha})$ be another wave-admissible Strichartz triple with $\tilde{p}'<p$. Fix $s\in \R$. Since $\SL$ is dense in $\rL^2(\R^d)$ by Proposition~\ref{prop:CalderonReproducingFormulaOnInhomogeneousSobolevSpaces}, it follows from (a) that
	\begin{equation*}
		\mathcal{T}_s\colon \rL^2(\R^d)\to \rL_t^{\tilde{p}}(\R;\rL^{\tilde{q}}_x(\R^d)), \quad (\mathcal{T}_s f)(t)=\rL^{-\frac{\tilde{\alpha}}{2}}T^\pm(t,s)f
	\end{equation*}
	is a bounded linear operator. Recalling the scalar product \eqref{eq:EquivalentScalarProduct} on $\rL^2$, we may define the equivalent dual pairing
	\begin{equation*}
		\langle F,G \rangle\coloneqq \int_{\R} \langle F(\tau), G(\tau)\rangle_A \dd \tau
	\end{equation*}
	on $\rL_\tau^{\tilde{p}}(\R;\rL^{\tilde{q}}_x(\R^d))\times \rL_\tau^{\tilde{p}'}(\R;\rL^{\tilde{q}'}_x(\R^d))$. Since $\mathcal{T}_s$ is bounded, the dual operator $\mathcal{T}_s^\ast% \colon  \rL_t^{\tilde{p}'}(\R;\rL^{\tilde{q}'}_x(\R^d))\to \rL^2(\R^d)
	$ and thus the composition $\mathcal{T}_s\mathcal{T}_s^\ast$ are bounded, too. Let $G\colon \R\to \SL$ be measurable with compact support. Then, a straightforward computation reveals
	\begin{equation*}
		\mathcal{T}_s^\ast G=\int_{\R} \mathcal{T}_{\tau}(s) G(\tau) \dd \tau,
	\end{equation*}
	and using that 
	\begin{equation*}
		T^\pm(t,s)T^\pm(s,\tau)=T^\pm(t,\tau), \quad  T^\pm(t,s)^\ast =T^\pm(s,t)\quad \text{for }\quad t,s,\tau\in \R
	\end{equation*}
	as identities on $\cL(\rL^2)$, we conclude
	\begin{equation*}
		(\mathcal{T}_s\mathcal{T}_s^\ast G)(t)= \int_{\R}L^{-\frac{\alpha+\tilde{\alpha}}{2}}T^\pm(t,\tau)G(\tau) \dd \tau
	\end{equation*}
	for a.e. $t\in \R$. Now, the claim follows from the boundedness of $\mathcal{T}_s\mathcal{T}_s^\ast$ and Lemma~\ref{lemma:ChristKiselev}.
	\end{enumerate}
	\end{proof}
	\begin{corollary}[Strichartz Estimates for Parametrices II]\label{cor:StrichartzEstimatesParatetricesII} Suppose that $(p,q,\alpha)$ is a wave-admissible Strichartz triple. 
	\begin{enumerate}[label=(\alph*)]
		\item We have \begin{equation}\label{eq:StrCos}
			\|L^{\frac{1-\alpha}{2}}C(t,s)f\|_{\rL^p_t(\R;\rL_x^q(\R^d))}\lesssim \|L^{\frac{1}{2}}f\|_2 \quad (f\in \rH^1_L)
		\end{equation}
		and
		\begin{equation}\label{eq:StrSin}
			\|L^{\frac{1-\alpha}{2}}S(t,s)f\|_{\rL^p_t(\R;\rL_x^q(\R^d))}\lesssim \|f\|_2 \quad (f\in \rL^2)
		\end{equation}  
		uniformly in $s\in \R$.
		\item Suppose that $(\tilde{p},\tilde{q},\tilde{\alpha})$ is another Strichartz triple with $\tilde{p}'<p$. Then, there holds the inhomogeneous estimate
		\begin{equation*}
			\left\|\int_0^t L^{\tfrac{1-\alpha-\tilde{\alpha}}{2}}S(t,s) G(s)\dd s \right\|_{\rL^p_t(\R;(\rL^q_x(\R^d)))}\lesssim \|G\|_{\rL_t^{\tilde{p}'}(\R;\rL_x^{\tilde{q}'}(\R^d))}
		\end{equation*}
		for $G\in \rL^{\tilde{p}'}_t(\rL^{\tilde{q}'}_x(\R^d))$.
	\end{enumerate}
	\end{corollary}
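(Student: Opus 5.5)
The plan is to reduce everything to Theorem~\ref{thm:StrichartzEstimatesForParametricesI}, using Lemma~\ref{lemma:RelationBetweenParametrices} to express $C(t,s)$ and $S(t,s)$ through $T^\pm(t,s)$. We also need to commute $L^{\beta/2}$ with these Littlewood--Paley sums, which is allowed by \eqref{eq:RuleForFractionalPowers} on each dyadic piece and is first done for $f\in\SL$, then extended by density (Proposition~\ref{prop:CalderonReproducingFormulaOnInhomogeneousSobolevSpaces}).

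For \eqref{eq:StrCos}, take $f\in \SL$ and set $g\coloneqq L^{1/2}f\in\SL$. By \eqref{eq:RelationBetweenParametricesI} and \eqref{eq:RuleForFractionalPowers},
\[
L^{\frac{1-\alpha}{2}}C(t,s)f=\tfrac12\big(L^{-\frac{\alpha}{2}}T^+(t,s)g+L^{-\frac{\alpha}{2}}T^-(t,s)g\big).
\]
Theorem~\ref{thm:StrichartzEstimatesForParametricesI}~(a) then bounds the left-hand side by $\|g\|_2=\|L^{1/2}f\|_2$, uniformly in $s$. Density of $\SL$ in $\rH^1_L$ and Fatou's lemma extend this to all $f\in\rH^1_L$.

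For \eqref{eq:StrSin}, the approach is to write
\[
\frac{\sin\varphi_{t,s}(\xi)}{\partial_s\varphi_{t,s}(\xi)}=\frac{\ee^{\ii\varphi_{t,s}(\xi)}-\ee^{-\ii\varphi_{t,s}(\xi)}}{2\ii}\cdot\frac{|\xi|}{\partial_s\varphi_{t,s}(\xi)}\cdot|\xi|^{-1},
\]
where $\partial_s\varphi_{t,s}(\xi)=-(B(s)\xi|\xi)^{1/2}$ does not depend on $t$. The factor $m_s(\xi)\coloneqq |\xi|/\partial_s\varphi_{t,s}(\xi)$ is therefore $t$-independent, zero-homogeneous, smooth away from $0$, and has symbol bounds uniform in $s$ by \eqref{eq:Ellipticity_b}. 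Summing dyadically as in Proposition~\ref{prop:L^2BoundednessOfAlmostOrthogonalOperators} shows that $M_s\coloneqq \sum_\lambda(m_s\psi_\lambda)(\sqrL)$ is bounded on $\rL^2$ uniformly in $s$. Using the homomorphism property (Proposition~\ref{prop:PropertiesPhillipsCalculus}~(a)) together with almost disjointness of supports, we get $L^{\frac{1-\alpha}{2}}S(t,s)f=\tfrac12 L^{-\frac{\alpha}{2}}(T^+(t,s)-T^-(t,s))M_sf$ for $f\in\SL$. Strictly, $M_s$ is applied after inserting a fattened cutoff $\tilde\psi_\lambda$ with $\tilde\psi_\lambda\psi_\lambda=\psi_\lambda$; alternatively one can rerun the square-function argument of Theorem~\ref{thm:StrichartzEstimatesForParametricesI}~(a) with the extra bounded factor $m_s$. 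Part (a) of that theorem then gives the bound $\|M_sf\|_2\lesssim\|f\|_2$. This commutation bookkeeping is the main technical point, because $m_s$ is not in $\cF\rL^1$ globally and must be handled dyadically.

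For (b), apply the same factorization inside the integral:
\[
L^{\frac{1-\alpha-\tilde\alpha}{2}}S(t,s)G(s)=\tfrac12 L^{-\frac{\alpha+\tilde\alpha}{2}}\big(T^+(t,s)-T^-(t,s)\big)M_sG(s).
\]
The $s$-dependence of $M_s$ obstructs a direct use of Theorem~\ref{thm:StrichartzEstimatesForParametricesI}~(b), which needs $\|M_sG(s)\|_{\tilde q'}\lesssim\|G(s)\|_{\tilde q'}$, i.e.\ $\rL^{\tilde q'}$-boundedness of $M_s$. I would obtain this from the square function characterization (Proposition~\ref{prop:SquareFunctionCharacterizationOfL^pNorm}) together with the Mikhlin-type transference bound in Proposition~\ref{prop:PropertiesPhillipsCalculus}~(c), since $m_s$ is a zero-homogeneous symbol whose derivative bounds are uniform in $s$. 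A cleaner route would avoid $\rL^{\tilde q'}$ bounds entirely: write $m_s(\xi)=|\xi|/(B(s)\xi|\xi)^{1/2}$, note that $\partial_s\varphi_{t,s}$ is independent of $t$, and absorb $m_s$ into a modified parametrix $\tilde T^\pm(t,s)=T^\pm(t,s)M_s$. For this family one reruns the $TT^\ast$ and Keel--Tao argument, since the dispersive bound of Theorem~\ref{thm:OscillatoryIntegralEstimate} tolerates the smooth amplitude $m_s$ uniformly in $s$. Finally, pass from compactly supported $\SL$-valued $G$ to general $G$ by density.
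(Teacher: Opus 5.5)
Your proposal is correct and follows the paper's proof: the cosine bound via \eqref{eq:RelationBetweenParametricesI} and Theorem~\ref{thm:StrichartzEstimatesForParametricesI}~(a), the sine bound by factoring out the zero-homogeneous multiplier $|\xi|/(B(s)\xi|\xi)^{1/2}$, and part (b) via Theorem~\ref{thm:StrichartzEstimatesForParametricesI}~(b) together with uniform $\rL^{\tilde q'}$-boundedness of that multiplier from the Mikhlin-type bound in Proposition~\ref{prop:PropertiesPhillipsCalculus}~(c). The only difference is bookkeeping: instead of your dyadic sum $M_s$ or a square-function argument, the paper multiplies by a single cutoff $\chi_\lambda(\xi)=\rho(\xi/\lambda)-\rho(\lambda\xi)$, which equals $1$ on the spectral support of $f\in\SL$, so that $m_s\chi_\lambda\in C_c^\infty(\R^d)\subseteq\cF\rL^1$ and Proposition~\ref{prop:PropertiesPhillipsCalculus}~(c) applies directly in both $\rL^2$ and $\rL^{\tilde q'}$, uniformly in $s$ and $\lambda$.
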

	\begin{proof} Let $(p,q,\alpha)$ be wave-admissible and let $f\in \SL$. By Lemma~\ref{lemma:RelationBetweenParametrices}~(a) and Lemma~\ref{lemma:ExtensionOfPhillipsFunctionalCalculus}~(b), we may write
	\[L^{\frac{1-\alpha}{2}}C(t,s)f=\tfrac{1}{2}L^{-\frac{\alpha}{2}}T^+(t,s)L^{\frac{1}{2}}f+\tfrac{1}{2}L^{-\frac{\alpha}{2}}T^-(t,s)L^{\frac{1}{2}}f,\]
	so Theorem~\ref{thm:StrichartzEstimatesForParametricesI} gives $\|L^{\frac{1-\alpha}{2}}C(t,s)f\|_{\rL^p_t(\R;\rL_x^q(\R^d))}\lesssim \|L^{\frac{1}{2}}f\|_2$ uniformly in $s\in \R$. Now, \eqref{eq:StrCos} follows as $\SL$ is dense in $\rH_L^1$ (in particular, the left-hand side $L^{\frac{1-\alpha}{2}}C(t,s)f$ is to be understood as a limit in $\rL^p_t(\R;\rL^q_x(\R^d))$). This proves \eqref{eq:StrCos}. Let $\rho\in C_c^\infty(\R^d)$ with $0\leq \rho\leq 1$ and $\rho=1$ on $B(0,1)$. For $\lambda\geq 1$, put $\chi_\lambda\in C_c^\infty(\R^d)$ by $\chi_\lambda(\xi)\coloneqq \rho(\frac{\xi}{\lambda})-\rho(\lambda \xi)$. Then, by the definition of $\SL$, we have $\chi_\lambda(\sqrL)f=f$ for $\lambda=\lambda(f)\geq 1$ sufficiently large. In view of \eqref{def:Parametrices2}, we may then write
	\[L^{\frac{1-\alpha}{2}}S(t,s)f=\tfrac{1}{2}L^{-\frac{\alpha}{2}}T^+(t,s)m_{s,\lambda}(\sqrL)f-\tfrac{1}{2}L^{-\frac{\alpha}{2}}T^-(t,s)m_{s,\lambda}(\sqrL)f\]
	with $m_{s,\lambda}(\xi)\coloneqq \frac{|\xi|}{(B(s)\xi|\xi)^{1/2}}\chi_\lambda(\xi)$. Since $\|m_{s,\lambda}(\sqrL)f\|_2\lesssim \|m_{s,\lambda}\|_\infty \|f\|_2\leq \|f\|_2$ by Proposition~\ref{prop:PropertiesPhillipsCalculus}~(c), we argue as above to infer \eqref{eq:StrSin}. Finally, (b) is proved similarly as Theorem~\ref{thm:StrichartzEstimatesForParametricesI}~(b), using that $\|m_{s,\lambda}(\sqrL)\|_{\cL(\rL_x^{\tilde{q}'}(\R^d))}\lesssim 1$ by Proposition~\ref{prop:PropertiesPhillipsCalculus}~(c).
	\end{proof}
	From the preceding corollary and Theorem~\ref{thm:WellposednessInAdaptedSpaces}, we obtain Strichartz estimates for weak solutions in $\rH^1_L$. 
	\begin{theorem}[Global-In-Time Strichartz Estimates for Weak Solutions in $\rH^1_L$]\label{thm:GlobalStrichartzEstimatesOnAdaptedSpaces} Let $(p,q,\alpha)$ be a wave-admissible Strichartz triple. Suppose that $g\in \rH_L^{1}$, $h\in \rL^2$, and $F\in \rL^1(\R;\rL^{2})$. Then, the weak solution to the wave equation \eqref{eq:WaveEquationWithTimeDependentLipschitzMetrics} satisfies the global-in-time Strichartz estimate
	\begin{equation*}
		\|L^{\frac{1-\alpha}{2}}u\|_{\rL_t^p(\R;\rL_x^q)}\lesssim \|g\|_{\rH^1_L(\R^d)}+\|h\|_{\rL^2}+\|F\|_{\rL^1(\R;\rL^2)}.
	\end{equation*}
	\end{theorem}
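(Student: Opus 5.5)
We start from the representation formula \eqref{eq:RepresentationOfSolution} of Theorem~\ref{thm:WellposednessInAdaptedSpaces} with $\alpha=1$:
\[
u(t)=C(t,0)g+S(t,0)h+\int_0^t S(t,s)G(s)\dd s,
\]
where $G\in \rL^1(\R;\rL^2)$ and
\[
\|G\|_{\rL^1(\R;\rL^2)}\lesssim \|g\|_{\rH^1_L}+\|h\|_{\rL^2}+\|F\|_{\rL^1(\R;\rL^2)}.
\]
We apply $L^{\frac{1-\alpha}{2}}$ to each of the three terms and estimate them separately in $\rL^p_t(\R;\rL^q_x)$.

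For the first term, \eqref{eq:StrCos} with $s=0$ gives the bound $\|L^{\frac12}g\|_2\le\|g\|_{\rH^1_L}$. For the second term, \eqref{eq:StrSin} with $s=0$ gives the bound $\|h\|_2$.

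For the Duhamel-type term, we do not use the inhomogeneous estimate of Corollary~\ref{cor:StrichartzEstimatesParatetricesII}(b); since $G$ is only $\rL^1$ in time, we use Minkowski's integral inequality instead. Pointwise in $t$, we write the term as
\[
\int_\R \mathbbm{1}_{[0,t]}(s)\,L^{\frac{1-\alpha}{2}}S(t,s)G(s)\dd s,
\]
with the obvious sign convention for $t<0$. This gives
\[
\Big\|\int_0^t L^{\frac{1-\alpha}{2}}S(t,s)G(s)\dd s\Big\|_{\rL^p_t\rL^q_x}\le \int_\R \big\|\mathbbm{1}_{\{s\in[0,t]\}}L^{\frac{1-\alpha}{2}}S(t,s)G(s)\big\|_{\rL^p_t\rL^q_x}\dd s .
\]
The cutoff in $t$ only decreases the $\rL^p_t$ norm, so \eqref{eq:StrSin}, applied for each fixed $s$ and uniformly in $s$, bounds the integrand by $\|G(s)\|_2$. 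The right-hand side is therefore $\lesssim\|G\|_{\rL^1(\R;\rL^2)}$, which is controlled by the data.

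The main technical point is justifying these manipulations, since \eqref{eq:StrSin} and \eqref{eq:StrCos} are stated for densely defined objects (limits from $\SL$). We handle this by density. First we treat data $g,h\in\SL$ and simple functions $F$ with values in $\SL$, approximating $G$ in $\rL^1(\R;\rL^2)$ by simple $\SL$-valued functions. On this class the Bochner integrals make sense, and the measurability of $(t,s)\mapsto S(t,s)G(s)$ follows from the strong continuity in Theorem~\ref{thm:GeneralizedCosineAndSineFuntions}. We then pass to general data using the linearity of $G$ in $(g,h,F)$ and the Lipschitz dependence \eqref{eq:ContinuousDependence}. Convergence $u_n\to u$ in $C(\overline J;\rH^1_L)$ gives $L^{\frac{1-\alpha}{2}}u_n\to L^{\frac{1-\alpha}{2}}u$ in a distributional sense on each bounded $J$, while the uniform Strichartz bound makes the sequence Cauchy in $\rL^p_t\rL^q_x$. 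Fatou's lemma then yields the estimate for $u$ itself.
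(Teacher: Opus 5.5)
Your proof is correct. For the first two terms of the representation formula it matches the paper's argument: both use Corollary~\ref{cor:StrichartzEstimatesParatetricesII}(a) with $s=0$. The routes differ only for the Duhamel term.

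The paper applies the inhomogeneous estimate of Corollary~\ref{cor:StrichartzEstimatesParatetricesII}(b) with $(\tilde p,\tilde q,\tilde\alpha)=(\infty,2,0)$. That estimate rests on a $TT^\ast$ argument, using $T^\pm(t,s)T^\pm(s,\tau)=T^\pm(t,\tau)$ and $T^\pm(t,s)^\ast=T^\pm(s,t)$. It then needs the Christ--Kiselev lemma to pass from $\int_\R$ to $\int_0^t$. You instead use Minkowski's integral inequality together with the homogeneous bound \eqref{eq:StrSin}, which holds uniformly in $s$.

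Theorem~\ref{thm:WellposednessInAdaptedSpaces} only gives $G\in\rL^1(\R;\rL^2)$ anyway, so your route loses nothing. It is more elementary, and it handles the cutoff $\mathbbm{1}_{[0,t]}(s)$ at no cost, so you need neither Christ--Kiselev nor the condition $\tilde p'<p$. It also avoids the $s$-dependent multiplier $m_{s,\lambda}$, which the paper's ``proved similarly'' step for $S(t,s)$ must absorb. What the paper's route buys is the general inhomogeneous estimate with forcing in $\rL^{\tilde p'}_t\rL^{\tilde q'}_x$, which this theorem does not need.

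Your density and measurability discussion is more careful than the paper's, which omits it. One simplification: you do not need to approximate the data $(g,h,F)$. It suffices to approximate $G$ alone, because \eqref{eq:StrCos} and \eqref{eq:StrSin} are already stated for all $f\in\rH^1_L$ and all $f\in\rL^2$ respectively. Moreover, the map $G\mapsto\int_0^t S(t,s)G(s)\dd s$ is continuous from $\rL^1(\R;\rL^2)$ into $C(\overline J;\rH^1_L)$ by Theorem~\ref{thm:GeneralizedCosineAndSineFuntions}(a).
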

	\begin{proof}By Theorem \ref{thm:WellposednessInAdaptedSpaces}, we have the representation
	\begin{equation*}
		u(t)=C(t,0)g+S(t,0)h+\int_0^t S(t,s)G(s)\dd s \quad (t\in \R) 
	\end{equation*}
	with
	\begin{equation}\label{eq:EstimateForVolterraTerm}
		\|G\|_{\rL^1(\R;\rL^2)}\lesssim \|g\|_{\rH_L^1}+\|h\|_{\rL^2}+\|F\|_{\rL^1(\R;\rL^2)}.
	\end{equation}
	By Corollary \ref{cor:StrichartzEstimatesParatetricesII} (a), we have
	\begin{align*}
		&\|L^{\frac{1-\alpha}{2}}C(t,0)g(x)\|_{\rL_t^p(\rL_x^q(\R^d))}\lesssim \|L^{\frac{1}{2}}g\|_2\lesssim \|g\|_{\rH^1_L}, \\ &\|L^{\frac{1-\alpha}{2}}S(t,0)g(x)\|_{\rL_t^p(\rL_x^q(\R^d))}\lesssim \|h\|_2.
	\end{align*}
	Moreover, applying Corollary \ref{cor:StrichartzEstimatesParatetricesII} (b) with $(\tilde{p}, \tilde{q},\tilde{\alpha})=(\infty,2,0)$ (note that then $\tilde{p}'=1<2\leq p$) and \eqref{eq:EstimateForVolterraTerm}, we find 
	\begin{align*}
		\bigg\|\int_0^t L^{\frac{1-\alpha}{2}} S(t,s)G(s )\dd s\bigg\|_{\rL_t^p(\rL_x^q(\R^d))}{}&\lesssim \|G\|_{\rL^1(\R;\rL^2)}\\
		{}&\lesssim\|g\|_{\rH_L^1}+\|h\|_{\rL^2}+\|F\|_{\rL^1(\R;\rL^2)}.
	\end{align*}
	\end{proof}
	Now, in view of Proposition~\ref{prop:Kato}, Theorem~\ref{mainthm:Strichartz} is an immediate consequence of Theorem~\ref{thm:GlobalStrichartzEstimatesOnAdaptedSpaces}:
	\begin{corollary}[Global-In-Time Strichartz Estimates for Weak Solutions in $\rH^1$]\label{cor:StrichartzEstimates} Let $(p,q,\alpha)$ be a wave-admissible Strichartz triple and $\alpha\in [0,2]$. Suppose that $g\in \rH^{1}$, $h\in \rL^{2}$ and $F\in \rL^1(\R;\rL^{2})$. Then, the weak solution to the wave equation \eqref{eq:WaveEquationWithTimeDependentLipschitzMetrics} satisfies the global-in-time Strichartz estimate
	\begin{equation}
		\||D_x|^{1-\alpha}u\|_{\rL_t^p(\R;\rL_x^q(\R^d))}\lesssim \|g\|_{\rH^1}+\|h\|_{\rL^2}+\|F\|_{\rL^1(\R;\rL^2)}.
	\end{equation}
	\end{corollary}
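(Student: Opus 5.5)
The plan is to deduce the corollary from Theorem~\ref{thm:GlobalStrichartzEstimatesOnAdaptedSpaces} by replacing the adapted quantities with classical ones, using Proposition~\ref{prop:Kato} on both sides of the estimate.

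\textbf{Step 1: the data.} Since $\rH^1_L=\rH^1$ with equivalent norms by Proposition~\ref{prop:Kato}~(a), we have $g\in\rH^1_L$ and $\|g\|_{\rH^1_L}\simeq\|g\|_{\rH^1}$. Similarly, $\rH^{\pm1}_L=\rH^{\pm1}$ and $\rH^0_L=\rH^0$, so the weak solution in $\rH^1$ from Theorem~\ref{thm:wellposedness} (equivalently Corollary~\ref{cor:WellposednessInH}) coincides with the weak solution in $\rH^1_L$ from Theorem~\ref{thm:WellposednessInAdaptedSpaces}. Theorem~\ref{thm:GlobalStrichartzEstimatesOnAdaptedSpaces} then gives
\[\|L^{\frac{1-\alpha}{2}}u\|_{\rL^p_t\rL^q_x}\lesssim\|g\|_{\rH^1}+\|h\|_{\rL^2}+\|F\|_{\rL^1(\R;\rL^2)}.\]

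\textbf{Step 2: the left-hand side.} Set $\beta:=1-\alpha$. Admissibility gives $\alpha\ge0$ and, via $\frac1p+\frac dq=\frac d2-\alpha$ with $q\ge2$, $p\ge 2$, also $\alpha\le \frac{d}{2}$. Together with the hypothesis $\alpha\in[0,2]$ this yields $\beta\in[-1,1]$. For a.e.\ fixed $t$, Proposition~\ref{prop:Kato}~(b) gives
\[\||D_x|^{\beta}u(t)\|_{q}\simeq\|L^{\beta/2}u(t)\|_q\]
on $\dot W^{\beta,q}=\dot W^{\beta,q}_L$, since $q\in[2,\infty)\subseteq(1,\infty)$. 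Taking the $\rL^p_t$ norm finishes the proof.

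\textbf{Main obstacle: identifying the two objects.} One must check that $L^{\frac{1-\alpha}{2}}u$, which appears in Theorem~\ref{thm:GlobalStrichartzEstimatesOnAdaptedSpaces} as an $\rL^p_t\rL^q_x$-limit of Littlewood--Paley approximations, really equals $L^{\beta/2}$ applied to $u(t)$ in the sense of $\dot W^{\beta,q}_L$. I would argue by density. First take data in $\SL$ and $G$ in $\SL$-valued simple functions; there $u(t)$ lies in all $\Dom(L^\gamma)\cap\rL^q$ and the equivalence of Proposition~\ref{prop:Kato}~(b) applies directly. Then pass to the limit, using that both sides are controlled by the same data norms, so that by Fatou the estimate for $|D_x|^{\beta}u$ persists for general data.
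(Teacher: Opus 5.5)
Your proposal is correct and follows the paper's route. The paper also obtains the corollary as an immediate consequence of Theorem~\ref{thm:GlobalStrichartzEstimatesOnAdaptedSpaces}, using Proposition~\ref{prop:Kato}~(a) to pass from $\rH^1$ to $\rH^1_L$ on the data side and Proposition~\ref{prop:Kato}~(b) with $\beta=1-\alpha\in[-1,1]$ on the left-hand side.

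Your density argument for identifying $L^{\frac{1-\alpha}{2}}u$ with $|D_x|^{1-\alpha}u$ fills in a detail the paper leaves implicit. One correction there: $G$ is determined by $(g,h,F)$, so you cannot choose it freely. Instead, approximate $g$, $h$ and $G$ separately in the three terms of the representation formula \eqref{eq:RepresentationOfSolution}; this is permitted because the estimates in Corollary~\ref{cor:StrichartzEstimatesParatetricesII} are linear in each of them.
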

	\section{Oscillatory Integral Estimate}\label{section:OscillatoryIntegralEstimate}
	In this section, we provide a proof of Theorem~\ref{thm:OscillatoryIntegralEstimate}, which was one of the key ingredients for the proof of Theorem~\ref{mainthm:Strichartz}. For the convenience of the reader, we restate it here. Recall that we set for $\eps\in (0,\frac{1}{2})$
	\begin{equation*}
	\cB_\eps\coloneqq \{\tilde{B}\colon [0,1]\to \R^{d\times d} \mid \tilde{B}(t)\text{ is diagonal for all $t\in [0,1]$ and }\|\tilde{B}-\id\|_\infty\leq \eps\}
	\end{equation*}
	and define for $\tilde{B}\in \cB_\eps$ the phase function
	\begin{equation}\label{eq:DefinitionOfGeneralPhaseFunction2}
	 \varphi_{\tilde{B}}\colon \R^d\to \R, \quad \varphi_{\tilde{B}}(\xi)=\int_0^1 (\tilde{B}(\tau)\xi|\xi)^{\frac{1}{2}}\dd \tau.
	\end{equation}
	To ease notation, we just write $\tilde{\varphi}= \varphi_{\tilde{B}}$ as in Section~\ref{section:GlobalStrichartzEstimates}.
	\begin{theorem}\label{thm:OscillatoryIntegralEstimate2} There exists $\eps_0\in (0,\frac{1}{2})$ such that the following holds. For any $\tilde{\psi}\in C_c^\infty(\R^d)$ supported away from the origin, there exists some constant $C=C(\tilde{\psi},d)>0$ such that we have the decay estimate
	\begin{equation*}
		|\cF(\ee^{\ii t\tilde{\varphi}}\tilde{\psi})(y)|\leq C  (1+|t|)^{-\frac{d-1}{2}}
	\end{equation*}	 
	 for all $t\in \R, y\in \R^d,$ and $\tilde{B}\in \cB_{\eps_0}$.
	\end{theorem}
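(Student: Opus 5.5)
We would prove the decay estimate by rewriting the integral in a form suited to stationary phase and exploiting that $\tilde\varphi$ is a $C^\infty$-small perturbation of $|\xi|$ on the support of $\tilde\psi$, uniformly in $\tilde B\in\cB_{\eps_0}$. For $|t|\le 1$ the bound is trivial, since $|\cF(\ee^{\ii t\tilde\varphi}\tilde\psi)(y)|\le\|\tilde\psi\|_{\rL^1}$. So we fix $|t|\ge1$ and write
\[
\cF(\ee^{\ii t\tilde\varphi}\tilde\psi)(y)=\int_{\R^d}\ee^{\ii t(\tilde\varphi(\xi)-\xi\cdot y/t)}\tilde\psi(\xi)\dd\xi .
\]
Set $z\coloneqq y/t$, so the phase is $t\,\Phi_z(\xi)$ with $\Phi_z(\xi)=\tilde\varphi(\xi)-\xi\cdot z$.

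The first step is to record uniform symbol bounds. Let $K\coloneqq\supp{\tilde\psi}\subseteq\{r\le|\xi|\le R\}$. From \eqref{eq:DefinitionOfGeneralPhaseFunction2} and differentiation under the integral, $\partial_\xi^\alpha\tilde\varphi=\int_0^1\partial_\xi^\alpha(\tilde B(\tau)\xi|\xi)^{1/2}\dd\tau$. The map $M\mapsto\partial^\alpha_\xi(M\xi|\xi)^{1/2}$ is smooth in $(M,\xi)$ for $M$ diagonal near $\id$ and $\xi\in K$. Hence $\|\tilde\varphi-|\cdot|\|_{C^N(K)}\lesssim_{N,K}\eps_0$ uniformly over $\cB_{\eps_0}$. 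The Hessian of $|\xi|$ has rank $d-1$, with kernel spanned by $\xi$ and eigenvalue $1/|\xi|$ on $\xi^\perp$; by homogeneity the Hessian of $\tilde\varphi$ has the same kernel. By continuity of the nonzero eigenvalues, for $\eps_0$ small the restriction of $\nabla^2\tilde\varphi(\xi)$ to $\xi^\perp$ (or to any fixed complementary hyperplane) has all eigenvalues $\ge \frac{1}{2R}$ on $K$.

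The second step splits $\Phi_z$ into two regimes. If $z$ is far from $\nabla\tilde\varphi(K)$, say $\mathrm{dist}(z,\nabla\tilde\varphi(K))\ge c$, then $|\nabla\Phi_z|\ge c$ on $K$. Non-stationary phase (repeated integration by parts with the operator $\frac{\nabla\Phi_z\cdot\nabla}{\ii t|\nabla\Phi_z|^2}$) then gives $O(|t|^{-N})$, with constants controlled by the uniform $C^N$ bounds. For $|z|$ large, note $|\nabla\Phi_z|\ge|z|/2$. Otherwise $z$ is in a bounded set and we use a partition of unity on $K$ into finitely many small conic sectors, whose number depends only on $\tilde\psi$ and $d$. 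In each sector we introduce polar-type coordinates $\xi=\rho\,\omega(\theta)$ with $\theta\in\R^{d-1}$. Homogeneity then gives
\[
\Phi_z(\rho\omega)=\rho\,(\tilde\varphi(\omega(\theta))-\omega(\theta)\cdot z)=\rho\, g_z(\theta).
\]
We then apply stationary phase in the $d-1$ variables $\theta$ for each fixed $\rho\in[r,R]$ and integrate trivially in $\rho$. The Hessian of $g_z$ in $\theta$ at critical points is, up to $O(\eps_0)$, that of the sphere case $1-\omega\cdot z$ restricted to a critical point. This is where the nondegeneracy must be proved.

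The main obstacle is showing that $\partial_\theta^2 g_z$ is uniformly nondegenerate, or more precisely that $\theta\mapsto g_z(\theta)$ satisfies a uniform bound of the form $|\det\partial^2_\theta g_z|\ge c$ wherever $|\nabla_\theta g_z|$ is small. Uniformity is needed in $z$ from a bounded set and in $\tilde B$. We would handle it via the geometry of the hypersurface $\mathbb S_{\tilde B}=\{\tilde\varphi=1\}$. Its Gauss map is $\xi\mapsto\nabla\tilde\varphi/|\nabla\tilde\varphi|$, and by the $C^2$-closeness established above its second fundamental form is $\eps_0$-close to that of the unit sphere, so its Gaussian curvature is $\ge 1/2$. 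Then $\cF(\ee^{\ii t\tilde\varphi}\tilde\psi)$ can be viewed as $\int_r^R\widehat{\sigma_\rho}(\cdots)\dd\rho$, where each $\sigma_\rho$ is a smooth density on the dilate $\rho\,\mathbb S_{\tilde B}$. We would then apply the classical surface-measure decay estimate $|\widehat{\sigma}(\eta)|\lesssim\langle\eta\rangle^{-(d-1)/2}$ (Stein, Chapter~VIII), whose constant depends only on finitely many derivatives of the local graph parametrizations and a lower bound on the curvature. Both are uniform in $\tilde B$ by the first step. More directly, in the $\theta$-integral the uniform Morse-type lemma (the van der Corput/stationary phase estimate with constants depending on $C^N$ norms and the Hessian lower bound) gives $O(|t\rho|^{-(d-1)/2})$ for each $\rho$, and integrating over $\rho\in[r,R]$ yields $C(\tilde\psi,d)(1+|t|)^{-(d-1)/2}$. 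We would check that the choice of $\eps_0$ depends only on $d$ and not on $\tilde\psi$. This should hold because the curvature comparison can be done on the unit sphere $|\omega|=1$, which is scale-invariant by homogeneity.
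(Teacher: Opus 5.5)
Your proposal is correct, but it is organized differently from the paper's proof.

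You split at the outset according to $\mathrm{dist}(z,\nabla\tilde\varphi(K))$, with $z=y/t$.
\begin{itemize}
\item Away from $\Sigma$, you use non-stationary phase in all of $\xi\in\R^d$.
\item Near $\Sigma$, you observe that $|z|\simeq 1$, so decay in $|y|$ is decay in $|t|$. The radial integral can then be done trivially after stationary phase in the angular variables, or equivalently after surface-measure decay on $\rho\,\mathbb{S}$.
\end{itemize}
You get nondegeneracy from a soft perturbation argument. Since $\tilde\varphi|_{S^{d-1}}$ is $O(\eps_0)$-close to $1$ in $C^N$, the angular phase $g_z$ is a small perturbation of the uniformly Morse function $\omega\mapsto 1-\omega\cdot z$ with $|z|\in[\frac12,\frac32]$.

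The paper proves a sharper kernel bound, $|K_t(y)|\lesssim_N (1+|y|)^{-\frac{d-1}{2}}(1+\mathrm{dist}(y,t\Sigma))^{-N}$, in the following steps:
\begin{itemize}
\item It integrates along the level sets $r\mathbb{S}$ (coarea formula).
\item An explicit second-fundamental-form computation shows that the principal curvatures of $\mathbb{S}$ are at least $\kappa(\eps_0)$.
\item Because the Gauss map is a diffeomorphism, it localizes near the two points $\pm\omega_z$ whose normals are $\pm z/|z|$.
\item It treats the remaining region with a quantitative separation-of-normals lemma.
\item It gains the factor $(1+|\lambda(1-|z|/\sigma)|)^{-N}$ by integrating by parts in $r$.
\end{itemize}
Only after this does it perform essentially your near/far split to obtain the $L^\infty$ bound.

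Your route is shorter. It avoids the bijectivity of the Gauss map and the separation lemma, whose role is played by a uniform stationary phase lemma whose hypotheses you obtain from the sphere case. It also makes it transparent that $\eps_0$ depends only on $d$. The paper's route buys a more informative pointwise estimate concentrated on $t\Sigma$, although only its $L^\infty$ consequence is used later.
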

	For $\tilde{B}\in \cB_\eps$ and associated phase function $\tilde{\varphi}$, let us define the hypersurface
	\begin{equation}\label{eq:DefinitionOfHypersurface}
	\mathbb{S}\coloneqq \{\xi\in \R^d\colon \tilde{\varphi}(\xi)=1\}.
	\end{equation}
	We also define the hypersurface
	\begin{equation}\label{eq:DefinitionOfSingularSet}
	\Sigma\coloneqq \{\nabla_\xi \tilde{\varphi}(\xi)\in \R^d \mid \xi\neq 0\}.
	\end{equation}
	Theorem~\ref{thm:OscillatoryIntegralEstimate2} can be reduced to decay estimates for the Fourier transform of the surface measure on $\mathbb{S}$. It is well-known that these are linked to the Gaussian curvature on $\mathbb{S}$ (see e.g. \cite{Herz1962}, \cite{Littman1963},   \cite[Theorem~7.7.14]{Hoermander1990}). Therefore, we first collect some important geometrical properties of $\mathbb{S}$, see Proposition~\ref{prop:PropertiesOfSubmanifold} below. We then use these to prove Theorem~\ref{thm:DecayEstimateForUnitFrequencies} below, from which Theorem~\ref{thm:OscillatoryIntegralEstimate2} follows.
	
	Observe that $\nabla_{\xi}\tilde{\varphi}(\xi)=\tilde{\sB}(\xi)\xi $ for $\xi\neq 0$, where
	\begin{equation}\label{eq:RepresentationOfGradientOfGeneralPhaseFunction}
	\tilde{\sB}(\xi)\coloneqq \mathrm{diag}(\tilde{\sB}_1(\xi),\dots, \tilde{\sB}_d(\xi)), \quad \tilde{\sB}_j(\xi)\coloneqq  \int_0^1\frac{\tilde{b}_j(\tau)}{(\tilde{B}(\tau)\xi|\xi)^{1/2}}\dd \tau.
	\end{equation}
	Since $\|\tilde{B}-\id\|_\infty\leq \eps_0\leq \frac{1}{2}$, it follows immediately from \eqref{eq:DefinitionOfGeneralPhaseFunction2} and \eqref{eq:RepresentationOfGradientOfGeneralPhaseFunction} that
	\begin{equation}\label{eq:BoundednessOfSubmanifold}
	\mathbb{S}\subseteq \big\{\xi\in \R^d\colon |\xi|\simeq 1\big\},\quad \tilde{\sB}_j(\xi)\simeq |\xi|^{-1}, \quad \Sigma\subseteq  \big\{\xi\in \R^d\colon|\xi|\simeq 1\big\}.
	\end{equation}

	\begin{proposition}[Properties of $\mathbb{S}$]\label{prop:PropertiesOfSubmanifold} Let $\eps_0\in (0,\frac{1}{2})$, $\tilde{B}\in \cB_{\eps_0}$, and $\mathbb{S}$ and $\Sigma$ as defined in \eqref{eq:DefinitionOfHypersurface} and \eqref{eq:DefinitionOfSingularSet}, respectively. 
	\begin{enumerate}[label=(\alph*)]
		\item $\mathbb{S}$ is a smooth, compact hypersurface in $\R^d$, with normal space at each $\omega\in \mathbb{S}$ given by
		\begin{equation}\label{eq:NormalSpaceOfSubmanifold}
			N_{\omega}(\mathbb{S})=\mathrm{span}\big\{\nabla_\xi \tilde{\varphi}(\omega)\big\}\quad (\nabla_\xi \tilde{\varphi}(\omega)\in \Sigma).
		\end{equation}
		Moreover, $\mathbb{S}$ is the boundary of the compact, strictly convex set \[K\coloneqq \{\xi\in \R^d\mid \tilde{\varphi}(\xi)\leq 1\}.\]
		\item The Gauss map on $\mathbb{S}$ given by
		\begin{equation*}
			n\colon \mathbb{S}\to S^{d-1},\quad n(\omega)\coloneqq \frac{\nabla_\xi \tilde{\varphi}(\omega)}{|\nabla_\xi \tilde{\varphi}(\omega)|}
		\end{equation*}
		is a diffeomorphism with $n(-\omega)=-n(\omega)$. In particular, for each $\nu_0  \in S^{d-1}$ there exists exactly one $\omega_0\in \mathbb{S}^{d-1}$ such that $\pm \nu_0= n(\pm \omega_0)$.
		
		\item Let $n$ be the Gauss map from (b) and $\eps_0$ sufficiently small. Then, there exists some $\kappa=\kappa(\eps_0)>0$ such that in each point $\omega\in \mathbb{S}$, the principal curvatures $\kappa_1(\omega),\dots, \kappa_{d-1}(\omega)$ with respect to $-n$ satisfy $\kappa_j(\omega)\geq \kappa$.
	\end{enumerate}
	\end{proposition}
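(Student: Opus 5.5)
The plan is to treat $\tilde{\varphi}$ as a $C^\infty$-small perturbation of the Euclidean norm, uniformly over $\tilde{B}\in \cB_{\eps_0}$. For the unperturbed case $\tilde{B}\equiv \id$ we have $\tilde{\varphi}(\xi)=|\xi|$, so $\mathbb{S}=S^{d-1}$ and all principal curvatures equal $1$. Set $f_\tau(\xi)\coloneqq(\tilde{B}(\tau)\xi|\xi)^{1/2}$.

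\textbf{Step 1: uniform smallness of the perturbation.} On the annulus $A\coloneqq\{\frac12\le|\xi|\le 2\}$ we have, for every multi-index $\alpha$ with $|\alpha|\le 3$, the bound $\sup_{\xi\in A}|\partial_\xi^\alpha(f_\tau(\xi)-|\xi|)|\lesssim_\alpha \eps_0$, uniformly in $\tau$. This holds because $f_\tau$ is a smooth function of $(\tilde{B}(\tau),\xi)$ on the compact set $\{\|\tilde{B}-\id\|\le\frac12\}\times A$, and it coincides with $|\xi|$ when $\tilde{B}=\id$; the mean value theorem in the matrix variable then gives a factor $\eps_0$. Integrating in $\tau$ yields
\begin{equation*}
\|\tilde{\varphi}-|\cdot|\|_{C^3(A)}\lesssim \eps_0 .
\end{equation*}

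\textbf{Step 2: parts (a) and (b).} Smoothness of $\tilde{\varphi}$ away from the origin and the bound $\nabla_\xi\tilde{\varphi}=\tilde{\sB}(\xi)\xi\neq 0$ from \eqref{eq:BoundednessOfSubmanifold} show, via the implicit function theorem, that $\mathbb{S}$ is a smooth hypersurface. It is compact because it is closed and contained in $\{|\xi|\simeq 1\}$. Its normal space is spanned by the gradient, which gives \eqref{eq:NormalSpaceOfSubmanifold}.

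Each $f_\tau$ is a norm, being the square root of a positive definite quadratic form. Hence $\tilde{\varphi}$, as an average of norms, is a norm, so $K$ is a compact convex body with $\partial K=\mathbb{S}$. Strict convexity follows from part (c) below, since positive curvature forces it; alternatively, it follows because each $f_\tau$ is strictly convex on non-collinear pairs.

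The map $n$ is odd because $\tilde{\varphi}$ is even. For a compact hypersurface bounding a convex body with nonvanishing Gauss curvature, the Gauss map is a local diffeomorphism, hence a covering map $\mathbb{S}\to S^{d-1}$. Since $d\ge 2$, we argue as follows. For $d\ge3$, $\mathbb{S}\cong S^{d-1}$ by radial projection and $S^{d-1}$ is simply connected, so the covering is a diffeomorphism. For $d=2$, strict convexity gives injectivity directly: a unique supporting point exists for each direction. Uniqueness of $\omega_0$ then follows.

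\textbf{Step 3: part (c), the main obstacle.} The shape operator at $\omega\in\mathbb{S}$ with respect to the inner normal $-n$ is
\begin{equation*}
\frac{1}{|\nabla\tilde{\varphi}(\omega)|}\,\nabla^2\tilde{\varphi}(\omega)\Big|_{T_\omega\mathbb{S}} .
\end{equation*}
The key is to control this uniformly. Using Step 1, $\nabla\tilde{\varphi}$ and $\nabla^2\tilde{\varphi}$ are $O(\eps_0)$-close on $A$ to
\begin{equation*}
\frac{\xi}{|\xi|}\qquad\text{and}\qquad \frac{1}{|\xi|}\Big(\id-\frac{\xi\xi^T}{|\xi|^2}\Big).
\end{equation*}
Moreover, $T_\omega\mathbb{S}=n(\omega)^\perp$ is $O(\eps_0)$-close to $\omega^\perp$, and $|\omega|=1+O(\eps_0)$. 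Restricting to unit tangent vectors $v\perp n(\omega)$, we get
\begin{equation*}
\frac{v^T\nabla^2\tilde{\varphi}(\omega)\,v}{|\nabla\tilde{\varphi}(\omega)|}\ \ge\ 1-C\eps_0 .
\end{equation*}
Since the principal curvatures are the eigenvalues of a symmetric form, they satisfy $\kappa_j(\omega)\ge 1-C\eps_0\eqqcolon\kappa>0$ once $\eps_0<1/(2C)$. Every constant depends only on $d$ through the compact parameter set, so the bound is uniform in $\tilde{B}$.

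If one prefers to avoid matrix-perturbation bookkeeping, a more direct route is available. Each $\nabla^2 f_\tau$ is positive semidefinite with kernel $\mathrm{span}\{\xi\}$, and it is $\ge c\,(\id-P_\xi)/|\xi|$ up to $O(\eps_0)$. Averaging in $\tau$ preserves this bound. I would try this second route first, since it exploits the structure of the phase and may even remove the need for $\eps_0$ to be small in part (c).
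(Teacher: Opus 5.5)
Your proposal is correct. In part (c) it uses the same idea as the paper, perturbation from the round sphere using the smallness of $\eps_0$, but the bookkeeping differs, and part (b) is argued differently.

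\textbf{Part (c), perturbative route.} The paper writes $|\nabla_\xi\tilde{\varphi}(\omega)|\,\mathrm{II}_\omega(v,v)=(\tilde{\sB}(\omega)v|v)-(\tilde{\sR}(\omega)v|v)$ explicitly. It then shows $(\tilde{\sR}(\omega)v|v)=O(\eps_0)|v|^2$ on $T_\omega(\mathbb{S})$ by expanding in the basis $v_k=\nu_de_k-\nu_ke_d$ and using $\|\tilde b_j-\tilde b_k\|_\infty\le 2\eps_0$. You instead obtain $C^2$-closeness of $\tilde{\varphi}$ to $|\xi|$ from smooth dependence on the matrix parameter. This is cleaner but gives less explicit constants.

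\textbf{Part (b).} The paper avoids topology. It gets local invertibility from (c), injectivity from the uniqueness of the supporting point of a strictly convex body in each direction, and surjectivity by maximizing $\xi\mapsto\nu\cdot\xi$ over $K$ together with Lagrange multipliers. Your covering-space argument is valid. However, the supporting-hyperplane injectivity you invoke for $d=2$ works in every dimension, so you do not need the simple connectivity of $S^{d-1}$.

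\textbf{Part (c), second route.} This is the genuinely better argument, and the bound holds exactly, without the $O(\eps_0)$ loss you allowed for. For $M=\tilde B(\tau)$ one has
$(\nabla^2 f_\tau(\xi)v|v)=f_\tau(\xi)^{-1}\min_{t\in\R}|M^{1/2}(v-t\xi)|^2$.
Hence $(\nabla^2 f_\tau(\xi)v|v)\ge\lambda_{\min}(M)\,f_\tau(\xi)^{-1}|v-(v\cdot\hat\xi)\hat\xi|^2$ with $\hat\xi=\xi/|\xi|$. By Euler's relation $\nabla_\xi\tilde{\varphi}(\omega)\cdot\omega=1$, every unit $v\in T_\omega(\mathbb{S})$ satisfies $|v-(v\cdot\hat\omega)\hat\omega|\ge(|\nabla_\xi\tilde{\varphi}(\omega)|\,|\omega|)^{-1}\gtrsim 1$. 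Averaging in $\tau$ then bounds all principal curvatures uniformly from below for every $\eps_0\in(0,\tfrac12)$, using only ellipticity of $\tilde B$. So the perturbation argument is unnecessary for (c).

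This has a real payoff for (b). Part (b) is stated for all $\eps_0\in(0,\tfrac12)$. Yet both the paper's proof and your perturbative route take the local invertibility of the Gauss map from (c), which they only establish for small $\eps_0$. The non-perturbative bound removes that dependence.
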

	\begin{proof} The assertions in (a) are straightforward to prove (see e.g. \cite[Proposition~5.1]{Tu2017}). To prove (c), let $n$ be the Gauss map as defined in (b) and $\omega\in \mathbb{S}$. Recall that the principal curvatures $\kappa_1(\omega),\dots,\kappa_{d-1}(\omega)$ with respect to $-n$ are the eigenvalues of the self-adjoint shape operator
	\begin{equation*}
		L_\omega \colon T_\omega(\mathbb{S}) \to T_\omega(\mathbb{S}),\quad L_\omega(v)=D_v n(\omega)
	\end{equation*} 
	(see e.g. \cite[Section~I.9]{Tu2017}). Here, $D_v$ denotes the directional derivative along the tangent vector $v\in T_\omega(\mathbb{S})$. Thus, we have to show that there exists some $\kappa=\kappa(\eps_0)\in (0,1)$ such that $\kappa_j(\omega)~\geq~\kappa$ for all $j\in \{1,\dots,d-1\}$ and $\omega\in \mathbb{S}$. Equivalently, we need to show that the bilinear form associated with $L_\omega$, namely the second fundamental form in $\omega$, defined by
	\begin{equation*}
		\mathrm{II}_\omega \colon T_{\omega}(\mathbb{S})\times T_{\omega}(\mathbb{S}) \to \R, \quad  \mathrm{II}_\omega(v,\tilde{v})=(L_\omega v| \tilde{v}),
	\end{equation*}
	is positive definite uniformly in $\omega\in \mathbb{S}$. Unfortunately, the strict convexity of $K$, as shown in (a), only implies that $\mathrm{II}_\omega$ is positive \textit{semi}-definite. To prove strict positive definiteness, we argue by perturbation. Fix $\omega=(\omega_1,\dots,\omega_d)\in \mathbb{S}$. A computation yields for any $v\in T_\omega(\mathbb{S})$
	\begin{equation}\label{eq:SecondFundamentalForm}
		|\nabla_\xi \tilde{\varphi}(\omega)|\cdot \mathrm{II}_\omega(v,v)=(\nabla^2_\xi \tilde{\varphi}(\omega)v|v)=(\tilde{\sB}(\omega)v|v)-(\tilde{\sR}(\omega)v|v)
	\end{equation}
	with
	\begin{equation}
		\tilde{\sR}(\omega)=(\tilde{r}_{j,k})_{j,k=1}^d=(r_{jk}(\omega)\omega_j\omega_k)_{j,k=1}^d,\quad r_{jk}(\omega)\coloneqq \int_{0}^1 \frac{\tilde{b}_j(\tau)\tilde{b}_k(\tau)}{(\tilde{B}(\tau)\omega|\omega)^{3/2}}\dd \tau. 
	\end{equation}
	Therefore, in view of \eqref{eq:BoundednessOfSubmanifold}, 
	\begin{equation*}
		\mathrm{II}_\omega(v,v)\gtrsim (\tilde{\sB}(\omega)v|v)-(\tilde{\sR}(\omega)v|v)\gtrsim |v|^2-(\tilde{\sR}(\omega)v|v).
	\end{equation*}
	So it is enough to show that
	\begin{equation}\label{eq:PertubationTerm}
		|(\tilde{\sR}(\omega)v|v)|=\mathcal{O}(\eps_0)\cdot |v|^2 \quad (v\in T_\omega(\mathbb{S})).
	\end{equation}
	Put $\nu\coloneqq \nabla_{\xi}\tilde{\varphi}(\omega)=\tilde{\sB}(\omega)\omega$. Without restriction, we may assume $\nu_d\neq 0$. Then, $T_\omega(\mathbb{S})=\mathrm{span}\{ v_1,\dots, v_{d-1}\}$ with $v_k\coloneqq \nu_d e_k-\nu_k  e_d$. Now, if $v\in T_\omega(\mathbb{S})$, then there exists $\lambda\in \R^{d-1}$ such that $v=\sum_{k=1}^{d-1}\lambda_k v_k$ and thus
	\begin{equation*}
		(\tilde{\sR}(\omega)v|v)=\sum_{j,k=1}^{d-1}\lambda_j\lambda_k (\tilde{\sR}(\omega)v_k|v_j)
	\end{equation*}
	with
	\begin{align*}
		(\tilde{\sR}(\omega)v_k|v_j)={}&\big[\big(r_{jk}\tilde{\sB}_d-r_{jd}\tilde{\sB}_k\big)\tilde{\sB}_d+\big(r_{dd}\tilde{\sB}_k-r_{dk}\tilde{\sB}_d\big)\tilde{\sB}_j\big]\omega_j\omega_k|\omega_d|^2\\
		={}&\big[\big(r_{jk}-r_{jd}\big)\tilde{\sB}_d^2+r_{jd}\big(\tilde{\sB}_d-\tilde{\sB}_k\big)\tilde{\sB}_d\big]\omega_j\omega_k|\omega_d|^2\\
		+{}&\big[\big(r_{dd}-r_{dk}\big)\tilde{\sB}_j\tilde{\sB}_k+r_{dk}\big(\tilde{\sB}_k-\tilde{\sB}_d\big)\tilde{\sB}_j\big]\omega_j\omega_k|\omega_d|^2.
	\end{align*}
	By assumption, we have $\|\tilde{b}_k-\tilde{b}_j\|_\infty\leq \|\tilde{b}_k-1\|_\infty+\|\tilde{b}_j-1\|_\infty\leq 2\eps_0$ for all $j,k\in \{1,\dots,d\}$. Hence, since $|\omega|\simeq 1$ by \eqref{eq:BoundednessOfSubmanifold},
	\begin{align*}
		|(r_{jk}-r_{jd})(\omega)|\leq{}& \bigg(\int_0^1 \frac{\tilde{b}_j(\tau)}{(\tilde{B}(\tau)\omega|\omega)^{3/2}}\dd \tau\bigg)\cdot \|\tilde{b}_k-\tilde{b}_d\|_{\infty}\lesssim \eps_0.
	\end{align*}
	and similarly
	\begin{equation*}
		|\tilde{\sB}_d(\omega)-\tilde{\sB}_k(\omega)|\leq \bigg(\int_0^1 \frac{ 1}{(\tilde{B}(\tau)\omega|\omega)^{1/2}}\dd \tau\bigg)\cdot \|\tilde{b}_k-\tilde{b}_d\|_{\infty}  \lesssim \eps_0. 
	\end{equation*}
	We conclude that $|(\tilde{\sR}(\omega)v_k|v_j|\lesssim \eps_0 |\omega_j||\omega_k| |\omega_d|^2$ and since $|\omega_d|\simeq |\nu_d|$ by \eqref{eq:BoundednessOfSubmanifold}, we obtain
	\begin{equation*}
		|(\tilde{\sR}(\omega)v|v)|\lesssim  \eps_0 |\nu_d|^2 \sum_{j,k=1}^{d-1}\lambda_j\lambda_k |\omega_j| |\omega_k| \leq \eps_0|\nu_d|^2 |\lambda|^2|\omega|^2\simeq \eps_0 |\nu_d|^2 |\lambda|^2 \leq \eps_0 |v|^2
	\end{equation*}
	(the last inequality follows from the identity $|v|^2=|(\lambda|\nu')|^2+|\nu_d|^2|\lambda|^2$, which is readily verified).
	This proves \eqref{eq:PertubationTerm} and therefore the claim.
	
	Finally, we prove (b). Note that by (c), $\dd n_\omega\colon T_\omega (\mathbb{S})\to T_{n_\omega}(S^{d-1})\simeq T_\omega(\mathbb{S})$ has nonvanishing determinant for each $\omega\in \mathbb{S}$, so $n$ is a local diffeomorphism. Also, $n(-\omega)=-n(\omega)$ for $\omega\in \mathbb{S}$ follows immediately from the definition of $n$. Therefore, it suffices to show that $n$ is bijective.\\[0.2cm]
	\textit{Injectivity}: Suppose that $\nu\coloneqq n(\omega_1)=n(\omega_2)$ for some $\omega_1,\omega_2\in \mathbb{S}$. By (a), the set $K$ is strictly convex and $n$ is the outer unit normal vector field on the boundary of $K$. Therefore, we must have $K\backslash\{\omega_j\}\subseteq H_j$, where $H_j$ is the open supporting hyperplane at $\omega_j$ defined by
	\begin{equation*}
		H_j\coloneqq \{\xi \in \R^d\colon (\xi-\omega_j|\nu)< 0 \} \quad (j\in \{1,2\}).
	\end{equation*}
	But this is only possible if $\omega_1=\omega_2$.\\[0.2cm]
	\textit{Surjectivity}: Let $\nu \in S^{d-1}$. We consider the linear function
	\begin{equation*}
		f\colon \R^d\to \R,\quad f(\omega)=\nu\cdot \omega.
	\end{equation*}
	By the extreme value theorem and the compactness of $K$, there are extremal points $\omega_1,\omega_2\in K$ in which $f$ attains a maximum and a minimum. The function $f$, being linear, cannot attain extremal values in the interior of $K$ and thus $\omega_1$ and $\omega_2$ have to lie on $\partial K=\mathbb{S}$. In particular, $f(\omega_1)$ and $f(\omega_2)$ are local extremas on $\mathbb{S}$. Therefore, $\nu=\nabla_\omega f(\omega_j)$ must belong to $N_{\omega_j}(\mathbb{S})$ by the Lagrange multiplier theorem. Combining this with (a), we conclude $\pm n(\omega_1)=\nu=\pm n(\omega_2)$. If $n(\omega_1)=n(\omega_2)$, then $\omega_1=\omega_2$ by the already shown injectivity. But then $f$ would have to be constant (since minimum and maximum of $f$ would be equal), contradicting the fact that $f\neq 0$ is linear. So either $\nu=n(\omega_1)$ or $\nu=n(\omega_2)$, which shows that $n$ is surjective.
	\end{proof}
	
		\begin{lemma}\label{lemma:SeparationOfNormals} Let $\eps_0\in (0,\frac{1}{2})$  and $\kappa=\kappa(\eps_0)$ be as in Proposition~\ref{prop:PropertiesOfSubmanifold}~(c). Then, there are constants $c_\kappa$ and $c_{d,\kappa}>0$ such that the following holds. For all $\delta\in (0,c_\kappa^{-1})$ and $\omega,\omega_0\in \mathbb{S}$ with $|\omega\pm \omega_0|\geq \delta$, there exists some $v\in T_{\omega}(\mathbb{S})$ with $|v|=1$ and
		\begin{equation*}
			|(v|n(\omega_0))|\geq c_{d,\kappa}\delta.
		\end{equation*}
	\end{lemma}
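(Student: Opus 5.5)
The plan is to reduce the claim to a statement about the unit normals $n(\omega)$ and $n(\omega_0)$. Since $n(\omega)$ spans $N_\omega(\mathbb{S})$ by Proposition~\ref{prop:PropertiesOfSubmanifold}~(a), we have $T_\omega(\mathbb{S})=n(\omega)^\perp$. Hence
\[
\sup_{v\in T_\omega(\mathbb{S}),\,|v|=1}|(v|n(\omega_0))| = |n(\omega_0)-(n(\omega_0)|n(\omega))\,n(\omega)| = \big(1-(n(\omega)|n(\omega_0))^2\big)^{1/2}.
\]
Writing $\nu=n(\omega)$ and $\nu_0=n(\omega_0)$, the right-hand side equals $\sin\theta$, where $\theta\in[0,\pi]$ is the angle between $\nu$ and $\nu_0$. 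It is comparable to $\min\{|\nu-\nu_0|,|\nu+\nu_0|\}$. It therefore suffices to show
\[
\min\{|n(\omega)-n(\omega_0)|,\,|n(\omega)+n(\omega_0)|\}\gtrsim_{d,\kappa}\delta \quad\text{whenever } |\omega\pm\omega_0|\geq\delta .
\]
Because $n(-\omega_0)=-n(\omega_0)$ by Proposition~\ref{prop:PropertiesOfSubmanifold}~(b), and the hypothesis is symmetric under $\omega_0\mapsto-\omega_0$, it is enough to prove the single bound $|n(\omega)-n(\omega_0)|\gtrsim\delta$ for $|\omega-\omega_0|\geq\delta$. Applying the same bound with $-\omega_0$ in place of $\omega_0$ handles the other sign.

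The key step is a quantitative injectivity, or lower Lipschitz bound, for the Gauss map, and it comes from the uniform curvature bound in Proposition~\ref{prop:PropertiesOfSubmanifold}~(c). I would use a strong-convexity, monotonicity argument. Set $\Phi=\nabla_\xi\tilde\varphi$. The uniform lower bound on $\mathrm{II}_\omega$ together with \eqref{eq:SecondFundamentalForm} and \eqref{eq:BoundednessOfSubmanifold} gives $(\nabla^2\tilde\varphi(\xi)v|v)\gtrsim\kappa|v|^2$ for $v\perp\xi$ and $|\xi|\simeq1$. For two points on $\mathbb{S}$, integrate along the segment from $\omega_0$ to $\omega$. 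This segment stays in $K$ and, since $\mathbb{S}\subseteq\{|\xi|\simeq1\}$, away from the origin when $|\omega+\omega_0|\geq\delta$. The integration yields
\[
(\Phi(\omega)-\Phi(\omega_0)\,|\,\omega-\omega_0)\gtrsim\kappa|\omega-\omega_0|^2 .
\]
The transversal component of $\omega-\omega_0$ causes no harm here: by homogeneity, $\nabla^2\tilde\varphi(\xi)\xi=0$, and since $\tilde\varphi(\omega)=\tilde\varphi(\omega_0)=1$ the radial direction contributes only at higher order. The awkward part is that the segment passes through the interior at radius $r<1$. I would handle this by using $0$-homogeneity of $n$ to rescale, or by normalizing $\omega_0,\omega$ onto the unit sphere and working with $\Phi$ restricted there.

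From the monotonicity bound, Cauchy--Schwarz gives $|\Phi(\omega)-\Phi(\omega_0)|\gtrsim\kappa|\omega-\omega_0|\ge\kappa\delta$. Since $|\Phi|\simeq1$ on $\mathbb{S}$ and $n=\Phi/|\Phi|$, I then need to pass to the normalized vectors. If $\Phi(\omega)$ and $\Phi(\omega_0)$ differ mainly in length, the normalization could kill the difference. I would rule this out with Euler's identity $(\Phi(\xi)|\xi)=\tilde\varphi(\xi)=1$ on $\mathbb{S}$. If $n(\omega)=n(\omega_0)$ exactly, the supporting hyperplane argument from the injectivity proof applies. Quantitatively, $(\Phi(\omega)-\Phi(\omega_0)|\omega-\omega_0)$ controls $|n(\omega)-n(\omega_0)|$ up to constants, because the length difference of $\Phi$ only pairs with $\omega-\omega_0$ through $n(\omega_0)$, which is itself small. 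An alternative is a compactness and contradiction argument using that $n$ is a diffeomorphism with $|dn|^{-1}$ controlled by $\kappa$; the constraint $\delta<c_\kappa^{-1}$ then gives room for a local inverse-function estimate at small scales and compactness at large ones.

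The main obstacle is making the constants depend only on $d,\kappa$ rather than on the particular $\tilde B$. This requires uniform $C^2$ bounds on $\tilde\varphi$ over $\cB_{\eps_0}$, which follow from \eqref{eq:BoundednessOfSubmanifold} and the explicit formulas for $\nabla^2\tilde\varphi$. The other delicate point is the normalization step just described.
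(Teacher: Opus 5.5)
Your outer reductions are correct, and the final one is sharper than the paper's. Since $T_\omega(\mathbb{S})=n(\omega)^\perp$, projecting $n(\omega_0)$ gives exactly $\sup_{|v|=1}|(v|n(\omega_0))|=(1-(n(\omega)|n(\omega_0))^2)^{1/2}$. The paper instead takes an orthonormal basis of $T_\omega(\mathbb{S})$ and pigeonholes, which is where the factor $(d-1)^{-1/2}$ in $c_{d,\kappa}$ comes from.

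The weak point is the central inequality $(\Phi(\omega)-\Phi(\omega_0)|\omega-\omega_0)\gtrsim\kappa|\omega-\omega_0|^2$, which your sketch does not establish.
\begin{itemize}
\item Since $\nabla^2\tilde\varphi(\xi)\xi=0$, the component of $u=\omega-\omega_0$ along $\xi_s=\omega_0+su$ contributes nothing to $(\nabla^2\tilde\varphi(\xi_s)u|u)$. The whole lower bound must come from the component of $u$ orthogonal to $\xi_s$. That component has length $|u|d_0/|\xi_s|$, where $d_0\le|\omega+\omega_0|/2$ is the distance of the chord from the origin.
\item For nearly antipodal points the chord is almost radial along most of its length, so the radial part is not of ``higher order''. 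If you use the Hessian bound only at unit scale, you get merely $\gtrsim\kappa d_0^2|u|^2$. This degenerates as $\omega\to-\omega_0$ and loses powers of $\delta$.
\item The inequality is nonetheless true, precisely because $\nabla^2\tilde\varphi$ is homogeneous of degree $-1$. The integrand is $\gtrsim\kappa|u|^2d_0^2|\xi_s|^{-3}$, and the part of the chord near the origin rescues the estimate. So the interior radius $r<1$ helps rather than hurts.
\end{itemize}

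The cleanest repair is your own fallback. Set $\theta=\omega/|\omega|$, $\theta_0=\omega_0/|\omega_0|$, and let $\gamma$ be the great-circle arc from $\theta_0$ to $\theta$. Then $(\nabla^2\tilde\varphi(\gamma)\gamma'|\theta-\theta_0)=(\theta-\theta_0|\gamma')(\nabla^2\tilde\varphi(\gamma)\gamma'|\gamma')$ with $(\theta-\theta_0|\gamma')\ge0$. Integrating gives $(\Phi(\theta)-\Phi(\theta_0)|\theta-\theta_0)\gtrsim\kappa|\theta-\theta_0|^2$. Alternatively, treat the nearly antipodal regime separately, using $|n(\omega)-n(\omega_0)|^2+|n(\omega)+n(\omega_0)|^2=4$ and the Lipschitz continuity of $n$.

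The passage from $\Phi$ to $n$ also needs an actual argument. The supporting-hyperplane inequalities $(n(\omega)|\omega-\omega_0)\ge0\ge(n(\omega_0)|\omega-\omega_0)$ give $(\Phi(\omega)-\Phi(\omega_0)|\omega-\omega_0)\le\max(\sigma,\sigma_0)\,|n(\omega)-n(\omega_0)|\,|\omega-\omega_0|$, with $\sigma=|\Phi(\omega)|$ and $\sigma_0=|\Phi(\omega_0)|$. This is the rigorous form of your heuristic. If you normalize to the unit sphere, use instead that $1/|\Phi|$ composed with $n^{-1}$ is the support function of $K$, hence Lipschitz.

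The paper avoids all of this by working with the Gauss map directly. $\dd n_\omega$ is the shape operator, whose eigenvalues are $\ge\kappa$ by Proposition~\ref{prop:PropertiesOfSubmanifold}~(c), and $n$ is a global diffeomorphism by part~(b). Hence $\|\dd(n^{-1})\|\le\kappa^{-1}$ on all of $S^{d-1}$. Integrating along a great-circle arc gives $|\omega-\omega_0|\le\frac{\pi}{2\kappa}|n(\omega)-n(\omega_0)|$ in one line. There are no chords through the interior of $K$, no passage from $\Phi$ to $n$, and the constants visibly depend only on $\kappa$. Your mentioned ``alternative'' is essentially this route, but it needs no split into small and large scales and no compactness. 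A compactness argument would in fact obscure the uniformity in $\tilde B$ you were worried about.
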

	
	\begin{proof} Let $\omega\in \mathbb{S}$. Recall that the principal curvatures $\kappa_1(\omega),\dots,\kappa_{d-1}(\omega)$ with respect to $-n$ are the eigenvalues of the self-adjoint shape operator $L_\omega \colon T_\omega(\mathbb{S})\to T_\omega(\mathbb{S})$, $L_\omega = D_v n$. Under the identification $T_\omega(\mathbb{S})\simeq T_{n(\omega)}(S^{d-1})$, we have $\dd n_\omega =L_\omega$ and thus by the inverse function theorem that $ n^{-1}$ is smooth with $\|\dd (n^{-1})_\nu\|=\|(\dd n_{n^{-1}(\nu)})^{-1}\|\leq \kappa^{-1}$ for all $\nu\in S^{d-1}$. Thus,
		\begin{equation*}
			|n^{-1}(\nu)-n^{-1}(\nu_0)| \leq  \|\dd n^{-1}\|_\infty \dd_{S^{d-1}}(\nu,\nu_0) \leq \tfrac{\pi}{2\kappa } |\nu-\nu_0| 
		\end{equation*}	
		for $\nu,\nu_0\in S^{d-1}$. In particular, $n^{-1}$ is Lipschitz continuous with Lipschitz constant $\tfrac{\pi}{2\kappa}$. Thus, if $\delta>0$ and $\omega,\omega_0\in \mathbb{S}$ with $|\omega\pm \omega_0|\geq \delta$, then
		\begin{align*}
			\big(\tfrac{2\kappa \delta}{\pi}\big)^2\leq (\tfrac{2\kappa}{\pi}|\omega-\omega_0|)^2\leq{}& |n(\omega)-n(\omega_0)|^2\\
			={}&|n(\omega)|^2+|n(\omega_0)|^2-2(n(\omega)|n(\omega_0))\\
			={}&2\big(1-(n(\omega)|n(\omega_0))\big),
		\end{align*}
		which implies $(n(\omega)|n(\omega_0))\leq 1-\tfrac{1}{2}(\tfrac{2\kappa\delta}{\pi})^2$. Replacing $\omega_0$ by $-\omega_0$ and using that $n(-\omega_0)=-n(\omega_0)$, we also get $-(n(\omega)|n(\omega_0))\leq 1-\tfrac{1}{2}(\tfrac{2\kappa\delta}{\pi})^2$ and thus
		\begin{equation*}
			|(n(\omega)|n(\omega_0))|\leq 1-\tfrac{1}{2}\big(\tfrac{2\kappa \delta}{\pi}\big)^2\eqqcolon  r
		\end{equation*}
		 Put $c_{\kappa}\coloneqq \sqrt{2}\pi^{-1}\kappa>0$ so that $r\in (0,1)$ for $\delta\in (0,\tfrac{1}{c_\kappa})$. Now choose an orthonormal basis $(v_1,\dots, v_{d-1})$ of $T_{\omega}(\mathbb{S})$. Then,
		\begin{equation*}
			\sum_{j=1}^{d-1}|(v_j|n(\omega_0))|^2=|n(\omega_0)|^2- |(n(\omega)|n(\omega_0)|^2\geq 1-r^2\geq 1-r.
		\end{equation*}
		Thus, there must exist some $j\in \{1,\dots,d-1\}$ such that
		\begin{equation*}
			|(v_j|n(\omega_0))|^2\geq \tfrac{1}{d-1}(1-r)=\tfrac{1}{2(d-1)}\big(\tfrac{2\kappa \delta}{\pi}\big)^2 \eqqcolon c_{d,\kappa}^2\delta^2,
		\end{equation*}
		proving the claim.
	\end{proof}
	
	Let $\tilde{\psi}\in C_c^\infty(\R^d)$ be supported away from the origin. For $\eps\in (0,\frac{1}{2})$, $\tilde{B}\in \cB_\eps$ and associated phase function $\tilde{\varphi}$ as in \eqref{eq:DefinitionOfGeneralPhaseFunction}, let us define for $t\in \R$ the kernel $K_t$ by
	\begin{equation*}
		K_t (y)=\cF^{-1}(\ee^{-\ii t\tilde{\varphi}}\tilde{\psi})(y)=\frac{1}{(2\pi)^d}\int_{\R^d} \ee^{\ii (y\cdot \xi-t\tilde{\varphi}(\xi))} \tilde{\psi}(\xi) \dd \xi\quad (y\in \R^d).
	\end{equation*}
	
	Theorem~\ref{thm:OscillatoryIntegralEstimate2} is then a consequence of the following result.
	\begin{theorem}[Decay Estimate for $K_{t}$]\label{thm:DecayEstimateForUnitFrequencies}Let $\eps_0\in (0,\frac{1}{2})$ as in Proposition~\ref{prop:PropertiesOfSubmanifold}~(c) be sufficiently small. Then, there are constants $C_N=C_N(\tilde{\psi})>0$ $(N\in \N_0)$ such that the following estimates hold true for all $s,t\in \R, y\in \R^d$, and all $N\in \N_0$:
		\begin{equation}\label{eq:DecayEstimateForUnitFrequencies}
			\big|K_{t}(y)\big|\leq 		C_N(1+|y|)^{-\frac{d-1}{2}} (1+\mathrm{dist}(y,t\Sigma))^{-N}.
		\end{equation}
		In particular, 	
		\begin{equation}\label{eq:L^inftyEstimateForHalfWaveKerlUnitFrequencies}
			\|K_{t}\|_{\rL^\infty}\lesssim_{\tilde{\psi}} 	(1+|t|)^{-\frac{d-1}{2}} \quad (t\in \R).
		\end{equation}
	
	\end{theorem}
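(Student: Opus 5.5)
The plan is to write $K_t(y)$ in polar coordinates adapted to $\mathbb{S}$ and reduce the decay in $|y|$ to the decay of the Fourier transform of smooth densities on $\mathbb{S}$. The rapid decay away from $t\Sigma$ comes from non-stationary phase in the radial variable.

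First, I would handle the trivial regimes. For $|y|\lesssim 1$, or more generally $|y|\lesssim 1+|t|$ with $|t|\lesssim 1$, the bound $|K_t(y)|\le \|\tilde\psi\|_1$ suffices. For $\mathrm{dist}(y,t\Sigma)\gtrsim |y|+|t|$, I would integrate by parts in $\xi$ with the operator $\frac{y-t\nabla\tilde\varphi}{\ii|y-t\nabla\tilde\varphi|^2}\cdot\nabla_\xi$. Since $\nabla\tilde\varphi(\xi)\in\Sigma$ (it is $0$-homogeneous), we have $|y-t\nabla\tilde\varphi(\xi)|\ge \mathrm{dist}(y,t\Sigma)$. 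Moreover, all $\xi$-derivatives of $\tilde\varphi$ on $\supp{\tilde\psi}$ are bounded uniformly in $\tilde B\in\cB_{\eps_0}$, by Lemma~\ref{lemma:DerivativesOfPhaseFunction}(a)-type bounds. This yields $C_N(1+\mathrm{dist}(y,t\Sigma))^{-N}$, which dominates the claim whenever $\mathrm{dist}(y,t\Sigma)\gtrsim|y|$. Since $\Sigma\subseteq\{|\xi|\simeq1\}$, this covers every case except $|y|\simeq|t|\gg1$.

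In the main regime, I would use the coordinates $\xi=r\omega$ with $r>0$ and $\omega\in\mathbb{S}$, so that $\tilde\varphi(\xi)=r$ and $\dd\xi=r^{d-1}\,\dd r\,\dd\sigma_*(\omega)$. Here $\dd\sigma_*=(\omega\cdot n(\omega))\,\dd\sigma$ is a smooth positive density, by star-shapedness. This gives
\[K_t(y)=\tfrac{1}{(2\pi)^d}\int_0^\infty \ee^{-\ii tr}r^{d-1}\int_{\mathbb{S}}\ee^{\ii r\,y\cdot\omega}\tilde\psi(r\omega)\,\dd\sigma_*(\omega)\,\dd r .\]
Next, I would localize on $\mathbb{S}$ with a smooth partition of unity near the two points $\pm\omega_0$ where $n(\pm\omega_0)=\pm y/|y|$, which exist and are unique by Proposition~\ref{prop:PropertiesOfSubmanifold}(b). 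Away from $\pm\omega_0$, Lemma~\ref{lemma:SeparationOfNormals} provides a unit tangent direction $v$ with $|(v|y)|\gtrsim\delta|y|$. Integrating by parts along such tangent fields then gives $O(|y|^{-N})$, uniformly in $\tilde B$.

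Near $\pm\omega_0$, I would write $\mathbb{S}$ locally as a graph over $T_{\pm\omega_0}\mathbb{S}$. By Proposition~\ref{prop:PropertiesOfSubmanifold}(c), the Hessian of the graph function is uniformly nondegenerate, with eigenvalues $\ge\kappa$. Stationary phase in the $d-1$ tangential variables then gives an expansion $\ee^{\pm\ii r|y|h(\pm\omega_0)}(r|y|)^{-\frac{d-1}{2}}a_\pm(r,r|y|)$, where $h(\omega_0)=\omega_0\cdot y/|y|$ is the support function and $a_\pm$ is a symbol of order $0$ in $r|y|$. The $r$-integral then has phase $r(\pm|y|h-t)$. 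Because $\pm|y|\,h(\pm\omega_0)-t$ is comparable to $\mathrm{dist}(y,t\Sigma)$ (using $\nabla\tilde\varphi(\omega_0)/|\nabla\tilde\varphi|=y/|y|$ and Euler's identity $\omega_0\cdot\nabla\tilde\varphi(\omega_0)=1$), repeated integration by parts in $r$, which is legitimate since $\tilde\psi$ localizes $r\simeq1$, yields $(1+\mathrm{dist}(y,t\Sigma))^{-N}|y|^{-\frac{d-1}{2}}$. Finally, \eqref{eq:L^inftyEstimateForHalfWaveKerlUnitFrequencies} follows: either $|y|\simeq|t|$, or the distance factor is $\gtrsim|t|$, and we take $N\ge\frac{d-1}{2}$.

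The main obstacle is \emph{uniformity} of all constants in $\tilde B\in\cB_{\eps_0}$, especially in the stationary-phase step. The constants there depend on lower bounds for the curvature (supplied by Proposition~\ref{prop:PropertiesOfSubmanifold}(c)) and on $C^k$-bounds for the local graph parametrizations and for $n^{-1}$. I would obtain the latter from uniform derivative bounds on $\tilde\varphi$ on $\{|\xi|\simeq1\}$, since the integrand $(\tilde B\xi|\xi)^{1/2}$ has derivatives bounded independently of $\tau$, together with the implicit function theorem with quantitative constants. I would also check that the size of the neighbourhood of $\pm\omega_0$ can be chosen uniformly, via Lemma~\ref{lemma:SeparationOfNormals}.
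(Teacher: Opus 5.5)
Your proposal is correct and follows essentially the paper's route: polar coordinates along the level sets of $\tilde{\varphi}$, localization on $\mathbb{S}$ near the two Gauss-map preimages $\pm\omega_0$ of $\pm y/|y|$, nonstationary phase away from them via Lemma~\ref{lemma:SeparationOfNormals}, curvature-driven stationary phase near them, and integration by parts in $r$ to produce the factor $(1+\mathrm{dist}(y,t\Sigma))^{-N}$. The differences are cosmetic: you split off $|y|+|t|\lesssim 1$ and $\mathrm{dist}(y,t\Sigma)\gtrsim |y|+|t|$ by hand rather than rescaling to $I(t,y/t)$; beyond that, you are slightly more careful than the written proof in giving each critical point its own radial phase $r(\pm|y|h-t)$ (the paper factors out $\ee^{-\ii r\lambda(1-|z|/\sigma)}$ for $I_2$ as well) and in tracking uniformity of all constants in $\tilde{B}\in\cB_{\eps_0}$.
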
 
	\begin{proof} Let $t\in \R$. The claim is trivial if $t=0$, for then $K_{t}=\mathcal{F}^{-1}\tilde{\psi}\in \Sc(\R^d)$ and $t\Sigma=\{0\}$. We may therefore suppose $t\neq 0$ in the following. We divide the proof into three steps.\vspace{0.2cm}
		
	\textbf{Step 1: Rescaling}\vspace{0.2cm}
		
	Define the full phase function $\tilde{\Phi}(z,\xi)\coloneqq z\cdot \xi-\tilde{\varphi}(\xi)$ for $(z,\xi)\in \R^d\times \R^d$. Then,
		\begin{equation}\label{eq:RescalingKernel}
			K_{t}(y)={}\frac{1}{(2\pi)^d}\int_{\R^d} \ee^{\ii t \tilde{\Phi}\big(\frac{y}{t},\,\xi\big)}\tilde{\psi}(\xi)\dd \xi\eqqcolon \frac{1}{(2\pi)^d}I\big(t, \tfrac{y}{t}\big),
		\end{equation}
		where
		\begin{equation*}
			I(\lambda,z)\coloneqq  \int_{\R^d} \ee^{\ii \lambda\tilde{\Phi}(z,\xi)}\tilde{\psi}(\xi)\dd \xi \quad (z\in \R^d, \lambda\in \R),
		\end{equation*}
		and \eqref{eq:DecayEstimateForUnitFrequencies} would follow from the uniform estimates
		\begin{equation}\label{eq:StationaryPhaseRescaled}
			\big|I(\lambda,z)\big|\lesssim_{N, \tilde{\psi}}(1+|\lambda z|)^{-\frac{d-1}{2}} (1+|\lambda| \mathrm{dist}(z,\Sigma))^{-N} \quad (z\in \R^d,\lambda\in \R).
		\end{equation}
		This is what we will show next.\vspace{0.2cm}
		
		\textbf{Step 2 : The Proof of Estimate \eqref{eq:StationaryPhaseRescaled}}
		
		We deal first with the easier case $z=0$ and then proceed with the case $z\neq 0$.\vspace{0.2cm}

		\textit{Case 1 }: $z= 0$.\vspace{0.2cm}
		
		Put $\delta\coloneqq \mathrm{dist}(0,\Sigma)>0$. Then, $\delta\simeq 1$ by \eqref{eq:BoundednessOfSubmanifold}. Now, observe that 
		\begin{equation}\label{eq:NonStationaryPhasePhaseFunctionEstimate1}
			|\nabla_\xi\tilde{\Phi}(0,\xi)|=|0-\nabla_{\xi}\tilde{\varphi}(\xi)|\geq \mathrm{dist}(0, \Sigma)= \delta \simeq 1
		\end{equation}
		and that by the positive homogeneity of $\tilde{\varphi}$, we have for any $\alpha\in \N_0^d$ with $|\alpha|\geq 2$
		\begin{equation}\label{eq:NonStationaryPhasePhaseFunctionEstimate2}
			|\partial_\xi^\alpha \tilde{\Phi}(y,\xi)|=|\partial_\xi^\alpha \tilde{\varphi}(\xi)|\lesssim_{\alpha}|\xi|^{1-|\alpha|}\lesssim 1 \quad \text{on $\supp{\tilde{\psi}}$}.
		\end{equation}
		Therefore, nonstationary phase (see e.g. \cite[Theorem~7.7.1]{Hoermander1990}) implies
		\begin{equation*}
			|I(\lambda,0)|\lesssim_{N,d, \tilde{\psi}}(1+|\lambda|)^{-N}\lesssim (1+|\lambda|\, \delta)^{-N}\quad (\lambda\in \R)
		\end{equation*}
		which is exactly \eqref{eq:StationaryPhaseRescaled}.\vspace{0.2cm}

		\textit{Case 2 }: $z\neq 0$.\vspace{0.2cm}
		
		Let $z\in \R^d\setminus \{0\}$ and set $\nu_z\coloneqq \frac{z}{|z|}\in S^{d-1}$. Using \cite[Theorem~3.11]{EvansGariepy2015} (see also \cite[Proposition~A.0.1~(iv)]{Mesfun2026}), we may integrate the integrand of $I(\lambda,z)$ along the level sets of $\tilde{\varphi}$. This gives
		\begin{equation*}
			I(\lambda,z)={}\int_{0}^{\infty} \ee^{-\ii r\lambda} \bigg(\int_{\tilde{\varphi}^{-1}(r)} \ee^{\ii\lambda z \cdot \omega} \tilde{\psi}(\omega) \dd \omega\bigg)\dd r,
		\end{equation*}
		where $\dd \omega \coloneqq |\nabla_\xi \tilde{\varphi}|^{-1} \dd \mathcal{H}^{d-1}$ (the latter denoting the $d-1$-dimensional Hausdorff measure on the hypersurface $\tilde{\varphi}^{-1}(r)$). Since $\tilde{\varphi}^{-1}(r)=r\tilde{\varphi}^{-1}(1)=r\mathbb{S}$ by the positive homogeneity of $\tilde{\varphi}$, the integral in the above display is equal to 
		\begin{align*}
			I(\lambda,z)={}&\int_{0}^{\infty} \ee^{-\ii r\lambda} \bigg(\int_{\mathbb{S}} \ee^{\ii r\lambda z \cdot \omega} \tilde{\psi}(r\omega) \dd \omega\bigg)r^{d-1}\dd r\\
			={}& \int_{0}^{\infty} \ee^{-\ii r\lambda} J(r\lambda| z|,\nu_z)\tilde{\chi}(r) \dd r,
		\end{align*}
		where $\tilde{\chi}(r)\coloneqq \chi(r)r^{d-1}$ and $\chi\in C_c^\infty((0,\infty))$ such that $\chi=1$ on $\mathrm{supp}(r\mapsto \tilde{\psi}(r\omega))$, and for fixed $r>0$,
		\begin{equation*}
			J(\mu,\nu)\coloneqq \int_{\mathbb{S}} \ee^{\ii \mu \nu \cdot \omega} \psi(r\omega) \dd \omega  \quad (\mu\in \R,\nu\in S^{d-1}).
		\end{equation*}
		By Proposition \ref{prop:PropertiesOfSubmanifold}~(b), there is exactly one $\omega_z\in \mathbb{S}$ such that $\pm \nu_z=n(\pm \omega_z)$. We localize $J(\mu,\nu_z)$ around $\pm \omega_z$: To this end, we choose $\rho\in C_c^\infty (\R)$ such that $\rho=1$ on $(-\tfrac{1}{2},\tfrac{1}{2})$, $\supp{\rho}\subseteq (-1,1)$ and for some $\delta>0$, we split $J(\mu,\nu_z)={}J_1(\mu,\nu_z)+J_2(\mu,\nu_z)+J_3(\mu,\nu_z)$, where
		\begin{align*}
			J_{1}(\mu,\nu_z)\coloneqq{}& \int_{\mathbb{S}} \ee^{\ii\mu \nu_z \cdot \omega}  \rho\big(\tfrac{|\omega-\omega_z|}{\delta}\big)\psi(r\omega) \dd \omega ,\\
			J_{2}(\mu,\nu_z)\coloneqq{}& \int_{\mathbb{S}} \ee^{\ii  \mu \nu_z \cdot \omega}  \rho\big(\tfrac{|\omega+\omega_z|}{\delta}\big)\psi(r\omega) \dd \omega ,\\
			J_{3}(\mu,\nu_z)\coloneqq{}&  \int_{\mathbb{S}} \ee^{\ii \mu \nu_z\cdot \omega} \psi(r\omega)\bigg(1-\rho\big(\tfrac{|\omega-\omega_z|}{\delta}+\rho\big(\tfrac{|\omega+\omega_z|}{\delta}\big)\bigg) \dd \omega.
		\end{align*}
		 Then, we obtain the splitting $I(\lambda,z)=I_1(\lambda,z)+I_2(\lambda,z)+I_3(\lambda,z)$ with
		\begin{equation*}
			I_k(\lambda,z)=\int_{0}^{\infty} \ee^{-\ii r\lambda} J_k(r\lambda| z|,\nu_z)\tilde{\chi}(r) \dd r
		\end{equation*}
		for $k=\{1,2,3\}$. We estimate each of these terms separately. Let $k\in \{1,2\}$. Note that by Euler's relation for $\tilde{\varphi}$,
		\begin{align*}
			\nu_z \cdot \omega_z =  n(\omega_z)\cdot  \omega_z=\frac{\nabla_\xi\tilde{\varphi}(\omega_z)\cdot\omega_z}{|\nabla_\xi\tilde{\varphi}(\omega_z)|}=\frac{\tilde{\varphi}(\omega_z)}{|\nabla_\xi\tilde{\varphi}(\omega_z)|}=\frac{1}{|\nabla_\xi\tilde{\varphi}(\omega_z)|}\eqqcolon \frac{1}{\sigma}
		\end{align*}
		and therefore 
		\begin{equation*}
			I_k(\lambda,z)=\int_0^\infty \ee^{-\ii r\lambda(1-\frac{ |z|}{\sigma})}\cdot  \ee^{-\ii r\lambda\frac{| z|}{\sigma}} J_k(r\lambda| z|,\nu_z)\tilde{\chi}(r) \dd r.
		\end{equation*}
		Hence, integrating by parts and applying stationary phase (see \cite[Theorem~7.7.5, Theorem~7.7.14]{Hoermander1990}) with $\delta=\delta(\kappa(\eps_0))>0$ sufficiently small, we obtain
		\begin{align*}
			&\big|\big(\lambda\big(1-\tfrac{ |z|}{\sigma}\big)\big)^N I_k(\lambda,z)\big|\\={}&\bigg|\int_{0}^{\infty}\bigg( (-D_r)^N\ee^{-\ii r\lambda(1-\frac{|z|}{\sigma})}\bigg) \ee^{-\ii r\lambda\frac{| z|}{\sigma}} J_k(r\lambda| z|,\nu_z)\tilde{\chi}(r)  \dd r\bigg|\\
			={}&\bigg|\int_{0}^{\infty} \ee^{-\ii r\lambda(1-\frac{ |z|}{\sigma})} D_r^N\bigg( \ee^{-\ii r\frac{\lambda| z|}{\sigma}} J_k(r\lambda| z|,\nu_z)\tilde{\chi}(r) \bigg) \dd r\bigg|\\
			\leq{}&\int_{0}^{\infty}\bigg|D_r^N \bigg(\ee^{-\ii r\lambda\frac{| z|}{\sigma}} J_k(r\lambda| z|,\nu_z)\tilde{\chi}(r)\bigg)\bigg| \dd r\\
			\lesssim_{N}{}& \sum_{\ell=0}^N\int_{0}^{\infty} (1+r|\lambda  z|)^{-\frac{d-1}{2}} |\partial_r^{N-\ell}\tilde{\chi}(r)| \dd r\lesssim_{N,\tilde{\psi}} (1+ |\lambda z|)^{-\frac{d-1}{2}}.
		\end{align*}
		This yields
		\begin{equation*}
			|I_k(\lambda,z)|\lesssim_{N,\tilde{\psi}} (1+ |\lambda z|)^{-\frac{d-1}{2}} \big(1+\big|\lambda\big(1-\tfrac{ |z|}{\sigma}\big)\big|\big)^{-N}
		\end{equation*}
		for $k\in \{1,2\}$. For $k=3$, we use the principle of nonstationary phase. Indeed, Lemma~\ref{lemma:SeparationOfNormals} implies (for $\delta<c_\kappa^{-1}$, which we may assume without restriction) that $f\colon \mathbb{S}\to \R,$ $f(\omega)=\nu_z \cdot \omega$ satisfies
		\begin{equation*}
			\|\dd f_\omega\|\geq c_{d,\kappa}\,  \delta
		\end{equation*}
		uniformly $z\in \R^d\setminus \{0\}$, $\omega_z\in \mathbb{S}$ and $\omega\in \mathbb{S}\setminus (B_\delta(\omega_z)\cup B_\delta(-\omega_z))$. Thus, by nonstationary phase (see e.g. \cite[Theorem~7.7.1]{Hoermander1990}),
		\begin{equation*}
			|D_r^\ell J_3(\mu  , \nu_z)|\lesssim_{\ell, M} (1+|\mu|)^{-M}\quad (M,\ell\in \N_0).
		\end{equation*}
		Following the above argument in the case $k\in \{1,2\}$, we therefore obtain
		\begin{align*}
			&\big|\big(\lambda\big(1-\tfrac{|z|}{\sigma}\big)\big)^N I_3(\lambda,z)\big|\\
			\leq&\int_{0}^{\infty}\bigg|D_r^N \bigg(\ee^{-\ii r\lambda\frac{| z|}{\sigma}} J_3(r\lambda| z|,\nu_z)\tilde{\chi}(r)\bigg)\bigg| \dd r\\
			\lesssim&_{N,d} \sum_{\substack{\ell \in \N_0^3,\\ |\ell|=N}}\int_{0}^{\infty} \big|D_r^{\ell_1}\ee^{-\ii r\lambda\frac{| z|}{\sigma}}\big|\cdot \big|D_r^{\ell_2}J_3(r\lambda|  z|,\nu_z) \big|\cdot\big| D_r^{\ell_3}\tilde{\chi}(r)\big| \dd r\\
			\lesssim&_{N,M} \sum_{\substack{\ell \in \N_0^3,\\ |\ell|=N}}\int_{0}^{\infty} |\lambda z|^{\ell_1+\ell_2} (1+r|\lambda z|)^{-M}\cdot\big| D_r^{\ell_3}(\tilde{\chi}(r))\big| \dd r\\
			\lesssim{}& (1+|\lambda z|)^{-(M-N)}.
		\end{align*}
		Choosing $M=2N$ gives 
		\begin{equation*}
			|I_3(\lambda,z)|\lesssim_{N} (1+|\lambda z|)^{-N}\big(1+\big|\lambda\big(1-\tfrac{ |z|}{\sigma}\big)\big|\big)^{-N}.
		\end{equation*}
		Since $|I(\lambda,z)|\leq |I_1(\lambda,z)|+|I_2(\lambda,z)|+|I_3(\lambda,z)|$, we have proved that
		\begin{equation*}
			|I(\lambda,z)|\lesssim_N (1+ |\lambda z|)^{-\frac{d-1}{2}} \big(1+\big|\lambda\big(1-\tfrac{ |z|}{\sigma}\big)\big|\big)^{-N}.
		\end{equation*}
		
		Now it just remains to recall that $\sigma=|\nabla_\xi \tilde{\varphi}( \omega_z)|\simeq 1$ by \eqref{eq:BoundednessOfSubmanifold} as well as $ z=\frac{|z|}{\sigma}\nabla_\xi \tilde{\varphi}( \omega_z)$, which gives
		\begin{equation*}
			\mathrm{dist}(z,\Sigma)\leq \big|z-\nabla_\xi \tilde{ \varphi}(\omega_z)\big|=\sigma \big|1-\tfrac{ |z|}{\sigma}\big|  \simeq \big|1-\tfrac{ |z|}{\sigma}\big|.
		\end{equation*}
		This proves \eqref{eq:StationaryPhaseRescaled} as desired.\vspace{0.2cm}
		
		\textbf{Step 3 : The Proof of \eqref{eq:DecayEstimateForUnitFrequencies}}\vspace{0.2cm}
		
		Recall from \eqref{eq:BoundednessOfSubmanifold} that
		\begin{equation}\label{eq:BoundsForSingularSet1}
			\Sigma\subseteq \{\xi\in \R^d\colon \tilde{c}_1 \leq |\xi|\leq \tilde{c}_2\} 
		\end{equation}
		for some $\tilde{c}_1, \tilde{c}_2>0$. 
		Therefore, we deduce from \eqref{eq:StationaryPhaseRescaled} 
		the uniform bounds
		\begin{equation}\label{eq:LInftyEstimateOfI}
			|I(\lambda,z)| \lesssim (1+|\lambda|)^{-\frac{d-1}{2}} \quad (\lambda\in \R, z\in \R^d).
		\end{equation}
		Indeed, if $\mathrm{dist}(z, \Sigma)\leq \tfrac{\tilde{c}_1}{2}$, then \eqref{eq:BoundsForSingularSet1} implies $|z|\geq \frac{\tilde{c}_1}{2}$, so \eqref{eq:StationaryPhaseRescaled} with $N=0$ yields
		\begin{equation*}
			|I(\lambda,z)|\lesssim (1+|\lambda z|)^{-\frac{d-1}{2}}\lesssim_{\tilde{c}_1,d}  (1+|\lambda |)^{-\frac{d-1}{2}} \quad (\lambda\in \R, z\in \R^d).
		\end{equation*}
		On the other hand, if $\mathrm{dist}(z, \Sigma)\geq \frac{\tilde{c}_1}{2}$, then choosing some $N\in \N$ with $N\geq \frac{d-1}{2}$ in \eqref{eq:StationaryPhaseRescaled} yields
		\begin{equation*}
			|I(\lambda,z)|\lesssim_N (1+|\lambda |\mathrm{dist}(z, \Sigma))^{-N}\lesssim_{\tilde{c}_1,N}(1+|\lambda |)^{-\frac{d-1}{2}} \quad (\lambda\in \R, z\in \R^d).
		\end{equation*}
		This proves \eqref{eq:LInftyEstimateOfI}. Now, \eqref{eq:DecayEstimateForUnitFrequencies} is an immediate consequence of \eqref{eq:LInftyEstimateOfI} in view of \eqref{eq:RescalingKernel}. The proof is complete.
	\end{proof}


\begin{thebibliography}{10}
	

	
	\bibitem{ArendtBattyHieberNeubrander2011}
	W. Arendt, C. J. K. Batty, M. Hieber and F. Neubrander, {\em Vector-valued Laplace transforms and Cauchy problems}, second edition, 
	Monographs in Mathematics, 96, Birkh\"auser/Springer Basel AG, Basel, 2011.
	
	
	\bibitem{AuscherMcInstoshTchamitchian1998}
	P. Auscher, A.~G.~R. McIntosh and P. Tchamitchian, Heat kernels of second order complex elliptic operators and applications, J. Funct. Anal. {\bfseries 152} (1998), no.~1, 22--73.

	\bibitem{BahouriDanchinChemin2011}
	H. Bahouri, J.-Y. Chemin and R. Danchin, {\em Fourier analysis and nonlinear partial differential equations}, Grundlehren der mathematischen Wissenschaften, 343, Springer, Heidelberg, 2011.
	
	\bibitem{BeliIgnatZuaZua2016}
	C.~N. Beli, L.~I. Ignat and E. Zuazua, Dispersion for 1-D Schr\"odinger and wave equations with BV coefficients, Ann. Inst. H. Poincar\'e{} C Anal. Non Lin\'eaire {\bfseries 33} (2016), no.~6, 1473--1495.
	
	\bibitem{CazenaveHaraux1998}
	T. Cazenave and A. Haraux, {\em An introduction to semilinear evolution equations}, translated from the 1990 French original by Yvan Martel and revised by the authors, 
	Oxford Lecture Series in Mathematics and its Applications, 13, Oxford Univ. Press, New York, 1998.
	
	
	\bibitem{ColombiniFerruccioSpagnolo1979}
	F. Colombini, E. De~Giorgi and S. Spagnolo, Sur les \'equations hyperboliques avec des coefficients qui ne d\'ependent que du temps, Ann. Scuola Norm. Sup. Pisa Cl. Sci. (4) {\bfseries 6} (1979), no.~3, 511--559.
	
	\bibitem{EngelNagel2000}
	K.-J. Engel and R.~J. Nagel, {\em One-parameter semigroups for linear evolution equations}, Graduate Texts in Mathematics, 194, Springer, New York, 2000.
	
	\bibitem{Eskin2011}
	G. Eskin, {\em Lectures on linear partial differential equations}, Graduate Studies in Mathematics, 123, Amer. Math. Soc., Providence, RI, 2011.
	
	\bibitem{EvansGariepy2015}
	L.~C. Evans and R.~F. Gariepy, {\em Measure theory and fine properties of functions}, revised edition, 
	Textbooks in Mathematics, CRC Press, Boca Raton, FL, 2015.
	
	\bibitem{FangWang2006}
	D. Fang and C. Wang, Some remarks on Strichartz estimates for homogeneous wave equation, Nonlinear Anal. {\bfseries 65} (2006), no.~3.
	


	
	\bibitem{FreyMcPortal2018}
	D. Frey, A.~G.~R. McIntosh and P. Portal, Conical square function estimates and functional calculi for perturbed Hodge-Dirac operators in $L^P$, J. Anal. Math. {\bfseries 134} (2018), no.~2, 399--453.
	
		\bibitem{FrePortal2020}
	D. Frey and P. Portal, $L^p$ estimates for wave equations with specific $C^{0,1}$ coefficients, 
Ann. Inst. Fourier (Grenoble) (2026), Online first, 46 p.	DOI: 10.5802/aif.3763.
	
	
	\bibitem{FreySchippa2023}
	D. Frey and R. Schippa, Strichartz estimates for equations with structured Lipschitz coefficients, J. Evol. Equ. {\bfseries 23} (2023), no.~3, Paper No. 45, 34 pp.
	
	\bibitem{GinibreVelo1995} 
	J. Ginibre and G. Velo, Generalized Strichartz inequalities for the wave equation, J. Funct. Anal. {\bfseries 133} (1995), no.~1, 50--68.
		
	\bibitem{Guo2018}
	Z.~H. Guo, J. Li, K. Nakanishi and L. Yan, On the boundary Strichartz estimates for wave and Schr\"odinger equations, J. Differential Equations {\bfseries 265} (2018), no.~11, 5656--5675.
	
	\bibitem{Haase2006}
	M. Haase, {\em The functional calculus for sectorial operators}, Operator Theory: Advances and Applications, 169, Birkh\"auser, Basel, 2006.
	
	\bibitem{HassellRozendaal2022}
	A. Hassell and J. Rozendaal, $L^p$ and $\mathcal{H}_{FIO}^p$ regularity for wave equations with rough coefficients, Pure Appl. Anal. {\bfseries 5} (2023), no.~3, 541--599.
	
	\bibitem{Herz1962}
	C.~S. Herz, Fourier transforms related to convex sets, Ann. of Math. (2) {\bf 75} (1962), 81--92.
	
	\bibitem{Hoermander1990}
	L.~V. H\"ormander, {\em The analysis of linear partial differential operators. I}, second edition, 
	Grundlehren der mathematischen Wissenschaften, 256, Springer, Berlin, 1990.
	
	
	\bibitem{HytonenVanNeervenVeraarWeis2017}
	T.~P. Hyt\"onen, J. van Neerven, M. Veraar and L. Weis, {\em Analysis in Banach spaces. Vol. II. Probabilistic methods and operator theory}, Ergebnisse der Mathematik und ihrer Grenzgebiete. 3. Folge. A Series of Modern Surveys in Mathematics, 67, Springer, Cham, 2017.
		
	\bibitem{Kapitanski1989_1}
	L. Kapitanski, Leningrad Math. J. {\bfseries 1} (1990), no.~3, 693--726; translated from Algebra i Analiz {\bfseries 1} (1989), no.~3, 127--159.
	
	\bibitem{Kapitanski1989_2}
	L. Kapitanski, J. Soviet Math. {\bfseries 56} (1991), no.~2, 2348--2389; translated from Zap. Nauchn. Sem. Leningrad. Otdel. Mat. Inst. Steklov. (LOMI) {\bfseries 171} (1989), 106--162, 185--186. 
	
	\bibitem{KeelTao1998}
	M. Keel and T.~C. Tao, Endpoint Strichartz estimates, Amer. J. Math. {\bfseries 120} (1998), no.~5, 955--980.
	
	\bibitem{KunstmannWeis2004}
	P.~C. Kunstmann and L.~W. Weis, Maximal $L_p$-regularity for parabolic equations, Fourier multiplier theorems and $H^\infty$-functional calculus, in {\em Functional analytic methods for evolution equations}, 65--311, Lecture Notes in Math., 1855, Springer, Berlin.
	
	\bibitem{Lax1957}
	P.~D. Lax, Asymptotic solutions of oscillatory initial value problems, Duke Math. J. {\bfseries 24} (1957), 627--646.
	

	
	\bibitem{LindbladSogge1995}
	H. Lindblad and C.~D. Sogge, On existence and scattering with minimal regularity for semilinear wave equations, J. Funct. Anal. {\bfseries 130} (1995), no.~2, 357--426.
	
	\bibitem{Littman1963}
	W. Littman, Fourier transforms of surface-carried measures and differentiability of surface averages, Bull. Amer. Math. Soc. {\bfseries 69} (1963), 766--770.
	
		\bibitem{Lunardi2009}
	A. Lunardi, {\em Interpolation theory}, second edition, 
	Appunti. Scuola Normale Superiore di Pisa (Nuova Serie), Ed. Norm., Pisa, 2009.

	\bibitem{McIntoshNahmod2000}
	A.~G.~R. McIntosh and A.~R. Nahmod, Heat kernel estimates and functional calculi of $-b\Delta$, Math. Scand. {\bfseries 87} (2000), no.~2, 287--319.
	
	\bibitem{Mesfun2026}
	Y. Mesfun, On an Operator-Theoretic Approach to Strichartz Estimates for Rough Wave Equations, PhD dissertation, 2026, KIT. \url{https://publikationen.bibliothek.kit.edu/1000193300}. DOI: 10.5445/IR/1000193300.
	
	\bibitem{MetcalfeTataru2012}
	J.~L. Metcalfe and D. Tataru, Global parametrices and dispersive estimates for variable coefficient wave equations, Math. Ann. {\bfseries 353} (2012), no.~4, 1183--1237.
	
	\bibitem{MockenhauptSeegerSogge1993}
	G. Mockenhaupt, A. Seeger and C.~D. Sogge, Local smoothing of Fourier integral operators and Carleson-Sj\"olin estimates, J. Amer. Math. Soc. {\bfseries 6} (1993), no.~1, 65--130.
	
	
	\bibitem{Pazy2011}
	A. Pazy, {\em Semigroups of linear operators and applications to partial differential equations}, Applied Mathematical Sciences, 44, Springer, New York, 1983.
	
	\bibitem{SmithParametrix1998}
	H.~F. Smith, A parametrix construction for wave equations with $C^{1,1}$ coefficients, Ann. Inst. Fourier (Grenoble) {\bfseries 48} (1998), no.~3, 797--835.
	
	\bibitem{SmithTataru2002} 
	H.~F. Smith and D. Tataru, Sharp counterexamples for Strichartz estimates for low regularity metrics, Math. Res. Lett. {\bfseries 9} (2002), no.~2-3, 199--204.
	
	
	\bibitem{Sogge1995}
	C.~D. Sogge, {\em Lectures on non-linear wave equations}, second edition, 
	Int. Press, Boston, MA, 2008.
	
	\bibitem{Stein1993}
	E.~M. Stein, {\em Harmonic analysis: real-variable methods, orthogonality, and oscillatory integrals}, Princeton Mathematical Series Monographs in Harmonic Analysis, 43 III, Princeton Univ. Press, Princeton, NJ, 1993.
	

	
	\bibitem{Strichartz1977}
	R.~S. Strichartz, Restrictions of Fourier transforms to quadratic surfaces and decay of solutions of wave equations, Duke Math. J. {\bfseries 44} (1977), no.~3, 705--714.
	
	\bibitem{Tao2006}
	T.~C. Tao, {\em Nonlinear dispersive equations}, CBMS Regional Conference Series in Mathematics, 106, Conf. Board Math. Sci., Washington, DC, 2006 Amer. Math. Soc., Providence, RI, 2006.
	
	\bibitem{Tataru2001}
	D. Tataru, Strichartz estimates for second order hyperbolic operators with nonsmooth coefficients. II, Amer. J. Math. {\bfseries 123} (2001), no.~3, 385--423.
	
	\bibitem{Tataru2002}
	D. Tataru, Strichartz estimates for second order hyperbolic operators with nonsmooth coefficients. III, J. Amer. Math. Soc. {\bfseries 15} (2002), no.~2, 419--442.
	
	\bibitem{TataruGlobal}
	D. Tataru, Global Strichartz estimates for variable coefficient second order hyperbolic operators, in {\em S\'eminaire: \'Equations aux D\'eriv\'ees Partielles, 1999--2000}, Exp. No. XI, 17 pp., S\'emin. \'Equ. D\'eriv. Partielles, \'Ecole Polytech., Palaiseau.
	
	\bibitem{Tu2017}
	L.~W. Tu, {\em Differential geometry}, Graduate Texts in Mathematics, 275, Springer, Cham, 2017.

	
\end{thebibliography}
\end{document}